\documentclass[12pt]{article}
\usepackage[english]{babel}
\usepackage{graphicx}
\usepackage{framed}
\usepackage[normalem]{ulem}
\usepackage{amsmath}
\usepackage{amsthm}
\usepackage[utf8]{inputenc}
\usepackage[english]{babel}
\usepackage{amssymb}
\usepackage{amsfonts}
\usepackage{tikz-cd}
\usepackage{enumerate}
\usepackage[utf8]{inputenc}
\usepackage{tikz}
\usepackage[top=1 in,bottom=1in, left=1 in, right=1 in]{geometry}

\usepackage{fancyhdr}
\usepackage{graphicx}
\usepackage{fancyhdr}

\usepackage{dynkin-diagrams}
\usetikzlibrary{decorations.markings}

\usepackage{hyperref}

\renewcommand{\rm}[1]{\mathrm{#1}}

\newcommand{\R}{\mathbb{R}}
\newcommand{\p}{\mathbb{P}}
\newcommand{\N}{\mathbb{N}}

\newcommand{\B}{\mathcal{B}}
\newcommand{\C}{\mathbb{C}}
\newcommand{\s}{\mathcal{O}}

\newcommand{\Z}{\mathbb{Z}}

\theoremstyle{plain}
\newtheorem{theorem}{Theorem}[section] 

\theoremstyle{definition}
\newtheorem{prop}[theorem]{Proposition}
\newtheorem{lemma}[theorem]{Lemma}
\newtheorem{conj}[theorem]{Conjecture}

\newtheorem{Cor}[theorem]{Corollary}

\newtheorem{Remark}[theorem]{Remark}

\newtheorem{cor}[theorem]{Corollary}

\newtheorem{defn}[theorem]{Definition} 
\newtheorem{exmp}[theorem]{Example}

\title{On Representations of Weyl Groups attached to Spherical Varieties}

\author{Guy Kapon, Guy Shtotland}
\date{}

\begin{document}

\maketitle

\begin{abstract}
    To any smooth spherical $G$ variety $X$ we attach two representations of the Weyl group of $G$. The first is constructed geometrically using Borel Moore homology. The second is constructed combinatorially using the structure of the Borel orbits on $X$. Using the Fourier Sato transform, we show that both representations are isomorphic. We use this result to translate (in the cotangent case) a conjecture in the relative Langlands program proposed by \cite{FGT} to a combinatorial problem. We solve this combinatorial problem for several examples. 
\end{abstract}

\begin{figure}[h]
    \centering
    \includegraphics[width=0.5\textwidth]{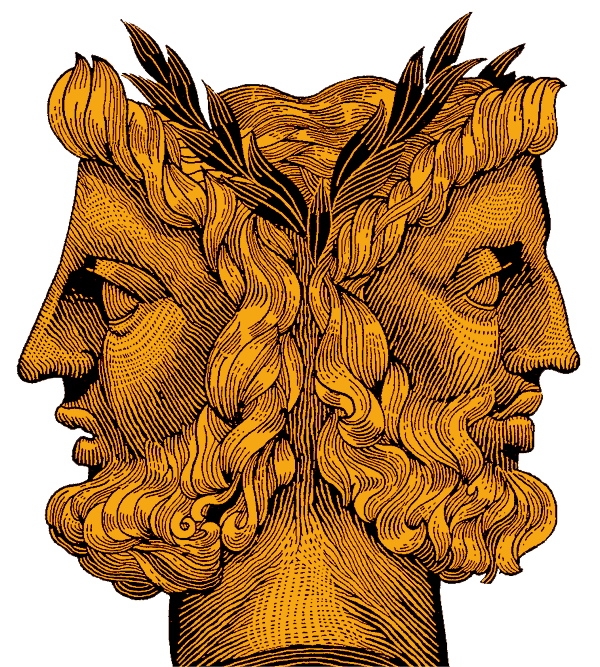}
    \caption{Janus. Artist and original publication unknown; modern colorized reproduction of an engraving.}
    \label{fig:myimage}
\end{figure}

\section{Introduction}

In the influential work \cite{benzvi2024relativelanglandsduality} the authors introduced relative Langlands duality, attaching to a reductive group $G$ acting on a spherical variety $X$ a dual Hamiltonian space $M^\vee$ with a $G^\vee$ action. This work gave rise to many conjectures, one of them suggested in \cite{FGT}. They conjecture that two representations of the Weyl group of $G$, the first constructed from the geometry of the cotangent bundle $T^*X$, and the second from the geometry of $M^\vee$ are isomorphic (up to a twist by sign). In this work we provide a method to translate this geometric problem into a combinatorial one and solve it in some examples. Recently, in \cite{braverman2026relativelanglandsdualitykoszul} they proved the same conjecture under various assumptions, including the assumption that the relative derived Satake conjecture (Conjecture 1.1.2 in \cite{braverman2026relativelanglandsdualitykoszul}) holds for $X$. Our approach is not conditional on relative derived Satake.

\subsubsection*{Main Results}

Let $G$ be a connected reductive group defined over $\C$.

Let $\mathfrak{g}$ be the Lie algebra of $G$ and let $\mathfrak{g}^*$ be its dual.

Denote by $W$ the Weyl group of $G$.

Let $\B$ be the flag variety of $G$, and choose a Borel subgroup $B\in \B$.  Let $\Tilde{N}=T^*\B$ be the cotangent bundle of $\B$. We consider the moment map $\mu:\Tilde{N}\rightarrow \mathfrak{g}^*$ and the Steinberg variety $St=\Tilde{N}\times_{\mathfrak{g}^*}\Tilde{N}$.

There is an algebra structure on the top Borel Moore homology, $H_{top}^{BM}(St)$, given by convolution, it is well known that $H_{top}^{BM}(St)\cong \C[W]$ (see \cite{Chriss1997RepresentationTA}).

Let $X$ be a smooth spherical $G$ variety, i.e. a smooth variety with finitely many Borel orbits. The cotangent bundle $T^*X$ also has a moment map $T^*X\rightarrow \mathfrak{g}^*$. Consider $\Lambda=T^*X\times_{\mathfrak{g}^*}\Tilde{N}$.

\begin{defn}
    Consider $T^*X\times_{\mathfrak{g}^*}\Tilde{N}\times_{\mathfrak{g}^*}\Tilde{N}$ with its three natural projections $\pi_0,\pi_1,\pi_2$ from it. The projection $\pi_0$ goes to $St$ and the projections $\pi_1,\pi_2$ to $\Lambda$. We have a module structure of $H_{top}^{BM}(\Lambda)$ over $H_{top}^{BM}(St)$ given by $x*y=\pi_{1*}(\pi_0^*x\cap \pi_2^*y)\in H_{top}^{BM}(\Lambda)$ for $x\in H_{top}^{BM}(St), y\in H_{top}^{BM}(\Lambda)$. 

    This gives a module over $\C[W]$, denote it by $V_{geo}(X)$.
\end{defn}

There is another way to construct a representation of $W$ from $X$, using the structure of the Borel orbits on $X$. We prove that these two representations are isomorphic. Let us describe this second construction.

In \cite{knop} Knop defined an action of $W$ on $B\backslash X$, the set of Borel orbits on $X$. For $w\in W$ and $\s\in B\backslash X$ we denote this action by $w\s$. We denote by $\overline{\s}$ the closure of $\s$ in the Zariski topology.

Let $\Delta$ be the set of simple roots of $G$, for any $s\in\Delta$ and every $\s\in B\backslash X$ we can associate a type to $\s$ with respect to $s$. 

Let $g\in G$ such that $\s=BgH$. Let $B\subset P_s$ be the parabolic corresponding to $s$. We have the following (see \cite{knop} for a proof). 

\begin{defn}\label{type_rank_1_spheric}

Consider the action of $P_s$ on $\s$, there are several options.

\begin{enumerate}
    \item (Type G) $P_s\s=\s$.
    \item (Type U1) $P_s\s=\s\cup s\s$ and $\s\subset \overline{s\s}$.
    \item (Type U2) $P_s\s=\s\cup s\s$ and $s\s\subset \overline{\s}$.
    \item (Type T1) $P_s\s=\s\cup s\s\cup \s'$ and $s\s\cup \s \subset \overline{\s'}$.
    \item (Type T2) $P_s\s=\s\cup \s'\cup s\s'$ and $s\s'\cup \s' \subset \overline{\s}$.
    \item (Type N1) $s\s=\s$, $P_s\s=\s\cup \s'$ and $ \s \subset \overline{\s'}$.
    \item (Type N2) $s\s=\s$, $P_s\s=\s\cup \s'$ and $\s' \subset \overline{\s}$.
\end{enumerate}
    
\end{defn}

\begin{defn}
    We define an action of $\C[W]$ on the vector space generated by the orbits $\C[B\backslash X]$. For $s\in \Delta$ we define $(s+1)[\s]=\alpha_{s,\s}(\sum_{\s'\in P_s\s}[\s'])$. The value $\alpha_{s,\s}$ is defined according to the type of $\s$ with respect to $s$ as follows: 

    \begin{enumerate}
    \item In Types G and N1, $\alpha_{s,\s}=2$.
    \item In Types U1,U2, and T1, $\alpha_{s,\s}=1$.
    \item In Types T2 and N2, $\alpha_{s,\s}=0$.
\end{enumerate}

This gives a well defined action of $\C[W]$ on $\C[B\backslash X]$ (as follows from results of \cite{knop}). Denote this module by $V_{comb}(X)$. 
\end{defn}

Our main result is the following.

\begin{theorem}\label{main}
    We have an isomorphism of $\C[W]$ modules $V_{comb}(X)\cong V_{geo}(X)$.
\end{theorem}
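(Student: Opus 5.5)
The plan is to identify both sides with one and the same sheaf-theoretic object, and then compute the action of each simple reflection locally, i.e. in rank one. First I describe $V_{geo}(X)$ by conormal bundles. Since $X$ has finitely many $B$-orbits, the variety $\Lambda=T^*X\times_{\mathfrak{g}^*}\Tilde{N}$ is, after identifying $\Tilde{N}$ with $G\times_B(\mathfrak{g}/\mathfrak{b})^*$, the union of the conormal bundles to the finitely many $G$-orbits on $X\times\B$. These $G$-orbits are in bijection with $B\backslash X$. It follows that $\Lambda$ is Lagrangian in $T^*(X\times \B)$ and that its irreducible components are the closures $\overline{T^*_{Y_\s}(X\times\B)}$, where $Y_\s=G\times_B\s$. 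Hence $H^{BM}_{top}(\Lambda)$ has a basis indexed by $B\backslash X$, so as vector spaces $V_{geo}(X)\cong\C[B\backslash X]$. The content of the theorem is that the $W$-actions agree, but I do not expect the naive basis of fundamental classes to match the orbit basis on the nose.

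To get a basis where the action is computable, I use the characteristic cycle map. The group $H^{BM}_{top}(\Lambda)$ is identified with the Grothendieck group of $G$-equivariant constructible sheaves on $X\times\B$, equivalently $B$-equivariant constructible sheaves on $X$, via $\mathcal{F}\mapsto CC(\mathcal{F})$. Under this identification, convolution with $H^{BM}_{top}(St)\cong\C[W]$ should become convolution with the (Grothendieck classes of) $G$-equivariant sheaves on $\B\times\B$. The Fourier–Sato transform is the tool for checking that the Lagrangian convolution of \cite{Chriss1997RepresentationTA} matches sheaf convolution up to the known sign and normalisation. The simple reflection $s$ acts through $s+1=[\text{class of the constant sheaf on }\overline{BsB}]$ (up to sign conventions). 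On the sheaf side this is push–pull along $\pi_s:\B\to\mathcal{P}_s$, i.e. on $B$-equivariant sheaves on $X$ it is the functor $\mathrm{Av}_{P_s}=\pi^*\pi_*$ for $X/B\to X/P_s$.

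Next I take the basis $[\s]\mapsto[j_{\s!}\C_{\s}]$ (extension by zero of the constant sheaf, possibly shifted) and compute $\pi^*\pi_*[j_{\s!}\C_\s]$. The fibre of $\s\to P_s\s/P_s$ over a point is $P_s/B\cong\p^1$ intersected with $\s$, and the Euler characteristics of these fibres produce the coefficients. In Type G the fibre is all of $\p^1$, with $\chi=2$. In Types U1, U2 and T1 it is $\mathbb{A}^1$, with $\chi=1$. In Type T2 it is $\C^*$, and in Type N2 it is $\p^1$ minus two points; both have $\chi=0$. Type N1 is the case where $\s$ is the two-point (double-covered) locus, so it again gives $2$. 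In each case the pushforward followed by pullback spreads the class over all of $P_s\s$ with coefficient $\alpha_{s,\s}$. This recovers exactly the formula defining $V_{comb}(X)$, using Knop's description of $P_s\s$ (Definition~\ref{type_rank_1_spheric}). A careful case check uses that $\s\to P_s\s/P_s$ is a fibration with these fibres; in the N and T cases the stabilizer in $PGL_2$ is a torus or its normaliser, which is what gives $\C^*$ or two points.

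The main obstacle is the compatibility in the second step. One has to show that the characteristic cycle/Fourier–Sato identification intertwines the Lagrangian convolution of the Definition of $V_{geo}(X)$ with sheaf-theoretic push–pull, including getting the signs right. The module structure is defined by intersection theory on $\Lambda$, and one must check that $CC$ commutes with proper pushforward and smooth pullback (Kashiwara–Schapira) in this nonproper setting. I would handle this by reducing to the rank-one case: check the $s$-action on the generator $s+1$ only, using that $\pi_s$ is proper and smooth, and use the fact that the Fourier–Sato transform relates the Steinberg side to the $\mathfrak{g}^*$-nilcone picture without changing Euler characteristics. Signs from the shifts of constant sheaves might twist the result by the sign character. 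If they do, I would absorb this by replacing $[\s]$ with $(-1)^{\dim\s}[\s]$, which is still a basis.
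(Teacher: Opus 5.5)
Your combinatorial half is correct, and it is a different route from the paper's. For $\pi\colon X/B\to X/P_s$ one has $\pi^*\pi_*1_{\s}=\chi(F)\,1_{P_s\s}$. Here $F\subset P_s/B\cong\p^1$ is the fibre of $P_s\times^B\s\to P_s\s$, which is the same over every point by $P_s$-equivariance, and $\chi(F)=\alpha_{s,\s}$ in all seven types.

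There is a small slip: in types U1 and T1 the orbit $\s$ is not open in $P_s\s$, so $F$ is a point, not $\mathbb{A}^1$; only U2 gives $\mathbb{A}^1$. The Euler characteristic is $1$ either way. So $V_{comb}(X)$ is exactly the $q=1$ Hecke module of $B$-invariant constructible functions on $X$, with $[\s]\mapsto 1_{\s}$.

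You should phrase everything with constructible functions rather than $K_0$ of $G$-equivariant sheaves. The latter is strictly bigger than $H^{BM}_{top}(\Lambda)$ whenever a $B$-stabiliser is disconnected, which happens for type N2 orbits, and $CC$ only sees stalk Euler characteristics anyway. The paper never uses characteristic cycles: it passes to $\mathfrak{h}\times_{\mathfrak g}\tilde{\mathfrak g}$ by Fourier--Sato and computes the multiplicities of $X_{\s}$ against $St'_s$ by hand.

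The gap is that all the geometric content sits in the step you defer as the ``main obstacle''. The proposal neither proves it nor handles its delicate point.

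\textbf{(i) Intertwining.} You need that $CC$ turns $\chi$-convolution on $X\times\B\times\B$ into the convolution defining $V_{geo}(X)$. The relevant tool is the functoriality of characteristic cycles under proper pushforward and non-characteristic tensor products (cf.\ \cite{KashiwaraSchapira1990}), not Fourier--Sato. Here $pr_{12}^*\mathcal F\otimes pr_{23}^*\mathcal G$ is non-characteristic because $T^*_b\B\to\mathfrak g^*$ is injective. The result must then be matched with the Chriss--Ginzburg convolution via the antipodal identification $\Lambda\cong m^{-1}(0)$. ``Fourier--Sato does not change Euler characteristics'' is not an argument for any of this.

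\textbf{(ii) Normalisation, which is more serious.} There are two standard algebra isomorphisms $H^{BM}_{top}(St)\cong\C[W]$, differing by $w\mapsto\mathrm{sgn}(w)w$. One is specialisation from $St'$ as in \cite{Chriss1997RepresentationTA}. The other transports $[St'_w]\mapsto w$ through Fourier--Sato, and this is the one the paper actually uses. The theorem depends on this choice. For $X=pt$, $V_{geo}(pt)=H^{BM}_{top}(\B)$ is the sign character under the first and trivial under the second, while $V_{comb}(pt)$ is trivial.

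Your fallback of replacing $[\s]$ by $(-1)^{\dim\s}[\s]$ cannot absorb such a discrepancy. A change of basis never turns $V$ into $V\otimes\mathrm{sgn}$; already for $X=pt$ these are non-isomorphic characters. What is missing is an explicit rank-one check: $CC$, normalised to be unital on $\B\times\B$, must send the constant function on the closure of the $G$-orbit of pairs in relative position $s$ to the element that the paper's identification calls $1+s$. A sign twist is exactly what separates the two sides of Conjecture~\ref{main conj}, so this cannot be left ``up to sign''.
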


\begin{Remark}
    It is well known that both representations have the same dimension which is equal to the number of Borel orbits on $X$. For $V_{comb}(X)$ this is clear. For $V_{geo}(X)$, Proposition \ref{basis of top BM} gives a basis for $V_{geo}(X)$, namely the fundamental classes of the irreducible components of $\Lambda$. Let $m:T^*X\times\Tilde{N}\rightarrow\mathfrak{g}^*$ be the moment map; it is the sum of the moment map of $\Tilde{N}$ and of $T^*X$. Clearly $\Lambda\cong m^{-1}(0)$. The irreducible components of $m^{-1}(0)$ are given by the closures of the conormal bundles $T^*_\s(X\times\B)$ for $\s\subset X\times \B$ a $G$ orbit.
\end{Remark}

Next, we describe a conjecture by \cite{FGT}.

 Let $G^\vee$ be the Langlands dual group of $G$. Under certain assumptions on $X$, \cite{benzvi2024relativelanglandsduality} introduce a dual Hamiltonian variety $M^\vee$ with a $G^\vee$ action and moment map $\mu^\vee:M^\vee\rightarrow \mathfrak{g}^{\vee*}$. Using $M^\vee$ we define another representation of $W$. Consider $\Lambda^\vee=M^\vee\times_{\mathfrak{g}^{\vee*}}\Tilde{N^\vee}$. Then $H^{BM}_{top}(\Lambda^\vee)$ is a module over $H^{BM}_{top}(St^\vee)\cong \C[W]$. Here, $St^\vee$ and $\Tilde{N^\vee}$ are defined as $St$ and $\Tilde{N}$ but for the group $G^\vee$ instead of $G$.

\begin{conj}[\cite{FGT}]\label{main conj}
    There is an isomorphism of $W$ representations 
    $H^{BM}_{top}(\Lambda^\vee)\cong sgn\otimes H^{BM}_{top}(\Lambda)$.
\end{conj}

We are interested in a special case of this conjecture in which $M^\vee$ is of the form $T^*X^\vee$ for some spherical $G^\vee$ variety $X^\vee$. In this case, we can verify Conjecture \ref{main conj} by relating the geometric constructions to the combinatorial ones and then solving a combinatorial problem. We use this idea to prove:

\begin{theorem}\label{application}
    Conjecture \ref{main conj} holds for the following dual pairs of spherical varieties:
    \begin{enumerate}
        \item $Hom(\C^n,\C^n)$ as a $GL_n\times GL_n$ variety is dual to $GL_n\times \C^n$ as a $(GL_n\times GL_n)^\vee\cong GL_n\times GL_n$ variety.
        \item $GL_{2n+1}/GL_{n+1}\times GL_n$ as a $GL_{2n+1}$ variety is dual to $GL_{2n+1}/Sp_{2n}$ as a $GL_{2n+1}^\vee\cong GL_{2n+1}$ variety.
    \end{enumerate}
\end{theorem}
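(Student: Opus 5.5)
By Theorem \ref{main}, applied to $X$ and to $X^\vee$, both sides of Conjecture \ref{main conj} become combinatorial. Here $M^\vee=T^*X^\vee$, so $H^{BM}_{top}(\Lambda^\vee)=V_{geo}(X^\vee)\cong V_{comb}(X^\vee)$ and $H^{BM}_{top}(\Lambda)=V_{geo}(X)\cong V_{comb}(X)$. It therefore suffices to show $V_{comb}(X^\vee)\cong sgn\otimes V_{comb}(X)$ as $W$-modules in the two cases. The Weyl groups are $S_n\times S_n$ in case 1 and $S_{2n+1}$ in case 2. Representations of the symmetric group are determined by their characters. In practice it is easier to decompose both modules into pieces we can identify, rather than computing all characters.

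\textbf{Step 1: list the Borel orbits and their types.}
\begin{itemize}
\item For $Hom(\C^n,\C^n)$ under $B_n\times B_n$, the orbits are the partial permutation matrices (rook placements), by Bruhat-type decomposition.
\item For $GL_n\times\C^n$ under $GL_n\times GL_n$, the $B\times B$ orbits are pairs $(w,\text{orbit of }B_n^{w}\text{ on }\C^n)$, where $w$ is a Bruhat cell. Equivalently, they are permutations together with a ``flag position'' of the vector.
\item For the symmetric spaces $GL_{2n+1}/GL_{n+1}\times GL_n$ and $GL_{2n+1}/Sp_{2n}$, we use the classical Matsuki/Richardson–Springer parametrizations. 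The first is indexed by clans (signed involutions with $n+1$ pluses more than... appropriately counted). The second is indexed by fixed-point-free-like involutions in $S_{2n+1}$ with exactly one fixed point.
\end{itemize}
For each orbit and each simple reflection $s$, we read off the Knop type (G, U, T, N) from the rank-one reductions. These are standard: the type is determined by how $s$ acts on the combinatorial label, for example whether $s$ swaps two ``$+$/$-$'' entries, joins them into a pair, or fixes an involution arc.

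\textbf{Step 2: decompose $V_{comb}$.}
The operators $(s+1)$ preserve the span of each closed $P_s$-stable union. This gives filtrations by orbit dimension, or by rank of the matrix in case 1. The associated graded pieces are induced modules of the form $\mathrm{Ind}_{W_L}^{W}(\chi)$, with $\chi$ a sign or trivial character on a Levi-type subgroup. This matches the known formula expressing $H^{BM}_{top}$ of such Lagrangians as sums of inductions; Knop's $W$-set structure makes $\C[B\backslash X]$ a twisted permutation module. Concretely:
\begin{itemize}
\item In case 1, the rank-$k$ stratum of $Hom$ gives $\mathrm{Ind}_{S_k\times S_{n-k}\times S_{n-k}}^{S_n\times S_n}$ of (diagonal regular piece)$\otimes$(sign or trivial).
\item On the dual side, the position of the vector gives $\mathrm{Ind}$ from $S_k\times S_{n-k}$ with a twisted structure.
\end{itemize}
Since $\C[W]$ is semisimple, the filtration splits.

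\textbf{Step 3: compare the two sides and twist by sign.}
Tensoring with $sgn$ exchanges trivial and sign in each induced summand. We then check, stratum by stratum, that the multisets of induced pieces match.

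In case 2, we use the known decompositions of these modules:
\begin{itemize}
\item the involution module of $S_{2n+1}$ restricted to one-fixed-point involutions gives $\bigoplus_{\lambda}$ over partitions with columns of even length plus one box;
\item the clan module gives a sum over inductions from $S_a\times S_b$.
\end{itemize}
Their equality after twisting by sign, which amounts to transposing partitions, is then a Littlewood–Richardson/Pieri computation.

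\textbf{Main obstacle.}
The hard part is computing the Knop types correctly and identifying each graded piece exactly, including which sign twist occurs. In particular, types T2/N2, where $\alpha=0$, produce the sign-type behaviour, so a misidentified type changes the answer. As a first attempt I would instead compute characters directly: evaluate the trace of each Coxeter-class element by counting orbits fixed under Knop's action, weighted by the $\alpha$-contributions. I would verify this for small $n$ before carrying out the general Pieri-rule matching.
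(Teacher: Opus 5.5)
Your opening reduction matches the paper's: by Theorem \ref{main} applied to $X$ and $X^\vee$, it suffices to prove $V_{comb}(X^\vee)\cong sgn\otimes V_{comb}(X)$. After that the proposal has a genuine gap, most seriously in case 2.

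\textbf{Case 2: the orbit model for $X^\vee$ is wrong.} $GL_{2n+1}/Sp_{2n}$ is not a symmetric space: $Sp_{2n}$ fixes a vector and is not the fixed-point group of an involution of $GL_{2n+1}$. So Matsuki/Richardson--Springer theory does not apply, and its Borel orbits are \emph{not} indexed by involutions of $S_{2n+1}$ with one fixed point. Already for $n=1$ there are only $3$ such involutions. On the other hand, $GL_3/SL_2$ has $6$ Borel orbits. To see this, realize it as triples $(v,W,q)$: there are $6$ $B$-orbits on pairs $(v,\phi)$ with $\phi(v)\neq 0$, and each stabilizer acts transitively on volume forms on $\ker\phi$. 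This matches the six $(2,1)$-clans indexing $B\backslash GL_3/(GL_2\times GL_1)$. With your model the two modules would not even have the same dimension, so the Pieri/transposition matching in your Step 3 cannot succeed. The missing idea is that $B\backslash X^\vee$ is \emph{also} parametrized by $(n+1,n)$-clans (Propositions \ref{B orbits on GL/Sp} and \ref{B orbits in Ps for GL/Sp}). Under this parametrization, Knop's action becomes the permutation action on clans on both sides. Moreover, whenever $s_i$ fixes a clan, the type is $G$ on one side exactly when it is $T2$ on the other.

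\textbf{Both cases: the sign twist is never determined.} You leave the decisive point --- which character of each stabilizer occurs --- as the ``main obstacle''. Your claim that tensoring with $sgn$ ``exchanges trivial and sign in each induced summand'' presupposes that the $X^\vee$-side character equals the $X$-side character times $sgn$ restricted to the stabilizer. That is exactly what has to be proved. The paper's argument needs three ingredients:
\begin{enumerate}
\item A common parametrization intertwining Knop's actions. In case 1 this is rook arrangements; your description of $B\backslash(GL_n\times\C^n)$ as permutations with a ``flag position'' of the vector does not provide it.
\item The rank filtration of Proposition \ref{associated graded}, on whose graded pieces $w[\s]=\pm[w\s]$.
\item A parity statement: if $w$ fixes a rook arrangement or clan, then in any word $w=s_1\cdots s_l$ the number of letters that actually move the object is even.
\end{enumerate}
The paper proves the parity statement with explicit invariants. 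For rook arrangements it uses chessboard colouring and the sign of the induced permutation on nonempty rows and columns. For clans it uses crossing pairs of numbers, the sum of the positions of signs, and $-\,+$ inversions.

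\textbf{Two further problems with Step 2.} The filtration by orbit dimension is not $W$-stable: type $U1$ raises dimension and type $U2$ lowers it. It is Knop's rank that is monotone. Also, the stabilizers are not Levi-type subgroups: they involve a diagonal $S_k\subset S_n\times S_n$, or $S_2\wr S_k$ for clans. So the graded pieces are not of the form $\mathrm{Ind}_{W_L}^W\chi$ with $W_L$ parabolic.
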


\begin{Remark}
    The first item of Theorem \ref{application} is covered by the results of \cite{braverman2026relativelanglandsdualitykoszul} as relative derived Satake is known in this case (see \cite{Braverman_Finkelberg_Ginzburg_Travkin_2021}).
\end{Remark}

Based on the calculations used to prove Theorem \ref{application}, we propose a conjecture for the cotangent case, relating the Borel orbits on $X$ and on $X^\vee$. In order to formulate our conjecture we recall the notion of the rank of a $B$ orbit. See Section 2 of \cite{knop} for more detail.

\begin{defn}\label{Borel rank}
    Let $Y$ be a variety with an action of $B$. Let $k(Y)$ be the ring of rational functions on $Y$. Let $\chi(Y)$ be the lattice of characters $\chi$ of $B$ such that there is $0\neq f\in k(Y)$ with $bf=\chi(b)f$ for every $b\in B$. Define $\operatorname{rank}(Y)=\dim(\chi(Y))$.
\end{defn}

\begin{conj}
    Let $X$ be a smooth spherical $G$ variety such that $T^*X$ is hyperspherical (see \cite{benzvi2024relativelanglandsduality} for a definition) with a relative Langlands dual of the form $T^*X^\vee$ for some smooth spherical $G^\vee$ variety $X^\vee$. Denote by $B\backslash X$ and $B^\vee\backslash X^\vee$ the sets of Borel orbits on $X$ and $X^\vee$ respectively. There exists a bijection $\Phi:B\backslash X\rightarrow B^\vee\backslash X^\vee$ which intertwines Knop's action of the Weyl group such that:
    \begin{enumerate}
        \item $\operatorname{rank}(x)+\operatorname{rank}(\Phi(x))=\operatorname{rank}(G)$.
        \item Let $s\in W$ be a simple reflection, $x\in B\backslash X$ has type $G$ with respect to $s$ if and only if $\Phi(x)$ has type $T2$ with respect to $s$. The opposite also holds.
    \end{enumerate}
\end{conj}

We also mention the paper \cite{shtotland2026relativekazhdanlusztigisomorphism} by the second author. In that paper the following conjecture appears.

\begin{conj}\label{second conj}
    We have an isomorphism of $W$ representations $V_{comb}(X)\cong sgn\otimes H^{BM}_{top}(\Lambda^\vee)$
\end{conj}

It is clear that Theorem \ref{main} implies that Conjectures \ref{main} and \ref{second conj} are equivalent. 

In \cite{shtotland2026relativekazhdanlusztigisomorphism} there is also a conjecture about the representations of the affine Weyl group obtained from $K^{G}(\Lambda)$. We plan to apply the methods of the current paper to try and establish the conjecture for $K$ theory for some cases in a future work.

On the topic of related works, we mention the independent work 
\cite{ma2025thetacorrespondencespringercorrespondence}. Theorem \ref{main} is not stated in this paper but under some assumptions on $X$ it can be deduced from its results, see Proposition 3.25. They assume that the space $X$ has no roots of type $N$, we do not need this assumption and we prove Theorem \ref{main} for any smooth spherical variety.

\subsubsection*{Idea of the Proof}
    We outline the main ideas of the proof of Theorem \ref{main}. Our first step is a reduction to the case of $X$ being a homogeneous space, this is done by introducing filtrations on $V_{geo}(X)$ and $V_{comb}(X)$ indexed by the $G$ orbits on $X$.

     Assume that $X=G/H$. Both $V_{geo}(X)$ and $V_{comb}(X)$ have a natural basis parametrized by $B$ orbits on $X$. However, the action written in these bases is not the same. In fact, looking at examples, we see that an isomorphism between the representations given in these bases cannot be sparse, i.e. the image of a basis element has to be a sum of many basis elements. 

    To address this, we use a Fourier transform, after which the isomorphism comes from an identification of natural bases of both representations.

     Let $\mathfrak{h}$ be the Lie algebra of $H$. Let $\Tilde{\mathfrak{g}}=\{(x,B)|x\in \mathfrak{g},B\in \B, x\in Lie(B)\}$ be the Grothendieck-Springer resolution.

    We use the Fourier Sato transform to show that $V_{geo}(X)$ is isomorphic to the representation given by the action of $H^{BM}_{top}(\Tilde{\mathfrak{g}}\times_{\mathfrak{g}}\Tilde{\mathfrak{g}})\cong \C[W]$ on $H^{BM}_{top}(\mathfrak{h}\times_{\mathfrak{g}}\Tilde{\mathfrak{g}})$. 
    In general, the computation of the convolution action in this description can be more complicated geometrically, as it might involve computing non-transversal intersections. However, using a direct computation for some cases, and combinatorial arguments, we can deduce the whole action of any simple reflection and conclude that $H^{BM}_{top}(\mathfrak{h}\times_{\mathfrak{g}}\Tilde{\mathfrak{g}})\cong V_{comb}(X)$.

\subsubsection*{Structure of the Paper}
    \begin{itemize}
        \item In Section \ref{s2} we reduce to the homogeneous case and use the Fourier Sato transform to relate $V_{geo}(X)$ to $H^{BM}_{top}(\mathfrak{h}\times_{\mathfrak{g}}\Tilde{\mathfrak{g}})$. 

         \item  In Section \ref{s3} we relate $H^{BM}_{top}(\mathfrak{h}\times_{\mathfrak{g}}\Tilde{\mathfrak{g}})$ to $V_{comb}(X)$ and prove Theorem \ref{main}.

        \item  In Section \ref{s4} we compute some examples and prove Theorem \ref{application}.

        \item In Appendix \ref{A1} we recall properties of Borel Moore homology that are used throughout the paper.

    \end{itemize}

\subsubsection*{Acknowledgments}

We would like to thank Shachar Carmeli, Eitan Sayag, and Noam Zimhoni for helpful discussion.

G.S. was partially supported by ISF grant number 1781/23 during the work on this paper.

G.K. was partially supported by ISF Beresheet grant 4093/25 and BSF grant 2024766.

\section{The homogeneous case and the Fourier Sato transform}\label{s2}

In this section, we use the Fourier Sato transform to give another description of $V_{geo}(X)$ that is used to compare it with $V_{comb}(X)$. Before doing that, we need to reduce to the homogeneous case.

We can write $X$ as a union of $G$ orbits $X=\cup_{i=1}^l X_i$. It is clear that $V_{comb}(X)=\oplus_i V_{comb}(X_i)$; it is enough to show the following.

\begin{prop}
    We have an isomorphism of $W$ representations 
    $V_{geo}(X)\cong\oplus_i V_{geo}(X_i)$.
\end{prop}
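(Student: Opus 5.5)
Order the $G$ orbits so that $Z_k=X_1\cup\dots\cup X_k$ is closed in $X$ for every $k$; this is possible because there are finitely many orbits and each orbit closure is a union of orbits. Write $\Lambda_X=T^*X\times_{\mathfrak{g}^*}\Tilde{N}\cong m^{-1}(0)$. By the Remark following Theorem \ref{main}, the irreducible components of $\Lambda_X$ are the closures of the conormal bundles $\overline{T^*_{\s}(X\times\B)}$, where $\s$ runs over the $G$ orbits on $X\times\B$. Every such $\s$ lies over a single $G$ orbit $X_i$. Set $\Lambda_{\le k}$ to be the union of the components whose $\s$ lies over $Z_k$. Since $Z_k$ is closed, $\Lambda_{\le k}$ is exactly the part of $\Lambda_X$ lying over $Z_k$ under the projection to $X$, so it is closed. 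Define $F_k\subset H^{BM}_{top}(\Lambda_X)$ to be the span of the fundamental classes of those components. By Proposition \ref{basis of top BM}, $F_k\cong H^{BM}_{top}(\Lambda_{\le k})$ and $F_k/F_{k-1}$ has a basis indexed by the components lying over $X_k$.

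\textbf{Step 1: the filtration is $W$-stable.} Convolution with a class supported on $St$ only changes the $\Tilde{N}$ factor, and the $T^*X$ coordinate stays fixed along $\pi_1$ and $\pi_2$. So $x*y$ is supported on the preimage of the support of $y$ in $X$, and therefore $F_k$ is a submodule. Top-degree classes supported there are combinations of fundamental classes of components of $\Lambda_{\le k}$, which gives this statement.

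\textbf{Step 2: identify the subquotients.} I claim $F_k/F_{k-1}\cong V_{geo}(X_k)$. Let $U=X\setminus Z_{k-1}$. This is open and contains $X_k$ as a closed orbit. Restriction to the open set $\Lambda_U$ gives a $W$-equivariant map (restriction commutes with convolution) $F_k\to H^{BM}_{top}(\Lambda_U)$. It kills $F_{k-1}$, and it is an isomorphism from $F_k/F_{k-1}$ onto the span of the components over $X_k$. Those components form $\Lambda_{U,X_k}$, the closed part of $\Lambda_U$ over $X_k$.

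It remains to compare this with $\Lambda_{X_k}=T^*X_k\times_{\mathfrak{g}^*}\Tilde{N}$. The point is that
$$\Lambda_{U,X_k}=\{(x,\xi,b):x\in X_k,\ \xi\in T^*_xX,\ \xi+\mu(b)=0\},$$
and for $G$-stable $X_k$ the image of $\mu(b)$ annihilates $\mathfrak{g}\cdot x=T_xX_k$. Hence $\xi$ must vanish on $T_xX_k$, up to the normal directions. More precisely, $\Lambda_{U,X_k}$ is a vector bundle over $\Lambda_{X_k}$ with fiber the conormal space $N^*_{X_k}U$: writing $T^*U|_{X_k}\to T^*X_k$, the condition only involves the image in $T^*X_k$. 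The Thom isomorphism (pullback along this affine bundle) then identifies top BM homologies up to a degree shift. It is compatible with convolution because the bundle is pulled back from the $T^*X$ factor, on which $St$ does not act. This gives $F_k/F_{k-1}\cong V_{geo}(X_k)$ as $W$-modules.

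\textbf{Step 3: split the filtration.} $\C[W]$ is semisimple, so every filtration of a finite-dimensional module splits. Hence $V_{geo}(X)\cong\bigoplus_k F_k/F_{k-1}\cong\bigoplus_i V_{geo}(X_i)$.

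\textbf{Main obstacle.} I expect the hardest part to be the compatibility in Step 2: checking that restriction to opens and the Thom/pullback isomorphism commute with the convolution product in top-degree BM homology. The refined pullback $\pi_2^*$ and the intersection in the definition need care, using base change for the open restriction and the smooth pullback properties recalled in Appendix \ref{A1}. I would first try to reduce everything to the statement that all maps involved are base changes along the first factor, on which convolution does not depend.
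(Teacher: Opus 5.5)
Your proof is correct and is essentially the paper's argument. The paper inducts by splitting off a closed orbit $X_1$: it shows $H^{BM}_{top}(p^{-1}(X_1))$ is a subrepresentation with quotient $V_{geo}(X\setminus X_1)$, identifies it with $V_{geo}(X_1)$ by pulling back along the vector bundle $p^{-1}(X_1)\to T^*X_1\times_{\mathfrak{g}^*}\Tilde{N}$, and finishes with semisimplicity; your filtration $F_k$, with restriction to $U=X\setminus Z_{k-1}$, is just that induction unrolled. The heuristic sentence claiming $\xi$ vanishes on $T_xX_k$ is wrong as stated, but the precise statement you give next is right: the moment map on $T^*U|_{X_k}$ factors through $T^*X_k$.
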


\begin{proof} 
    Let $X_1$ be a closed $G$ orbit in $X$. We have $\Lambda=T^*X\times_{\mathfrak{g}^*}\tilde{N}$, let $p:\Lambda \to X$ be the natural projection. Inside $V_{geo}(X)$ we have the subspace $H^{BM}_{top}(p^{-1}(X_{1}))$. It is enough to show that this subspace is a subrepresentation, isomorphic to $V_{geo}(X_{1})$, and that the quotient is isomorphic to $V_{geo}(X\setminus X_{1})$. After we show this, we can finish the proof using induction and the fact that all representations of $W$ are semisimple.

     Since $X_{1}$ is closed, so is $p^{-1}(X_{1})$; the action of $W$ preserves $H^{BM}_{top}(p^{-1}(X_{1}))$ meaning it is a subrepresentation. 

    The fact that the quotient by $H^{BM}_{top}(p^{-1}(X_{1}))$ is isomorphic to $V_{geo}(X\setminus X_{1})$ is also clear.

    It remains to show that $H^{BM}_{top}(p^{-1}(X_{1}))\cong V_{geo}(X_{1})$.

    Consider the projection $q:p^{-1}(X_{1})\to T^*X_{1}\times_{\mathfrak{g}^*} \tilde{N}$. Taking the pullback, we get a map $V_{geo}(X_{1}) \to H^{BM}_{top}(p^{-1}(X_{1}))$. Both sides have bases corresponding to $B$ orbits on $X_{1}$ and this is an isomorphism of vector spaces. It is easy to check that this map is indeed a map of $H^{BM}_{top}(St)$ modules.

\end{proof}

\begin{Remark}
    The more natural result is that $V_{geo}(X)$ has a filtration and its graded parts are $V_{geo}(X_{i})$. We are working in a situation where all representations are semisimple, so a filtration always splits. In other settings this might not be the case, and one may only have a filtration. For example, if one considers equivariant $K$ theory instead of Borel Moore homology as a representation of the affine Weyl group.
\end{Remark}
From now on we assume that $X=G/H$ is a homogeneous space.

Let $\mathfrak{h}$ be the Lie algebra of $H$. We have $\Lambda=G\times^H(\mathfrak{h}^\perp\times_{\mathfrak{g}^*}\Tilde{N})$. In particular, $H_{top}^{BM}(\Lambda)=H_{top}^{BM}(\mathfrak{h}^\perp\times_{\mathfrak{g}^*}\Tilde{N})$. Let $j$ be the embedding $j:\mathfrak{h}^\perp\rightarrow \mathfrak{g}^*$. 

Let $C_{\Tilde{N}}=\C[dim(\Tilde{N})],C_{\mathfrak{h}^\perp}=\C[dim(\mathfrak{h}^\perp)]$ be the constant sheaves on $\Tilde{N}$ and $\mathfrak{h}^\perp$ respectively, shifted such that the resulting sheaf is perverse.

\begin{prop}\label{pass to Ext}

 We have an algebra isomorphism $H^{BM}_{top}(St)\cong \text{Ext}^{0}(\mu_*C_{\Tilde{N}}, \mu_*C_{\Tilde{N}})$. Moreover, the following diagram is commutative and the vertical lines are isomorphisms. 

    \[
    \begin{tikzcd}
H^{BM}_{top}(St) \otimes H^{BM}_{top}(\mathfrak{h}^\perp\times_{\mathfrak{g}^*}\Tilde{N}) 
    \arrow[r, "\textit{convolution action}"] 
    \arrow[d]
& H^{BM}_{top}(\mathfrak{h}^\perp\times_{\mathfrak{g}^*}\Tilde{N}) 
    \arrow[d] \\
\text{Ext}^{0}(\mu_*C_{\Tilde{N}}, \mu_*C_{\Tilde{N}}) \otimes \text{Ext}^{dim(G)-dim(H)}(\mu_*C_{\Tilde{N}}, j_*C_{\mathfrak{h}^\perp}) 
    \arrow[r, "\textit{composition}"] 
& \text{Ext}^{dim(G)-dim(H)}(\mu_*C_{\Tilde{N}}, j_*C_{\mathfrak{h}^\perp})
\end{tikzcd}
\]

The Ext groups are computed in the category ${D_c^b(\mathfrak{g}^{*})}$ of bounded derived constructible sheaves on $\mathfrak{g}^{*}$.

\end{prop}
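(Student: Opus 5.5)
The plan is to apply the standard Chriss--Ginzburg formalism (\cite{Chriss1997RepresentationTA}, Section 8.6) that converts convolution in Borel--Moore homology into composition of Ext groups between pushforwards of constant sheaves. The basic input is the following. Let $f_i:M_i\to N$ ($i=1,2$) be proper maps from smooth varieties, and set $Z=M_1\times_N M_2$. Base change and Verdier duality then give a canonical isomorphism
\[ H^{BM}_{k}(Z)\cong \text{Ext}^{\dim M_1+\dim M_2-k}(f_{1*}\C_{M_1}[\dim M_1],f_{2*}\C_{M_2}[\dim M_2]), \]
possibly up to a harmless swap of the order of the factors. Under this isomorphism, convolution $H^{BM}(M_1\times_N M_2)\otimes H^{BM}(M_2\times_N M_3)\to H^{BM}(M_1\times_N M_3)$ corresponds to Yoneda composition (\cite{Chriss1997RepresentationTA}, Theorem 8.6.7). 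We use this with $N=\mathfrak{g}^*$, the proper map $\mu:\Tilde{N}\to\mathfrak{g}^*$, and the closed embedding $j:\mathfrak{h}^\perp\to\mathfrak{g}^*$. Both maps are proper and both sources are smooth.

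First we take $M_1=M_2=\Tilde{N}$. Then $Z=St$, which has dimension $\dim\Tilde{N}$, so $H^{BM}_{top}$ sits in degree $k=\dim\Tilde{N}$. The Ext degree is therefore $2\dim\Tilde{N}-\dim\Tilde{N}$ minus the shifts $2\dim\Tilde{N}$, which we must track carefully; with the perverse normalisations $C_{\Tilde{N}}$ it comes out to $\text{Ext}^0$. This gives $H^{BM}_{top}(St)\cong\text{Ext}^0(\mu_*C_{\Tilde{N}},\mu_*C_{\Tilde{N}})$, and it is an algebra isomorphism by the compatibility of convolution with composition.

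Next we take $M_1=\Tilde{N}$ and $M_2=\mathfrak{h}^\perp$, so that $Z=\mathfrak{h}^\perp\times_{\mathfrak{g}^*}\Tilde{N}$. We need the top dimension of $Z$. By the identification $\Lambda=G\times^H Z$ and the Remark above, $\Lambda\cong m^{-1}(0)$ is Lagrangian in $T^*X\times\Tilde{N}$. Hence
\[ \dim\Lambda=\tfrac12\bigl(2\dim X+2\dim\B\bigr)=\dim X+\dim\B, \]
and so $\dim Z=\dim\B$. The Ext degree is then
\[ \dim\Tilde{N}+\dim\mathfrak{h}^\perp-\dim\B \]
in unshifted terms. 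After shifting both sheaves to be perverse, this becomes
\[ \tfrac12\bigl(\dim\Tilde{N}+\dim\mathfrak{h}^\perp\bigr)+\tfrac12\bigl(\dim\Tilde{N}+\dim\mathfrak{h}^\perp\bigr)-\dim\Tilde{N}-\dim\mathfrak{h}^\perp+\ldots \]
Rather than trusting such bookkeeping, we recompute directly. The formula with perverse shifts reads
\[ H^{BM}_k(Z)\cong\text{Ext}^{d_1+d_2-k}\bigl(f_{1*}\C[d_1],f_{2*}\C[d_2]\bigr)\quad\text{shifted by } d_2-d_1, \]
and we will verify it against the first case to fix conventions. With $d_1=2\dim\B$, $d_2=\dim\mathfrak{g}-\dim\mathfrak{h}$ and $k=\dim\B$, the degree should be $\dim\mathfrak{g}-\dim\mathfrak{h}=\dim G-\dim H$, as claimed. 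Commutativity of the diagram is exactly the statement that convolution with a third factor $M_3=\mathfrak{h}^\perp$ corresponds to composition, which is again \cite{Chriss1997RepresentationTA}, Theorem 8.6.7.

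The main obstacle is matching the convolution in the definition of $V_{geo}(X)$, which is defined on $T^*X\times_{\mathfrak{g}^*}\Tilde{N}\times_{\mathfrak{g}^*}\Tilde{N}$, with the convolution on the $H$-slice $\mathfrak{h}^\perp\times_{\mathfrak{g}^*}\Tilde{N}$. We will handle this by noting that $G\times^H(-)$ induces an isomorphism on top BM homology: it is a fibre bundle with smooth fibre $G/H=X$, giving a shift by $2\dim X$. This isomorphism commutes with pullbacks, intersections and proper pushforwards. Hence the module structure on $H^{BM}_{top}(\Lambda)$ restricts to convolution on the slice, with $St$ acting on the right factor $\Tilde{N}$. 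A secondary point is the sign and ordering convention of Ext versus convolution, which may introduce an anti-isomorphism. Since $\C[W]\cong\C[W]^{op}$ via $w\mapsto w^{-1}$, compatible with the standard identification, this does not affect the statement.
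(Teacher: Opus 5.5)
Your route is the same as the paper's: the Chriss--Ginzburg identification of Borel--Moore homology of $M_1\times_N M_2$ with $\text{Ext}(f_{1*}\C_{M_1},f_{2*}\C_{M_2})$, compatibility of convolution with Yoneda composition, and the input $\dim(\mathfrak{h}^\perp\times_{\mathfrak{g}^*}\Tilde{N})=\dim\B$. Your Lagrangian argument for that dimension is fine, and more explicit than the paper's ``easy to see''.

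The gap is the one computation the proposition needs beyond these citations: the Ext degrees $0$ and $\dim G-\dim H$. Your opening formula $H^{BM}_k(Z)\cong\text{Ext}^{d_1+d_2-k}(C_{M_1},C_{M_2})$ is correct only when $k$ is the \emph{real} degree.

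\begin{itemize}
\item You then substitute complex dimensions, $k=\dim\Tilde{N}$ and $k=\dim\B$. This gives $\dim\Tilde{N}$ for $St$ and $\dim\B+\dim G-\dim H$ for the slice.
\item You try to repair this with an extra ``shift by $d_2-d_1$''. That double-counts the perverse normalisation, since your formula already uses the shifted sheaves.
\item The repair does not produce $0$: for $St$ the shift $d_2-d_1$ vanishes, leaving $\dim\Tilde{N}$. In general it also does not produce $\dim G-\dim H$.
\end{itemize}

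So the degrees in the statement are asserted (``it comes out to $\text{Ext}^0$'', ``the degree should be $\dim G-\dim H$'') rather than derived.

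The fix is exactly the formula the paper uses. Top Borel--Moore homology of $Z$ sits in real degree $2\dim_{\C} Z$, so
\[
H^{BM}_{top}(M_1\times_N M_2)\cong \text{Ext}^{\dim M_1+\dim M_2-2\dim(M_1\times_N M_2)}(C_{M_1},C_{M_2}),
\]
with no further shift. This follows from base change $\mu_1^!\mu_{2*}\cong p_{1*}p_2^!$ together with $p_2^!\C_{M_2}\cong\mathbb{D}_Z[-2\dim M_2]$. For $St$ the degree is $4\dim\B-4\dim\B=0$. For the slice it is $2\dim\B+(\dim G-\dim H)-2\dim\B=\dim G-\dim H$.

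Separately, what you call the main obstacle is not part of this proposition. The proposition is stated directly for the slice $\mathfrak{h}^\perp\times_{\mathfrak{g}^*}\Tilde{N}$, and the identification $H^{BM}_{top}(\Lambda)=H^{BM}_{top}(\mathfrak{h}^\perp\times_{\mathfrak{g}^*}\Tilde{N})$ is made before it. Also, $G\times^H Z\to G/H$ is a bundle with fibre $Z$, not with fibre $G/H$.
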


\begin{proof}
    The map $\mu$ is proper and the map $j$ is a closed embedding, in particular it is proper. 

    Following the proof of Lemma 8.6.1 of \cite{Chriss1997RepresentationTA} we see that, in general for $M_1,M_2$ smooth and proper over a smooth variety $Z$, $C_{M_1}=\C[dim(M_1)],C_{M_2}=\C[dim(M_2)]$ we have:
    
    $$H_{top}^{BM}(M_1\times_Z M_2)=\text{Ext}^{dim(M_1)+dim(M_2)-2dim(M_1\times_Z M_2)}(C_{M_1},C_{M_2})$$

    It is easy to see that $dim(\Tilde{N})=2dim(G)-2dim(B),dim(\mathfrak{h}^\perp)=dim(G)-dim(H)$ and $dim(\mathfrak{h}^\perp\times_{\mathfrak{g}^*}\Tilde{N})=dim(G)-dim(B)$.

    Thus, the result follows using Proposition 8.6.35 of \cite{Chriss1997RepresentationTA}. 
\end{proof}

Next we introduce the Fourier Sato transform. For more details about it, see Subsection 2.7 of \cite{Achar_2013} and Subsection 3.7 of \cite{KashiwaraSchapira1990}.

\begin{defn}
    Let $V$ be a finite dimensional vector space over $\C$. Let $D^b_{con}(V)$ be the derived category of constructible sheaves with respect to the stratification given by the orbits of the scalar multiplication action of $\C^\times$ on $V$. These are called conical (derived) sheaves. 

    Let $V^*$ be the dual vector space. Consider $Q=\{(v,v^*)|Re(v^*(v))\leq 0\}\subset V\times V^*$. Let $q:Q\rightarrow V,q':Q\rightarrow V^*$ be the natural projections. Define the Fourier Sato transform to be $F:D^b_{con}(V)\rightarrow D^b_{con}(V^*)$ given by $F=q'_!q^*[dim(V)]$. 
\end{defn}

We have the following (see Theorem 3.7.9 of \cite{KashiwaraSchapira1990}).

\begin{theorem}
    The Fourier Sato transform $F:D^b_{con}(V)\rightarrow D^b_{con}(V^*)$ is an equivalence of categories. It has an inverse $F^\vee:D^b_{con}(V^*)\rightarrow D^b_{con}(V)$, given by $F^\vee=a_*q_!q'^*[dim(V)]$, here $a:D^b_{con}(V)\rightarrow D^b_{con}(V)$ is induced by multiplication by $-1$.
\end{theorem}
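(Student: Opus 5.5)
Since the statement is the classical Fourier--Sato theorem of Kashiwara--Schapira, the proof is a kernel computation, and I would organize it that way. Set $n=\dim_\C V$, so $\dim_\R V=2n$. Write $F=q'_!q^*[n]$ as an integral transform with kernel $C_Q$ on $V\times V^*$. Write the proposed inverse $F^\vee=a_*q_!q'^*[n]$ as an integral transform from $V^*$ to $V$ with kernel $C_{Q'}$, where $Q'=\{(v,v^*)\mid \mathrm{Re}(v^*(v))\ge 0\}$; the sign flip comes from composing with $a$.

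By proper base change and the projection formula, the composite $F^\vee\circ F$ is the integral transform on $V$ with kernel
\[
K=p_{13!}\bigl(p_{12}^*C_{Q}\otimes p_{23}^*C_{Q'}\bigr)[2n],
\]
where $p_{ij}$ are the projections from $V\times V^*\times V$. The stalk of $K$ at $(v,w)$ is $R\Gamma_c$ of the closed subset
\[
S_{v,w}=\{v^*\in V^*\mid \mathrm{Re}\,v^*(v)\le 0,\ \mathrm{Re}\,v^*(w)\ge 0\},
\]
shifted by $2n$. I would compute this stalk in two cases.

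\begin{itemize}
\item If $v=w$, then $S_{v,v}$ is the real hyperplane $\{\mathrm{Re}\,v^*(v)=0\}$, or all of $V^*$ when $v=0$.
\item If $v\neq w$, then $S_{v,w}$ is an intersection of two closed half-spaces, or a half-space times a linear space, or a closed cone. In each of these subcases it contains a closed half-line direction, so its compactly supported cohomology vanishes.
\end{itemize}

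The case $v=w$ does not directly give the delta kernel $C_\Delta$, because the dimension of $S_{v,v}$ jumps at $v=0$. So the kernel computation alone is not enough, and this is where the real work is. On conical sheaves the correct statement is the following: the kernel $K$ and the kernel $C_\Delta$ induce isomorphic functors on $D^b_{con}(V)$, even though they need not be isomorphic as kernels. Following KS, I would split off the part of the kernel supported on $\{0\}\times V\cup V\times\{0\}$, more precisely on $\{(v,w)\mid \mathrm{Re}\langle\cdot\rangle \text{ degenerate}\}$. One then checks that the transform attached to this piece kills conic objects. The reason is that for a $\C^\times$-conic (hence $\R_{>0}$-conic) sheaf $G$, the functor $R\Gamma_c(\text{closed cone},G)$ vanishes when the cone is a half-space, by the homotopy $t\mapsto t\cdot x$. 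What remains is a natural map $F^\vee F\to \mathrm{id}$, or its adjoint version, and I would show it is an isomorphism on stalks using conicity. Computing $F^\vee F(G)_v$ for $v\ne0$ reduces to $R\Gamma_c$ of $G$ on translated half-spaces, and conicity identifies this with $G_v$. The cohomological shift $2n=\dim_\R V$ is exactly what is needed to cancel the degree in which the hyperplane-type set $S_{v,v}$ contributes.

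By symmetry, the same argument with the roles of $V$ and $V^*$ exchanged gives $F\circ F^\vee\cong\mathrm{id}$. Finally, $F$ preserves $\C^\times$-constructibility, so it lands in $D^b_{con}(V^*)$. This holds because $Q$ is $\C^\times$-biinvariant up to the $\R_{>0}$-action, and because constructibility is preserved by proper pushforward with $!$ along a conic family.

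I expect the main obstacle to be the middle step: the careful identification of $F^\vee F$ with the identity only on conic objects, rather than at the level of kernels. This is where Kashiwara--Schapira use the machinery of $\gamma$-topologies, and in practice I would cite Theorem 3.7.9 of \cite{KashiwaraSchapira1990} for it instead of reproducing it.
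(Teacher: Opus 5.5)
The paper gives no proof of this statement; it simply quotes Theorem 3.7.9 of \cite{KashiwaraSchapira1990}, so your final fallback to that citation matches what the paper does. The kernel computation you sketch on the way there is wrong, however, and the repair you propose targets the wrong phenomenon. The failing step is the claim that $R\Gamma_c(S_{v,w})=0$ for all $v\neq w$. Take $v=cw$ with $c>0$, $c\neq 1$, $w\neq 0$. Then $\mathrm{Re}\,v^*(v)=c\,\mathrm{Re}\,v^*(w)$, so $S_{v,w}=\{v^*\mid \mathrm{Re}\,v^*(w)=0\}$ is a real hyperplane and $R\Gamma_c(S_{v,w})\cong\C[1-2n]\neq 0$. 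Your vanishing does hold in the other off-diagonal cases: exactly one of $v,w$ is $0$, $v=cw$ with $c<0$, or $v,w$ are $\R$-linearly independent. Hence the stalks of $K$ are $\C$ at $(0,0)$, $\C[1]$ at every $(cw,w)$ with $c>0$ and $w\neq 0$, and $0$ elsewhere. So $K$ lives on the positive-ray relation $\{v\in\R_{>0}w\}$, not on the diagonal.

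With your stalks the transform would return $G_w[1]$ at $w\neq 0$, so the shift cannot balance. The splitting you propose does not help either: the true $K$ already vanishes on $\{0\}\times V\cup V\times\{0\}$ away from the origin. Moreover, the lemma you would use there is false: for the conic sheaf $\C_{\{0\}}$, $R\Gamma_c$ over a closed half-space through $0$ is $\C$.

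The correct mechanism is integration along rays. Fix $w\neq 0$. Along the open ray $\R_{>0}w$ the set $S_{v,w}$ is the same hyperplane, and at $v=0$ the stalk vanishes. So $K|_{V\times\{w\}}$ is the extension by zero of the constant sheaf $\C[1]$ from that ray, and $(F^\vee F(G))_w\cong R\Gamma_c(\R_{>0}w;G)[1]$. Conicity makes $G$ constant on the ray, which gives $G_w[-1][1]=G_w$. For $w=0$ the restriction of $K$ is $\C_{\{0\}}$, which gives $G_0$. Thus the shift $2n$ splits as $2n-1$ from the hyperplane plus $1$ from the ray, and this is exactly where conicity enters.

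Upgrading this stalkwise identification to an isomorphism of functors still needs a natural transformation. That is the part it is reasonable to take from \cite{KashiwaraSchapira1990}. Separately, your argument that $F$ preserves $D^b_{con}$ does not work as written. The map $q'|_Q$ is not proper, since its fibres are half-spaces, so base change gives no constructibility. That preservation also has to be taken from the literature, for instance from the behaviour of micro-supports under the transform.
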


Notice that the sheaves that interest us, i.e. $j_*\omega_1$ and $\mu_*\omega_2$ are conical on $\mathfrak{g}^*$. Thus, we can apply the Fourier Sato transform to them and get sheaves on $\mathfrak{g}$.

Over $\mathfrak{g}$ we have $\Tilde{\mathfrak{g}}=\{(g,B)|g\in \mathfrak{g}, B\in \B, g\in Lie(B)\}$ with a projection map $\pi:\Tilde{\mathfrak{g}}\rightarrow \mathfrak{g}$. We also have the embedding $i:\mathfrak{h}\rightarrow\mathfrak{g}$. 

Let $C_{\Tilde{\mathfrak{g}}}=\C[dim(\Tilde{\mathfrak{g}})]$ and $C_\mathfrak{h}=\C[dim(\mathfrak{h})]$ be the constant sheaves on $\Tilde{\mathfrak{g}}$ and $\mathfrak{h}$ respectively, shifted such that the resulting sheaf is perverse. 

\begin{prop}\label{trasnformed sheaves}
    \begin{enumerate}
        \item $F^\vee(\mu_*C_{\Tilde{N}})=\pi_*C_{\Tilde{\mathfrak{g}}}$.
        \item $F^\vee(j_*C_{\mathfrak{h}^\perp})=i_*C_{\mathfrak{h}}$

        \item There is an isomorphism $\text{Ext}^{dim(G)-dim(H)}(j_*C_{\mathfrak{h}^\perp},\mu_*C_{\Tilde{N}})\cong \text{Ext}^{dim(G)-dim(H)}(i_*C_{\mathfrak{h}},\pi_*C_{\Tilde{\mathfrak{g}}})$ of $\text{Ext}^{0}(\mu_*C_{\Tilde{N}},\mu_*C_{\Tilde{N}})\cong \text{Ext}^{0}(\pi_*C_{\Tilde{\mathfrak{g}}},\pi_*C_{\Tilde{\mathfrak{g}}})$ modules.
    \end{enumerate}
\end{prop}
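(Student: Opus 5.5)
We compute both transforms directly from the definition of $F^\vee=a_*q_!q'^*[\dim V]$, with $V=\mathfrak{g}$ and $V^*=\mathfrak{g}^*$, and then obtain part 3 from the fact that $F^\vee$ is an equivalence.

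\textbf{Part 2.} We use the standard fact that the Fourier Sato transform of the (shifted) constant sheaf on a linear subspace is the (shifted) constant sheaf on its annihilator. Concretely, $q'^*j_*C_{\mathfrak{h}^\perp}$ is the constant sheaf on $Q\cap(\mathfrak{g}\times\mathfrak{h}^\perp)$. The fiber of $q$ over $x\in\mathfrak{g}$ is the set $\{\xi\in\mathfrak{h}^\perp : \mathrm{Re}\,\xi(x)\le 0\}$. There are two cases.
\begin{itemize}
\item If $x\in\mathfrak{h}$, this fiber is all of $\mathfrak{h}^\perp$, a vector space of dimension $\dim G-\dim H$.
\item If $x\notin\mathfrak{h}$, this fiber is a closed real half-space in $\mathfrak{h}^\perp$.
\end{itemize}
Compactly supported cohomology of a closed half-space vanishes. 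For the full space, $H_c^*(\mathfrak{h}^\perp)=\C[-2(\dim G-\dim H)]$. Base change for $q_!$ then gives that $q_!q'^*j_*C_{\mathfrak{h}^\perp}$ is supported on $\mathfrak{h}$ and is constant there, concentrated in a single degree. Bookkeeping of the shifts ($\dim\mathfrak{h}^\perp$ from $C_{\mathfrak{h}^\perp}$, $-2\dim\mathfrak{h}^\perp$ from $H_c$, $\dim\mathfrak{g}$ from $F^\vee$) yields total shift $\dim\mathfrak{g}-\dim\mathfrak{h}^\perp=\dim\mathfrak{h}$. The antipode $a$ preserves $\mathfrak{h}$. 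Hence $F^\vee(j_*C_{\mathfrak{h}^\perp})=i_*C_{\mathfrak{h}}$.

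\textbf{Part 1.} We reduce to Part 2 in families over $\B$. The vector bundle $\tilde N=\{(\xi,B):\xi\in \mathfrak{b}^\perp\}$ sits inside the trivial bundle $\mathfrak{g}^*\times\B$. The vector bundle $\tilde{\mathfrak{g}}=\{(x,B):x\in\mathfrak{b}\}$ sits inside $\mathfrak{g}\times\B$, and $\mathfrak{b}=(\mathfrak{b}^\perp)^\perp$ fiberwise. The Fourier Sato transform for the trivial bundle over $\B$ commutes with proper pushforward along the projection $\B\to\mathrm{pt}$, since it is built from $!$-pushforwards and pullbacks and base change applies. The relative version of Part 2 shows that the transform of the shifted constant sheaf on $\tilde N$ is the shifted constant sheaf on $\tilde{\mathfrak{g}}$. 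The shifts match because both are normalized to be perverse: $\dim\tilde N=\dim\tilde{\mathfrak{g}}$ and the relative dimension shifts compensate. Pushing forward along the proper projection then gives $F^\vee(\mu_*C_{\tilde N})=\pi_*C_{\tilde{\mathfrak{g}}}$. The main obstacle is checking carefully that the shifts agree in the relative setting and that $F^\vee$ commutes with $\mu_*$, which is a proper pushforward. This is the standard argument of Subsection 2.7 of \cite{Achar_2013}, and we would follow it.

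\textbf{Part 3.} Since $F^\vee$ is an equivalence of triangulated categories, it induces isomorphisms on all Hom spaces in the derived category. In particular it gives isomorphisms on all Ext groups, and these are compatible with composition. Applying it to the pairs $(\mu_*C_{\tilde N},\mu_*C_{\tilde N})$ and $(j_*C_{\mathfrak{h}^\perp},\mu_*C_{\tilde N})$, and using Parts 1 and 2 to identify the images, we get the following:
\begin{itemize}
\item an algebra isomorphism $\text{Ext}^0(\mu_*C_{\tilde N},\mu_*C_{\tilde N})\cong \text{Ext}^0(\pi_*C_{\tilde{\mathfrak{g}}},\pi_*C_{\tilde{\mathfrak{g}}})$;
\item an isomorphism $\text{Ext}^{\dim G-\dim H}(j_*C_{\mathfrak{h}^\perp},\mu_*C_{\tilde N})\cong\text{Ext}^{\dim G-\dim H}(i_*C_{\mathfrak{h}},\pi_*C_{\tilde{\mathfrak{g}}})$, which intertwines the composition actions.
\end{itemize}
Functoriality of $F^\vee$ on compositions gives exactly the module compatibility claimed.
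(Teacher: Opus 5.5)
Your proposal is correct. Parts 1 and 3 follow the paper's logic. Part 1 is the standard argument behind Lemma 2.2 of \cite{Achar_2013}, which the paper simply cites. Part 3 is deduced from the equivalence exactly as in the paper.

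Your proof of Part 2, however, takes a different route. The paper computes $F$ rather than $F^\vee$. It uses the compatibility of the transform with the linear map $i$ (2.14 of \cite{Achar_2013}) to rewrite $F(i_*C_{\mathfrak h})$ as $i'^*$ applied to the transform of the constant sheaf on $\mathfrak h$, which is a skyscraper at $0\in\mathfrak h^*$. It then base changes along the cartesian square $\mathfrak h^\perp=i'^{-1}(0)$, and finally inverts. You compute $F^\vee$ directly from the kernel by a stalk computation. Its only input is that a closed real half-space has vanishing compactly supported cohomology, while $\mathfrak h^\perp$ itself contributes $\C[-2\dim\mathfrak h^\perp]$. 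Your shift count, giving $[\dim\mathfrak h]$, is correct.

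The paper's route is shorter once the formal properties of $F$ are available. Yours is self-contained, and it has a bonus: the same fiberwise computation proves Part 1 as well. You run it in $\mathfrak g^*\times\B$ over $\B$, with fibers $\mathfrak b^\perp$ versus $\mathfrak b=(\mathfrak b^\perp)^\perp$, and follow it with proper base change along $\B\to\mathrm{pt}$. So the two parts become a single argument.

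There is one slip in Part 1: the equality $\dim\Tilde{N}=\dim\Tilde{\mathfrak g}$ is false. In fact $\dim\Tilde{N}=2\dim\B=\dim G-\operatorname{rank}G$, while $\dim\Tilde{\mathfrak g}=\dim G$. The shifts still match, for the reason your relative computation already provides. The output shift is $\dim\Tilde{N}-2\dim\B+\dim\mathfrak g=\dim\mathfrak g=\dim\Tilde{\mathfrak g}$, because the fibers $\mathfrak b^\perp$ of $\Tilde{N}\to\B$ have dimension $\dim\B$.
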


\begin{proof}
    The first part follows from Lemma 2.2 of \cite{Achar_2013}. Clearly the third part follows from the previous two. We only prove the second part.

    We show that applying the Fourier Sato transform on $\mathfrak{g}$ we have $F(i_*\C[dim(\mathfrak{h}])=j_*\C[dim(\mathfrak{h}^\perp)]$. 

    Consider $i:\mathfrak{h}\rightarrow\mathfrak{g}$ and the adjoint map $i':\mathfrak{g}^*\rightarrow\mathfrak{h}^*$. By 2.14 of \cite{Achar_2013} we have $F(i_*\C[-dim(\mathfrak{g})])=i'^*F_{\mathfrak{h}}(\C[-dim(\mathfrak{h})])$, here $F_{\mathfrak{h}}$ is the Fourier transform between $\mathfrak{h}$ and $\mathfrak{h}^*$. Up to a shift, the Fourier transform of the constant sheaf is the skyscraper sheaf at zero. More precisely, let $z:0\rightarrow \mathfrak{h}^*$ be the embedding of zero, then $F_{\mathfrak{h}}(\C[-dim(\mathfrak{h})])=z_*\C[-2dim(\mathfrak{h})]$. 

    We have the following commutative diagram. 

        \[
    \begin{tikzcd}
\mathfrak{h}^\perp 
    \arrow[r,] 
    \arrow[d,"j"]
& 0  
    \arrow[d,"z"] \\
\mathfrak{g}^* 
    \arrow[r,"i'"] 
& \mathfrak{h}^*
\end{tikzcd}
\]

By base change we have $i'^*z_*\C[-2dim(\mathfrak{h})]=j_*\C[-2dim(\mathfrak{h})]$. Thus $F(i_*\C)=j_*\C[dim(\mathfrak{g})-2dim(\mathfrak{h})]=j_*\C[dim(\mathfrak{h}^\perp)-dim(\mathfrak{h})]$
\end{proof}

Denote $St'=\Tilde{\mathfrak{g}}\times_{\mathfrak{g}} \Tilde{\mathfrak{g}}$ and $\Lambda'=\mathfrak{h}\times_{\mathfrak{g}}\Tilde{\mathfrak{g}}$. We refer to $St'$ as the Grothendieck-Steinberg variety. 

Similarly to Proposition \ref{pass to Ext} we obtain the following.

\begin{prop}\label{pass to Ext again}

We have an algebra isomorphism $H^{BM}_{top}(St')\cong \text{Ext}^{0}(\pi_*\C, \pi_*\C)$. Moreover, the following diagram is commutative and the vertical lines are isomorphisms.
    \[
    \begin{tikzcd}
H^{BM}_{top}(St') \otimes H^{BM}_{top}(\mathfrak{h}\times_{\mathfrak{g}}\Tilde{\mathfrak{g}}) 
    \arrow[r, "\textit{convolution action}"] 
    \arrow[d]
& H^{BM}_{top}(\mathfrak{h}\times_{\mathfrak{g}}\Tilde{\mathfrak{g}}) 
    \arrow[d] \\
\text{Ext}^{0}(\pi_*\C, \pi_*\C) \otimes \text{Ext}^{dim(G)-dim(H)}(i_*C_{\mathfrak{h}},\pi_*C_{\Tilde{\mathfrak{g}}})
    \arrow[r, "\textit{composition}"] 
& \text{Ext}^{dim(G)-dim(H)}(i_*C_{\mathfrak{h}},\pi_*C_{\Tilde{\mathfrak{g}}})
\end{tikzcd}
\]

The Ext groups are computed in the category ${D_c^b(\mathfrak{g})}$ of bounded derived constructible sheaves on $\mathfrak{g}$.

\end{prop}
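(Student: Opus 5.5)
The plan is to mirror the proof of Proposition \ref{pass to Ext}, using the general principle from Chapter 8 of \cite{Chriss1997RepresentationTA}. If $M_1,M_2$ are smooth, $p_k:M_k\to Z$ are proper maps to a smooth variety $Z$, and $C_{M_k}=\C[\dim M_k]$, then
$$H^{BM}_{*}(M_1\times_Z M_2)\cong \text{Ext}^{*}(p_{1*}C_{M_1},p_{2*}C_{M_2}),$$
with a degree shift. Under this identification, convolution corresponds to Yoneda composition (Proposition 8.6.35 of \cite{Chriss1997RepresentationTA}).

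We will apply this with $Z=\mathfrak{g}$, taking $M_1=M_2=\tilde{\mathfrak{g}}$ for $St'$, and $M_1=\mathfrak{h}$, $M_2=\tilde{\mathfrak{g}}$ for $\Lambda'$. The hypotheses hold: $\pi$ is proper, being the restriction of the projection $\mathfrak{g}\times\B\to\mathfrak{g}$ to a closed subvariety, and $i$ is a closed embedding; $\tilde{\mathfrak{g}}$ is a vector bundle over $\B$, so it is smooth; and $\mathfrak{h}$ is smooth.

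The main work is the dimension bookkeeping. The first fact we need is $\dim\tilde{\mathfrak{g}}=\dim G$. The second is $\dim St'=\dim G$, which holds because $St'$ is a union over $W$ of closures of graphs of the $\mathfrak{g}^{rs}$-monodromy and has pure dimension $\dim\mathfrak{g}$. With $M_1=M_2=\tilde{\mathfrak{g}}$, the degree $\dim M_1+\dim M_2-2\dim(M_1\times_Z M_2)$ is then $0$, giving $H^{BM}_{top}(St')\cong\text{Ext}^0(\pi_*C_{\tilde{\mathfrak{g}}},\pi_*C_{\tilde{\mathfrak{g}}})=\text{Ext}^0(\pi_*\C,\pi_*\C)$, since the shifts cancel.

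For $\Lambda'=\mathfrak{h}\times_{\mathfrak{g}}\tilde{\mathfrak{g}}$, the expected obstacle is computing the top dimension. I would show that $\dim\Lambda'=\dim H+\dim(G/B)-\dim B+\dim T$, so that the degree is $\dim H+\dim G-2\dim\Lambda'=\dim G-\dim H$. To justify the dimension claim, I would use the projection $\Lambda'\to\B$, whose fiber over $B'$ is $\mathfrak{h}\cap Lie(B')$. On each $H$-orbit in $\B$ this fiber has constant dimension, so $\Lambda'$ decomposes into finitely many locally closed pieces indexed by $B\backslash X$, equivalently by $H\backslash\B$. I would then check that each piece has dimension exactly $\dim\mathfrak{h}$, so that $\Lambda'$ is equidimensional with irreducible components indexed by the $B$-orbits. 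Thus $\dim\Lambda'=\dim H$ and the Ext degree is $\dim G-\dim H$, consistent with the statement.

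The piece dimension is $\dim(H\cdot B')+\dim(\mathfrak{h}\cap\mathfrak{b}')$, and this equals $\dim H-\dim(H\cap B')+\dim(\mathfrak{h}\cap\mathfrak{b}')=\dim H$, since $\mathrm{Lie}(H\cap B')=\mathfrak{h}\cap\mathfrak{b}'$. Here finiteness of $H\backslash\B$ (sphericity) is used. This is the Lie-algebra analogue of the statement that $\Lambda\cong m^{-1}(0)$ is a union of conormal bundles of dimension $\dim G-\dim B$.

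Finally, compatibility of convolution with composition is the general statement of \cite{Chriss1997RepresentationTA} (Proposition 8.6.35), which applies to any triple of smooth varieties proper over $\mathfrak{g}$. It restricts to top-degree pieces because degrees add: $0+(\dim G-\dim H)$. Commutativity of the square then follows exactly as in Proposition \ref{pass to Ext}.
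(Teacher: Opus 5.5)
Your proposal is correct and is essentially the paper's proof: both reduce to the Chriss--Ginzburg identification (Lemma 8.6.1 and Proposition 8.6.35) exactly as in Proposition \ref{pass to Ext}. The only new input is $\dim(\mathfrak{h}\times_{\mathfrak{g}}\tilde{\mathfrak{g}})=\dim H$, which the paper takes from Corollary \ref{cor: irreducible components} and which you derive by the same orbit-wise count $\dim(H\cdot B')+\dim(\mathfrak{h}\cap\mathfrak{b}')=\dim H$ (your intermediate expression $\dim H+\dim(G/B)-\dim B+\dim T$ is just $\dim H$, since $\dim(G/B)=\dim B-\dim T$).
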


\begin{proof}
    The proof is almost identical to the proof of Proposition \ref{pass to Ext}, the only difference is the dimension computation. Clearly $dim(\Tilde{\mathfrak{g}})=dim(G)$, $dim(\mathfrak{h})=dim(H)$ and by Corollary \ref{cor: irreducible components} we have $dim(\mathfrak{h}\times_{\mathfrak{g}}\Tilde{\mathfrak{g}})=dim(H)$.
\end{proof}

As a corollary we obtain the following.

\begin{prop}\label{Fourier isomorphism}
    \begin{itemize}
        \item We have an algebra isomorphism $H^{BM}_{top}(St')\cong H^{BM}_{top}(St)$.
        \item We have an isomorphism of modules $H^{BM}_{top}(\mathfrak{h}\times_{\mathfrak{g}}\Tilde{\mathfrak{g}})\cong H^{BM}_{top}(\mathfrak{h}^\perp\times_{\mathfrak{g}^*}\Tilde{N})$.
    \end{itemize}
\end{prop}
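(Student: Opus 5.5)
The plan is to chain together the three preceding propositions, using that the Fourier Sato transform is an equivalence of categories. An equivalence induces isomorphisms on all Hom groups in the derived category, hence on all Ext groups, and these isomorphisms are compatible with composition.

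For the first item, we combine Proposition \ref{pass to Ext}, which gives $H^{BM}_{top}(St)\cong \text{Ext}^0(\mu_*C_{\Tilde{N}},\mu_*C_{\Tilde{N}})$ as algebras, with Proposition \ref{pass to Ext again}, which gives $H^{BM}_{top}(St')\cong \text{Ext}^0(\pi_*C_{\Tilde{\mathfrak{g}}},\pi_*C_{\Tilde{\mathfrak{g}}})$. The latter is stated with unshifted constant sheaves, but shifting both arguments by the same amount does not change the $\text{Ext}^0$ algebra.

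Between these sits the map induced by $F^\vee$, which is an equivalence by the theorem of Kashiwara--Schapira. By Proposition \ref{trasnformed sheaves}(1) it sends $\mu_*C_{\Tilde{N}}$ to $\pi_*C_{\Tilde{\mathfrak{g}}}$, so it induces a ring isomorphism on endomorphism algebras. Here one uses that an equivalence is a functor, so it respects composition and identities. The composite of the three isomorphisms is the desired algebra isomorphism $H^{BM}_{top}(St')\cong H^{BM}_{top}(St)$.

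For the second item, we use the right-hand columns of the commutative diagrams in Propositions \ref{pass to Ext} and \ref{pass to Ext again}. These identify $H^{BM}_{top}(\mathfrak{h}^\perp\times_{\mathfrak{g}^*}\Tilde{N})$ and $H^{BM}_{top}(\mathfrak{h}\times_{\mathfrak{g}}\Tilde{\mathfrak{g}})$ with the Ext groups $\text{Ext}^{\dim G-\dim H}(\mu_*C_{\Tilde N},j_*C_{\mathfrak{h}^\perp})$ and $\text{Ext}^{\dim G-\dim H}(i_*C_{\mathfrak{h}},\pi_*C_{\Tilde{\mathfrak{g}}})$ respectively. In each case the identification is module-equivariant, with convolution on the geometric side matching composition on the Ext side. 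Applying $F^\vee$ together with Proposition \ref{trasnformed sheaves}(1),(2), the first Ext group maps isomorphically to $\text{Ext}^{\dim G-\dim H}(\pi_*C_{\Tilde{\mathfrak{g}}},i_*C_{\mathfrak{h}})$. Functoriality makes this map equivariant for the algebra isomorphism from the first item.

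The main obstacle is a direction mismatch. Proposition \ref{pass to Ext} uses $\text{Ext}(\mu_*C,j_*C)$, while Proposition \ref{trasnformed sheaves}(3) and Proposition \ref{pass to Ext again} use $\text{Ext}(j_*C,\mu_*C)$ and $\text{Ext}(i_*C,\pi_*C)$. To reconcile them I would use Verdier duality. All the sheaves involved are pushforwards of shifted constant sheaves on smooth varieties along proper maps, so they are Verdier self-dual. Verdier duality $\mathbb{D}$ then gives an isomorphism $\text{Ext}^k(A,B)\cong\text{Ext}^k(\mathbb{D}B,\mathbb{D}A)=\text{Ext}^k(B,A)$. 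This isomorphism intertwines left composition by $\text{End}(A)$ with right composition by $\text{End}(A)$ twisted by the anti-involution $\phi\mapsto\mathbb{D}\phi$ of $\text{End}(\pi_*C)$.

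Alternatively, Lemma 8.6.1 of \cite{Chriss1997RepresentationTA} can be applied symmetrically, realizing $H^{BM}_{top}(M_1\times_Z M_2)$ as Ext in either order, since the fibre product is symmetric. In that case convolution is read as a right action in one description and a left action in the other. On $\C[W]$ the relevant anti-involution is $w\mapsto w^{-1}$ (possibly composed with an automorphism), and for a group algebra this converts left modules to right modules without changing isomorphism classes of $W$-representations. So I would check that the anti-involution induced on $H^{BM}_{top}(St')\cong\C[W]$ is of this form. This yields the module isomorphism $H^{BM}_{top}(\mathfrak{h}\times_{\mathfrak{g}}\Tilde{\mathfrak{g}})\cong H^{BM}_{top}(\mathfrak{h}^\perp\times_{\mathfrak{g}^*}\Tilde{N})$, with the module structure taken through the algebra isomorphism of the first item.
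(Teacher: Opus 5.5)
Your proposal is correct and matches the paper's argument: the paper's proof simply chains Propositions \ref{pass to Ext}, \ref{trasnformed sheaves} and \ref{pass to Ext again} through the equivalence $F^\vee$, exactly as you do. The paper does not comment on the mismatch you flag between $\text{Ext}(\mu_*C_{\Tilde N},j_*C_{\mathfrak{h}^\perp})$ and $\text{Ext}(j_*C_{\mathfrak{h}^\perp},\mu_*C_{\Tilde N})$; your fix works because the induced anti-involution is exactly $w\mapsto w^{-1}$ (swapping the factors sends $St'_w$ to $St'_{w^{-1}}$), but the caveat should say ``up to an \emph{inner} automorphism'', since an outer one such as $w\mapsto \mathrm{sgn}(w)w$ would twist the module by the sign character.
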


\begin{proof}
    This follows from Propositions \ref{pass to Ext}, \ref{trasnformed sheaves} and \ref{pass to Ext again}.
\end{proof}

\section{Explicit Computation}\label{s3}
In this section, we explicitly compute the $W$ representation $H^{BM}_{top}(\mathfrak{h}\times_{\mathfrak{g}}\Tilde{\mathfrak{g}})$ in the basis given by the irreducible components of $\mathfrak{h}\times_{\mathfrak{g}}\Tilde{\mathfrak{g}}$.  From this computation we deduce Theorem \ref{main}.
\subsection{The Steinberg variety}
In this subsection, we describe the irreducible components of the Grothendieck-Steinberg variety, and the ring structure on its top Borel Moore homology.

By Proposition \ref{basis of top BM} the fundamental classes of the irreducible components of $St'$ give a basis for $H_{top}^{BM}(St')$.

 Let $T\subset B$ be a maximal torus of our chosen Borel $B$. We define $W=N_G(T)/T$, this is the Weyl group of $G$.

\begin{defn}
Let $w\in W$, we say that two $B_1,B_2\in \B$ are in relative position $w$ if there is $g\in G$ such that $B_1=g^{-1}Bg$ and $B_2=g^{-1}w^{-1}Bwg$. We denote this by $B_1\sim^wB_2$. 
\end{defn}

The following is well known and easy to prove.
\begin{prop}
    Let $w\in W$, $B_1,B_2\in \B$ and $g\in G$, if $B_1\sim^wB_2$ then $g^{-1}B_1g\sim^w g^{-1}B_2g$.
\end{prop}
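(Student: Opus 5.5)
The plan is to unwind the definition of relative position and absorb the conjugating element $g$ into the witness. Suppose $B_1\sim^w B_2$. By definition there is some $h\in G$ with
\[
B_1=h^{-1}Bh, \qquad B_2=h^{-1}w^{-1}Bwh .
\]
Here $w$ stands for any representative in $N_G(T)$. This is harmless: two representatives differ by an element of $T\subset B$, which normalizes $B$, so $w^{-1}Bw$ depends only on the class of $w$ in $W$.

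Now take the element $hg$ as a new witness. Conjugating by $g$ gives
\[
g^{-1}B_1g=g^{-1}h^{-1}Bhg=(hg)^{-1}B(hg),
\]
and similarly
\[
g^{-1}B_2g=g^{-1}h^{-1}w^{-1}Bwhg=(hg)^{-1}w^{-1}Bw(hg).
\]
So the element $hg\in G$ exhibits $g^{-1}B_1g\sim^w g^{-1}B_2g$ directly from the definition.

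No genuine obstacle is expected. The argument uses only associativity of conjugation, namely that conjugation by $h$ followed by conjugation by $g$ is conjugation by $hg$. The only point needing a remark is the independence of the choice of representative of $w$ noted above.
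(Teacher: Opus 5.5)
Your proof is correct: the element $hg$ witnesses $g^{-1}B_1g\sim^w g^{-1}B_2g$ directly from the definition. Your remark on independence of the representative of $w$ is also fine, since $T$ is normal in $N_G(T)$, so two representatives differ by left multiplication by some $t\in T\subset B$. The paper gives no proof of this proposition, calling it well known and easy to prove, and your argument is exactly the direct verification that phrase refers to.
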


Denote by $\mathfrak{g}^{rs}$ the dense subset of regular semisimple elements.

\begin{prop}

The irreducible components of $St'$ are given by $$St'_w=\overline{\{(t,B_1,B_2)|t\in \mathfrak{g}^{rs}, t\in Lie(B_1)\cap Lie(B_2), B_1\sim^wB_2\}}$$ 

The product in $H_{top}^{BM}(St')$ is given by $[St'_{w_1}][St'_{w_2}]=[St'_{w_1w_2}]$.

In particular,  $H_{top}^{BM}(St)\cong H_{top}^{BM}(St')\cong \C[W]$.

\end{prop}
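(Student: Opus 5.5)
The plan has two parts: first identify the irreducible components of $St'$, then compute the convolution product by restricting to the regular semisimple locus.

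\emph{Step 1: the components.} Consider the projection $St'\to\mathfrak{g}$ and its restriction $St'^{rs}=St'\times_{\mathfrak{g}}\mathfrak{g}^{rs}$. Over $t\in\mathfrak{g}^{rs}$ with centralizer $T_t$, the Borels containing $t$ are exactly those containing $T_t$; they form a $W$-torsor. Hence $\pi^{-1}(\mathfrak{g}^{rs})\to\mathfrak{g}^{rs}$ is a finite étale $W$-covering, and $St'^{rs}$ is the fibre product of two copies of it. This fibre product decomposes according to relative position, $St'^{rs}=\bigsqcup_{w\in W}St'^{rs}_w$. Each piece is isomorphic to $\pi^{-1}(\mathfrak{g}^{rs})\cong G\times^T\mathfrak{t}^{rs}$ via $(t,B_1,B_2)\mapsto(t,B_1)$, since $B_2$ is determined by $B_1$, $w$ and the torus $T_t$. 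So each piece is smooth, irreducible and of dimension $\dim G$.

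Next we must see that nothing else is top dimensional. We use the stratification of $St'$ by the pair of relative position $w$ and the conjugacy type of the element. The fibre over $x\in\mathfrak{g}$ has dimension at most $\dim\mathcal{B}_x$ (the Springer fibre dimension, controlled by Steinberg's estimate). A standard dimension count, as in the Steinberg variety case of \cite{Chriss1997RepresentationTA} (the variety is semismall-like: $\pi$ is small), shows that the complement of $St'^{rs}$ has dimension $<\dim G$. Concretely, $\pi$ is small, so $\dim(\mathfrak{g}_{(i)})+2\dim\mathcal{B}_x<\dim\mathfrak{g}$ on non-regular-semisimple strata. Thus every irreducible component of $St'$ of dimension $\dim G$ is one of the closures $St'_w$. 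Together with Proposition \ref{basis of top BM}, this gives a basis $\{[St'_w]\}_{w\in W}$.

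\emph{Step 2: the product.} The convolution $p_{13*}(p_{12}^*[St'_{w_1}]\cap p_{23}^*[St'_{w_2}])$ is a top-dimensional class, so it is a combination $\sum_w c_w[St'_w]$. The coefficients $c_w$ can be read off by restricting to the open set over $\mathfrak{g}^{rs}$: restriction to an open subset is compatible with convolution (base change for proper pushforward and for pullback), and it is injective on top homology since every component meets $\mathfrak{g}^{rs}$.

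Over $\mathfrak{g}^{rs}$ everything is a finite étale cover, so the intersection is transversal. On a fibre, the triple space is $W^3$-torsor data, and convolution becomes composition of correspondences on the finite set of Borels containing $t$. The relation $B_1\sim^{w_1}B_2\sim^{w_2}B_3$ forces $B_1\sim^{w_1w_2}B_3$, with exactly one $B_2$ for each pair $(B_1,B_3)$. To confirm the multiplication order against the definition, identify Borels containing $T$ with $W$ via $B_1=g^{-1}Bg$. Hence $c_w=\delta_{w,w_1w_2}$ with multiplicity $1$.

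\emph{Step 3: the corollary.} This gives $H^{BM}_{top}(St')\cong\C[W]$. Combined with Proposition \ref{Fourier isomorphism}, it yields $H^{BM}_{top}(St)\cong\C[W]$.

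The main obstacle is the dimension estimate in Step 1: showing that the non-regular-semisimple part of $St'$ has no top-dimensional components. We handle it via smallness of the Grothendieck–Springer map $\pi$, stratifying $\mathfrak{g}$ by Jordan type and using $\dim\mathcal{B}_x=\tfrac12(\dim Z_G(x)-\operatorname{rank}G)$.
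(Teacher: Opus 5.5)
Your route matches the paper's: restrict to $\mathfrak{g}^{rs}$, where $St'$ splits by relative position into pieces isomorphic to $G\times^T\mathfrak{t}^{rs}$, and compute the product fibrewise as composition of relative positions. Your smallness estimate supplies something the paper's one-line ``as the regular semisimple elements are dense'' omits. Density of $\mathfrak{g}^{rs}$ in $\mathfrak{g}$ does not by itself make $St'\times_{\mathfrak{g}}\mathfrak{g}^{rs}$ dense in $St'$.

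Note what the estimate gives, though. The bound $\dim(St'\setminus St'^{rs})<\dim G$ only shows that the $St'_w$ are exactly the \emph{top-dimensional} components. That suffices for the basis of $H^{BM}_{top}(St')$, but the statement says they are all the components. For that you need a lower bound. $St'$ is the preimage of the diagonal under $\tilde{\mathfrak{g}}\times\tilde{\mathfrak{g}}\to\mathfrak{g}\times\mathfrak{g}$, so it is cut out of a smooth variety of dimension $2\dim G$ by $\dim\mathfrak{g}$ equations. Hence every component has dimension at least $\dim G$, so it meets the open set $St'^{rs}$ and is some $St'_w$.

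You postpone the check ``to confirm the multiplication order against the definition'', but it needs to be done. With the paper's definition of $\sim^w$ it does not give $w_1w_2$. Take $t\in\mathfrak{t}^{rs}$ and $B_1=B$; then $B_2=w_1^{-1}Bw_1$. Applying the definition with $g=w_1$, the unique Borel containing $T$ in position $w_2$ from $B_2$ is $B_3=w_1^{-1}w_2^{-1}Bw_2w_1=(w_2w_1)^{-1}B(w_2w_1)$. So $B_1\sim^{w_1}B_2\sim^{w_2}B_3$ forces $B_1\sim^{w_2w_1}B_3$, which differs from $w_1w_2$ whenever $w_1,w_2$ do not commute. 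With the first factor on the $(1,2)$-coordinates, as in the appendix's convolution, this gives $[St'_{w_1}][St'_{w_2}]=[St'_{w_2w_1}]$.

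The paper's proof has the same slip: its $x_3$ is really $w_2w_1$. This is only a convention issue. The displayed formula holds verbatim if one instead uses $B_1=gBg^{-1}$, $B_2=gwBw^{-1}g^{-1}$. In any case $H^{BM}_{top}(St')\cong\C[W]$ survives, because $w\mapsto w^{-1}$ identifies $\C[W]$ with its opposite algebra. With these two repairs your argument is complete.
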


\begin{proof}
    It is clear that the irreducible components of the restriction of $St'$ to the regular semisimple locus are the restrictions of $St'_w$. As the regular semisimple elements are dense, the first part follows.

    The more interesting statement is $[St'_{w_1}][St'_{w_2}]=[St'_{w_1w_2}]$. Again it is enough to check this over the regular semisimple elements. Let $t$ be regular semisimple and let $B_1,B_2,B_3$ be three Borel subgroups whose Lie algebras contain $t$ such that $B_1\sim^{w_1} B_2$ and $B_2\sim^{w_2} B_3$. We claim that $B_1\sim^{w_1w_2}B_3$. 

    Let $T'$ be a maximal torus such that $t\in Lie(T')$. The element $t$ is regular semi-simple so we must have $T'\subset B_1\cap B_2\cap B_3$. We can choose $g\in G$ such that $g^{-1}Tg=T'$. Conjugation by $g$ gives a bijection between Borel subgroups containing $T$ and those containing $T'$. Consider $gB_1g^{-1}$, $gB_2g^{-1}$, $gB_3g^{-1}$, these are Borels containing $T$. Thus, there are $x_1,x_2,x_3\in N_G(T)$ such that $gB_1g^{-1}=x^{-1}_1Bx_1$, $gB_2g^{-1}=x^{-1}_2Bx_2$, and  $gB_3g^{-1}=x^{-1}_3Bx_3$. Changing $g$ to $x_1g$ we can assume that $x_1=1$. The relative position of any two Borel subgroups is preserved under conjugation, so we must have $x_2=w_1$ and $x_3=w_1w_2$ in the Weyl group $W$. The result follows. 
\end{proof}

It is useful to write down the irreducible component explicitly in the case of a simple reflection.

\begin{defn}\label{def:functionl alpha_s}
For any root $\alpha$ and $(t,B)\in \Tilde{\mathfrak{g}}$ we define $\alpha_B(t)$ by identifying the Lie algebra $B/[B,B]$ with the canonical Cartan of $G$ and using the map $Lie(B)\rightarrow Lie(B/[B,B])$.    
\end{defn}

\begin{lemma}\label{simple reflection steinberg}
    Let $s\in W$ be a simple reflection corresponding to a simple root $\alpha$, then: $$St'_{s}=\{(t,B_{1},B_{2})|t\in Lie(B_{1})\cap Lie(B_{2}),B_{1}\sim^{s} B_{2}\}\cup \{(t,B_1,B_1)| t\in Lie(B_1),\alpha_{B_1}(t)=0\}$$
\end{lemma}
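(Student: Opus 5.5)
We will show that the right-hand side, call it $Z_s=Z^1\cup Z^2$, is closed, irreducible of dimension $\dim G$, and agrees with $St'_s$ over the regular semisimple locus. Together these give $St'_s=Z_s$.

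\textbf{Step 1: the geometry of $Z_s$.} Let $P_s$ be the parabolic of type $s$ and $\mathcal{P}_s=G/P_s$ the corresponding partial flag variety. Consider
$$Y=\{(t,B_1,B_2)\mid B_1,B_2\subset P \text{ for the same } P\in\mathcal{P}_s,\ t\in Lie(B_1)\cap Lie(B_2)\}.$$
The pairs $(B_1,B_2)$ with $B_1,B_2$ in a common $P\in\mathcal{P}_s$ are exactly those with $B_1\sim^{s}B_2$ or $B_1=B_2$. These pairs form the smooth closed variety $\B\times_{\mathcal{P}_s}\B$, which is a $\p^1$-bundle over $\B$ of dimension $\dim\B+1$.

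Over a point $(B_1,B_2)$ with $B_1\neq B_2$, the fibre is $Lie(B_1)\cap Lie(B_2)$, of codimension $1$ in $Lie(B_1)$. Over the diagonal the fibre is all of $Lie(B_1)$, so $Y$ itself has an extra diagonal component $\{(t,B,B)\}$. This shows we should not take all of $Y$.

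Instead we use the description $Lie(B_1)\cap Lie(B_2)=\{t\in Lie(B_1)\mid t\in Lie(P)\text{ and } \alpha_{B_1}(t)=0 \text{ relative to } \ldots\}$. More concretely, choose $P\supset B_1$ and the Levi quotient $L=P/U_P$. The image of $t$ in $\mathfrak{l}$ lies in both Borels of $\mathfrak{l}$, so it lies in their intersection. If $B_1\neq B_2$, that intersection is a Cartan of $\mathfrak{l}$ on which $\alpha$ vanishes modulo the center; in fact $\alpha_{B_1}(t)=0$ automatically.

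So the cleaner description is
$$Z_s=\{(t,B_1,B_2)\in \B\times_{\mathcal{P}_s}\B\times\mathfrak{g}\mid t\in Lie(B_1)\cap Lie(B_2),\ \alpha_{B_1}(t)=0 \text{ whenever } B_1=B_2\}.$$

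\textbf{Step 2: $Z_s$ is closed.} We need to check that the closure of $Z^1$ meets the diagonal only in $Z^2$. Suppose $(t_n,B_1^n,B_2^n)\to(t,B,B)$ with $B_1^n\neq B_2^n$. Inside $P$, the element $t_n$ lies in two distinct Borels of the rank-one Levi. Its image in $\mathfrak{l}$ therefore lies in a Cartan on which $\alpha$ vanishes, giving $\alpha_{B_1^n}(t_n)=0$. The function $(t,B)\mapsto\alpha_B(t)$ is continuous on $\Tilde{\mathfrak{g}}$, so $\alpha_B(t)=0$ in the limit. Together with the evident closedness of $Y$, this shows $Z_s$ is closed.

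\textbf{Step 3: $Z_s$ is irreducible of dimension $\dim G$.} We identify $Z_s$ with a vector bundle over $\B\times_{\mathcal{P}_s}\B$. Define
$$E=\{(t,B_1,B_2)\mid t\in Lie(B_1),\ \bar t\in\mathfrak{l}_P \text{ lies in } \mathfrak{n}_{B_1}\text{-complement}\ldots\}.$$
A cleaner route is to let $\mathfrak{u}_P$ be the nilradical of $Lie(P)$ and set
$$Lie(B_1)\cap Lie(B_2)\ \supseteq\ \{t\in Lie(B_1)\mid \alpha_{B_1}(t)=0\}\cap \ldots$$
This is messy, so instead we argue as follows. For each $P$, the fibre of $Z_s\to\mathcal{P}_s$ is $\mathfrak{u}_P\oplus Z'$. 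Here $Z'\subset \mathfrak{l}\times\p^1\times\p^1$ is the analogous variety for the rank-one reductive group $L$, which reduces the problem to $L$ with $[\mathfrak{l},\mathfrak{l}]\cong\mathfrak{sl}_2$.

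For $\mathfrak{sl}_2$ (times the center $\mathfrak{z}$) there are two pieces. The set of triples $(x,\ell_1,\ell_2)$ with $\ell_1\neq\ell_2$ and $x$ preserving both lines is the set of pairs of distinct lines together with the Cartan fixing them; this has dimension $2+1$. The diagonal part $\{(x,\ell,\ell)\mid x\ell\subset\ell,\ \alpha(x)=0\}$ is $\ell$ with $x$ nilpotent preserving $\ell$, of dimension $1+1$. It lies in the closure of the first piece: for example, take the degenerating pair of Cartans, where $\mathrm{diag}(a,-a)$ conjugated appropriately tends to $\begin{pmatrix}0&1\\0&0\end{pmatrix}$. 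The rank-one variety $Z'$ is the closure of the first piece, hence irreducible of dimension $3+\dim\mathfrak{z}=\dim\mathfrak{l}$.

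It follows that $Z_s$ is irreducible of dimension $\dim\mathcal{P}_s+\dim\mathfrak{u}_P+\dim\mathfrak{l}=\dim G$.

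\textbf{Step 4: conclusion.} Over $\mathfrak{g}^{rs}$, a regular semisimple $t$ lies in no Borel with $\alpha_B(t)=0$. Hence $Z_s$ restricted to $\mathfrak{g}^{rs}$ equals the set defining $St'_s$. Since $Z^1\cap\pi^{-1}(\mathfrak{g}^{rs})$ is open and dense in the irreducible closed set $Z_s$, we get $St'_s=Z_s$.

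The main obstacle is the irreducibility in Step 3, namely showing that the diagonal part is contained in the closure of the off-diagonal part. Reducing to the rank-one Levi via the $P$-fibration is the step I expect to need the most care.
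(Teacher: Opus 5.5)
\textbf{There is a genuine gap, in Step 2.} Your overall strategy is sound and differs from the paper's route. You show that the right-hand side $Z_s=Z^1\cup Z^2$ is closed, irreducible of dimension $\dim G$, and agrees with $St'_s$ over $\mathfrak{g}^{rs}$. The paper instead computes the closure directly: fibres over the diagonal are $G$-equivariantly constant and contain $\ker\alpha_{B_1}$, and the full fibre is excluded since $St'_1\not\subseteq St'_s$.

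You flagged Step 3 as the main obstacle, but that part is essentially fine. The problem is Step 2, already announced in Step 1, which carries the real content of the lemma and rests on a false claim. If $B_1\neq B_2$ lie in a common $P$, the image of $Lie(B_1)\cap Lie(B_2)$ in $\mathfrak{l}$ is a Cartan subalgebra of $\mathfrak{l}$, and $\alpha$ does \emph{not} vanish on it. For $\mathfrak{sl}_2$ with $B_1,B_2$ the upper and lower triangular Borels, $Lie(B_1)\cap Lie(B_2)$ is the diagonal and $\alpha_{B_1}(\mathrm{diag}(a,-a))=2a$. If $\alpha_{B_1}$ did vanish on $Lie(B_1)\cap Lie(B_2)$, then $Z^1$ would contain no regular semisimple elements at all, contradicting your own Step 4 and the definition of $St'_s$. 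So the assertion $\alpha_{B_1^n}(t_n)=0$ is wrong, and Step 2 does not prove that $\overline{Z^1}$ meets the diagonal only inside $Z^2$. Step 3 inherits the gap, because ``$Z'$ is the closure of the first piece'' needs the rank-one case of that same statement.

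\textbf{The missing idea is a sign relation, not a vanishing.} For $B_1\sim^s B_2$ and $t\in Lie(B_1)\cap Lie(B_2)$ one has $\alpha_{B_2}(t)=-\alpha_{B_1}(t)$. To see this, pick a maximal torus $T'\subset B_1\cap B_2$ with Lie algebra $\mathfrak{t}'$, and let $\beta$ be the $B_1$-simple root of $T'$ labelled by $\alpha$. Then
\[
Lie(B_1)\cap Lie(B_2)=\mathfrak{t}'\oplus\bigoplus_{\gamma\in\Phi^+(B_1)\setminus\{\beta\}}\mathfrak{g}_\gamma .
\]
Both $\alpha_{B_i}$ kill these root spaces, since they lie in both nilradicals. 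Since $\Phi^+(B_2)=s_\beta\Phi^+(B_1)$, the $B_2$-simple root labelled $\alpha$ is $s_\beta\beta=-\beta$.

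Consequently the regular function $f(t,B_1,B_2)=\alpha_{B_1}(t)+\alpha_{B_2}(t)$ on your closed variety $Y$ vanishes identically off the diagonal and equals $2\alpha_B(t)$ on it. Hence $Z_s=Y\cap\{f=0\}$ is closed. Equivalently, in your sequence formulation, $\alpha_B(t)=\lim\alpha_{B_1^n}(t_n)=-\lim\alpha_{B_2^n}(t_n)=-\alpha_B(t)$, so $\alpha_B(t)=0$.

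With this replacement, Step 2 holds and the rank-one closedness needed in Step 3 follows. Your explicit degeneration for $Z^2\subseteq\overline{Z^1}$ then covers all of $Z^2$ using $L$-equivariance, scaling and closedness. Together with the dimension count and Step 4, this completes the proof.
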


\begin{proof}
    We need to compute the closure:

    $$St'_s=\overline{\{(t,B_1,B_2)|t\in \mathfrak{g}^{rs}, t\in Lie(B_1)\cap Lie(B_2), B_1\sim^sB_2\}}$$

    Denote $X_1=\{(t,B_{1},B_{2})|t\in Lie(B_{1})\cap Lie(B_{2}),B_{1}\sim^{s} B_{2}\}$ and $X_2=\{(t,B_1,B_1)| t\in Lie(B_1),\alpha_{B_1}(t)=0\}$. 
    
    It is clear that $X_1\subset St'_s$. We show $X_2\subset St'_s$, notice that for $t,B_1$ with $t\in Lie(B_1)$ and $\alpha_{B_1}(t)=0$ the conjugation of $t$ by $s$ preserves it, and so $t$ lies in the Lie algebra of any Borel subgroup $B_2$ with $B_1\sim^sB_2$.

    Now we show the opposite direction, let $(t,B_1,B_2)\in St'_s$. The closure of the $G$ orbit on $\B\times \B$ of pairs of Borels in relative position $s$ is equal to the union of two $G$ orbits, the diagonal and the orbit of Borels in relative position $s$. Thus, we either have $B_1\sim^s B_2$ or $B_1=B_2$. In the former case we get that $(t,B_1,B_2)\in X_1$. 

    Consider:
    $$ Y=\{(t,B_1,B_2)| t\in Lie(B_1)\cap Lie(B_2), B_1\sim^sB_2\} \subseteq St'_{s}$$

    $Y$ is a dense subset of $St'_s$, and over any fixed $(B_{1},B_{2})$, it is a subvector space of $\mathfrak{g}$. Therefore, the fibers of its closure $St'_{s}$ are also vector spaces over any $(B_{1},B_{2})$. Furthermore, using the action of $G$, we see that these subspaces have constant dimension over the orbit of $\B\times\B$. Finally, $X_{2}\subseteq St'_{s}$ and $X_{2}$ is a codimension one vector space in $Lie(B_1)$ over any $(B_{1},B_{1})\in\B\times\B$ in the diagonal. Therefore, $St'_{s}\cap St'_{1}=X_{2}$ or all of $St'_{1}$, and the second case is clearly not possible. 
    
\end{proof}

\subsection{The variety $\mathfrak{h} \times_{\mathfrak{g}}\Tilde{\mathfrak{g}}$}

In this subsection, we describe some of the geometry of the variety $\Lambda'=\mathfrak{h} \times_{\mathfrak{g}}\Tilde{\mathfrak{g}}$. We also describe several sheaves on $\B$ closely related to $\Lambda'$.

We work with vector bundles over generalized flag varieties and subsheaves of them, all the subsheaves that appear are $H$ equivariant, and in particular are vector bundles when restricted to an $H$ orbit. We use the following notations:

\begin{itemize}
    \item $\mathfrak{g}$ is considered as the constant bundle with fiber $\mathfrak{g}$.
    \item $\mathfrak{b}$ is the bundle whose fiber at a point $B\in \B$ is the Lie algebra $\mathfrak{b}=Lie(B)$ of $B$. 
    \item $\mathfrak{h}$ is the constant bundle $\mathfrak{h}=T_{e}H$.
    
    \item Let $B_0\subseteq P_0$ be a Borel and parabolic, we have the bundle $\mathfrak{p}$ on $\mathcal{P}=P_0\backslash G$ that gives to a parabolic $P\in \mathcal{P}$ the Lie algebra of $P$. Pulling this back using the map $B_0\backslash G\to P_0\backslash G$ we get a bundle on $\B$ which we also denote by $\mathfrak{p}$. Notice that this bundle only depends on the conjugacy class of the pair $(B_0,P_{0})$. For example, for a simple root $\alpha$ we have the bundle $\mathfrak{p}_\alpha(\mathfrak{b})$ attached to $B\subset P_\alpha$ the parabolic corresponding to $\alpha$.
    
    \item For a sheaf $\mathfrak{f}$ as above and an orbit $\s$ we denote by $\mathfrak{f}|_\s$ the restriction of $\mathfrak{f}$ to $\s$.

    \item For a sheaf $\mathfrak{f}$ as above and a point $B'\in \B$ we denote by $\mathfrak{f}_{B'}$ the fiber of $\mathfrak{f}$ at $B'$.
\end{itemize}

Notice that all these bundles are contained in $\mathfrak{g}$. In particular, we may form sums and intersections of these bundles inside the constant bundle $\mathfrak{g}$. These operations may produce sheaves that are not bundles. 

\begin{prop}\label{dimnesion calculation}
    Let $B\subset P$ be a parabolic subgroup of $G$.
     Let $O$ be the $H$ orbit of $P$ inside $\mathcal{P} =P\backslash G$.

    On $B\backslash PH\subseteq\B$ the sheaves $\mathfrak{h}\cap \mathfrak{p}$ and $\mathfrak{h} + \mathfrak{p}$ are bundles of rank $\dim(H) - \dim(O)$ and $\dim(O)+\dim(P)$ respectively.
    
\end{prop}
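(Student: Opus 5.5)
Since $B\backslash PH$ is the preimage of the orbit $O$ under $\B\to\mathcal{P}$, and the pulled-back bundle $\mathfrak{p}$ depends only on the image in $\mathcal{P}$, the plan is to work on $O=P\backslash PH\subseteq\mathcal{P}$ and pull back. Everything here is $H$-equivariant, where $H$ acts on $\B$ and $\mathcal{P}$ by right multiplication, so points $P'=h^{-1}Ph$ with $h\in H$. It therefore suffices to prove two things. First, the fiber dimensions of $\mathfrak{h}\cap\mathfrak{p}$ and $\mathfrak{h}+\mathfrak{p}$ are constant along $O$. Second, these subspaces of the trivial bundle $\mathfrak{g}$ glue to locally free subsheaves.

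The second point is formal. Take $\mathfrak{h}\cap\mathfrak{p}|_O$, a subsheaf of the trivial bundle $\mathfrak{g}$ on the homogeneous space $O\cong (H\cap P)\backslash H$. Its fibers are $\mathrm{Ad}(h^{-1})$-translates of a single subspace. So it is the homogeneous vector bundle $H\times^{H\cap P}(\mathfrak{h}\cap\mathfrak{p})$, where the fiber $\mathfrak{h}\cap\mathfrak{p}$ is taken at the base point $P$, and this sits as a subbundle of $O\times\mathfrak{g}$. The same argument applies to $\mathfrak{h}+\mathfrak{p}$. Constancy of rank along the orbit is automatic, since $\mathrm{Ad}(h^{-1})$ preserves $\mathfrak{h}$ and carries $\mathfrak{p}$ to $\mathrm{Lie}(h^{-1}Ph)$. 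One subtlety is that in the paper's convention the restriction of a subsheaf of $\mathfrak{g}$ means the fiberwise intersection or sum, not the sheaf-theoretic restriction. I would state explicitly that we take the reduced structure on $O$ and define these sheaves fiberwise via the equivariant description, so that no closure or saturation issue arises.

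It then remains to compute the fiber dimensions at the base point $P$.

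\emph{Intersection.} $\mathfrak{h}\cap\mathfrak{p}=\mathrm{Lie}(H\cap P)$, at least for the reduced scheme-theoretic stabilizer, which holds in characteristic zero. The stabilizer of the point $P\in\mathcal{P}$ under the $H$-action is $H\cap P$, because $P$ is self-normalizing. Hence $\dim O=\dim H-\dim(H\cap P)$, and
\[
\dim(\mathfrak{h}\cap\mathfrak{p})=\dim H-\dim O .
\]

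\emph{Sum.} By the formula
\[
\dim(\mathfrak{h}+\mathfrak{p})=\dim\mathfrak{h}+\dim\mathfrak{p}-\dim(\mathfrak{h}\cap\mathfrak{p}),
\]
we get
\[
\dim(\mathfrak{h}+\mathfrak{p})=\dim H+\dim P-(\dim H-\dim O)=\dim O+\dim P .
\]

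The only real care needed is in the identification $\mathrm{Lie}(H\cap P)=\mathfrak{h}\cap\mathfrak{p}$. This follows because scheme-theoretic intersections of group schemes are smooth in characteristic $0$, by Cartier's theorem. It also needs the orbit-stabilizer dimension formula for the $H$-orbit of $P$ in $\mathcal{P}$. These are the steps I would state explicitly. The rest is bookkeeping about which quotient convention, $P\backslash G$ with right $H$-action, is in use.
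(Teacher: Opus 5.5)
Your proof is correct and essentially the paper's argument. In both, the sheaves are pulled back from the $H$-orbit $O\subseteq\mathcal{P}$ and are $H$-equivariant, hence bundles, and the two ranks are linked by $\dim(\mathfrak{h}+\mathfrak{p})=\dim\mathfrak{h}+\dim\mathfrak{p}-\dim(\mathfrak{h}\cap\mathfrak{p})$. The only difference is which rank is computed directly: the paper identifies $\mathfrak{h}+\mathfrak{p}$ with the tangent space at the identity of $PH$, of dimension $\dim(O)+\dim(P)$, while you identify $\mathfrak{h}\cap\mathfrak{p}$ with $\mathrm{Lie}(H\cap P)$ and use orbit--stabilizer, which is the same characteristic-zero smoothness fact seen from the stabilizer side.
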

\begin{proof}
    First, notice that both sheaves are pulled back from $\mathcal{P}$, and are $H$-equivariant and so they are bundles on $B\backslash PH\subseteq\B$.  It remains to calculate the dimensions. We have the exact sequence:

    \[\begin{tikzcd}
	   0 & \mathfrak{h}\cap \mathfrak{p} & \mathfrak{h} \oplus \mathfrak{p} & \mathfrak{h} + \mathfrak{p} & 0
	   \arrow[from=1-1, to=1-2]
	   \arrow[from=1-2, to=1-3]
	   \arrow[from=1-3, to=1-4]
	   \arrow[from=1-4, to=1-5]
    \end{tikzcd}\]
    It is enough to calculate the dimension of $\mathfrak{h} + \mathfrak{p}$. The space $\mathfrak{h} + \mathfrak{p}$ is the tangent space at the identity of $PH$, and therefore has dimension $dim(O)+\dim(P)$ as claimed.
\end{proof}

\begin{Cor}\label{cor: irreducible components}
     For any $\s$ an $H$ orbit on $\B$, let 

        $$ \mathring{X}_{\mathcal{O}}=\{(h,B)|h\in \mathfrak{h}, B\in \s, h\in Lie(B)\}$$
     
    $$ X_{\mathcal{O}}=\overline{\mathring{X}_\s}$$

    The space $\mathfrak{h}\times_{\mathfrak{g}}\Tilde{\mathfrak{g}}$ is equidimensional, its irreducible components are $X_\s$ and their dimension is $dim(H)$.

\end{Cor}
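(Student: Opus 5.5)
We prove the statement by stratifying $\Lambda'=\mathfrak{h}\times_{\mathfrak{g}}\Tilde{\mathfrak{g}}$ along the projection $p:\Lambda'\to\B$, $(h,B)\mapsto B$. Since $X=G/H$ is spherical, $H$ has finitely many orbits on $\B$; these correspond to the $B$ orbits on $G/H$. The preimages $p^{-1}(\s)=\mathring{X}_\s$ therefore form a finite decomposition of $\Lambda'$ into locally closed pieces. It suffices to show that each $\mathring{X}_\s$ is irreducible of dimension exactly $\dim(H)$. Then $\Lambda'$ is the finite union of the closures $X_\s$, each irreducible of dimension $\dim(H)$. None of these closures is contained in another: $X_\s\subseteq X_{\s'}$ would force equality, since both are irreducible of the same dimension, and hence $\mathring{X}_\s$ would be dense in $X_{\s'}$. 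But the $\mathring{X}$ are disjoint and each is open in its closure, so this is impossible unless $\s=\s'$. Thus the $X_\s$ are exactly the irreducible components, and $\Lambda'$ is equidimensional.

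For the dimension count, fix an orbit $\s$ and a base point $B'\in\s$. The fiber of $\mathring{X}_\s\to\s$ over $B'$ is the vector space $\mathfrak{h}\cap Lie(B')$. By $H$-equivariance, $\mathfrak{h}\cap\mathfrak{b}$ is a vector bundle on $\s$. Proposition \ref{dimnesion calculation} with $P=B$ (after conjugating so the chosen Borel is $B'$, so that $B\backslash BH$ is the orbit $\s$) gives its rank as $\dim(H)-\dim(\s)$. A vector bundle over the irreducible homogeneous space $\s$ is irreducible, and its total dimension is $\dim(\s)+\dim(H)-\dim(\s)=\dim(H)$. Equivalently, $\mathring{X}_\s\cong H\times^{H_{B'}}(\mathfrak{h}\cap Lie(B'))$ with $H_{B'}=H\cap B'$, whose dimension is $\dim H-\dim(H\cap B')+\dim(\mathfrak{h}\cap\mathfrak{b}')=\dim H$, since $Lie(H\cap B')=\mathfrak{h}\cap\mathfrak{b}'$.

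The main point needing care is irreducibility when $H$ is disconnected, because $\s$ might then fail to be connected. I would handle this by noting that the identity component $H^\circ$ acts with finitely many orbits as well. The argument can then be run with $H^\circ$ orbits, which are irreducible. Alternatively, one can use that $H\times^{H_{B'}}V$ has components permuted by $H/H^\circ$, and take $\s$ to mean an orbit of $H^\circ$ (or of the correct group) in the indexing. The remaining ingredient, that the dimension of the tangent space gives the orbit dimension used in Proposition \ref{dimnesion calculation}, is already established in that proposition.
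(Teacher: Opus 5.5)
Your proof is correct and follows the paper's route: you stratify $\mathfrak{h}\times_{\mathfrak{g}}\Tilde{\mathfrak{g}}$ by the $H$-orbits $\s$ on $\B$, use Proposition \ref{dimnesion calculation} to see that $\mathring{X}_\s$ is a vector bundle of rank $\dim(H)-\dim(\s)$ over $\s$ (hence irreducible of dimension $\dim(H)$), and conclude that the closures are the irreducible components, making explicit the non-containment step the paper leaves implicit. Your caveat about disconnected $H$ is a genuine point the paper passes over silently: irreducibility needs $\s$ irreducible, which holds for connected $H$ but can fail otherwise (e.g.\ for $H=N_G(T)$ in $SL_2$ the closed orbit $\{B^+,B^-\}$ gives a reducible $X_\s$), so the statement as written tacitly assumes $H$ connected.
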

\begin{proof}
    Since $\mathfrak{h}\cap\mathfrak{b}$ is a bundle when restricted to an $H$ orbit $\mathcal{O}$, it follows from Proposition \ref{dimnesion calculation} that the space $\mathring{X}_\s$ is irreducible and of dimension $\mathrm{dim}(H)$. These sets are a disjoint cover of $\mathfrak{h}\times_{\mathfrak{g}}\Tilde{\mathfrak{g}}$, and are all of the same dimension. Thus, their closures are exactly the irreducible components.
\end{proof}
\begin{Cor}\label{cor: non dense}
    Let $\alpha$ be a simple root, let $\s$ be a $B$ orbit on $G/H$, we can also consider it as a $H$ orbit on $\B$. If $\mathcal{O}$ is not dense in $P_{\alpha}\mathcal{O}$, then $\mathfrak{p}_{\alpha}(\mathfrak{b})\cap \mathfrak{h} =\mathfrak{b} \cap \mathfrak{h}$ on $\s$.
\end{Cor}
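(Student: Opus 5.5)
Throughout I regard $\s$ as an $H$ orbit on $\B=B\backslash G$. Let $P=P_\alpha$, let $\mathcal{P}=P\backslash G$, and let $\rho:\B\to\mathcal{P}$ be the projection. The statement concerns the fibers at points $B'\in\s$. Since $\mathfrak{b}\subseteq\mathfrak{p}_\alpha(\mathfrak{b})$ holds pointwise, we have $\mathfrak{b}\cap\mathfrak{h}\subseteq\mathfrak{p}_\alpha(\mathfrak{b})\cap\mathfrak{h}$, and both sides are $H$-equivariant bundles on $\s$. So I would prove equality by showing the two bundles have the same rank. This can be checked at one point of $\s$, so by translating with $G$ we may assume $B'=B$ and that $\s$ is the $H$ orbit of $B$.

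The ranks come from Proposition \ref{dimnesion calculation}. Taking the parabolic to be $B$ itself, $\mathfrak{h}\cap\mathfrak{b}$ has rank $\dim H-\dim\s$ on $\s$. Taking the parabolic to be $P$, $\mathfrak{h}\cap\mathfrak{p}_\alpha$ has rank $\dim H-\dim O$ on $B\backslash PH$, where $O=\rho(\s)$ is the $H$ orbit of $P$ in $\mathcal{P}$. So it suffices to show $\dim O=\dim\s$, i.e. that $\rho|_\s:\s\to O$ has finite fibers.

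The fiber of $\rho$ over $P$ is $B\backslash P\cong\p^1$, and $\rho|_\s$ is $H$-equivariant and surjective onto $O$. Hence $\dim\s=\dim O+\dim(\s\cap B\backslash P)$, and we need $\s\cap B\backslash P$ to be finite. Now $\rho^{-1}(O)=B\backslash PH$ is the $H$ orbit set corresponding to $P_\alpha\s$. Its dimension is $\dim O+1$, since it fibers over $O$ with fiber all of $\p^1$. If $\s\cap(B\backslash P)$ were infinite, it would be a constructible subset of $\p^1$ of dimension $1$. Then $\dim\s=\dim O+1=\dim(B\backslash PH)$, so $\s$ would be an orbit of full dimension in the irreducible variety $B\backslash PH$, hence dense in $P_\alpha\s$. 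This contradicts the hypothesis. Therefore the fiber is finite, $\dim\s=\dim O$, the ranks agree, and the inclusion is an equality on $\s$.

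The main obstacle is the translation between $B$ orbits on $G/H$ and $H$ orbits on $\B$. I would need to check that the orbit of $\B$ corresponding to $P_\alpha\s$ is exactly $B\backslash PH$ under the bijection, so that density and irreducibility transfer correctly. This is routine via the double coset description $B\backslash G/H$: $P_\alpha\s$ corresponds to $P\,g\,H$, which is irreducible because it is the image of $P\times H$.
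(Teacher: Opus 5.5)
Your proof is correct and is essentially the paper's argument: the inclusion $\mathfrak{b}\cap\mathfrak{h}\subseteq\mathfrak{p}_\alpha(\mathfrak{b})\cap\mathfrak{h}$ plus the rank count from Proposition \ref{dimnesion calculation}, with non-density forcing $\dim O=\dim\mathcal{O}$ (you spell out the fiber-dimension step that the paper compresses into one line). One minor fix: $P_\alpha gH\subseteq G$ need not be irreducible when $H$ is disconnected, but $P_\alpha\mathcal{O}=P_\alpha\cdot gH\subseteq G/H$ is irreducible, being the image of the connected group $P_\alpha$; since dimensions of $B\times H$-stable sets on $G/H$ and on $\B$ differ by the constant $\dim B-\dim H$, your full-dimension argument yields density in $G/H$.
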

\begin{proof}
    The inclusion of $\mathfrak{b} \cap \mathfrak{h}$ in $\mathfrak{p}_{\alpha}(\mathfrak{b})\cap \mathfrak{h}$ is obvious. On the other hand, by Proposition \ref{dimnesion calculation} we have:

    $$ \rm{dim}(\mathfrak{p}_{\alpha}(\mathfrak{b})\cap \mathfrak{h})=\rm{dim}(H)-\rm{dim}(P_s\backslash P_{s}\mathcal{O})$$

    $$ \rm{dim}(\mathfrak{b}\cap \mathfrak{h})=\rm{dim}(H)-\rm{dim}(\mathcal{O})$$

    Since $\mathcal{O}$ is not dense in $P_{s}\mathcal{O}$ and $\mathrm{dim}(P_{s}/B)=1$ we obtain $\rm{dim}(P_s\backslash P_{s}\mathcal{O})=\rm{dim}(\mathcal{O})$. 

    We conclude that the bundles have the same dimension and therefore are equal.

\end{proof}

Let $p:\mathfrak{h}\times_{\mathfrak{g}}\Tilde{\mathfrak{g}}\rightarrow\B$ be the projection on the second coordinate.

\begin{prop}\label{orbit limit}
    Let $\s,\s'$ be two different $H$ orbits on $\B$ such that $\mathcal{O}\subseteq \overline{\mathcal{O}'}$. Then, $X_{\mathcal{O'}}\cap p^{-1}(\mathcal{O})$ is a bundle over $\mathcal{O}$, it is a subbundle of $\mathfrak{b}\cap\mathfrak{h}$, and $X_{\mathcal{O'}}\cap p^{-1}(\mathcal{O})$ is of codimension at least one in $\mathfrak{b}\cap\mathfrak{h}$. We denote it by:
    $$(\mathfrak{b}\cap\mathfrak{h})^{\s'}_\s$$
    
\end{prop}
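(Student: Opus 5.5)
We use $H$-equivariance, a fibrewise-linear-closure argument and a dimension count.

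\emph{Inclusion in $\mathfrak{b}\cap\mathfrak{h}$.} First, $X_{\s'}\subseteq \mathfrak{h}\times_{\mathfrak{g}}\Tilde{\mathfrak{g}}=\{(h,B)\mid h\in\mathfrak{h}\cap Lie(B)\}$, and this set is closed. Hence the fibre of $X_{\s'}\cap p^{-1}(\s)$ over $B\in\s$ lies in $(\mathfrak{b}\cap\mathfrak{h})_B$. So $X_{\s'}\cap p^{-1}(\s)$ is a closed subset of the total space of the bundle $\mathfrak{b}\cap\mathfrak{h}|_{\s}$; that this is a bundle is Proposition \ref{dimnesion calculation} with $P=B$.

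\emph{Bundle structure.} We show the fibres are linear subspaces, arguing as in the proof of Lemma \ref{simple reflection steinberg}. The dense open part $\mathring{X}_{\s'}$ of $X_{\s'}$ is the total space of the vector bundle $\mathfrak{b}\cap\mathfrak{h}|_{\s'}$. It is therefore stable under fibrewise scalar multiplication and fibrewise addition, the latter meaning the map $(v_1,v_2)\mapsto v_1+v_2$ on $\mathring{X}_{\s'}\times_{\s'}\mathring{X}_{\s'}$.

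Stability under scalars passes to the closure immediately. For addition there is a cleaner route: view $\mathring{X}_{\s'}$ as a subvariety of $\mathfrak{h}\times\B$ and use the Grassmannian. Let $d=\dim(\mathfrak{b}\cap\mathfrak{h})|_{\s'}$. The section $B\mapsto(\mathfrak{b}\cap\mathfrak{h})_B\in Gr(d,\mathfrak{h})$ on $\s'$ has a graph whose closure $\Gamma\subseteq \overline{\s'}\times Gr(d,\mathfrak{h})$ is proper over $\overline{\s'}$. Then $X_{\s'}$ is the image of the tautological bundle over $\Gamma$: that image is closed by properness, and it is irreducible and contains $\mathring{X}_{\s'}$ densely. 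Consequently the fibre of $X_{\s'}$ over $B\in\s$ is the union of the limit subspaces $L\in\Gamma_B$.

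By $H$-equivariance every fibre over $\s$ is a translate of a single one, so it suffices to know that one fibre is a linear subspace. I expect this to be the main obstacle: a union of limit subspaces need not be linear. To handle it I would try to show $\Gamma_B$ is a single point, by showing $\Gamma\to\overline{\s'}$ is injective over $\s$. If that fails, I would replace "bundle" by the union of $H$-translates, which is still $H$-equivariant and locally trivial over $\s$ as a fibration by cones. If necessary I would note that the later use of $(\mathfrak{b}\cap\mathfrak{h})^{\s'}_{\s}$ requires only its dimension.

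\emph{Codimension.} By Corollary \ref{cor: irreducible components}, $X_{\s'}$ is irreducible of dimension $\dim H$. Also $X_{\s'}\cap p^{-1}(\s)$ is a proper closed subset of $X_{\s'}$, since it misses $\mathring{X}_{\s'}$ because $\s\ne\s'$. Hence $\dim(X_{\s'}\cap p^{-1}(\s))\le\dim H-1$. On the other hand, $\mathfrak{b}\cap\mathfrak{h}|_{\s}=\mathring{X}_{\s}$ has dimension exactly $\dim H$ by Corollary \ref{cor: irreducible components}. Both sets are fibred equivariantly over the same base $\s$ with equidimensional fibres, so the fibre dimension of $X_{\s'}\cap p^{-1}(\s)$ is at least one less than $\operatorname{rank}(\mathfrak{b}\cap\mathfrak{h}|_{\s})$. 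This is the claimed codimension.
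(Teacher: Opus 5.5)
Your containment in $\mathfrak b\cap\mathfrak h$, your description of the fibre over $B\in\s$ as the union of the limit subspaces $L\in\Gamma_B$, and your codimension count are all fine. The codimension count is the paper's argument in other words: the paper notes that full rank would force $X_{\s}\subseteq X_{\s'}$, which is impossible for two distinct components of dimension $\dim H$. Neither version uses linearity. The gap is exactly where you suspect it: nothing in your proposal shows that the fibre is a linear subspace. The paper's proof does not supply this either; it only says ``it is clear''.

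The gap cannot be closed, because the linearity claim is false in general, and your plan of showing $\Gamma_B$ is a point fails. Here is a counterexample.
Take $G=GL_4$ and $H=N$ the upper unitriangular group, so $G/N$ is smooth and spherical. Let $B_0$ be the upper triangular Borel, with flag $V_1\subset V_2\subset V_3$, and set $\s=\{B_0\}$. Let $\s'$ be the $5$-dimensional Bruhat cell of flags $V'_\bullet$ in position $(1\,4)$ relative to $V_\bullet$, i.e.\ generic except that $\dim(V_2\cap V_2')=1$.

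For $V'_\bullet\in\s'$ one checks that $\mathfrak h\cap\mathfrak b'=\mathfrak n\cap\mathfrak n'=\operatorname{Hom}\bigl(\C^4/(V_2+V_2'),\,V_2\cap V_2'\bigr)$, a line. Its elements have rank $\le 1$, image in $V_2$, and kernel containing $V_2$. These are closed conditions, so the fibre of $X_{\s'}$ over $B_0$ lies in the cone displayed below.

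Conversely, fix $\ell=\langle e_2+ae_1\rangle$ and $k=ce_3+e_4$. The curve
$V_1'=\langle e_1+tk\rangle$, $V_2'=V_1'+\ell$, $V_3'=V_2'+\langle e_3+te_4\rangle$
lies in $\s'$ for small $t\neq 0$ and tends to $V_\bullet$. Along it, $V_2\cap V_2'=\ell$ and $V_2+V_2'=V_2+\langle k\rangle$ stay fixed. Letting $a,c$ vary and taking the closure, the fibre of $X_{\s'}$ over $B_0$ is exactly
\[
\{x\in\operatorname{Hom}(\C^4/V_2,V_2)\;:\;\operatorname{rk}x\le 1\}.
\]
This is a quadric cone in a $4$-dimensional space. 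It contains $E_{13}$ and $E_{24}$ but not $E_{13}+E_{24}$. Here $\Gamma_{B_0}\cong\p^1\times\p^1$, not a point.

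So the statement that is actually true, and that your argument does prove, is your fallback. By $H$-equivariance, $X_{\s'}\cap p^{-1}(\s)\cong H\times^{H\cap B}\Phi_B$, where $\Phi_B\subseteq(\mathfrak b\cap\mathfrak h)_B$ is a closed $(H\cap B)$-stable cone with $\dim\Phi_B\le\dim(\mathfrak b\cap\mathfrak h)_B-1$.

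Do not, however, dismiss the difference by saying that later uses need only the dimension. For instance, the proof of Lemma~\ref{lemma:diagram} treats these fibres as subspaces cut out by the equation $\mathfrak g_s^*$. Any downstream step therefore has to be rechecked with cones in place of subbundles, not assumed to go through.
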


\begin{proof}
    It is clear that over $\s$, $X_{\mathcal{O'}}\cap p^{-1}(\mathcal{O})$ is a sub-bundle of $\mathfrak{b}\cap \mathfrak{h}$. If the rank of $X_{\mathcal{O'}}\cap p^{-1}(\mathcal{O})$ is $\dim(H)-\dim(\mathcal{O})$ then we have $X_{\mathcal{O}}\subseteq X_{\mathcal{O}'}$ which is impossible, so the rank is at most $\dim(H)-\dim(\mathcal{O})-1$.
\end{proof}

\begin{Remark}
    The rank of $X_{\mathcal{O'}}\cap p^{-1}(\mathcal{O})$ is at least the rank of $\mathfrak{b}\cap \mathfrak{h}$ over $\mathcal{O}'$, which is $\dim(H)-\dim(\mathcal{O}')$. In particular if $\dim(\mathcal{O}')=\dim(\mathcal{O})+1$ then the codimension is exactly one.
\end{Remark}
\begin{Remark}
For any simple root $\alpha$ we also have a similar construction to the one in Proposition \ref{orbit limit} for $H$ orbits on $\mathcal{P}_\alpha=P_\alpha\backslash G$. 
    
\end{Remark}

\subsection{Actions of Simple Reflections}

In this Subsection we calculate the action of a simple reflection in $W$ on a basis element of $H^{BM}_{top}(\mathfrak{h}\times_{\mathfrak{g}}\Tilde{\mathfrak{g}})$.

First, we prove several general results. After that, we do a case by case examination according to the type of an $H$ orbit on $\B$ with respect to the simple reflection. 

For the rest of this section we denote by $\s,\mathcal{T}$ two $B$ orbits on $G/H$. We sometimes identify them with $H$ orbits on $\B$. 

Let $P_s$ be the parabolic corresponding to $s$ that contains $B$, we denote by $P_s\s$ the union of $B$ orbits on $G/H$ obtained by acting with $P_s$ on $\s$, and identify them with a union of $H$ orbits on $B\backslash G$.

We write $[\mathcal{O}]$ for the class in the top Borel Moore homology defined by $X_{\mathcal{O}}$.

Consider $\mathfrak{h}\times_{\mathfrak{g}}\Tilde{\mathfrak{g}}\times_{\mathfrak{g}}\Tilde{\mathfrak{g}}$ with the three natural projections $\pi_0,\pi_1,\pi_2$ from it. The projection $\pi_0$ goes to $St'$ and the projections $\pi_1,\pi_2$ to $\Lambda'$.

For $w\in W$ and $\s$ an orbit we denote by $w\cdot [\s]$ the result of $St'_w*X_\s=\pi_{1*}(\pi_0^*St'_w\cap \pi_2^*X_\s)$ in $H^{BM}_{top}(\mathfrak{h}\times_{\mathfrak{g}}\Tilde{\mathfrak{g}})$.

Let $\alpha$ be a simple root and let $s=s_\alpha\in W$ be the simple reflection corresponding to $\alpha$.

We denote the parabolic $P_\alpha$ also by $P_s=P_{s_\alpha}$. We also denote $\mathfrak{p_\alpha}$ by $\mathfrak{p_s}$

In the following, we denote by $\mathfrak{b}_i$ the Lie algebra of a Borel subgroup $B_i$.

\subsubsection{Support of $s \cdot [\mathcal{O}]$}

In this subsubsection we bound the support of $s\cdot [\mathcal{O}]$.

\begin{prop}\label{prop: contained in closure}
    If $[\mathcal{O}']$ appears with a non-zero coefficient in $s \cdot [\mathcal{O}]$ then $\mathcal{O}'\subseteq \overline{P_{s} \mathcal{O}}$ 
\end{prop}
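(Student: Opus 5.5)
We argue by a support computation for the convolution. The class $s\cdot[\s]=\pi_{1*}(\pi_0^*[St'_s]\cap \pi_2^*[X_\s])$ is supported on the set-theoretic image
$$\pi_1\bigl(\pi_0^{-1}(St'_s)\cap \pi_2^{-1}(X_\s)\bigr).$$
The restricted intersection product $\cap$ is supported on the intersection of the supports. Pushforward along the proper map $\pi_1$ is supported on the image. So it suffices to show that this image is contained in $p^{-1}(\overline{P_s\s})$, where $p:\Lambda'\to\B$ is the projection and $\overline{P_s\s}$ is viewed as a closed union of $H$ orbits on $\B$.

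Suppose $[\s']$ occurs with nonzero coefficient. The cycle $s\cdot[\s]$ is a top-dimensional cycle, so it is a combination of the fundamental classes of irreducible components $X_{\s'}$, using the basis of top Borel--Moore homology. A class $[X_{\s'}]$ can only appear if $X_{\s'}$ lies in the support. Since $\mathring{X}_{\s'}$ is dense in $X_{\s'}$ and $p(\mathring X_{\s'})=\s'$, this forces $\s'\subseteq\overline{P_s\s}$ once the support claim is known.

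To bound the image, take a point $(h,B_1,B_2)$ with $(h,B_1,B_2)\in St'_s$ and $(h,B_2)\in X_\s$; its image under $\pi_1$ is $(h,B_1)$. Since $X_\s\subseteq p^{-1}(\overline{\s})$ by definition, we have $B_2\in\overline{\s}$. By Lemma \ref{simple reflection steinberg}, either $B_1=B_2$ or $B_1\sim^s B_2$. In both cases $B_1$ and $B_2$ lie in the same fiber of $\B\to\mathcal{P}_s=P_s\backslash G$, so $B_1$ lies in the preimage in $\B$ of the image of $\overline{\s}$ in $\mathcal{P}_s$. Call this preimage $\widetilde{\overline{\s}}$.

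It remains to identify $\widetilde{\overline{\s}}$ with $\overline{P_s\s}$. This is the only point needing care, namely the matching of the two sides: "$P_s$ acting on $B$-orbits of $G/H$" versus "$H$-orbits on $\B$ saturated along $\B\to\mathcal{P}_s$". In $G/H$ language, $\widetilde{\overline{\s}}$ is $P_s\overline{\s}$. The projection $\B\to\mathcal P_s$ is proper, so the image of the closed set $\overline{\s}$ is closed, and its preimage is closed and contains $P_s\s$. Hence $P_s\overline{\s}$ is closed, and it equals $\overline{P_s\s}$: it contains $P_s\s$, and it is contained in $\overline{P_s\s}$ because $P_s\overline{\s}\subseteq\overline{P_s\s}$ by continuity. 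This gives $\s'\subseteq\overline{P_s\s}$.
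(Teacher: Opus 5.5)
Your proof is correct and follows the same support argument as the paper. The paper also notes that $X_{\s}$ lies over $\overline{\s}$ and that $St'_s$ only relates Borels in the same fiber of $\B\to\mathcal{P}_s$, so the convolution is supported over $\overline{P_s\s}$. You have made the support bookkeeping and the identification with $\overline{P_s\s}$ explicit; only the inclusion $P_s\overline{\s}\subseteq\overline{P_s\s}$ is actually needed, and your swap of which factor carries $X_{\s}$ is harmless because $St'_s$ is symmetric.
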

\begin{proof}
    $X_{\mathcal{O}}$ is supported over $\overline{\s}$. The restriction of $St'_s$ to $\overline{\mathcal{O}}$ in the first coordinate, is supported over $\overline{P_{s}\mathcal{O}}$ in the second coordinate. The result follows.
\end{proof}

For the rest of this subsection we assume $\mathcal{O}$ is not dense in $P_{s}\mathcal{O}$.

Let $\mathcal{T}$ be an orbit, we want to calculate the coefficient of $[\mathcal{T}]$ in ${s}\cdot [\mathcal{O}]$. We are looking at the intersection:
    $$ I_{\s,s}=\{(h,B_{1},B_{2})|(\mathfrak{h},B_{1})\in X_{\mathcal{O}}, (h,B_{1},B_{2})\in St'_{s}\}$$
    
If the coefficient of $[\mathcal{T}]$ in $s\cdot[\s]$ is nonzero then for a generic point $(h,B'_{2})\in X_{\mathcal{T}}$ the fiber of $I_{\s,s}$ over $(h,B'_{2})$ must be nonempty. Meaning that, for a generic $(h,B'_{2})\in X_{\mathcal{T}}$ there is $B'_1\in \B$ such that $(h,B'_1,B'_2)\in I_{\s,s}$.

\begin{lemma}
    If $\mathcal{T}$ is not dense in $P_{s}\mathcal{T}$ and is not in $P_{s}\mathcal{O}$ then the coefficient of $\mathcal{T}$ in ${s}\cdot  [\mathcal{O}]$ is zero.
\end{lemma}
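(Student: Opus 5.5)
We prove the statement by contradiction, using the criterion recorded just before the lemma. Suppose the coefficient of $[\mathcal{T}]$ in $s\cdot[\s]$ is nonzero. Then for a generic point $(h,B_2')\in X_{\mathcal{T}}$ there is some $B_1'$ with $(h,B_1',B_2')\in I_{\s,s}$. Since $\mathring{X}_{\mathcal{T}}$ is dense in $X_{\mathcal{T}}$ (Corollary \ref{cor: irreducible components}), we may take $B_2'\in\mathcal{T}$ and $h$ a generic element of the fiber $(\mathfrak{b}_2'\cap\mathfrak{h})$ over $B_2'$. The triple $(h,B_1',B_2')$ lies in $St'_s$, so Lemma \ref{simple reflection steinberg} leaves two cases: $B_1'\sim^s B_2'$, or $B_1'=B_2'$ with $\alpha_{B_1'}(h)=0$. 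In both cases $B_1'$ and $B_2'$ lie in the same $P_s$-fiber of $\B\to\mathcal{P}_s$. Hence the $H$-orbit of $B_1'$ lies in $P_s\mathcal{T}$.

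The main step is to show that $h$ lies in $\mathfrak{b}_1'$ in the first case, and then to exploit genericity. In the first case, $h\in\mathfrak{b}_1'\cap\mathfrak{b}_2'$. In the second case, $h\in\mathfrak{b}_2'$. In either case $h\in\mathfrak{p}_s(\mathfrak{b}_2')\cap\mathfrak{h}$.

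Since $\mathcal{T}$ is not dense in $P_s\mathcal{T}$, Corollary \ref{cor: non dense} gives $\mathfrak{p}_s(\mathfrak{b})\cap\mathfrak{h}=\mathfrak{b}\cap\mathfrak{h}$ on $\mathcal{T}$. Moreover, every orbit in $P_s\mathcal{T}$ other than $\mathcal{T}$ has the same dimension as $\mathcal{T}$ or smaller. Since $\mathcal{T}$ is not dense, $P_s\mathcal{T}$ is $\mathcal{T}$ together with orbits of dimension at most $\dim\mathcal{T}$. Consequently, by the same dimension count as in Corollary \ref{cor: non dense} (Proposition \ref{dimnesion calculation}), $\mathfrak{p}_s\cap\mathfrak{h}=\mathfrak{b}\cap\mathfrak{h}$ also holds on every orbit of $P_s\mathcal{T}$ whose closure meets in the right way. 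What we actually need is only that $h\in\mathfrak{b}_1'\cap\mathfrak{h}$ for $B_1'$ in the $P_s$-fiber of $B_2'$. This holds because $\mathfrak{p}_s(\mathfrak{b}_2')\cap\mathfrak{h}=\mathfrak{b}_2'\cap\mathfrak{h}$ is a subspace of $\mathfrak{p}_s$ stable under the fiber. More cleanly, $\mathfrak{p}_s\cap\mathfrak{h}$ is constant along the $P_s$-fiber, and its rank, computed at any point of the fiber, is at most the rank of $\mathfrak{b}\cap\mathfrak{h}$ at that point. Hence $\mathfrak{b}_1'\cap\mathfrak{h}=\mathfrak{p}_s\cap\mathfrak{h}=\mathfrak{b}_2'\cap\mathfrak{h}$ for every $B_1'$ in the fiber, so $h\in\mathfrak{b}_1'\cap\mathfrak{h}$.

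Now $(h,B_1')\in X_{\s}$, so $B_1'\in\overline{\s}$. Combined with $B_1'\in P_s\mathcal{T}$, this shows that the orbit $\s_1$ of $B_1'$ satisfies $\s_1\subseteq\overline{\s}\cap P_s\mathcal{T}$. As $h$ ranges over a dense subset of the whole fiber $\mathfrak{b}_2'\cap\mathfrak{h}=\mathfrak{b}_1'\cap\mathfrak{h}$, and only finitely many orbits are available, we may fix one orbit $\s_1$ for which a dense set of $h\in\mathfrak{b}_1'\cap\mathfrak{h}$ satisfies $(h,B_1')\in X_{\s}$. Since $X_{\s}$ is closed, the whole fiber of $\mathfrak{b}\cap\mathfrak{h}$ over $\s_1$ lies in $X_{\s}$. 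By Proposition \ref{orbit limit}, this forces $\s_1=\s$, since otherwise $X_{\s}\cap p^{-1}(\s_1)$ would have positive codimension in $\mathfrak{b}\cap\mathfrak{h}$. Thus $\s\subseteq P_s\mathcal{T}$, that is $\mathcal{T}\subseteq P_s\s$, contradicting the hypothesis.

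The delicate point is the step showing that $h$ lies in every Borel of the $P_s$-fiber. This uses non-density of $\mathcal{T}$ via Corollary \ref{cor: non dense}, together with constancy of $\mathfrak{p}_s\cap\mathfrak{h}$ along $P_s$-fibers.
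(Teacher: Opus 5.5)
There is a genuine gap, in the step you flag as delicate. Your claim that every orbit of $P_s\mathcal{T}$ other than $\mathcal{T}$ has dimension at most $\dim\mathcal{T}$ is false. Since $\mathcal{T}$ is not dense in $P_s\mathcal{T}$, we have $\dim P_s\mathcal{T}=\dim\mathcal{T}+1$. The dense $H$-orbit $\mathcal{T}''$ of $P_s\mathcal{T}$ therefore meets the $P_s$-fiber $F\cong\mathbb{P}^1$ through $B_2'$ in a cofinite subset. For $B_1'\in F\cap\mathcal{T}''$, Proposition \ref{dimnesion calculation} gives $\dim(\mathfrak{b}_1'\cap\mathfrak{h})=\dim(\mathfrak{p}_s\cap\mathfrak{h})-1$. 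So the equality $\mathfrak{b}_1'\cap\mathfrak{h}=\mathfrak{b}_2'\cap\mathfrak{h}$ ``for every $B_1'$ in the fiber'' fails there. (The membership $h\in\mathfrak{b}_1'$ it was meant to give is automatic from $(h,B_1')\in X_{\s}$ anyway.)

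The real problem is that $B_1'$ depends on $h$. Pigeonholing over the finitely many \emph{orbits} does not fix a \emph{point} $B_1'$ when $\s_1=\mathcal{T}''$, because $F\cap\mathcal{T}''$ is infinite. So you never obtain a single $B_1'$ with a dense set of $h\in\mathfrak{b}_1'\cap\mathfrak{h}$ satisfying $(h,B_1')\in X_{\s}$, and closedness of $X_{\s}$ cannot be invoked. Your argument does work when $\s_1$ is a non-dense orbit of $P_s\mathcal{T}$: then $F\cap\s_1$ is finite, you can pigeonhole on the point, and Corollary \ref{cor: non dense} applied to $\s_1$ gives the equality you want. The case $\s_1=\mathcal{T}''\subseteq\overline{\s}\setminus\s$, however, is left unhandled.

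The paper closes this case with the standing hypothesis that $\s$ is not dense in $P_s\s$, which your proof never uses. That hypothesis gives $\mathfrak{h}\cap\mathfrak{b}=\mathfrak{h}\cap\mathfrak{p}_s$ on $\s$. Hence every limit $(\mathfrak{h}\cap\mathfrak{b})^{\s}_{\mathcal{T}',B_1'}$ lies in $(\mathfrak{h}\cap\mathfrak{p}_s(\mathfrak{b}))^{P_s\s}_{P_s\mathcal{T}}$, a bundle pulled back from $\mathcal{P}_s$. All constraints on $h$, for all $B_1'\in F$ at once, therefore lie in one fixed proper subspace of $\mathfrak{h}\cap\mathfrak{p}_s=\mathfrak{h}\cap\mathfrak{b}_2'$.

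Alternatively, you can repair your route with a dimension count. Let $Z=X_{\s}\cap p^{-1}(P_s\mathcal{T})$. It is disjoint from $\mathring{X}_{\s}$, since $\s\not\subseteq P_s\mathcal{T}$, so $\dim Z\le\dim H-1$. $Z$ maps $H$-equivariantly to the $H$-orbit of $P_sB_2'$ in $\mathcal{P}_s$, which has dimension $\dim\mathcal{T}$. Hence $Z\cap p^{-1}(F)$ has dimension at most $\dim H-1-\dim\mathcal{T}=\dim(\mathfrak{h}\cap\mathfrak{b}_2')-1$. Its projection to $\mathfrak{g}$ contains every $h$ admitting some $B_1'$, and so cannot be dense in $\mathfrak{h}\cap\mathfrak{b}_2'$.
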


\begin{proof}

    Let $\mathcal{T}$ be an orbit as in the statement of the lemma. We show that a generic point $(h,B'_{2})\in X_{\mathcal{T}}$ does not have a fiber $(h,B'_{1},B'_{2})\in I_{\s,s}$.
    
    We can extend our construction of bundles from $\B$ to $\B\times \B$. We have the constant bundle $\mathfrak{h}$ on $\B\times \B$, the bundle $\mathfrak{b_1}$ which assigns to the point $(B_1,B_2)$ the Lie algebra of $B_1$, and the bundle $\mathfrak{b_2}$ which assigns to the point $(B_1,B_2)$ the Lie algebra of $B_2$. Also, for any simple reflection $s$ we have the bundles $\mathfrak{p}_s(\mathfrak{b}_1)$ and $\mathfrak{p}_s(\mathfrak{b}_2)$ on $\B\times \B$ pulled back from $P_s\backslash G\times \B$, and $\B\times P_s\backslash G$ respectively. Furthermore, we can extend the construction of Proposition \ref{orbit limit} to this setting. 

    Let $Y_{s}=\overline{\{(B_1,B_2)\in \B\times\B|B_1\sim^sB_2\}} \subseteq \B\times \B$.
    
    Since $\mathcal{T}$ is not dense in $P_{s} \mathcal{T}$, by Corollary \ref{cor: non dense} restricting to $\mathcal{T}$ we have:

    $$ (\mathfrak{h}\cap \mathfrak{b})|_{ \mathcal{T}}=(\mathfrak{h}\cap \mathfrak{p}_{s}(\mathfrak{b}))|_{\mathcal{T}}$$
    Similarly we have:
    $$ (\mathfrak{h}\cap \mathfrak{b_{1}})|_{(\mathcal{O}\times \B) \cap Y_s}=(\mathfrak{h}\cap \mathfrak{p}_{s}(\mathfrak{b_{1}}))|_{(\mathcal{O}\times \B) \cap Y_s} = (\mathfrak{h}\cap \mathfrak{p}_{s}(\mathfrak{b_{2}}))|_{(\mathcal{O}\times \B) \cap Y_s} $$

    Notice that this bundle is the restriction of the bundle $\mathfrak{h}\cap \mathfrak{p}_{s}(\mathfrak{b_{2}})$ on $\B \times P_s \mathcal{O} $.

    Thus, for an orbit $\mathcal{T}' \subseteq P_{s}\mathcal{T} \cap \overline{\mathcal{O}}$, over $(\mathcal{T}'\times\B)\cap Y_s$ we have:

    $$ (\mathfrak{h} \cap \mathfrak{b_{1}})^{(\mathcal{O}\times \B) \cap Y_{s}}_{(\mathcal{T}'\times \B) \cap Y_{s}} \subseteq (\mathfrak{h}\cap\mathfrak{p}_{s}(\mathfrak{b_{2}}))^{(\B \times P_{s} \mathcal{O})\cap Y_{s}}_{(\B \times P_{s}\mathcal{T})\cap Y_{s}}$$

    Furthermore, for $(B_{1},B_{2})\in (\mathcal{T}' \times \B) \cap Y_{s}$:

    $$(\mathfrak{h} \cap \mathfrak{b_{1}})^{(\mathcal{O}\times \B) \cap Y_{s}}_{(\mathcal{T}'\times \B) \cap Y_{s},B_{1}} = (\mathfrak{h} \cap \mathfrak{b})^{\mathcal{O}}_{\mathcal{T}',B_{1}}$$

    $$ (\mathfrak{h}\cap\mathfrak{p}_{s}(\mathfrak{b_{2}}))^{(\B \times P_{s} \mathcal{O})\cap Y_{s}}_{(\B \times P_{s}\mathcal{T})\cap Y_{s},B_{2}}=(\mathfrak{h}\cap\mathfrak{p}_{s}(\mathfrak{b}))^{P_{s} \mathcal{O}}_{ P_{s}\mathcal{T},B_{2}}$$

    For every $(h,B'_{1},B'_{2})\in I_{\s,s}$ over $(h,B_{2}')\in X_{\mathcal{T}}$, we have $B'_{1}\in \mathcal{T}'$ for some $\mathcal{T}' \subseteq P_{s}\mathcal{T} \cap \overline{\mathcal{O}}$, and then: 
    $$h\in (\mathfrak{h}\cap \mathfrak{b})^{\mathcal{O}}_{ \mathcal{T}',B'_{1}}\subseteq(\mathfrak{h}\cap \mathfrak{p}_{s}(\mathfrak{b}))^{P_{s}\mathcal{O}}_{ P_{s}\mathcal{T}, B'_{2}}$$

    By Proposition \ref{orbit limit} the space $(\mathfrak{h}\cap \mathfrak{p}_{s}(\mathfrak{b}))^{P_{s}\mathcal{O}}_{ P_{s}\mathcal{T}, B'_{2}}$ is strictly smaller then $(\mathfrak{h}\cap \mathfrak{p}_{s}(\mathfrak{b}))_{B'_{2}}=(\mathfrak{h}\cap \mathfrak{b})_{B'_{2}}$. 
    
    We see that the fiber over a generic $(h,B_2')\in X_{\mathcal{T}}$ is empty. Thus, the coefficient of $[\mathcal{T}]$ is zero.

\end{proof}

\begin{lemma}
    If $\mathcal{T}\subset \overline{\mathcal{O}}$ then the coefficient of $[\mathcal{T}]$ in ${s}\cdot [\mathcal{O}]$ is zero.
\end{lemma}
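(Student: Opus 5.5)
The plan is to argue as in the previous lemma, by a dimension and genericity argument on the fiber of $I_{\s,s}$ over a generic point of $X_{\mathcal{T}}$. Throughout we keep the standing assumption that $\s$ is not dense in $P_s\s$, and we may assume $\mathcal{T}\neq\s$, so that $\mathcal{T}\subseteq\overline{\s}\setminus\s$ and $\dim\mathcal{T}<\dim\s$.

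Over $\mathcal{T}$, the component $X_{\mathcal{T}}$ is the full bundle $\mathfrak{h}\cap\mathfrak{b}$. Hence a generic point $(h,B_2')\in X_{\mathcal{T}}$ has $B_2'\in\mathcal{T}$ and $h$ generic in $(\mathfrak{h}\cap\mathfrak{b})_{B_2'}$. By Lemma \ref{simple reflection steinberg}, a point $(h,B_1',B_2')\in I_{\s,s}$ over it falls into one of two cases.

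\textbf{Diagonal case: $B_1'=B_2'$ and $\alpha_{B_2'}(h)=0$.} Then $(h,B_2')\in X_{\s}\cap p^{-1}(\mathcal{T})$. By Proposition \ref{orbit limit}, this means $h\in(\mathfrak{b}\cap\mathfrak{h})^{\s}_{\mathcal{T},B_2'}$, which is a proper subspace of $(\mathfrak{h}\cap\mathfrak{b})_{B_2'}$. So a generic $h$ avoids this case.

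\textbf{Off-diagonal case: $B_1'\sim^s B_2'$.} Here $B_1'\in\mathcal{T}'$ for some orbit $\mathcal{T}'\subseteq P_s\mathcal{T}\cap\overline{\s}$, and
$$h\in(\mathfrak{h}\cap\mathfrak{b})^{\s}_{\mathcal{T}',B_1'}\cap\mathfrak{b}_2' \quad\text{if } \mathcal{T}'\neq\s, \qquad h\in(\mathfrak{h}\cap\mathfrak{b}_1')\cap\mathfrak{b}_2' \quad\text{if } \mathcal{T}'=\s.$$

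Consider first $\mathcal{T}'=\s$. Then $B_2'\in P_s\s\cap\mathcal{T}$, and $\mathcal{T}$ is a non-dense orbit of $P_s\s$ since $\dim\mathcal{T}<\dim\s$. Because $\s$ is not dense in $P_s\s$ and has dimension at least $\dim\mathcal{T}+1$, the orbit $\s$ must be the dense orbit of $P_s\s$. That contradicts the standing assumption, so this case cannot occur.

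Now take $\mathcal{T}'\neq\s$. I would use Corollary \ref{cor: non dense} to replace $\mathfrak{h}\cap\mathfrak{b}_1'$ by $\mathfrak{h}\cap\mathfrak{p}_s(\mathfrak{b}_1')=\mathfrak{h}\cap\mathfrak{p}_s(\mathfrak{b}_2')$, exactly as in the previous lemma. The limit bundle then embeds:
$$(\mathfrak{h}\cap\mathfrak{b})^{\s}_{\mathcal{T}',B_1'}\subseteq(\mathfrak{h}\cap\mathfrak{p}_s(\mathfrak{b}))^{P_s\s}_{P_s\mathcal{T},B_2'}.$$
There are two subcases.
\begin{enumerate}
\item If $P_s\mathcal{T}\neq P_s\s$, then $P_s\mathcal{T}$ lies in the boundary of $\overline{P_s\s}$. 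Proposition \ref{orbit limit} on $P_s\backslash G$ makes the right-hand side a proper subspace of $(\mathfrak{h}\cap\mathfrak{p}_s(\mathfrak{b}))_{B_2'}\supseteq(\mathfrak{h}\cap\mathfrak{b})_{B_2'}$. A generic $h$ avoids it, provided that generic elements of $(\mathfrak{h}\cap\mathfrak{b})_{B_2'}$ are not contained in this proper subspace.
\item If $P_s\mathcal{T}=P_s\s$, we instead do a direct dimension count on the locus of triples $(h,B_1',B_2')$ with $B_1'\in\mathcal{T}'$, $B_2'\in\mathcal{T}$, $B_1'\sim^sB_2'$, and $h$ in the limit bundle. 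This locus has dimension at most
$$\dim\mathcal{T}+1+\bigl(\dim H-\dim\mathcal{T}'-1\bigr).$$
Here the $+1$ accounts for the $\p^1$ of Borels $B_1'$ in relative position $s$ with $B_2'$, and the last term uses the codimension-one drop from Proposition \ref{orbit limit}. Since $\mathcal{T}$ is not dense in $P_s\mathcal{T}$, the relevant $\mathcal{T}'$ satisfy $\dim\mathcal{T}'\ge\dim\mathcal{T}$, giving a bound of at most $\dim H$. I would then show the inequality is strict, or that this locus maps to $X_{\mathcal{T}}$ with positive-dimensional fibers, so that its image is not dense.
\end{enumerate}

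The main obstacle is the subcase $P_s\mathcal{T}=P_s\s$ with $\mathcal{T}'\neq\s$. There the limit bundles over $P_s\backslash G$ do not directly give a proper subspace, and the proof rests on the careful dimension count: the codimension-one drop from Proposition \ref{orbit limit} must exactly compensate for the extra $\p^1$ of choices of $B_1'$. I expect the saving to come from the fact that $h\in\mathfrak{b}_1'\cap\mathfrak{b}_2'$ can be taken generic only when $\mathfrak{h}\cap\mathfrak{b}_1'\subseteq\mathfrak{p}_s$ already forces $h\in\mathfrak{b}_2'$ for all $B_1'$ in the $\p^1$. This gives a positive-dimensional family of preimages over each image point, which lowers the dimension of the image below $\dim H$.
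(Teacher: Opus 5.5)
Your reduction to the generic fibre of $I_{\mathcal{O},s}$ is correct, as are your diagonal case and your exclusion of $\mathcal{T}'=\mathcal{O}$. But the proof has a genuine gap, and it sits exactly where this lemma has new content.

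First, your subcase 2 is empty. Since $\mathcal{O}$ is not dense in $P_s\mathcal{O}$, every $B$-orbit in $P_s\mathcal{O}$ has dimension at least $\dim P_s\mathcal{O}-1=\dim\mathcal{O}>\dim\mathcal{T}$. Hence $\mathcal{T}\not\subseteq P_s\mathcal{O}$ and $P_s\mathcal{T}\neq P_s\mathcal{O}$; your own $\mathcal{T}'=\mathcal{O}$ argument already shows this. So the ``main obstacle'' does not exist.

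Second, when $\mathcal{T}$ is not dense in $P_s\mathcal{T}$, the statement is already the previous lemma. Your subcase 1 just reproves it, since Corollary \ref{cor: non dense} makes your proviso automatic. The only new case is $\mathcal{T}$ dense in $P_s\mathcal{T}$, and there the proviso is exactly what is unjustified. In that case $(\mathfrak{h}\cap\mathfrak{b})_{B_2'}$ is a hyperplane in $(\mathfrak{h}\cap\mathfrak{p}_s(\mathfrak{b}))_{B_2'}$. Knowing that $(\mathfrak{h}\cap\mathfrak{p}_s(\mathfrak{b}))^{P_s\mathcal{O}}_{P_s\mathcal{T},B_2'}$ is a proper subspace of the latter does not stop it from equalling that hyperplane. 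If it did, the constraint on $h$ would be vacuous and your argument would give nothing.

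The missing idea, which is what the paper uses, is that $\mathcal{T}$ itself lies in $\overline{\mathcal{O}}$.
\begin{itemize}
\item On $\mathcal{O}$ the bundle $\mathfrak{h}\cap\mathfrak{b}$ equals $\mathfrak{h}\cap\mathfrak{p}_s(\mathfrak{b})$, which is pulled back from $P_s\backslash G$.
\item Hence the limit spaces $(\mathfrak{h}\cap\mathfrak{b})^{\mathcal{O}}_{\mathcal{T}'}$ are all restrictions of the single bundle $(\mathfrak{h}\cap\mathfrak{p}_s(\mathfrak{b}))^{P_s\mathcal{O}}_{P_s\mathcal{T}}$, which is constant along the fibres of $\mathcal{B}\to P_s\backslash G$.
\item Since $B_1'$ and $B_2'$ with $(B_1',B_2')\in Y_s$ lie in the same fibre, the constraint at $B_1'$ becomes $h\in(\mathfrak{h}\cap\mathfrak{b})^{\mathcal{O}}_{\mathcal{T},B_2'}$.
\item By Proposition \ref{orbit limit} applied to $\mathcal{T}\subset\overline{\mathcal{O}}$, this is a proper subspace of $(\mathfrak{h}\cap\mathfrak{b})_{B_2'}$.
\end{itemize}
This handles every $\mathcal{T}'$, including the diagonal, uniformly and needs no dimension count.

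If you want to keep your own structure, you can close the proviso with Lemma \ref{lemma:diagram}. Apply it to $\mathcal{T}$ and the dense orbit $\mathcal{O}'$ of $P_s\mathcal{O}$, identifying the limit over $P_s\mathcal{O}$ with the limit over $\mathcal{O}'$ as in the proof of Proposition \ref{prop:support}. A proper subspace of $(\mathfrak{h}\cap\mathfrak{p}_s(\mathfrak{b}))_{B_2'}$ that contains the hyperplane $(\mathfrak{h}\cap\mathfrak{b})_{B_2'}$ must equal it, and that lemma rules this out.
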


\begin{proof}

    Let $\mathcal{T}$ be an orbit as in the statement of the lemma. We show that a generic point $(h,B'_{2})\in X_{\mathcal{T}}$ does not have a fiber $(h,B'_{1},B'_{2})\in I_{\s,s}$.

    Again we know:

    $$ (\mathfrak{h}\cap \mathfrak{b})_{\mathcal{O}}=(\mathfrak{h}\cap \mathfrak{p}_{s}(\mathfrak{b}))_{\mathcal{O}} $$

    This implies like before that for any orbit $\mathcal{T}'\subseteq P_{s}\mathcal{T} \cap \overline{\mathcal{O}}$:

    $$ (\mathfrak{h} \cap \mathfrak{b})^{\mathcal{O}}_{\mathcal{T}'}=(\mathfrak{h} \cap \mathfrak{p}_{s}(\mathfrak{b}))^{P_{s}\mathcal{O}}_{ P_{s}\mathcal{T}}|_{\mathcal{T}'}$$

    The vector bundle $(\mathfrak{h} \cap \mathfrak{p}_{s}(\mathfrak{b}))^{P_{s}\mathcal{O}}_{P_{s}\mathcal{T}}$ is pulled back from $P_{s}\backslash G$. Since $\mathcal{T}\subset \overline{\mathcal{O}}$, for any $B'_{1}\in \mathcal{T}', B'_{2}\in \mathcal{T}$ such that $(B_1',B_2')\in Y_s$ we have:
    $$ (\mathfrak{h}\cap\mathfrak{b})^{\mathcal{O}}_{\mathcal{T}',B'_{1}}=(\mathfrak{h} \cap \mathfrak{p}_{s}(\mathfrak{b}))^{P_{s}\mathcal{O}}_{ P_{s}\mathcal{T},B_1'}=(\mathfrak{h}\cap\mathfrak{b})^{\mathcal{O}}_{\mathcal{T},B'_{2}}$$
    
    Given $(h,B_{1}',B_{2}')\in I_{\s,s}$ over $(h,B_{2}')\in X_{\mathcal{T}}$, with $B_{1}'\in \mathcal{T}'$ we have

     $$ h \in (\mathfrak{h} \cap \mathfrak{b})^{\mathcal{O}}_{\mathcal{T}',B'_{1}}= (\mathfrak{h} \cap \mathfrak{b})^{\mathcal{O}}_{\mathcal{T},B'_{2}}\subsetneq (\mathfrak{h} \cap \mathfrak{b})_{\mathcal{T},B_{2}}$$

    So again the generic fiber is empty, and we are done.
    
\end{proof}

To finish the calculation we need the following lemma.

\begin{lemma}\label{lemma:diagram}
    Let $\mathcal{T},\s'$ be two $H$ orbits on $\B=B\backslash G$ such that $\mathcal{T}\subset\overline{\s'}$. Assume that $\mathcal{T}$ is dense in $P_s\mathcal{T}$ and $\s'$ is dense in $P_s\s'$. 
    For $B'\in \mathcal{T}$ we have $(\mathfrak{h}\cap \mathfrak{b})_{\mathcal{T},B'}\neq (\mathfrak{h}\cap \mathfrak{p}_{s}(\mathfrak{b}))^{\mathcal{O}'}_{\mathcal{T},B'}$.
\end{lemma}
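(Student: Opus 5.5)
The plan is to argue by contradiction, comparing the two subspaces of $\mathfrak{h}\cap\mathfrak{p}_s(\mathfrak{b})$ at $B'$. Throughout, write $V_{B'}:=(\mathfrak{h}\cap \mathfrak{p}_{s}(\mathfrak{b}))^{\s'}_{\mathcal{T},B'}$ for the limit, at $B'$, of the bundle $\mathfrak{h}\cap\mathfrak{p}_s(\mathfrak{b})$ over $\s'$. Let $\pi_s:\B\to\mathcal{P}_s=P_s\backslash G$ be the projection, and let $\mathfrak{r}_s(\mathfrak{b})$ be the bundle of Lie algebras of unipotent radicals of the parabolics $P\in\mathcal{P}_s$. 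This bundle is pulled back from $\mathcal{P}_s$, like $\mathfrak{p}_s(\mathfrak{b})$, and its fibre is the intersection of all $\mathfrak{b}''$ with $B''\in\pi_s^{-1}(\pi_s(B'))\cong\p^1$. Put $\mathfrak{l}_s=\mathfrak{p}_s/\mathfrak{r}_s$, the Levi quotient bundle, whose semisimple part is $\mathfrak{sl}_2$ acting on each $\p^1$ fibre.

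\emph{Step 1 (ranks).} By Proposition \ref{dimnesion calculation} and the density of $\s'$ in $P_s\s'$, on $\s'$ the bundle $\mathfrak{h}\cap\mathfrak{p}_s(\mathfrak{b})$ has rank $\dim H-\dim\s'+1$. The same holds for $\mathcal{T}$ with $\dim\mathcal{T}$ in place of $\dim\s'$, while $\mathfrak{h}\cap\mathfrak{b}$ has rank $\dim H-\dim\mathcal{T}$ on $\mathcal{T}$. Hence $\dim V_{B'}=\dim H-\dim\s'+1\le \dim H-\dim\mathcal{T}$, and if $\dim\s'\ge\dim\mathcal{T}+2$ the two spaces differ for dimension reasons. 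So the real case is $\dim\s'=\dim\mathcal{T}+1$, where both spaces have the same dimension. In that case I assume $V_{B'}=(\mathfrak{h}\cap\mathfrak{b})_{B'}$ and derive a contradiction.

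\emph{Step 2 (constancy along the $\p^1$ fibre).} The bundle $\mathfrak{h}\cap\mathfrak{p}_s(\mathfrak{b})$ on $\s'$ is pulled back from $\pi_s(\s')\subset\mathcal{P}_s$, so its limit $V$ is pulled back from $\pi_s(\mathcal{T})$. In particular $V$ is constant along the fibre $L=\pi_s^{-1}(\pi_s(B'))$. Since $\mathcal{T}$ is dense in $P_s\mathcal{T}$, $L\cap\mathcal{T}$ is dense in $L$. By $H$-equivariance, the identity $V=\mathfrak{h}\cap\mathfrak{b}$ holds at every point of $\mathcal{T}$, so $V_{B'}=(\mathfrak{h}\cap\mathfrak{b}'')$ for all $B''$ in a dense subset of $L$. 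Intersecting over these $B''$, and using that this is a closed condition, I get $V_{B'}\subseteq \mathfrak{h}\cap\mathfrak{r}_s(\mathfrak{b})_{B'}$. Equivalently, $V_{B'}$ maps to zero in $\mathfrak{l}_{s}$.

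\emph{Step 3 (contradiction from $\s'$).} Over $\s'$, the map $\phi:\mathfrak{h}\cap\mathfrak{p}_s\to\mathfrak{l}_s$ has image that acts on the fibre $\p^1$ with an open orbit, because $\s'$ is open in $P_s\s'$ and the $H$-orbit meets each fibre in an open set. So the image of $\phi$ over $\s'$ contains a non-nilpotent element, or at least acts nontrivially on $\p^1$. I want to pass this to the limit and show that $\phi(V_{B'})\neq0$, contradicting Step 2.

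This is the main obstacle, because images need not behave well under limits: in general one only has $\phi(\lim W)\subseteq\lim\phi(W)$. My first attempt will be to choose a one-parameter family $B'_t\in\s'$ with $B'_t\to B'$, pass to the $\p^1$ fibres, and use Step 2 in the form $V\subseteq\mathfrak{r}_s$. The aim is to show that $\lim(\mathfrak{h}\cap\mathfrak{r}_s)|_{\s'}$ already has dimension $\dim V_{B'}$. I expect to use that $\mathfrak{h}\cap\mathfrak{r}_s$ has rank at most $\dim H-\dim\s'$ on $\s'$, which is one less than $\dim V_{B'}$. The point is that a limit of a rank-$(r-1)$ subbundle of the rank-$r$ bundle $\mathfrak{h}\cap\mathfrak{p}_s$, contained in the limit of $\mathfrak{r}_s\cap\mathfrak{h}$, cannot be all of $V_{B'}$ unless the $\mathfrak{l}_s$-component degenerates. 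I then expect to rule out this degeneration using density of $\mathcal{T}$ in $P_s\mathcal{T}$, since it forces the transversal direction in $\mathfrak{l}_s$ to survive in the limit.
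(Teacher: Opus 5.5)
Your Steps 1 and 2 are valid reductions. In the critical case $\dim\s'=\dim\mathcal{T}+1$, they show that the assumed equality forces $(\mathfrak{h}\cap\mathfrak{p}_s(\mathfrak{b}))^{\s'}_{\mathcal{T},B'}$ into the radical of $\mathfrak{p}_s$ at $B'$. So the whole content of the lemma is that the nonzero image of $\mathfrak{h}\cap\mathfrak{p}_s$ in the Levi quotient over $\s'$ survives in the limit. Step 3 does not prove this; it only says what you expect. The input you hope to use, density of $\mathcal{T}$ in $P_s\mathcal{T}$, cannot supply it, because the degeneration actually occurs.

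Take $G=SL_2\times SL_2$ and $\B=\p^1\times\p^1$, with $z\leftrightarrow[z:1]$ and $\infty\leftrightarrow[1:0]$. Let $H=\{(\mathrm{diag}(\lambda,\lambda^{-1}),\left(\begin{smallmatrix}\lambda & x\\ 0 & \lambda^{-1}\end{smallmatrix}\right))\}$, so $\mathfrak{h}=\{(ad,\,ad+yu)\}$ with $d=\mathrm{diag}(1,-1)$ and $u=\left(\begin{smallmatrix}0&1\\0&0\end{smallmatrix}\right)$. It acts by $(z_1,z_2)\mapsto(\lambda^2z_1,\lambda^2z_2+\lambda x)$ with six orbits and open orbit $\C^*\times\C$, so $H$ is spherical. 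Let $s$ be the reflection of the first factor, so $\pi_s(z_1,z_2)=z_2$. Put $\s'=\C^*\times\C$ and $\mathcal{T}=\C^*\times\{\infty\}$. Both are dense in their $P_s$-saturations $\p^1\times\C$ and $\p^1\times\{\infty\}$, and $\mathcal{T}\subset\overline{\s'}$.

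Over $z_2=c\in\C$ one finds $\mathfrak{h}\cap\mathfrak{p}_s=\C(d,\,d-2cu)$, whose image $\C d$ in the Levi quotient is nonzero. As $c\to\infty$ this line tends to $\C(0,u)$, which lies in the radical. At $B'=(z_1,\infty)$ with $z_1\in\C^*$, one also has $\mathfrak{h}\cap\mathfrak{b}'=\C(0,u)$. So the two spaces in the statement coincide. The lemma is false as stated, and Step 3 cannot be completed without additional hypotheses.

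The paper's proof does not supply the missing idea either. It introduces $(\mathfrak{h}\cap\mathfrak{b})^{\s'}_{\mathcal{T},B'}$, which is strictly smaller than $(\mathfrak{h}\cap\mathfrak{b})_{\mathcal{T},B'}$ by Proposition \ref{orbit limit}. It then asserts that the left vertical inclusion of its square is cut out by $\mathfrak{g}_s^*$, i.e.\ that $(\mathfrak{h}\cap\mathfrak{b})^{\s'}_{\mathcal{T},B'}=(\mathfrak{h}\cap\mathfrak{p}_s(\mathfrak{b}))^{\s'}_{\mathcal{T},B'}\cap\mathfrak{b}'$. This claim that the limit commutes with cutting by $\mathfrak{g}_s^*$ is essentially your non-degeneration in another form. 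It fails in the example: the left side is $0$, since $\mathfrak{h}\cap\mathfrak{b}=0$ on the open orbit $\s'$, while the right side is $\C(0,u)$.

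The failure also matters downstream. Take $\s=\{0\}\times\C$, which is not dense in $P_s\s$. The triples $(v,B_1,B_2)=((0,yu),(0,\infty),(z,\infty))$ form a two-dimensional, hence proper, component of $p_1^*X_{\s}\cap St'_s$ inside $\tilde{\mathfrak{g}}\times\B$. This component maps birationally onto $X_{\mathcal{T}}$. So by Theorem \ref{positive coef}, $[\mathcal{T}]$ occurs in $s\cdot[\s]$ with positive coefficient, contradicting Proposition \ref{prop:support}.
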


\begin{proof}
     Consider the following commutative diagram:

    \[\begin{tikzcd}
	   {(\mathfrak{h}\cap \mathfrak{b} )^{\mathcal{O}'}_{ \mathcal{T},B'}} & {(\mathfrak{h}\cap \mathfrak{b})_{\mathcal{T},B'}} \\
	   {(\mathfrak{h}\cap \mathfrak{p}_{s}(\mathfrak{b}))^{\mathcal{O}'}_{\mathcal{T},B'}} & {(\mathfrak{h}\cap \mathfrak{p}_{s}(\mathfrak{b}))_{\mathcal{T},B'}}
	   \arrow[from=1-1, to=1-2]
	   \arrow[from=1-1, to=2-1]
	   \arrow[from=1-2, to=2-2]
	   \arrow[from=2-1, to=2-2]
    \end{tikzcd}\]

    By Proposition \ref{orbit limit} the horizontal maps are strict inclusions. The right vertical map is also a strict inclusion.
    
    As bundles on $\B$ the subbundle $\mathfrak{b}$ inside $\mathfrak{p_s}(\mathfrak{b})$ is defined by a single equation, we denote it by $\mathfrak{g}_s^*$.
    
    The right vertical inclusion in our diagram is defined by the equation $\mathfrak{g}_{s}^*$. The left vertical map is also an inclusion defined by the same equation (although it may not be a strict inclusion).

 If $(\mathfrak{h}\cap \mathfrak{b})_{\mathcal{T},B'}= (\mathfrak{h}\cap \mathfrak{p}_{s}(\mathfrak{b}))^{\mathcal{O}'}_{\mathcal{T},B'}$ then both the right vertical map and the upper horizontal map are inclusions defined by the equation $\mathfrak{g}_{s}^*$. This means that the upper horizontal map is an equality, this is a contradiction.
\end{proof}

\begin{prop}\label{prop:support}
    If $\mathcal{T}$ is not in $P_{s}\mathcal{O}$ then the coefficient of $[\mathcal{T}]$ in ${s}\cdot [\mathcal{O}]$ is zero. 
\end{prop}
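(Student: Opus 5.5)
By Proposition \ref{prop: contained in closure}, any $\mathcal{T}$ with nonzero coefficient lies in $\overline{P_s\s}$. The standing assumption is that $\s$ is not dense in $P_s\s$. So it suffices to treat $\mathcal{T}\subseteq\overline{P_s\s}\setminus P_s\s$ and show its coefficient vanishes. I will split according to whether $\mathcal{T}$ is dense in $P_s\mathcal{T}$.

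\emph{Case 1: $\mathcal{T}$ is not dense in $P_s\mathcal{T}$.} Since $\mathcal{T}\not\subseteq P_s\s$, the first lemma of this subsubsection applies directly and the coefficient is zero.

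\emph{Case 2: $\mathcal{T}$ is dense in $P_s\mathcal{T}$.} Here I plan to reduce to the situation of Lemma \ref{lemma:diagram}, adapting the generic-fiber argument of the first lemma.
\begin{enumerate}
\item Take a generic point $(h,B_2')\in X_{\mathcal{T}}$ and suppose $(h,B_1',B_2')\in I_{\s,s}$. Then $B_1'\in\mathcal{T}'$ for some orbit $\mathcal{T}'\subseteq P_s\mathcal{T}\cap\overline{\s}$ with $(B_1',B_2')\in Y_s$.
\item As in the first lemma, using Corollary \ref{cor: non dense} on $\s$, I expect to obtain
\[
h\in(\mathfrak{h}\cap\mathfrak{b})^{\s}_{\mathcal{T}',B_1'}\subseteq(\mathfrak{h}\cap\mathfrak{p}_s(\mathfrak{b}))^{P_s\s}_{P_s\mathcal{T},B_2'}.
\]
The right-hand side is the restriction to $\mathcal{T}$ of a bundle pulled back from $P_s\backslash G$. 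Since $P_s\s$ is a union of $H$-orbits, I take $\s'$ to be its dense orbit; then $\s'$ is dense in $P_s\s'$ and $\mathcal{T}\subseteq\overline{\s'}$.
\item Lemma \ref{lemma:diagram} then gives
\[
(\mathfrak{h}\cap\mathfrak{p}_s(\mathfrak{b}))^{\s'}_{\mathcal{T},B_2'}\cap\mathfrak{b}_2'\subsetneq(\mathfrak{h}\cap\mathfrak{b})_{\mathcal{T},B_2'}.
\]
It is really the intersection with $\mathfrak{b}_2'$ that matters, because $h\in\mathfrak{b}_2'$ and the containment in step 2 can only be compared inside $(\mathfrak{h}\cap\mathfrak{b})_{B_2'}$.
\item I will read Lemma \ref{lemma:diagram} (from the commutative diagram in its proof) as saying the left vertical intersection cannot fill the upper-right corner. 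So the set of admissible $h$ is a proper closed subset of the fiber $(\mathfrak{h}\cap\mathfrak{b})_{B_2'}$. Hence the generic fiber of $I_{\s,s}$ over $X_{\mathcal{T}}$ is empty, and the coefficient is zero.
\end{enumerate}

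The main obstacle is step 2. It requires the precise compatibility between the limit bundle computed from $\s$ (via $\mathcal{T}'$) and the limit of $\mathfrak{h}\cap\mathfrak{p}_s(\mathfrak{b})$ from $\s'$. In particular I must check that the limiting-bundle comparison across $Y_s$ still works when $\mathcal{T}$ is dense in $P_s\mathcal{T}$, since $\mathfrak{h}\cap\mathfrak{b}$ and $\mathfrak{h}\cap\mathfrak{p}_s(\mathfrak{b})$ then differ on $\mathcal{T}$.

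If the containment in step 2 fails, my fallback is to bound dimensions instead. Compute $\dim I_{\s,s}$ over $p^{-1}(\mathcal{T})$ using the rank bounds of Proposition \ref{orbit limit}, and show it is less than $\dim H$, so that no component over $\mathcal{T}$ can appear in the top-dimensional class.
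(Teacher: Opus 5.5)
Your proposal is correct and follows essentially the paper's argument: you reduce via Proposition \ref{prop: contained in closure} and the first lemma to the case where $\mathcal{T}$ is dense in $P_s\mathcal{T}$, then use Corollary \ref{cor: non dense} on $\s$ to trap $h$ in $(\mathfrak{h}\cap\mathfrak{p}_s(\mathfrak{b}))^{\s'}_{\mathcal{T},B_2'}\cap\mathfrak{b}_2'$ (the paper writes the underlying comparison of limit bundles as an equality), and conclude with Lemma \ref{lemma:diagram}. Your worry about step 2 is unnecessary: that containment only uses $\mathfrak{h}\cap\mathfrak{b}=\mathfrak{h}\cap\mathfrak{p}_s(\mathfrak{b})$ on $\s$ and the fact that the closure of $\mathfrak{h}\cap\mathfrak{p}_s(\mathfrak{b})$ over $P_s\s$ is pulled back from $P_s\backslash G$, so its fibers at $B_1'$ and $B_2'$ coincide; because this bound depends only on $B_2'$, you can also skip the paper's separate appeal to the $\mathcal{T}\subset\overline{\s}$ lemma and its count of at most two $B_1'$.
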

\begin{proof}
    Given the previous lemmas, we only need to consider the case where $\mathcal{T}$ is dense in $P_{s}\mathcal{T}$, and $\mathcal{T} \subset\overline{\mathcal{O}'}$ for $\mathcal{O}'$ the dense orbit in $P_{s}\mathcal{O}$, but $\mathcal{T}\nsubseteq \overline{\mathcal{O}}$.

    Assume that the coefficient of $[\mathcal{T}]$ in ${s}\cdot [\mathcal{O}]$ is not zero. 

    In this case, we must have an orbit $\mathcal{T}'\subseteq P_{s}\mathcal{T}$ that is not dense in $P_s\mathcal{T}$, such that $\mathcal{T}'\subset \overline{\mathcal{O}}$.

    Given $(h,B'_{2})\in \mathring{X_{\mathcal{T}}}$ there are at most two Borel subgroups $B'_{1}$ such that $B'_{1},B'_{2}$ are in relative position $s$ and $B'_{1}\in \overline{\mathcal{O}}$. Let $\mathcal{T}'$ be the orbit of $B'_{1}$, for at least one $B_1'$ as such we have:

    $$(\mathfrak{h}\cap \mathfrak{b})^{\mathcal{O}}_{ \mathcal{T'},B'_1}\cap Lie(B_2')=\mathfrak{h}\cap Lie(B_2')$$

    As $\s$ is not dense in $P_s\s$ we also have:

       $$ (\mathfrak{h} \cap \mathfrak{b})^{\mathcal{O}}_{\mathcal{T}',B'_{1}}=(\mathfrak{h} \cap \mathfrak{p}_{s}(\mathfrak{b}))^{P_{s}\mathcal{O}}_{ P_{s}\mathcal{T},B'_{1}}=(\mathfrak{h} \cap \mathfrak{p}_{s}(\mathfrak{b}))^{\mathcal{O}'}_{ \mathcal{T},B'_{2}}$$

    A necessary condition for the coefficient of $[\mathcal{T}]$ in $s\cdot [\s]$ to be nonzero is that for generic $(h,B_2')\in X_{\mathcal{T}}$ the equality
 $(\mathfrak{h}\cap \mathfrak{p}_{s}(\mathfrak{b}))^{\mathcal{O}'}_{\mathcal{T},B_{2}'}=(\mathfrak{h}\cap \mathfrak{b})_{\mathcal{T},B_{2}'}$ holds.

   By Lemma \ref{lemma:diagram} this is a contradiction.
    
\end{proof}

\subsubsection{Calculation of $s\cdot [\mathcal{O}]$}

In the previous subsubsection we showed that the only orbits contributing to $s\cdot [\mathcal{O}]$ are the ones in $P_s\s$. In this subsubsection we compute the coefficients of those orbits.

We begin with a corollary of Proposition \ref{prop:support}. By Remark \ref{remark: intersection then convolution} we can compute the convolution $s\cdot  [\mathcal{O}]=\pi_{1*}(\pi_0^*[St'_s]\cap \pi_2^*[{X}_\s])$ by first computing the coefficient of each fundamental class of an irreducible component of $\pi_0^*St'_s\cap \pi_2^*{X}_\s$ and then pushing these fundamental classes with $\pi_{1,*}$.  

\begin{cor}
    Let $\s$ be an orbit that is not dense in $P_s\s$.  Only irreducible components of $(\pi_0^*St'_s\cap \pi_2^*{X}_\s)$ contained in the closure of $\pi_0^*St'_s\cap \pi_2^*\mathring{X}_\s$ contribute to $s\cdot [\s]$.
\end{cor}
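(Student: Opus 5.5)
The plan is to decompose the intersection $\pi_0^*St'_s\cap \pi_2^*X_\s$ according to where the second Borel lies, and to show that any component not in the closure of the part over $\mathring{X}_\s$ pushes forward to a class supported outside $P_s\s$. Proposition \ref{prop:support} then kills all such contributions.

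Write $X_\s=\mathring{X}_\s\sqcup \bigcup_{\mathcal{T}\subsetneq\overline{\s}}(X_\s\cap p^{-1}(\mathcal{T}))$. The preimage $\pi_2^{-1}(X_\s)$ in $\mathfrak{h}\times_{\mathfrak{g}}\Tilde{\mathfrak{g}}\times_{\mathfrak{g}}\Tilde{\mathfrak{g}}$ splits accordingly, the $B_2$ coordinate lying in $\s$ or in some smaller orbit $\mathcal{T}\subset\overline{\s}\setminus\s$. An irreducible component $Z$ of $\pi_0^*St'_s\cap\pi_2^*X_\s$ either meets $\pi_2^{-1}(\mathring{X}_\s)$ in a dense open subset, in which case it lies in the closure of $\pi_0^*St'_s\cap \pi_2^*\mathring{X}_\s$, or its generic point has $B_2\in\mathcal{T}$ for some fixed orbit $\mathcal{T}\subsetneq\overline{\s}$. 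Here $B_2$ is the $X_\s$-coordinate and $B_1$ the coordinate kept by $\pi_1$, following the paper's indexing of the two Borels.

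In the second case, $B_1$ ranges over Borels in relative position $s$ with, or equal to, an element of $\mathcal{T}$. So $\pi_1(Z)$ lies in $p^{-1}(\overline{P_s\mathcal{T}})$. Since $\mathcal{T}\subset\overline{\s}\setminus\s$, we have $P_s\mathcal{T}\subseteq \overline{P_s\s}$. The point is that the contribution $\pi_{1*}[Z]$ is a combination of the classes $[\mathcal{T}'']$ with $\mathcal{T}''\subseteq P_s\mathcal{T}$, and it is zero whenever $\dim \pi_1(Z)<\dim H$.

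To conclude, I would reuse the structure of the proof of Proposition \ref{prop:support}. That argument did not really need the whole of $X_\s$: it only used that $h$ lies in the limit bundle $(\mathfrak{h}\cap\mathfrak{b})^{\s}_{\mathcal{T}'}$ when $B_1$, respectively $B_2$, lies in a boundary orbit. Running the same three lemmas with $X_\s$ replaced by the part over a boundary orbit $\mathcal{T}$ shows the following. A generic point of $X_{\mathcal{T}''}$, for $\mathcal{T}''$ dense in its $P_s$-saturation, has empty fiber. The same holds for $\mathcal{T}''\subseteq\overline{\s}$ or not dense in $P_s\mathcal{T}''$. In each case the constraint "$h$ lies in a codimension $\ge 1$ limit subbundle" strictly lowers the dimension below $\dim H$.

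The main obstacle is an orbit $\mathcal{T}''$ lying in $P_s\s$ itself, for instance the dense orbit $\s'$ of $P_s\s$, which could in principle receive contributions from a boundary component $Z$. For this case I would argue by dimension. Over $B_2\in\mathcal{T}$ the fiber in $h$ is the limit bundle $(\mathfrak{h}\cap\mathfrak{b})^{\s}_{\mathcal{T}}$, of rank at most $\dim H-\dim\mathcal{T}-1$ by Proposition \ref{orbit limit}. The choice of $B_1$ adds at most one dimension, and the $B_2$ coordinate adds $\dim\mathcal{T}$. Hence $\dim Z\le \dim H$. If equality held, $\pi_1(Z)$ would have to be dense in some $X_{\mathcal{T}''}$ with $\mathcal{T}''\subseteq P_s\mathcal{T}$. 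That would give $\mathcal{T}''\subseteq\overline{\s}\setminus\s$ or $\mathcal{T}''$ dense in $P_s\mathcal{T}$, and in both cases Lemma \ref{lemma:diagram} and the preceding lemmas give the strict inclusion contradiction. I expect checking this equality case carefully to be the delicate step.
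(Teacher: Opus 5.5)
There is a genuine gap. Your overall plan matches the paper's: components not in the closure of the part over $\mathring{X}_{\mathcal{O}}$ push forward to classes of orbits outside $P_s\mathcal{O}$, and Proposition \ref{prop:support} kills them. However, you never prove the middle step, and the argument you substitute for it does not work.

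\textbf{The missing observation.} If $\mathcal{T}\subseteq\overline{\mathcal{O}}\setminus\mathcal{O}$, then $P_s\mathcal{T}\cap P_s\mathcal{O}=\emptyset$. There are two reasons:
\begin{enumerate}
\item $P_s$-saturations of $H$-orbits on $\mathcal{B}$ are preimages of $H$-orbits on $P_s\backslash G$, so two of them are either equal or disjoint.
\item $\dim P_s\mathcal{T}\le\dim\mathcal{T}+1\le\dim\mathcal{O}<\dim P_s\mathcal{O}$. The last inequality holds because $\mathcal{O}$ is not dense in $P_s\mathcal{O}$.
\end{enumerate}
A generic point of a bad component $Z$ has its $X_{\mathcal{O}}$-Borel in such a $\mathcal{T}$ and its output Borel in $P_s\mathcal{T}$. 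Hence $\pi_{1*}[Z]$ is either zero or a multiple of some $[\mathcal{T}'']$ with $\mathcal{T}''\subseteq P_s\mathcal{T}$, so $\mathcal{T}''\not\subseteq P_s\mathcal{O}$. Proposition \ref{prop:support} then finishes the proof.

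In particular, the case you single out as the ``main obstacle'', $\mathcal{T}''\subseteq P_s\mathcal{O}$, cannot occur. You wrote $P_s\mathcal{T}\subseteq\overline{P_s\mathcal{O}}$, but what is needed is $P_s\mathcal{T}\subseteq\overline{P_s\mathcal{O}}\setminus P_s\mathcal{O}$.

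\textbf{Why your replacement argument fails.}
\begin{itemize}
\item Re-running the support lemmas ``with $X_{\mathcal{O}}$ replaced by the part over a boundary orbit $\mathcal{T}$'' is not legitimate. Those lemmas use that $\mathcal{O}$ is not dense in $P_s\mathcal{O}$, to get $(\mathfrak{h}\cap\mathfrak{b})|_{\mathcal{O}}=(\mathfrak{h}\cap\mathfrak{p}_s(\mathfrak{b}))|_{\mathcal{O}}$ and hence that the relevant limit bundle is pulled back from $P_s\backslash G$. A boundary orbit $\mathcal{T}$ may well be dense in $P_s\mathcal{T}$.
\item Your dimension count only gives $\dim Z\le\dim H$, which is the expected dimension anyway. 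For the equality case you appeal to Lemma \ref{lemma:diagram} and the preceding lemmas. Those are statements about the full intersection over orbits outside $P_s\mathcal{O}$.
\item For $\mathcal{T}''$ the dense orbit of $P_s\mathcal{O}$, the full intersection certainly has nonempty generic fiber; that is where the coefficient $1$ or $2$ comes from. So those lemmas cannot yield a contradiction there.
\item Your dichotomy ``$\mathcal{T}''\subseteq\overline{\mathcal{O}}\setminus\mathcal{O}$ or $\mathcal{T}''$ dense in $P_s\mathcal{T}$'' is also unjustified. For example, $s\mathcal{T}$ need not lie in $\overline{\mathcal{O}}$, since $\overline{\mathcal{O}}$ is not $P_s$-stable.
\end{itemize}
Replace this whole last part with the disjointness observation above.
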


\begin{proof}
     Let $Z$ be an irreducible component of $\pi_0^*St'_s\cap \pi_2^*{X}_\s$ that is not contained in the closure of $\pi_0^*St'_s\cap \pi_2^*\mathring{X}_\s$. Let $(h,B_1',B_2')\in Z$. The Borel $B_2'$ cannot be in $\s$ and thus $B_1'\notin P_s\s$. By Proposition \ref{prop:support} such irreducible components do not contribute to $s\cdot [\s]$. 
\end{proof}

Notice that both $\mathring{X}_\s$ and  $St'_s$ are smooth. For $\mathring{X}_\s$ this is immediate, $St'_s$ is smooth as it is the total space of a vector bundle over the smooth variety $Y_s$. $Y_s$ is smooth as it is a $\p^1$ fiber bundle over $P_s\backslash G$. In particular, $\pi_0^*St'_s, \pi_2^*\mathring{X}_\s$ are also smooth.

Proposition \ref{compact description of convolution} gives an alternative way to compute this convolution. 

 Consider $\Tilde{\mathfrak{g}}\times \B$ and let $p_1$ be the projection onto the first coordinate. Let $p_2:\Tilde{\mathfrak{g}}\times \B\rightarrow \mathfrak{g}\times \B$ be the map that is the identity on the second coordinate and $\phi:\Tilde{\mathfrak{g}}\rightarrow \mathfrak{g}$ on the first coordinate.
 We have $\pi_{1*}(\pi_0^*[St'_s]\cap \pi_2^*[{X}_\s])=p_{2*}(p_1^*[{X}_\s]\cap [St'_s])$.

\begin{prop}\label{transversal_claim}
    If $\mathcal{O}$ is not dense in $P_{s} \mathcal{O}$, then 
    the intersection of $p_{1}^*\mathring{X}_{\mathcal{O}}$ and $St'_{s}$ at a point $(h,B_1,B_2)$ is transversal inside $\Tilde{\mathfrak{g}}\times \B$ if $B_2$ is in the open orbit in $P_s\s$.

\end{prop}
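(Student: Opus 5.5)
We verify transversality by a tangent space dimension count at a point $(h,B_1,B_2)$ of the intersection with $B_2$ in the open orbit $\s'$ of $P_s\s$. Because $\s$ is not dense in $P_s\s$, we have $B_1\in\s$ and $B_2\in\s'\neq \s$, so $B_1\neq B_2$ and $B_1\sim^s B_2$. Hence we are on the open part of $St'_s$. Near this point, $St'_s$ is the total space of the bundle $\mathfrak{b}_1\cap\mathfrak{b}_2$ over the open orbit $\{B_1\sim^s B_2\}$ of $Y_s$. Also, $p_1^*\mathring{X}_\s$ is locally the total space of $\mathfrak{h}\cap\mathfrak{b}_1$ over $\s\times\B$. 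Both are smooth, and both sit inside the ambient smooth variety $\Tilde{\mathfrak{g}}\times\B$ of dimension $\dim G+\dim\B$. Transversality at the point is equivalent to
\[
\dim\big(T p_1^*\mathring{X}_\s\cap T St'_s\big)=\dim p_1^*\mathring{X}_\s+\dim St'_s-\dim(\Tilde{\mathfrak{g}}\times\B),
\]
so the plan is to compute the left side.

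First we identify the set-theoretic intersection near the point. Its points are triples $(h',B_1',B_2')$ with $B_1'\in\s$, $B_1'\sim^s B_2'$, and $h'\in\mathfrak{h}\cap\mathfrak{b}_1'\cap\mathfrak{b}_2'$. By Corollary \ref{cor: non dense}, $\mathfrak{h}\cap\mathfrak{b}_1'=\mathfrak{h}\cap\mathfrak{p}_s(\mathfrak{b}_1')\subseteq\mathfrak{p}_s(\mathfrak{b}_1')$. Since $\mathfrak{p}_s(\mathfrak{b}_1')=\mathfrak{p}_s(\mathfrak{b}_2')$, this gives $\mathfrak{h}\cap\mathfrak{b}_1'\subseteq\mathfrak{h}\cap\mathfrak{p}_s(\mathfrak{b}_2')$, and intersecting with $\mathfrak{b}_2'$ yields $\mathfrak{h}\cap\mathfrak{b}_1'\cap\mathfrak{b}_2'=\mathfrak{h}\cap\mathfrak{b}_1'\cap\mathfrak{b}_2'$ with no further restriction. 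So the intersection is a vector bundle over the smooth variety $\{(B_1',B_2'):B_1'\in\s,\ B_2'\in\s',\ B_1'\sim^sB_2'\}$. This base maps isomorphically to an open piece of $P_s\s'$ fibered over $\s$. Its dimension is $\dim\s+1$, because $\s$ is not dense in $P_s\s$ and so each $B_1'$ has a $\p^1$ of partners, an open subset of which lie in $\s'$.

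For the fiber we use Proposition \ref{dimnesion calculation}: over $B_2'\in\s'$ the fiber $\mathfrak{h}\cap\mathfrak{b}_1'\cap\mathfrak{b}_2'$ equals $(\mathfrak{h}\cap\mathfrak{p}_s(\mathfrak{b}_2'))\cap\mathfrak{b}_2'=\mathfrak{h}\cap\mathfrak{b}_2'$. This has rank $\dim H-\dim\s'=\dim H-\dim\s-1$. So the intersection is smooth of dimension $\dim H$.

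For the expected dimension we compute the pieces. We have $\dim p_1^*\mathring X_\s=\dim H+\dim\B$, and $\dim St'_s=\dim G$, since the base $Y_s$ has dimension $\dim\B+1$ and the fiber has dimension $\dim\mathfrak b-1$. Subtracting $\dim G+\dim\B$ from their sum gives exactly $\dim H$. Hence the intersection has the expected dimension and is smooth.

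The main obstacle is upgrading this to an equality of tangent spaces: showing that the scheme-theoretic intersection is reduced, so that its tangent space at the point has dimension $\dim H$ rather than merely bounding the dimension of the set. I would handle this by writing both tangent spaces explicitly. Using the $H$-action and the vector bundle structures, the tangent space splits into a base part and a fiber part. The base part is $T(\s\times\B)\cap TY_s$, which has dimension $\dim\s+1$ because $\s\times\B$ is transverse to the $\p^1$-fibers of $Y_s\to P_s\backslash G$. The fiber part is $(\mathfrak{h}\cap\mathfrak{b}_1)\cap(\mathfrak{b}_1\cap\mathfrak{b}_2)=\mathfrak{h}\cap\mathfrak b_2$. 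The point is that the linearized condition in the fiber directions coincides with the set-theoretic one by the same Corollary \ref{cor: non dense} identity, applied along a first-order deformation of $B_1$ inside $\s$. If the base-direction transversality fails, I would instead reduce along $P_s\backslash G$: both varieties are pulled back from a situation over $\mathcal{P}_s$, which reduces the claim to transversality of $\s$ with a $\p^1$ fiber at a point where the orbit $\s'$ is open.
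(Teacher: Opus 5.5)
Your argument is correct, and its core is the paper's computation in dual form. The paper shows directly that $Tp_1^*\mathring{X}_{\s}+TSt'_s$ is the whole ambient tangent space. The $\mathfrak{g}/\mathfrak{b}_2$ and $\mathfrak{g}/\mathfrak{b}_1$ directions are filled by the two subvarieties. So everything reduces to $\mathfrak{b}_1\cap\mathfrak{b}_2+\mathfrak{h}\cap\mathfrak{b}_1=\mathfrak{b}_1$, i.e.\ $\mathfrak{h}\cap\mathfrak{b}_1\not\subseteq\mathfrak{b}_2$, which follows from $\dim(\mathfrak{h}\cap\mathfrak{b}_2)<\dim(\mathfrak{h}\cap\mathfrak{b}_1)$.

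You instead bound the intersection of the tangent spaces, using the same two inputs. The first is transversality of $\s\times\B$ and $Y_s$ in $\B\times\B$. The right reason for it is that the first projection $Y_s\to\B$ is a $\p^1$-bundle, so $(\s\times\B)\cap Y_s$ is the preimage of $\s$ under a smooth map. The second is that $\mathfrak{h}\cap\mathfrak{b}_1\cap\mathfrak{b}_2$ has codimension one in $\mathfrak{h}\cap\mathfrak{b}_1$, which is equivalent to the paper's spanning condition.

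The ``main obstacle'' you describe does not actually exist. Projecting to $T(\B\times\B)$ gives a left exact sequence
\[
0\to(\mathfrak{h}\cap\mathfrak{b}_1)\cap(\mathfrak{b}_1\cap\mathfrak{b}_2)\to Tp_1^*\mathring{X}_{\s}\cap TSt'_s\to T(\s\times\B)\cap TY_s .
\]
Hence $\dim(Tp_1^*\mathring{X}_{\s}\cap TSt'_s)\le(\dim H-\dim\s-1)+(\dim\s+1)=\dim H$. The reverse inequality is automatic, since the sum of the two tangent spaces lies in the ambient one. That is the whole proof. You need no splitting of the tangent space (which you assert but do not justify), no reducedness of the scheme-theoretic intersection, no first-order deformation argument, and no fallback through $P_s\backslash G$.

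Your set-theoretic description of the intersection is then a consequence rather than an input: it is a bundle with fiber $\mathfrak{h}\cap\mathfrak{b}_2'$, smooth of dimension $\dim H$. That is the kind of information used in Lemma \ref{coef open}, but for the proposition itself the paper's spanning check is shorter. The kernel bound also only needs $\dim(\mathfrak{h}\cap\mathfrak{b}_1\cap\mathfrak{b}_2)\le\dim(\mathfrak{h}\cap\mathfrak{b}_2)$, so, as in the paper, Corollary \ref{cor: non dense} is not required.

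A small slip: your displayed identity $\mathfrak{h}\cap\mathfrak{b}_1'\cap\mathfrak{b}_2'=\mathfrak{h}\cap\mathfrak{b}_1'\cap\mathfrak{b}_2'$ is a tautology. You mean $=\mathfrak{h}\cap\mathfrak{b}_2'$, as you correctly write in the next paragraph.
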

\begin{proof}
    Let $\mathfrak{b}_1,\mathfrak{b}_2$ be the Lie algebras of $B_1,B_2$ respectively.

     The tangent space of $p_1^*\mathring{X}_{\mathcal{O}}$ at $(h,B_1,B_2)$ is $$\mathfrak{h}\cap \mathfrak{b_{1}}\oplus (\mathfrak{h}+\mathfrak{b_{1}})/\mathfrak{b_{1}}\oplus \mathfrak{g}/\mathfrak{b_{2}}$$
    
    For $St'_{s}$ the tangent space is 
    $$\mathfrak{b}_{1} \cap \mathfrak{b}_{2} \oplus \Delta(\mathfrak{g}) $$
    Here $\Delta:\mathfrak{g}\rightarrow \mathfrak{g}/\mathfrak{b}_{1} \oplus \mathfrak{g}/\mathfrak{b}_{2}$ is the diagonal.
    We wish to show that their sum is $$\mathfrak{b}_{1}\oplus \mathfrak{g}/\mathfrak{b}_{1} \oplus \mathfrak{g}/\mathfrak{b}_{2}$$
    The third coordinate is exhausted by $p_1^*\mathring{X}_{\mathcal{O}}$, the second one is exhausted (given the third one) by $St'_{s}$, it remains to be show that:

    $$ \mathfrak{b}_{2}\cap \mathfrak{b}_{1} + \mathfrak{h} \cap \mathfrak{b}_{1}=\mathfrak{b}_{1} $$

    Since $B_{1}$ and $B_{2}$ are in relative position $s$, $\mathfrak{b}_{2}\cap \mathfrak{b}_{1}$ has codimension one in $\mathfrak{b}_{1}$, it is enough to show:

    $$ \mathfrak{h} \cap \mathfrak{b}_{1} \nsubseteq \mathfrak{h} \cap\mathfrak{b}_{2}$$

    As $\mathcal{O}$ is not dense in $P_{s}\mathcal{O}$, and $B_{2}\in P_{s}\mathcal{O}$ is in the dense orbit of $P_s\mathcal{O}$ we have $\mathrm{dim}(\mathfrak{h} \cap \mathfrak{b}_{2}) < \mathrm{dim}(\mathfrak{h} \cap \mathfrak{b}_{1})$.

    
\end{proof}

In fact, the intersection is transversal also in a slightly more general situation. 

\begin{prop}\label{transversal_claim with extra steps} 
    The intersection of $p_{1}^*\mathring{X}_{\mathcal{O}}$ and $St'_{s}$ at the point $(h,B_1,B_2)$ is transversal inside $\Tilde{\mathfrak{g}}\times \B$ if one of the following conditions holds: 
    \begin{enumerate}
        \item $P_s\s=\s$ and $B_2\neq B_1$.
        \item $B_2=B_1$ and either $P_s\s=\s$ or $\mathcal{O}$ is not dense in $P_{s} \mathcal{O}$.
    \end{enumerate}
    
\end{prop}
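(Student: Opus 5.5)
The plan is to reuse the tangent space computation from the proof of Proposition \ref{transversal_claim}. Recall that at a point $(h,B_1,B_2)$ the tangent space of $p_1^*\mathring{X}_{\s}$ is $\mathfrak{h}\cap\mathfrak{b}_1\oplus(\mathfrak{h}+\mathfrak{b}_1)/\mathfrak{b}_1\oplus\mathfrak{g}/\mathfrak{b}_2$. The ambient tangent space is $\mathfrak{b}_1\oplus\mathfrak{g}/\mathfrak{b}_1\oplus\mathfrak{g}/\mathfrak{b}_2$. As before, the third coordinate is exhausted by $p_1^*\mathring{X}_\s$, and the second coordinate is then exhausted by the $\Delta(\mathfrak{g})$ part of $T St'_s$. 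In each case it remains to exhaust the first coordinate $\mathfrak{b}_1$.

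\emph{Case 1: $P_s\s=\s$ and $B_2\neq B_1$.} Here $B_1\sim^s B_2$, and exactly as in Proposition \ref{transversal_claim} it suffices to show $\mathfrak{h}\cap\mathfrak{b}_1\nsubseteq\mathfrak{b}_2$. Let $P$ be the common parabolic of type $s$ containing $B_1,B_2$. Since $P_s\s=\s$, the whole fiber $P/B_1\cong\p^1$ lies in the single $H$-orbit $\s$. Hence $(H\cap P)^\circ$ acts transitively on this $\p^1$ through the Levi quotient $P\to PGL_2$. The only connected subgroup of $PGL_2$ acting transitively on $\p^1$ is $PGL_2$ itself, so the image of $\mathfrak{h}\cap\mathfrak{p}$ in $\mathfrak{pgl}_2$ is everything. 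The stabilizer of the point $B_1$ then maps onto the Borel of $\mathfrak{pgl}_2$ corresponding to $B_1$. It follows that $\mathfrak{h}\cap\mathfrak{b}_1$ surjects onto the image of $\mathfrak{b}_1$ in $\mathfrak{pgl}_2$. That image is not contained in the image of $\mathfrak{b}_2$, since the two Borels of $\mathfrak{pgl}_2$ are distinct. This gives $\mathfrak{h}\cap\mathfrak{b}_1\nsubseteq\mathfrak{b}_2$.

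\emph{Case 2: $B_2=B_1$.} Here I need the tangent space of $St'_s$ at a diagonal point $(h,B_1,B_1)$ with $\alpha_{B_1}(h)=0$ (Lemma \ref{simple reflection steinberg}). I plan to compute it using the description of $St'_s$ as the total space of a rank $\dim\mathfrak{b}-1$ bundle over the smooth $\p^1$-bundle $Y_s$ over $P_s\backslash G$, whose fiber over a diagonal point is $\ker\alpha_{B_1}\subset\mathfrak{b}_1$. The expected answer is
\[
\{(x,\xi_1,\xi_2)\;:\;x\in\mathfrak{b}_1,\ \xi_1-\xi_2\in\mathfrak{p}_s(\mathfrak{b}_1)/\mathfrak{b}_1,\ \alpha_{B_1}(x)=\lambda_h(\xi_1-\xi_2)\}
\]
for some linear form $\lambda_h$ on the line $\mathfrak{p}_s(\mathfrak{b}_1)/\mathfrak{b}_1$. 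To obtain it, I would differentiate the condition $h\in\mathfrak{b}_1\cap\mathfrak{b}_2(\epsilon)$ along a curve $B_2(\epsilon)$ leaving the diagonal in the $\p^1$ direction. The fiber dimension $\dim\mathfrak{b}-1$ is consistent with $\dim St'_s=\dim G$.

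With this description, there are two ways to reach the missing direction $\alpha_{B_1}$ in the first coordinate. One is to find an element of $\mathfrak{h}\cap\mathfrak{b}_1$ on which $\alpha_{B_1}\neq 0$. When $P_s\s=\s$, the $\mathfrak{pgl}_2$ argument of Case 1 shows that $\mathfrak{h}\cap\mathfrak{b}_1$ surjects onto a Borel of $\mathfrak{pgl}_2$, hence contains an element of nonzero weight $\alpha$. The other is to vary $\xi_1-\xi_2$ in the $\p^1$ direction, which is free because the third coordinate is already exhausted by $p_1^*\mathring{X}_\s$, and then correct the second coordinate using $(\mathfrak{h}+\mathfrak{b}_1)/\mathfrak{b}_1$. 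I expect this second mechanism to be the one used when $\s$ is not dense in $P_s\s$. In that case Corollary \ref{cor: non dense} gives $\mathfrak{h}\cap\mathfrak{p}_s(\mathfrak{b})=\mathfrak{h}\cap\mathfrak{b}$ on $\s$, so that $(\mathfrak{h}+\mathfrak{b}_1)\cap\mathfrak{p}_s(\mathfrak{b}_1)=\mathfrak{b}_1$. Consequently the $\mathfrak{p}_s/\mathfrak{b}_1$-component of $\xi_1-\xi_2$ is not absorbed by the tangent space of $p_1^*\mathring{X}_\s$, and it can only be supplied by $St'_s$. The coupling $\alpha_{B_1}(x)=\lambda_h(\xi_1-\xi_2)$ then transports this freedom into the first coordinate, provided $\lambda_h\neq 0$.

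The main obstacle is the precise first-order computation of $T St'_s$ at diagonal points, in particular determining $\lambda_h$ and checking that it is nonzero where needed. I would first try to do this in the rank-one case $G=PGL_2$, reducing via the Levi of $P_s$, and then pull back to $G$.
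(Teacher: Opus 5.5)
\paragraph{The non-dense case of Case~2 has a gap.} Your Case~1, and Case~2 when $P_s\s=\s$, are correct and follow the paper. The paper also reduces to $\mathfrak{h}\cap\mathfrak{b}_1\not\subseteq\mathfrak{b}_2$, resp.\ to $\alpha_{B_1}\not\equiv 0$ on $\mathfrak{h}\cap\mathfrak{b}_1$. Your surjection of $\mathfrak{h}\cap\mathfrak{p}$ onto $\mathfrak{pgl}_2$ is a cleaner version of its three-Borel dimension count.

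The gap is Case~2 with $\s$ not dense in $P_s\s$. There you rely on $\lambda_h\neq 0$, which you leave open and which is in fact false in types T1 and N1. Carry out your first-order computation; it is visible already in $\mathfrak{sl}_2$, where for $B_2=\exp(\epsilon f)B_1\exp(-\epsilon f)$ one gets $\mathfrak{b}_1\cap\mathfrak{b}_2=\C\,(e-\tfrac{\epsilon}{2}\operatorname{diag}(1,-1))$. It shows that $\lambda_h\neq 0$ if and only if the image of $h$ in $\bar{\mathfrak{b}}_1$ (the image of $\mathfrak{b}_1$ in $\mathfrak{pgl}_2$) is nonzero. Since $\alpha_{B_1}(h)=0$, this image lies in the root line $\bar{\mathfrak{n}}_1\cong\mathfrak{g}_\alpha$.

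Let $H_s$ be the image of $H\cap P$ in $\operatorname{Aut}(P/B_1)\cong PGL_2$. In types T1 and N1, $H_s^\circ$ is a torus fixing the point $B_1$. So $\mathfrak{h}\cap\mathfrak{b}_1=\mathfrak{h}\cap\mathfrak{p}$ maps onto a Cartan subalgebra of $\bar{\mathfrak{b}}_1$, on which $\alpha$ is injective. Hence every $h\in\mathfrak{h}\cap\mathfrak{b}_1$ with $\alpha_{B_1}(h)=0$ has zero image in $\bar{\mathfrak{n}}_1$. These are exactly the points of the diagonal part of the intersection, so $\lambda_h=0$ there and your mechanism gives nothing. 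The equality $(\mathfrak{h}+\mathfrak{b}_1)\cap\mathfrak{p}=\mathfrak{b}_1$ that you draw from Corollary~\ref{cor: non dense} has no bearing on $\lambda_h$.

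What works in these types is the paper's mechanism: $\alpha_{B_1}$ is nonzero on $\mathfrak{h}\cap\mathfrak{b}_1$. This holds because $\mathfrak{h}\cap\mathfrak{b}_1$ surjects onto a Cartan of $\bar{\mathfrak{b}}_1$ (types T1, N1), or onto $\bar{\mathfrak{b}}_1$ itself (type U1 with $H_s$ a Borel).

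\paragraph{Where your normal direction is needed.} Your extra direction is still a genuine point. The paper writes $T St'_s$ at a diagonal point as $\mathfrak{b}^s\oplus\Delta(\mathfrak{g})$, which is one dimension short. The missing direction is needed in the remaining subcase, type U1 with $H_s^\circ$ unipotent, where the paper's appeal to the rank-one classification is wrong.

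Take $G=SL_2$, $H=U$, and $\s$ the closed $B$-orbit, i.e.\ the point $B_0\in\B$. Then $\mathfrak{h}\cap\mathfrak{b}_0=\mathfrak{u}$ and $\alpha_{B_0}|_{\mathfrak{u}}=0$. Yet $p_1^*\mathring{X}_\s$ and $St'_s$ meet transversally at $(ce,B_0,B_0)$ for $c\neq 0$, precisely because $\lambda_{ce}\neq 0$. At $h=0$ transversality fails, since the intersection components $\{(0,B_0,B_2)\}$ and $\{(ce,B_0,B_0)\}$ meet there.

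So a complete proof of the non-dense case must split into two subcases:
\begin{itemize}
\item If $\operatorname{Lie}(H_s)$ contains a Cartan subalgebra, use $\alpha_{B_1}\not\equiv 0$ on $\mathfrak{h}\cap\mathfrak{b}_1$.
\item If $\operatorname{Lie}(H_s)$ is the nilradical of $\bar{\mathfrak{b}}_1$, use $\lambda_h\neq 0$. This holds only for generic $h$, so in that subcase only a generic-point statement is true.
\end{itemize}
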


\begin{proof}
    \begin{enumerate}
        \item Like in the proof of Proposition \ref{transversal_claim}, it is enough to check that     $ \mathfrak{h} \cap \mathfrak{b}_{1} \neq \mathfrak{h} \cap\mathfrak{b}_{2}$.

    Let $B_1\subset P_s$ be the parabolic corresponding to $s$ and let $U_s$ be the unipotent radical of $P_s$. We have $\s=B_1H=P_sH=U_sH$ because in type $G$, $U_s\backslash(H\cap P_s)=U_s\backslash P_s$.

    Let $\mathfrak{u}_s$ be the Lie algebra of $U_s$.
    
    Like in the proof of Proposition \ref{dimnesion calculation} we have $dim(\mathfrak{h}\cap \mathfrak{u}_s)=dim(\mathfrak{h}\cap p_s)-3=dim(\mathfrak{h}\cap b_1)-2$. Notice that if $B_1,B_2,B_3$ are three different Borels in $P_s$ then $\mathfrak{b}_1\cap \mathfrak{b}_2\cap \mathfrak{b}_3=u_s$. Consider $\mathfrak{h}\cap \mathfrak{b}_1\subset \mathfrak{h}\cap p_s$, it has codimension 1. The space  $\mathfrak{h}\cap \mathfrak{b}_1\cap \mathfrak{b}_2\subset \mathfrak{h}\cap \mathfrak{p}_s$ has codimension at most 2, and if  it has codimension 1 then $\mathfrak{h}\cap \mathfrak{b}_1\cap \mathfrak{b}_2\cap \mathfrak{b}_3\subset \mathfrak{h}\cap \mathfrak{p}_s$ would have codimension at most 2. But we know that it has codimension 3. Thus  $\mathfrak{h}\cap \mathfrak{b}_1\cap\mathfrak{b}_2$ has codimension 2 and in particular $\mathfrak{h}\cap \mathfrak{b}_1\neq \mathfrak{h}\cap\mathfrak{b}_2$.

    \item  We denote $B=B_1=B_2$ and $\mathfrak{b}=\mathfrak{b}_1=\mathfrak{b}_2$. Let $\alpha$ be the simple root corresponding to $s$. Recall Definition \ref{def:functionl alpha_s}, we have $\alpha_B:\mathfrak{b}\rightarrow \C$. Let $\mathfrak{b}^s$ be its kernel. By Lemma \ref{simple reflection steinberg} $\mathfrak{b}^s$ is the analogue for $\mathfrak{b}_1\cap \mathfrak{b}_2$ in the case of $\mathfrak{b}_1= \mathfrak{b}_2$. 

    Like in the proof of Proposition \ref{transversal_claim}, we need to show that $\mathfrak{b}^s+\mathfrak{h}\cap\mathfrak{b}=\mathfrak{b}$. It is enough to show $\mathfrak{h}\cap\mathfrak{b}\subsetneq \mathfrak{b}^s$ i.e. $\alpha_B$ is not trivial on $\mathfrak{h}\cap\mathfrak{b}$. We know that $U_s\backslash P_s\cap H$ is a spherical subgroup of the rank-one group $U_s\backslash P_s$. And we know from the classification of such (see \cite{knop}) that $\alpha_B$ is not trivial on the Lie algebra of $U_s\backslash B\cap H$, this implies that $\alpha_B$ is not trivial on $\mathfrak{h}\cap\mathfrak{b}$.   
        
    \end{enumerate}
\end{proof}

\begin{Remark}\label{remark: non_transversal}
    Notice that Proposition \ref{transversal_claim} and part two of Proposition \ref{transversal_claim with extra steps} cover almost all cases where $\s$ is not dense in $P_s\s$. The only remaining case is the one where $P_s\s$ contains three Borel orbits, $B_2,B_1$ are in different ones and none of them is in the open orbit. In this case the intersection of $p_{1}^*\mathring{X}_{\mathcal{O}}$ and $St'_{s}$ is in fact not transversal at $(h,B_1,B_2)$. It is still true that all the irreducible components of the intersection have the same dimension. This follows from the fact that the dimension of the tangent space at such a point $(h,B_1,B_2)$ is larger by one than in the generic case and the dimension of the fiber over $\B\times \B$ is smaller by one than in the generic case.
\end{Remark}

\begin{lemma}\label{coef open}
    Let $\mathcal{T}$ be the dense orbit of $P_{s}\mathcal{O}$, the coefficient of $\mathcal{T}$ in $s\cdot[\mathcal{O}]$ is $1$ for types $U_{1}, T_{1}$, and $2$ for type $N_{1}$.
\end{lemma}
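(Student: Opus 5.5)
We compute the coefficient of $[\mathcal{T}]$ in $s\cdot[\mathcal{O}]=p_{2*}(p_1^*[X_\s]\cap[St'_s])$ over a generic point of $X_{\mathcal{T}}$. Since $\mathcal{T}$ is the dense orbit of $P_s\s$ and $\s$ is not dense there, $\dim\mathcal{T}=\dim\s+1$. Hence by Proposition \ref{dimnesion calculation} the generic fiber of $X_{\mathcal{T}}\to\mathcal{T}$ is the space $(\mathfrak{h}\cap\mathfrak{b})_{B_2}$, whose dimension is one less than that of $(\mathfrak{h}\cap\mathfrak{b})_{B_1}$ for $B_1\in\s$.

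Fix a generic $(h,B_2)\in\mathring{X}_{\mathcal{T}}$ and consider the fiber of $p_1^*X_\s\cap St'_s$ over it under $p_2$. By the corollary following Proposition \ref{prop:support}, only points with $B_1\in\s$, i.e. with $(h,B_1)\in\mathring{X}_\s$, matter. The condition $(h,B_1,B_2)\in St'_s$ with $B_1\neq B_2$ forces $B_1\sim^s B_2$. So $B_1$ lies in the $\p^1$ of Borels $B'$ with $B'\subset P$, where $P$ is the parabolic of type $s$ containing $B_2$, and additionally $h\in\mathfrak{b}_1$.

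Since $h\in\mathfrak{h}\cap\mathfrak{b}_2\subset\mathfrak{h}\cap\mathfrak{p}$, which by Corollary \ref{cor: non dense} equals $\mathfrak{h}\cap\mathfrak{b}_1$ for every $B_1\in\s\cap \p^1$, the condition $h\in\mathfrak{b}_1$ is automatic. Thus the fiber is identified with the finite set $\s\cap\p^1$, where $\p^1=P/B$ carries the action of $H\cap P$. This set has
\begin{itemize}
\item[] one point in type U1 (the $H\cap P$ orbits on $\p^1$ are a point and its complement),
\item[] one point in type T1 (with $\s$ and $s\s$ being distinct points, only one lies in $\s$),
\item[] two points in type N1 (the orbit $\s=s\s$ is a two-point orbit).
\end{itemize}
These counts come from Knop's rank-one classification in Definition \ref{type_rank_1_spheric}: $\s\cap\p^1$ is the non-open $(H\cap P)$-orbit on $\p^1$, and its cardinality is read off from how many $B$-orbits of $P_s\s$ it meets.

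Next, by Proposition \ref{transversal_claim}, at each such point the intersection $p_1^*\mathring{X}_\s\cap St'_s$ is transversal, because $B_2$ lies in the open orbit $\mathcal{T}$. Each point therefore contributes multiplicity one. Since the fiber is finite and $p_2$ is generically finite of degree equal to the fiber cardinality over $X_{\mathcal{T}}$, pushing forward gives coefficient $|\s\cap \p^1|$. This yields $1$ for U1 and T1, and $2$ for N1.

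One must also exclude contributions from the diagonal part $B_1=B_2$ of $St'_s$ (Lemma \ref{simple reflection steinberg}). These require $B_2\in\s$, which is impossible since $B_2\in\mathcal{T}\neq\s$.

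The main obstacle is verifying the fiber count via the rank-one classification, in particular that in type N1 both points of the finite $H\cap P$ orbit on $P/B$ lie in the single $B$-orbit $\s$. I would argue this through the element of $N_{H\cap P}$ swapping the two fixed points of the torus in the normalizer case. I would also confirm that the generic point genuinely has reduced preimage, which follows from transversality.
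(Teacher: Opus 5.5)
Your proposal is correct and follows essentially the same route as the paper. Transversality from Proposition \ref{transversal_claim} together with Theorem \ref{positive coef} reduces the coefficient to the cardinality of the generic fiber over $\mathring{X}_{\mathcal{T}}$, and the inclusion $\mathfrak{h}\cap\mathfrak{b}_2\subseteq\mathfrak{h}\cap\mathfrak{p}_s=\mathfrak{h}\cap\mathfrak{b}_1$ shows that every $B_1\in\s$ in relative position $s$ to $B_2$ lies in that fiber. You also spell out the rank-one count of $\s\cap\p^1$ and the exclusion of the diagonal part of $St'_s$, both of which the paper leaves implicit.
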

\begin{proof}
    By Theorem \ref{positive coef} and Proposition \ref{transversal_claim}, the intersection in Borel Moore homology $p_1^*[{X}_\s]\cap [St'_s]$ restricted to $p_2^{-1}(X_\mathcal{T})$, is the fundamental class of the irreducible component of the set theoretic intersection over $p_2^{-1}(X_\mathcal{T})$.
    
    Therefore the coefficient of $[\mathcal{T}]$ in $p_{2*}(p_1^*[{X}_\s]\cap [St'_s])$ is equal to the size of the fiber over a generic point $(h,B_{2})\in \mathring{X}_{\mathcal{T}}$. There are either one or two Borel subgroups $B_{1}$ such that $B_{1}\in \mathcal{O}$ and $B_{1},B_{2}$ are in a relative position $s$. There is exactly one for types $U_{1},T_{1}$ and two for type $N_{1}$.

    We claim that any such $B_{1}$, $(h,B_{1},B_{2})\in I_{\s,s}$. We need to check that $h\in \mathfrak{h}\cap \mathfrak{b}_{1}$, i.e. $\mathfrak{h}\cap \mathfrak{b}_{2}\subseteq \mathfrak{h} \cap \mathfrak{b}_{1}$.
    Let $\mathfrak{b_1},\mathfrak{b_2},\mathfrak{p}_s$ be the Lie algebras of $B_1,B_2$ and the parabolic $P_s$ corresponding to $s$ that contains them.
    
    As we saw before:
    $$\mathfrak{h}\cap \mathfrak{b}_{2}\subseteq \mathfrak{h} \cap \mathfrak{p}_{s}=\mathfrak{h} \cap \mathfrak{b}_{1}$$
\end{proof}

\begin{lemma}\label{coef same}
    Let $\mathcal{T}=\mathcal{O}$, the coefficient of $\mathcal{T}$ in ${s}\cdot [\mathcal{O}]$ is $0$ for types $U_{1},T_{1}$ and $1$ for type $N_{1}$.
\end{lemma}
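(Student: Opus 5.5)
The coefficient of $[\mathcal{O}]$ in $s\cdot[\mathcal{O}]=p_{2*}(p_1^*[X_\s]\cap[St'_s])$ comes from irreducible components of the intersection that lie generically over points $(h,B_2)$ with $B_2\in\mathcal{O}$. By the Corollary after Proposition \ref{prop:support}, only components in the closure of $\pi_0^*St'_s\cap\pi_2^*\mathring{X}_\s$ matter, so $B_1\in\mathcal{O}$ as well. Since $B_1,B_2\in P_s$-related position and both lie in $\mathcal{O}$, there are two possibilities for a generic point: either $B_1=B_2$ (the diagonal part $X_2$ of $St'_s$ from Lemma \ref{simple reflection steinberg}), or $B_1\sim^s B_2$ with $B_1\neq B_2$ both in $\mathcal{O}$. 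I will analyse these two contributions separately, and the answer should be that the diagonal part never contributes while the off-diagonal part contributes exactly in type $N_1$.

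\emph{Diagonal contribution.} Over the diagonal the intersection is $\{(h,B,B): B\in\mathcal{O},\,h\in\mathfrak{h}\cap\mathfrak{b},\,\alpha_B(h)=0\}$. By part 2 of Proposition \ref{transversal_claim with extra steps} (as $\mathcal{O}$ is not dense in $P_s\mathcal{O}$), the intersection is transversal there. Its image under $p_2$ is contained in $\{(h,B):\alpha_B(h)=0\}\cap \mathring{X}_\s$. Since $\alpha_B$ is nontrivial on $\mathfrak{h}\cap\mathfrak{b}$ (shown in that proof), this image has codimension one in $X_\s$. So it contributes nothing to the top-dimensional class $[\mathcal{O}]$. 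I will also check that the diagonal piece has dimension $\dim H$, so that it is a genuine component: its fibre over $(B,B)$ has dimension $\dim(\mathfrak{h}\cap\mathfrak{b})-1$, which together with the fibre of the $\B$ factor is consistent. It is simply crushed by $p_2$, or lands in lower dimension.

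\emph{Off-diagonal contribution.} For types $U_1$ and $T_1$, the fibre of $P_s\to P_s\backslash G$ through $B_2\in\mathcal{O}$ meets $\mathcal{O}$ only at $B_2$ itself, because $P_s\mathcal{O}\cap$ line $\cong\p^1$ is the open orbit plus a single point of $\mathcal{O}$ (and of $s\mathcal{O}$ for $T_1$, which is a different orbit). So there is no off-diagonal $B_1\in\mathcal{O}$, and the coefficient is $0$.

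For type $N_1$, the $\p^1$ meets $\mathcal{O}$ in exactly two points, $B_2$ and one other point $B_1$. I will show that $(h,B_1,B_2)$ lies in the intersection for generic $h\in\mathfrak{h}\cap\mathfrak{b}_2$. This reduces to showing $\mathfrak{h}\cap\mathfrak{b}_2\subseteq\mathfrak{b}_1$, which follows from Corollary \ref{cor: non dense}: $\mathfrak{h}\cap\mathfrak{b}_2=\mathfrak{h}\cap\mathfrak{p}_s=\mathfrak{h}\cap\mathfrak{b}_1$. Thus $p_2$ is generically injective (degree one) from this component onto $X_\s$.

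It remains to check multiplicity one, i.e. transversality at such a point. Here $B_1\neq B_2$ are in the same non-open orbit, a situation not covered by Propositions \ref{transversal_claim} or \ref{transversal_claim with extra steps}. This is the main obstacle. Following the tangent computation in Proposition \ref{transversal_claim}, transversality is equivalent to $\mathfrak{h}\cap\mathfrak{b}_1\not\subseteq\mathfrak{b}_2$. That fails here, since the two spaces are equal. So the intersection is not transversal, and I must instead compute the intersection multiplicity via Theorem \ref{positive coef}.

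To do this I would reduce to the rank-one group $U_s\backslash P_s$ acting on $\p^1$, with spherical subgroup of type $N$ (normaliser of a torus in $PGL_2$). There the excess is one dimension, and a local computation in coordinates should show the multiplicity is $1$. The alternative is to use the $\Z/2$ symmetry swapping the two points, together with the quadratic relation $s^2=1$, to pin down the coefficient: the relation forces the diagonal-plus-off-diagonal coefficient to be consistent with $(s+1)$ acting as computed in Lemma \ref{coef open}. Specifically, applying $s$ twice and using that the coefficient of the open orbit is $2$, the coefficient $c$ of $[\mathcal{O}]$ must satisfy $c^2+2d=1$, where $d$ is the coefficient of $[\mathcal{O}]$ in $s\cdot[\mathcal{O}']$. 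Combined with $c\ge 1$ from positivity (Theorem \ref{positive coef}), this forces $c=1$ provided one knows $d=0$. That would be the fallback if the local multiplicity computation proves too messy.
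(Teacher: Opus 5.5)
Your reduction matches the paper's up to the multiplicity question. Only points with $B_1,B_2\in\mathcal O$ matter, and the diagonal cannot dominate $X_{\mathcal O}$ because $\alpha_B\neq0$ on $\mathfrak h\cap\mathfrak b$. In types $U_1,T_1$ the $\mathbb P^1$ through $B_2$ meets $\mathcal O$ only in $B_2$. In type $N_1$ the second point $B'$ gives a fibre because $\mathfrak h\cap\mathfrak b_1=\mathfrak h\cap\mathfrak p_s=\mathfrak h\cap\mathfrak b_2$.

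\textbf{The gap is the multiplicity of this off-diagonal component in type $N_1$.} You declare it non-transversal and never compute it, and the non-transversality claim is wrong. The criterion $\mathfrak h\cap\mathfrak b_1\not\subseteq\mathfrak b_2$ read off from Proposition \ref{transversal_claim} is sufficient, not necessary, because that computation ignores the twisting of $T\tilde{\mathfrak g}$. Work inside $\mathfrak g\times\B\times\B$ and write $\bar y$ for the class of $y$ in $\mathfrak g/\mathfrak b_i$. The tangent space of $St'_s$ at $(h,B_1,B_2)$ is $\{([y,h]+w,\bar y,\bar y): y\in\mathfrak g,\ w\in\mathfrak b_1\cap\mathfrak b_2\}$. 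That of $p_1^*\mathring X_{\mathcal O}$ is $\{([y,h]+z,\bar y,\eta): y\in\mathfrak h,\ z\in\mathfrak h\cap\mathfrak b_1,\ \eta\in\mathfrak g/\mathfrak b_2\}$. They span the ambient tangent space if and only if $\mathfrak h\cap\mathfrak b_1+\mathfrak b_1\cap\mathfrak b_2+[\mathfrak b_1,h]=\mathfrak b_1$.

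Now choose a maximal torus in $B_1\cap B_2$ and a root vector $e_\alpha\in\mathfrak b_1\setminus\mathfrak b_2$. Then $[e_\alpha,h]\equiv-\alpha_{B_1}(h)e_\alpha$ modulo $\mathfrak b_1\cap\mathfrak b_2$. In type $N$ the image of $\mathfrak h\cap\mathfrak p_s$ in the rank-one quotient is a Cartan subalgebra lying in the image of $\mathfrak b_1$. So $\alpha_{B_1}(h)\neq0$ for generic $h$, and the intersection is transversal generically along this component. The component has the expected dimension $\dim H$, so there is no excess, and Theorem \ref{positive coef}(2) gives coefficient exactly $1$. As a check, for $G=SL_2$, $H=N(T)$, take coordinates $B_1=[1:u]$, $B_2=[w:1]$ and $\lambda$ along the fibre of $St'_s$. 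The intersection is locally $\{u=0,\ \lambda w=0\}$, which is reduced. The paper simply takes this multiplicity to be one via Theorem \ref{positive coef}. The transversality result it cites does not literally cover $B_1\neq B_2$ both in $\mathcal O$; the computation above is what justifies it.

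Neither fallback closes the gap. The rank-one local computation is only announced. The relation $c^2+2d=1$ needs $d=0$, where $d$ is the coefficient of $[\mathcal O]$ in $s\cdot[\mathcal O']$ for the dense orbit $\mathcal O'$. That is an excess intersection the paper never computes directly; it gets the $N_2$ formula from the $N_1$ one via $s^2=1$. Using it here is therefore circular, and without it you only learn that $c$ is a positive odd integer.

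Two smaller points.
\begin{itemize}
\item The diagonal locus $\{(h,B,B): B\in\mathcal O,\ h\in\mathfrak h\cap\mathfrak b,\ \alpha_B(h)=0\}$ has dimension $\dim H-1$, not $\dim H$. It is not a component at all, which is the clean reason it contributes nothing.
\item The nonvanishing of $\alpha_B$ on $\mathfrak h\cap\mathfrak b$, which you and the paper both use, needs the rank-one stabilizer image to contain a torus. It holds in types $T$ and $N$, so your $T_1$ and $N_1$ arguments are fine. It fails in type $U$ when that image is unipotent. For $G=SL_2$, $H=U$ and $\mathcal O$ the closed orbit (type $U_1$), $\{(h,B,B): h\in\mathfrak u\}$ is a proper component of $p_1^*X_{\mathcal O}\cap St'_s$ mapping isomorphically onto $X_{\mathcal O}$. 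By Theorem \ref{positive coef}(1) the coefficient of $[\mathcal O]$ is then positive; a local computation gives $s\cdot[\mathcal O]=[\mathcal O]+[s\mathcal O]$. So the $U_1$ clause needs that extra hypothesis.
\end{itemize}
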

\begin{proof}
     By Theorem \ref{positive coef} and Proposition \ref{transversal_claim with extra steps}, the product $p_1^*[{X}_\s]\cap [St'_s]$ over $p_2^{-1}(X_\s)$ is the fundamental class of the corresponding irreducible component.

    Therefore, the coefficient of $[\mathcal{O}]$ in $p_{2,*}(p_1^*[{X}_\s]\cap [St'_s])$ is equal to the size of the fiber of a generic point $(h,B_{2})\in \mathring{X}_{\mathcal{O}}$. In types $U_{1}$ and $T_{1}$, the only choice of $B_{1}$ such that $(B_1,B_2)\in Y_s$ and also $B_{1}\in \mathcal{O}$ is $B_{1}=B_{2}$. 
    
    For $(h,B_{1},B_{1})\in I_{\s,s}$, by Lemma \ref{simple reflection steinberg} we have $\alpha_{B_1} (h)=0$.

    Therefore, if the generic point $(h,B_{1})\in \mathring{X}_{\mathcal{O}}$ has a nonempty fiber in $I_{\s,s}$, then $\mathfrak{h} \cap \mathfrak{b}_1\subseteq ker(\alpha_{B_1})$. By the proof of Proposition \ref{transversal_claim with extra steps} we know this is impossible.






    Now we consider type $N_{1}$. In this case there are two Borel subgroups $B_{1}$ such that $(B_1,B_2)\in Y_s$ and $B_{1}\in \s$, we have $B_{1}=B_{2}$ and another unique $B_{1}=B'$. The same argument shows that for generic $(h,B_{2})\in \mathring{X}_{\mathcal{O}}$, we have $(h,B_2,B_2)\notin I_{\s,s}$. It remains to check whether a generic point has a fiber with $B_{1}=B'\neq B_2$. 
    
    Let $\mathfrak{b_1},\mathfrak{b_2},\mathfrak{p}_s$ be the Lie algebras of $B_1,B_2$ and the parabolic $P_s$ corresponding to $s$ that contains them.
    In this case, since $B_{1}\neq B_{2}$ we only need to check whether $\mathfrak{h}\cap \mathfrak{b}_{2}\subseteq \mathfrak{h} \cap \mathfrak{b}_{1}$, this is true since:

    $$ \mathfrak{h} \cap \mathfrak{b}_{1}=\mathfrak{h} \cap \mathfrak{p}_{s}=\mathfrak{h} \cap \mathfrak{b}_{2}$$
\end{proof}

Now we turn to the final coefficient.

\begin{lemma}
    Let $\mathcal{O},s$ be of type $T_{1}$ and let $\mathcal{T}=s\mathcal{O}$, the coefficient of $[\mathcal{T}]$ in ${s}\cdot[\mathcal{O}]$ is $1$.
\end{lemma}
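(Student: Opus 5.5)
The plan is to compute the coefficient of $[s\s]$ through the local intersection multiplicity of $p_1^*[X_\s]$ with $[St'_s]$ over a generic point of $\mathring{X}_{s\s}$, as in Lemmas \ref{coef open} and \ref{coef same}. Here we cannot invoke transversality directly. In type T1 the orbits $\s$ and $s\s$ are both non-dense in $P_s\s$, and a pair $(B_1,B_2)$ with $B_1\in\s$, $B_2\in s\s$ is exactly the non-transversal situation of Remark \ref{remark: non_transversal}.

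\textbf{Step 1: the set-theoretic fiber.} Fix a generic $(h,B_2)\in\mathring{X}_{s\s}$. Among the Borels $B_1$ in $P_sB_2\cong\p^1$ (those with $(B_1,B_2)\in Y_s$), exactly one lies in $\s$. I check that $(h,B_1,B_2)\in I_{\s,s}$. Both $\s$ and $s\s$ are non-dense in $P_s\s$, so Corollary \ref{cor: non dense} gives
$$\mathfrak{h}\cap\mathfrak{b}_1=\mathfrak{h}\cap\mathfrak{p}_s=\mathfrak{h}\cap\mathfrak{b}_2.$$
Hence $h\in\mathfrak{h}\cap\mathfrak{b}_1\cap\mathfrak{b}_2$, and the fiber is a single point. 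The component of the intersection lying over $X_{s\s}$ therefore maps birationally onto $X_{s\s}$. So the coefficient equals the intersection multiplicity $m$ of this component, and $m\geq 1$ by Theorem \ref{positive coef}.

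\textbf{Step 2: showing $m=1$.} This is the main obstacle. Remark \ref{remark: non_transversal} says the tangent spaces fail to span by exactly one dimension, so the intersection is not transversal and the multiplicity could a priori exceed $1$. I would try to avoid a local computation by exploiting that the $\C[W]$-action is already known to be well-defined, through Proposition \ref{Fourier isomorphism}. Write $s\cdot[\s]=[\s']+c[s\s]$. The contributions are: $1$ on the dense orbit $\s'$ by Lemma \ref{coef open}; $0$ on $\s$ by Lemma \ref{coef same}; nothing outside $P_s\s$ by Proposition \ref{prop:support}; and $c\geq 1$ to be determined. By the symmetry $\s\leftrightarrow s\s$, which is also a T1 pair, we likewise have $s\cdot[s\s]=[\s']+c'[\s]$.

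Now apply $s^2=1$ to $[\s]$:
$$[\s]=s\cdot[\s']+c\,[\s']+cc'[\s].$$
Since $\s'$ is dense in $P_s\s'$, I will use the support bound. The proof of Proposition \ref{prop:support} for non-dense $\s$ does not directly cover this case, but Proposition \ref{prop: contained in closure} applies. Together with the fact that $s\cdot[\s']$ is supported on $X_{\s'}$ plus lower strata of $\overline{P_s\s}$, I want to argue that the coefficient of $[\s]$ in $s\cdot[\s']$ vanishes, or at least pin it down. If that works, comparing coefficients of $[\s]$ gives $cc'=1$, so $c=c'=1$ because both are positive integers.

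If controlling $s\cdot[\s']$ proves hard, I would switch to a trace argument. The subspace spanned by $[\s],[s\s],[\s']$ is $s$-stable. An involution has eigenvalues $\pm1$, and its trace on this block must be an integer with the right parity. This would force $c=1$ given $c\geq 1$, although it needs the diagonal entry of $s\cdot[\s']$.

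The fallback is a direct local computation of $m$. Near the point, one slices transversally to $G$-orbits and reduces to the rank-one group $U_s\backslash P_s$ with spherical subgroup from Knop's classification of type T. There one computes the length of a scheme-theoretic intersection explicitly, expecting the defining equation to vanish to order one.
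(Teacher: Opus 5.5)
Your Step 1 is sound, and it even gives $c\geq 1$, which is more than needed. Step 2, however, has a genuine gap.

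Applying $s^2=1$ to $[\s]$ alone gives $[\s]=s\cdot[\s']+c[\s']+cc'[\s]$. To conclude $cc'=1$ you must know that $[\s]$ has coefficient $0$ in $s\cdot[\s']$, and nothing you invoke gives this:
\begin{itemize}
\item Proposition~\ref{prop: contained in closure} only confines $s\cdot[\s']$ to $\overline{P_s\s'}=\overline{P_s\s}$, and this set contains $\s$.
\item Proposition~\ref{prop:support} is only available for orbits that are not dense in their $P_s$-saturation, whereas $\s'$ is dense.
\end{itemize}
The vanishing is true a posteriori, but a direct proof needs a further generic-fiber argument over $X_{\s}$ that you do not supply. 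For instance, one would use that in type T the image of $\mathfrak{h}\cap\mathfrak{p}_s$ in the Levi of $P_s$ is a Cartan. Then a generic such $h$ lies in $\mathfrak{b}_1$ only for the two points of the line $P_sB_2$ in $\s$ and $s\s$, and those two are excluded by Proposition~\ref{orbit limit}.

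The trace fallback does not help. Since $c$ and $c'$ are off-diagonal entries, the trace of $s$ on the block is just the coefficient of $[\s']$ in $s\cdot[\s']$. The constraint that it equal $\pm1$ or $\pm3$ says nothing about $c$. The local multiplicity computation is only announced, not carried out.

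The missing idea is that you never need $s\cdot[\s']$ if you use both relations at once. Subtracting $s\cdot[s\s]=[\s']+c'[\s]$ from $s\cdot[\s]=[\s']+c[s\s]$ eliminates $[\s']$:
\[
s\cdot([\s]-[s\s])=c[s\s]-c'[\s].
\]
Applying $s$ again and substituting the two relations gives
\[
[\s]-[s\s]=(c-c')[\s']+cc'([\s]-[s\s]).
\]
Linear independence of the classes forces $c=c'$ and $cc'=1$, hence $c=\pm1$, and positivity gives $c=1$.

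This is precisely the paper's proof, which only needs $c\geq 0$ from Theorem~\ref{positive coef} and Remark~\ref{remark: non_transversal}. Equivalently, compute $s\cdot[\s']$ twice, once from $[\s']=s\cdot[\s]-c[s\s]$ and once from $[\s']=s\cdot[s\s]-c'[\s]$. As a by-product you get $s\cdot[\s']=-[\s']$, which confirms the vanishing you were hoping for without ever analyzing the dense orbit.
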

\begin{proof}

    Let $\s'$ be the open orbit in $P_s\s$. By Lemmas \ref{coef open} and \ref{coef same} together with Proposition \ref{prop:support} we have some $\alpha\in \Z$ such that: $$s\cdot [\mathcal{O}]=[\mathcal{O}'] +\alpha[\mathcal{T}]$$
    
    Furthermore, by Remark \ref{remark: non_transversal} and Theorem \ref{positive coef} we have $\alpha \geq 0$. We wish to prove $\alpha =1$.

    Similarly, we have some $\beta\in \Z$ such that:
    $$ s\cdot [\mathcal{T}]=[\mathcal{O}'] +\beta[\mathcal{O}]$$

    Subtracting these equations we get:

    $$ s \cdot ([\mathcal{O}]-[\mathcal{T}])= \alpha [\mathcal{T}] - \beta [\mathcal{O}]$$

    Acting by $s$ on this equation and using the fact that $s^2=1$ we get:

    $$ [\mathcal{O}]-[\mathcal{T}]= (\alpha - \beta)[\mathcal{O}'] + \alpha \beta ([\mathcal{O}] - [\mathcal{T}])$$

    It is easy to see that the only solutions to this equation are $\alpha =\beta = \pm 1$. Since we know $\alpha \geq 0$, then $\alpha =1$ and we are done.

    \end{proof}

Putting all of our results together, we calculated the action of ${s}$ on $[\mathcal{O}]$ for $\s$ an orbit which is not dense in $P_{s}\mathcal{O}$. The action of ${s}$ is of order $2$, and we use this to calculate the action in the remaining cases.
\begin{theorem}\label{computation result}
    Let $\mathcal{O}$ be an $H$ orbit on $\B=B\backslash G$, then in the notions of Definition \ref{type_rank_1_spheric} we have:

    \begin{itemize}
        \item (Type $G$ ) $s \cdot [\mathcal{O}]=[\mathcal{O}]$
        \item (Type $U1$) $s \cdot [\mathcal{O}]=[s\mathcal{O}]$
        \item (Type $U2$) $s \cdot [\mathcal{O}]=[s\mathcal{O}]$
        \item (Type $T1$) $s \cdot [\mathcal{O}]=[\mathcal{O}']+[s\mathcal{O}]$
        \item (Type $T2$) $s \cdot [\mathcal{O}]=-[\mathcal{O}]$
        \item (Type $N1$) $s \cdot [\mathcal{O}]=[\mathcal{O}']+[\mathcal{O}]$
        \item (Type $N2$) $s \cdot [\mathcal{O}]=-[\mathcal{O}]$

    \end{itemize}

    \begin{proof}
        The result for types $U1,T1,N1$ follows from the previous lemmas. Using these cases and the fact $s^2=1$, we can easily compute $s \cdot [\mathcal{O}]$ for types $U_{2},T_{2},N_{2}$.

        It remains to see what happens in type $G$. We show that if $\mathcal{T}\neq \s$ then the coefficient of $[\mathcal{T}]$ in $s\cdot [\s]$ is zero. By Proposition \ref{prop: contained in closure} we may assume $\mathcal{T}\subset \overline{\s}$.
        
        There are two cases, either $\mathcal{T}$ is closed in $P_s\mathcal{T}$, or $\mathcal{T}$ is dense in $P_s\mathcal{T}$. 
        
        In the first case we have $P_s\mathcal{T}\subset \overline{P_s\s}=\overline{\s}$. For $B'\in \mathcal{T}$, we have $$(\mathfrak{p}_s\cap \mathfrak{h})^\s_{\mathcal{T},B'}\subsetneq(\mathfrak{p}_s\cap \mathfrak{h})_{\mathcal{T},B'}=(\mathfrak{b}\cap \mathfrak{h})_{\mathcal{T},B'}$$
        
        So the fiber of a generic point $(h,B')\in X_\mathcal{T}$ has to be empty. 

        In the second case, a necessary condition for the coefficient of $[\mathcal{T}]$ in $s\cdot [\s]$ to be nonzero is that for generic $(h,B')\in X_{\mathcal{T}}$ the equality
 $(\mathfrak{h}\cap \mathfrak{p}_{s}(\mathfrak{b}))^{\mathcal{O}}_{\mathcal{T},B'}=(\mathfrak{h}\cap \mathfrak{b})_{\mathcal{T},B'}$ holds. By Lemma \ref{lemma:diagram} we know that this equality does not hold.

Therefore, there is some $\alpha\in \Z$ such that $s\cdot [\s]=\alpha[\s]$. By Theorem \ref{positive coef} and Proposition \ref{transversal_claim with extra steps}, we have $\alpha=1$.
    \end{proof}
\end{theorem}

As an immediate corollary, we get the following.

\begin{theorem}
    We have an isomorphism of $\C[W]$ modules $V_{comb}(X)\cong V_{geo}(X)$.
\end{theorem}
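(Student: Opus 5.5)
The plan is to chain the reductions already established and then compare the explicit formulas in Theorem \ref{computation result} with the defining formulas of $V_{comb}(X)$.

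\emph{Reduction to the homogeneous case.} By the proposition at the start of Section \ref{s2}, $V_{geo}(X)\cong\oplus_i V_{geo}(X_i)$, where the $X_i$ are the $G$ orbits. Also $V_{comb}(X)=\oplus_i V_{comb}(X_i)$, because Knop's types and the operators $(s+1)$ only involve orbits inside $P_s\s$, which lies in a single $G$ orbit. So we may assume $X=G/H$. Then $V_{geo}(X)=H^{BM}_{top}(\mathfrak{h}^\perp\times_{\mathfrak{g}^*}\Tilde N)$. By Proposition \ref{Fourier isomorphism}, this module is isomorphic to $H^{BM}_{top}(\Lambda')$ with $\Lambda'=\mathfrak{h}\times_{\mathfrak g}\Tilde{\mathfrak g}$, acted on by $H^{BM}_{top}(St')\cong\C[W]$.

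One point needs checking here. The identification $H^{BM}_{top}(St')\cong H^{BM}_{top}(St)$ obtained from the Fourier–Sato transform must send $[St'_s]$ to the element $s\in\C[W]$ used to define $V_{geo}$, and not to some twisted version such as $-s$. I would handle this by noting that both algebras are identified with $\C[W]$ through $\text{Ext}^0(\pi_*C_{\Tilde{\mathfrak g}},\pi_*C_{\Tilde{\mathfrak g}})$, using the standard compatibility of Fourier–Sato with Springer theory (\cite{Achar_2013}). Any residual twist would be by a character of $W$, and this twist would act the same way on both sides of the final comparison. If needed, I would pin it down by testing on $X=G/G$, where $\Lambda$ consists of a single component and both modules are trivial. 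I expect this normalization to be the main obstacle: the computation in Section \ref{s3} is internal to $St'$, so it must be transported correctly to $St$.

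\emph{Comparison of formulas.} In the basis $\{[\s]\}$ of $H^{BM}_{top}(\Lambda')$ given by Corollary \ref{cor: irreducible components}, indexed by $B\backslash X$, Theorem \ref{computation result} gives $s\cdot[\s]$ explicitly. I then compute $(s+1)[\s]$ type by type:
\begin{itemize}
\item type G: $2[\s]$;
\item types U1 and U2: $[\s]+[s\s]$;
\item type T1: $[\s]+[s\s]+[\s']$;
\item types T2 and N2: $0$;
\item type N1: $2[\s]+[\s']$.
\end{itemize}
In each case except N1 this is exactly $\alpha_{s,\s}\sum_{\s''\in P_s\s}[\s'']$.

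For type N1 there is a mismatch: the formula gives $2[\s]+[\s']$, whereas $V_{comb}$ prescribes $2([\s]+[\s'])$. So the identity map on bases need not be the isomorphism, and I would rescale the basis. I would look for scalars $c_\s$ (for instance powers of $2$, depending on the number of N-type steps needed to reach $\s$) such that $[\s]\mapsto c_\s[\s]$ intertwines the two actions. In type N1 this requires $c_{\s'}=2c_\s$ when passing from the closed orbit $\s$ to the open orbit $\s'$; in types U and T it requires $c_{s\s}=c_\s=c_{\s'}$. Consistency of these constraints is the delicate combinatorial point. Alternatively, I would avoid an explicit rescaling: both actions are well-defined $W$-actions on a space with basis $B\backslash X$, and their characters can be compared. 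Each $s$ acts as $\pm1$ or by a block that is conjugate over $\C$ to the corresponding block in $V_{comb}$. This gives equal traces for simple reflections, though not yet for all of $W$.

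\emph{Conclusion.} Once the rescaling is shown to be consistent, the resulting map is an isomorphism of $\C[W]$-modules for the homogeneous case, since the generators $s$ determine the action. Combined with the Fourier isomorphism and the reduction to $G$ orbits, this gives $V_{comb}(X)\cong V_{geo}(X)$.
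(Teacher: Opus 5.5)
Your argument does not close. It ends by assuming that a rescaling $[\mathcal{O}]\mapsto c_{\mathcal{O}}[\mathcal{O}]$ is consistent. That amounts to requiring that the signed number of type-N steps along any two chains of $P_s$-moves between the same pair of orbits be equal, and you do not prove this. The trace comparison you offer instead cannot replace it, since a character of $W$ is not determined by its values on simple reflections.

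The missing point is that the N1 mismatch forcing all this does not exist. The formula $s\cdot[\mathcal{O}]=[\mathcal{O}']+[\mathcal{O}]$ printed for type N1 in Theorem \ref{computation result} is a misprint. It contradicts Lemma \ref{coef open}, from which that theorem is assembled. For generic $(h,B_2)$ with $B_2$ in the open orbit $\mathcal{O}'$, there are two Borels $B_1\in\mathcal{O}$ in relative position $s$ with $B_2$; this is exactly what type N means. Each contributes with multiplicity one, by Proposition \ref{transversal_claim} and Theorem \ref{positive coef}. So the coefficient of $[\mathcal{O}']$ is $2$, and with Lemma \ref{coef same} one gets $s\cdot[\mathcal{O}]=[\mathcal{O}]+2[\mathcal{O}']$. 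Hence $(s+1)[\mathcal{O}]=2([\mathcal{O}]+[\mathcal{O}'])$, exactly as $\alpha_{s,\mathcal{O}}=2$ prescribes.

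With this correction all seven types match exactly, and the identity on the bases indexed by $B\backslash X$ is the isomorphism. That is the paper's one-line proof; your type-by-type table is otherwise the same as the paper's. The relation $s^2=1$ cannot detect the misprint: $s[\mathcal{O}]=[\mathcal{O}]+c[\mathcal{O}']$ together with $s[\mathcal{O}']=-[\mathcal{O}']$ squares to the identity for every $c$. The coefficient has to be read off from the fibre count.

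On the normalization you single out as the main obstacle: in the paper's proof, $\C[W]$ acts on $V_{geo}(X)$ through $\C[W]\cong H^{BM}_{top}(St')\cong H^{BM}_{top}(St)$ of Proposition \ref{Fourier isomorphism}. With that convention there is nothing further to check. Your justification, however, is not right. $V_{comb}(X)$ is defined combinatorially, so a twist by a character of $W$ would act on $V_{geo}(X)$ alone and would change the statement.

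Likewise, your claim that both modules are trivial for $X=G/G$ presupposes the Fourier-transported identification. Via $St'$ one gets $\Lambda'=\Tilde{\mathfrak{g}}$ with trivial action. With the specialization identification of \cite{Chriss1997RepresentationTA}, $W$ instead acts on $H^{BM}_{top}(\B)$ by the sign character. Already for $SL_2$, $s$ specializes to $[T^*_\Delta(\B\times\B)]$ plus the class of the product of the zero sections. The latter acts on $[\B]$ by $-2$, the self-intersection of the zero section of $T^*\mathbb{P}^1$. So the $G/G$ test would not confirm your normalization; it would expose the twist.
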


\begin{proof}
    This follows from Proposition \ref{Fourier isomorphism}, Theorem \ref{computation result} and the definition of $V_{comb}(X)$.
\end{proof}

\section{Application and examples}\label{s4}

In this section, we use Theorem \ref{main} to give a proof of Conjecture \ref{main conj} in two special cases. Let $G^\vee$ be the Langlands dual group of $G$. Let $M$ be a hyperspherical $G$ variety with a dual hyperspherical $G^\vee$ variety $M^\vee$ (see \cite{benzvi2024relativelanglandsduality}). Let $\Lambda=M\times_{\mathfrak{g}^*}T^*\B$ and $\Lambda^\vee=M^\vee\times_{\mathfrak{g}^{\vee *}}T^*\B^\vee$. We recall the formulation of the conjecture:

\begin{conj}\label{FGT conjecture}
\begin{enumerate}
    \item $\# Irr(\Lambda)=\# Irr(\Lambda^\vee)$.
    \item We have an isomorphism of $\C[W]$ modules $H^{BM}_{top}(\Lambda^\vee)\cong H^{BM}_{top}(\Lambda)\otimes sgn$.
\end{enumerate}
    
\end{conj}

We prove this conjecture for two cases: 

\begin{enumerate}
        \item $Hom(\C^n,\C^n)$ as a $GL_n\times GL_n$ variety is dual to $GL_n\times \C^n$ as a $(GL_n\times GL_n)^\vee\cong GL_n\times GL_n$ variety.
        \item $GL_{2n+1}/GL_{n+1}\times GL_n$ as a $GL_{2n+1}$ variety is dual to $GL_{2n+1}/Sp_{2n}$ as a $GL_{2n+1}^\vee\cong GL_{2n+1}$ variety.
    \end{enumerate}

Recall the notion of the rank of a $B$ orbit (Definition \ref{Borel rank}).

\begin{prop}\label{associated graded}
    Let $X$ be a spherical variety, consider the action of $W$ on $V_{comb}(X)$. There is a filtration $F^i(V_{comb}(X))$ spanned by orbits of rank at least $i$. Let $\Gamma(V_{comb}(X))=\bigoplus_{i} F^{i}(V_{comb}(X))/F^{i+1}(V_{comb}(X))$ be the associated graded of this filtration. Moreover, for every $B$ orbit $\s$ on $X$ and every $w\in W$ there is a number $a\in \{1,-1\}$ such that $w[\s]=a[w\s]$ in $\Gamma(V_{comb}(X))$.
\end{prop}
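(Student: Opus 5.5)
It suffices to check everything for simple reflections $s\in\Delta$ acting on $V_{comb}(X)$ via the formula $(s+1)[\s]=\alpha_{s,\s}\sum_{\s'\in P_s\s}[\s']$, i.e. $s[\s]=\alpha_{s,\s}\sum_{\s'\in P_s\s}[\s']-[\s]$. The proof is a type-by-type check against Definition \ref{type_rank_1_spheric}, combined with the rank behaviour of orbits under $P_s$ recalled from \cite{knop}.

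The rank facts we need are the following.

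\begin{itemize}
\item In types U1, U2, T1, T2, N1 and N2 the Knop action pairs $\s$ with $s\s$ of equal rank.
\item In type U, the orbits $\s$ and $s\s$ have the same rank.
\item In type T, the open orbit $\s'$ has rank exactly one more than the two closed orbits $\s, s\s$. In type T2 the roles are reversed: $\s$ is the open orbit, of rank one more than $\s'$ and $s\s'$.
\item In type N1, the open orbit $\s'$ has rank one more than $\s$. In type N2, $\s$ has rank one more than $\s'$.
\item In type G, $s\s=\s$.
\end{itemize}

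Using these, compute $s[\s]$ in each case.

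\begin{itemize}
\item Type G: $s[\s]=[\s]$, so $a=1$.
\item Types U1 and U2: $s[\s]=[s\s]$, so $a=1$.
\item Type T1: $s[\s]=[s\s]+[\s']$, and $[\s']$ has strictly larger rank.
\item Type N1: $s[\s]=[\s]+2[\s']$, and again $[\s']$ has strictly larger rank.
\item Types T2 and N2: $s[\s]=-[\s]$, so $a=-1$ with $s\s=\s$.
\end{itemize}

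In every case $s[\s]$ equals $\pm[s\s]$ plus a combination of orbits of strictly larger rank. This shows two things at once. First, each $F^i$, the span of orbits of rank $\ge i$, is $W$-stable, since higher-rank error terms stay inside $F^i$. Second, on the associated graded $\Gamma$ we have $s[\s]=\pm[s\s]$.

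For general $w$, write $w=s_1\cdots s_k$. Induct on $k$, using that Knop's action is an action, so that $s_1(w'\s)=w\s$. The sign $a$ is the product of the signs from each step, hence lies in $\{1,-1\}$.

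The main obstacle is justifying the rank facts, in particular the claim that the open orbit in types T and N has rank exactly one more while the Knop-paired orbits have equal rank. I would take these directly from Knop's analysis of the rank-one spherical varieties $U_s\backslash P_s/(P_s\cap H)$, where the possible types correspond to $\mathrm{SL}_2/T$, $\mathrm{SL}_2/N(T)$, $\mathrm{SL}_2/U$ and so on, and the rank increments can be read off. A secondary point is the direction of the filtration. One must check that the error terms go to higher rank rather than lower, so that $F^i$ defined by "rank at least $i$" is the stable one. The listed type behaviours show this, because the extra orbits always lie in the open orbit of $P_s\s$ in the T1 and N1 cases.
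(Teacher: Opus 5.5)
Your proof is correct and takes the same approach as the paper: reduce to simple reflections, then check type by type, using Knop's rank-one analysis, that $s[\mathcal{O}]$ equals $\pm[s\mathcal{O}]$ plus terms supported on the open orbit of $P_s\mathcal{O}$, which has strictly larger rank. The paper states this in two lines. Your case computations, for example $s[\mathcal{O}]=[\mathcal{O}]+2[\mathcal{O}']$ in type N1 and $s[\mathcal{O}]=-[\mathcal{O}]$ with $s\mathcal{O}=\mathcal{O}$ in types T2 and N2, supply the details it leaves implicit.
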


\begin{proof}
    It is enough to check that the action of a simple reflection can only raise the rank, this is well known and easy to see from the definitions of types (see \cite{knop}). It is also enough to check the second part for simple reflections, again this follows easily from the definitions of types (Definition \ref{type_rank_1_spheric}).
\end{proof}

\begin{subsection}{The case of $T^*(Hom(\C^n,\C^n))$ and $T^*(GL_n\times \C^n)$}
    Let $G=GL_n\times GL_n$ and let $B=B_n\times B_n$ be the Borel of upper triangular matrices in both copies of $GL_n$. 

    The group $G$ acts on $Hom(\C^n,\C^n)$ by $(g_1,g_2)(\phi)(x)=g_2\phi(g_1^{-1}x)$. This action induces an action on $T^*(Hom(\C^n,\C^n))$.

    The group $G$ also acts on $GL_n\times \C^n$. The action is given by $(g_1,g_2)(g',v)=(g_1g'g_2^t,g_2v)$. This induces an action on $T^*(GL_n\times \C^n)$.

    By \cite{benzvi2024relativelanglandsduality} these are relative Langlands dual Hamiltonian spaces.

    By Theorem \ref{main} it is enough to compare $V_{comb}(X)$ and $V_{comb}(X^\vee)$. According to Proposition 4.1.1 of \cite{FGT} the Borel orbits on $X$ and on $X^\vee$ are parametrized by rook arrangements on a board of size $n\times n$. The Weyl group of $GL_n\times GL_n$ is $S_n\times S_n$, it acts naturally on the set of rook arrangements by permuting the rows and columns. This action agrees with Knop's action on the set of Borel orbits.

    For completeness we reprove some of the results of \cite{FGT}.

    \begin{subsubsection}{Borel orbits on $Hom(\C^n,\C^n)$}

    We consider an element of $X=Hom(\C^n,\C^n)$ as a $n\times n$ matrix $a$. There are $n+1$ $G$ orbits on $X$. The orbit of a matrix is determined by its rank, this rank determines the number of rooks in the corresponding rook arrangement.

    The Borel $B$ acts as left and right multiplication by upper-triangular matrices. This allows us to perform row and column operations on $a$. Using these operations we can transform $a$ to a matrix with only ones and zeros and such that there are no two ones in the same row or in the same column. This is precisely the rook arrangement. 

    We proved the following.

    \begin{prop}
        The $B$ orbits on $X$ correspond to rook arrangements on a board of size $n\times n$.
    \end{prop}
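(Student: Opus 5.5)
The claim is that the $B$ orbits on $X=Hom(\C^n,\C^n)$ are in bijection with rook arrangements, i.e. partial permutation matrices. The plan is to show two things: every orbit contains a partial permutation matrix (existence of a normal form), and distinct partial permutation matrices lie in distinct orbits (uniqueness).

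\textbf{Existence.} Here $(b_1,b_2)\in B_n\times B_n$ sends $a\mapsto b_2 a b_1^{-1}$. Left multiplication by $b_2$ performs the row operations "scale a row" and "add a multiple of a row to a row above it". Right multiplication by $b_1^{-1}$ performs "scale a column" and "add a multiple of a column to a column to its right".

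We proceed by induction, processing the rows from the bottom up. In the current bottom row that is not yet handled, take the leftmost nonzero entry and scale it to $1$. Use it to clear every entry to its right in the same row, by adding multiples of its column to later columns. Use it to clear every entry above it in the same column, by adding multiples of its row to earlier rows.

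These operations only change entries in rows above and columns to the right of the pivot. Hence previously created pivots and the zero entries already produced are not disturbed. The process ends with a partial permutation matrix.

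\textbf{Uniqueness.} Use $B$-invariants. For $1\le i,j\le n$, let $r_{ij}(a)$ be the rank of the submatrix of $a$ formed by the bottom $n-i+1$ rows and the leftmost $j$ columns.

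Each allowed row operation adds lower rows to higher ones, so it preserves the span of any set of bottom rows. Each allowed column operation adds earlier columns to later ones, so it preserves the span of any set of leftmost columns. Therefore every $r_{ij}$ is invariant under $B$.

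For a partial permutation matrix $\sigma$, $r_{ij}(\sigma)$ counts the rooks in that southwest rectangle. By inclusion–exclusion, these counts determine whether a rook sits at position $(i,j)$. So two distinct rook arrangements have different invariants and lie in different orbits.

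\textbf{Main obstacle.} The only real difficulty is bookkeeping. One must fix the orientation conventions (upper triangular matrices, the action $g_2\phi g_1^{-1}$) and check that the elimination never re-fills a previously cleared position. Processing rows from the bottom and choosing the leftmost pivot in each row is exactly what keeps every later operation confined to unprocessed rows and columns.

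Finally, the rank of $a$ equals the number of rooks. This matches the $n+1$ $G$-orbits described above.
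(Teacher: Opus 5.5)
Your proof is correct and follows the same route as the paper. The paper likewise uses the row and column operations given by $B_n\times B_n$ to reduce any matrix to a $0$--$1$ matrix with at most one nonzero entry in each row and column; it stops there and leaves the distinctness of these normal forms implicit, so your invariants $r_{ij}$ (ranks of the southwest submatrices, which transform as $M\mapsto PMQ$ with $P,Q$ invertible) supply the injectivity half of the bijection that the paper omits.
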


    Using the described representatives of the orbits it is easy to deduce the following.

    \begin{prop}\label{rank compuation Hom()}
        The rank of a $B$ orbit on $X$ corresponding to a rook arrangement with $m$ rooks is $m$.
    \end{prop}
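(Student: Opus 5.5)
The plan is to compute the character lattice $\chi(\s)$ of Definition \ref{Borel rank} directly on an explicit representative of the orbit. By the previous proposition, every $B$ orbit contains a representative $a_\sigma$, a $0$--$1$ matrix with $m$ ones at positions $(\sigma(j),j)$ for $j$ in a set $J$ of $m$ columns. It is convenient to record this as a partial permutation $\sigma:J\to I$ with $|I|=|J|=m$. The steps are:

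\begin{enumerate}
\item Reduce to the stabilizer. Since $\s=B\cdot a_\sigma\cong B/B_{a_\sigma}$, a $B$-semi-invariant rational function on $\s$ is determined by its value at $a_\sigma$. So $\chi(\s)$ is the set of characters of $B$ that are trivial on the stabilizer $B_{a_\sigma}$. Equivalently, $\chi(\s)$ is the character lattice of $T/(T\cap B_{a_\sigma}U)$, where $U$ is the unipotent radical, because characters of $B$ factor through $B\to T$.

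\item Compute the image of the stabilizer in $T=T_n\times T_n$. Take $(g_1,g_2)\in B$ with diagonal parts $t_1=\mathrm{diag}(x_k)$ and $t_2=\mathrm{diag}(y_k)$. The condition $g_2a_\sigma=a_\sigma g_1$ holds, for example, for the torus elements with
\[
y_{\sigma(j)}=x_j \quad (j\in J),
\]
with all other coordinates free. The claim is that the diagonal part of any stabilizer element satisfies the same relations. To see this, compare the entries of $g_2a_\sigma g_1^{-1}$ at the rook positions. The rook pattern lets us order the pivots, as in row reduction, so that the triangular parts do not change the pivot entries, and that entry equals $y_{\sigma(j)}x_j^{-1}$. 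Hence the image of $B_{a_\sigma}$ in $T$ is the subtorus cut out by the $m$ independent relations $y_{\sigma(j)}x_j^{-1}=1$.

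\item Conclude. The characters trivial on that subtorus form the lattice spanned by $\varepsilon'_{\sigma(j)}-\varepsilon_j$ for $j\in J$. These $m$ characters are linearly independent because the indices involved are distinct, so $\operatorname{rank}(\s)=m$. As a sanity check, the open orbit ($m=n$, the invertible matrices) should have rank $n$, since $GL_n\times GL_n/\Delta GL_n$ has rank $n$. The zero orbit has rank $0$, since it is a point with trivial stabilizer image constraints.
\end{enumerate}

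The main obstacle is the triangularity argument in step 2. One must show that the unipotent parts of $g_1$ and $g_2$ cannot compensate a change in a pivot entry, i.e.\ that the $(\sigma(j),j)$ entry of $g_2a_\sigma g_1^{-1}$ is exactly $y_{\sigma(j)}x_j^{-1}$ plus terms that must vanish for $g_2a_\sigma g_1^{-1}$ to equal $a_\sigma$.

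I would try first to process the rooks in a fixed order, by row index from the bottom. Then $(g_2a_\sigma g_1^{-1})_{\sigma(j),j}=\sum_{k,l}(g_2)_{\sigma(j),k}(a_\sigma)_{k,l}(g_1^{-1})_{l,j}$ only involves rooks at positions $(k,l)$ with $k\ge\sigma(j)$ and $l\le j$. By induction, the rooks already processed force the off-diagonal contributions at those positions to be zero.

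If this bookkeeping becomes messy, an alternative is to find $m$ explicit semi-invariant functions. The natural candidates are the minors of $a$ on bottom-left corner submatrices, which are $B$-eigenfunctions, and on $\s$ one can pick the $m$ of them that do not vanish identically and have independent weights. This gives the lower bound $\operatorname{rank}(\s)\ge m$. The matching upper bound comes from the fact that the dimension of the torus part acting on $\s$ is bounded by the relations above.
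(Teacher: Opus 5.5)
Your proposal is correct and is essentially the paper's argument: the rank is read off from the $m$ independent relations $y_{\sigma(j)}=x_j$ that cut out the torus part of the stabilizer of the rook representative. You are slightly more careful than the paper, which computes only $T\cap B_{a_\sigma}$ and does not note that the image of all of $B_{a_\sigma}$ in $T$ is no larger.

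That extra check (your ``main obstacle'') is immediate if you write the stabilizer condition as $g_2a_\sigma=a_\sigma g_1$: its $(\sigma(j),j)$ entry reads $(g_2)_{\sigma(j),\sigma(j)}=(g_1)_{j,j}$, so no triangularity bookkeeping is needed. Drop the corner-minor fallback, which fails as stated: for $n=2$ and a single rook at $(1,2)$, every bottom-left corner minor vanishes on the orbit, although its rank is $1$.
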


    \begin{proof}

        The rank of a Borel orbit of an element $a$ is equal to the dimension of the lattice of characters on $B$ which vanish on the stabilizer of $a$. Let $T\subset B$ be the maximal torus of diagonal matrices in both coordinates. Any character on $B$ is determined by its values on $T$. 

        Let $a$ be a matrix that contains only ones and zeros and such that there is at most one $1$ on every row and on every column. The stabilizer of $a$ inside $T$ is given by $Stab_T(a)=\{(t^1,t^2)\in T|t^1_{i,i}=t^2_{j,j} \text{ if } a_{i,j}=1\}$. From this the result follows.
    
    \end{proof}

    Let $s_1,...,s_{n-1}$ be the standard simple reflections generating the Weyl group of $GL_n$. Denote by $s^l_1,...,s^l_{n-1}$ the simple reflections in the first copy of $GL_n$ in $G$ and by $s^r_1,...,s^r_{n-1}$ the simple reflections in the second copy.

    We define an action of $W$ on rook arrangements.

    \begin{defn}
        Let $w\in W$, write it as $w=(w^l,w^r)$ for $w^l,w^r$ in the Weyl group of $GL_n$. The element $w$ acts on $R$ by permuting the rows with $w^l$ and the columns with $w^r$. We denote this action by $w(R)$.  
    \end{defn}

    \begin{prop}\label{B orbit in P Hom()}
        Let $s$ be a simple reflection of $G$, let $R$ be a rook arrangement and let $\s$ be the corresponding $B$ orbit on $X$. Let $P_s$ be the parabolic corresponding to $s$. The rook arrangements corresponding to $B$ orbits in $P_s\s$ are $\{R,s(R)\}$. 
    \end{prop}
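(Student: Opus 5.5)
Since $G=GL_n\times GL_n$ and $W=S_n\times S_n$, every simple reflection $s$ is either some $s^l_i$ or some $s^r_i$. The two cases are symmetric: the $GL_n$ factors act on $a$ by column operations (through $g_1^{-1}$) and row operations (through $g_2$). It is therefore enough to treat a single simple reflection, which transposes two adjacent lines $i,i+1$ (two adjacent columns, or two adjacent rows) of the matrix. The parabolic $P_s$ is then $B$ with the single extra root subgroup for $-\alpha$ adjoined in the relevant factor. The Levi of $P_s$ is $GL_2$ acting on the two lines $i,i+1$, and we have $P_s=B\cup BsB$. Hence
$$P_s\s=B\s\cup BsB\s=\s\cup Bs\s,$$
where $\s=Ba_R$ and $a_R$ is the $0/1$ matrix representing $R$.

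The plan is to identify the $B$ orbits appearing in $BsB a_R$ explicitly. Since $P_s\s$ is the $P_s$ orbit of $a_R$, it suffices to compute the orbits of $B$ on $P_s a_R$, and I will do this by computing $Ba_R$ together with $Bsa_R$. The matrix $sa_R$ is again a $0/1$ rook matrix, namely $a_{s(R)}$, because $s$ swaps lines $i$ and $i+1$. So $s(R)$ lies in $P_s\s$, and trivially so does $R$.

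It remains to show that no other rook arrangement occurs. Write $BsB=U_\alpha sB$, where $U_\alpha$ is the root subgroup of the positive root $\alpha$. Every element of $BsB\s$ then lies in the $B$ orbit of $u\,s\,a_R$ for some $u\in U_\alpha$, and $u$ acts as an elementary operation adding a multiple of one line, $i+1$ or $i$, to the other. One checks directly that $u\,a_{s(R)}$ is either already in $Ba_{s(R)}$ or can be reduced to $a_R$ by further row and column operations from $B$. Concretely, the only nontrivial case is when both lines $i,i+1$ carry rooks. Adding a multiple of one line to the other produces a matrix whose normal form, computed by the rook reduction used earlier, is $R$ or $s(R)$. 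This reduction is the argument from the proposition on $B$ orbits on $X$, restricted to a $2$-line block.

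A cleaner alternative is the invariant route. The $B$ orbit of a matrix is determined by the ranks of all its lower-left submatrices (the standard Bruhat-type invariants for $B\times B$ acting on matrices). Applying $U_\alpha$ changes only those minors that involve line $i$ but not line $i+1$ (or the reverse). Such a minor can only take the value it has for $R$ or the value it has for $s(R)$, so the orbit type must be $R$ or $s(R)$. The main obstacle is the bookkeeping in this case analysis, in particular the case where both lines contain rooks; I would handle it by the explicit $2\times 2$ computation inside the Levi $GL_2$.
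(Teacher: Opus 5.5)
Your plan (use $P_s=B\cup BsB$, reduce to a single elementary operation on $a_{s(R)}$, then do a case analysis on the rooks in lines $i,i+1$) is sound. It is a streamlined version of the paper's reduction, which works with the larger family $a_RB_n\tilde{s}$. However, the reduction step you actually wrote is false.

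From $BsB=U_\alpha sB$ you conclude that every element of $BsB\s$ lies in the $B$ orbit of $u\,s\,a_R$ with $u\in U_\alpha$. Since $U_\alpha\subset B$, that orbit is just $Ba_{s(R)}$, so you are asserting that $BsB\s$ is the single orbit of $s(R)$. This already fails for $n=2$, $R=\{(1,1)\}$ and $s$ the column swap. For $g=\begin{pmatrix}1&0\\1&1\end{pmatrix}\in B\tilde{s}B$ one has $E_{11}g=E_{11}$, so $\s\subseteq BsB\s$. For the same reason, your sentence that ``$u\,a_{s(R)}$ is either already in $Ba_{s(R)}$ or can be reduced to $a_R$'' is empty when $u\in U_\alpha$.

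The correct factorization is $BsB=BsU_\alpha$. It gives $BsB\s=\bigcup_{u\in U_\alpha}B\,(sus^{-1})\,a_{s(R)}$ with $sus^{-1}\in U_{-\alpha}$. So the orbits to classify are those of $a_{s(R)}$ after the one elementary operation between lines $i,i+1$ that $B$ does \emph{not} allow. For columns, this means adding $t$ times column $i+1$ to column $i$.

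Your rank-invariant alternative has the same $U_\alpha/U_{-\alpha}$ slip, and its last step does not follow. Knowing that each changed lower-left rank equals its value for $R$ or its value for $s(R)$ does not rule out a mixture across different corners. Such a mixture would be the rank function of a third arrangement, so you must show the choice is uniform.

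With the correct reduction, the case analysis you leave as ``one checks directly'' is the real content, and it is not concentrated in the two-rook case.
\begin{itemize}
\item If column $i+1$ of $a_{s(R)}$ is empty, nothing changes and you stay in $Ba_{s(R)}$.
\item If column $i+1$ has a rook at row $r$ and column $i$ is empty, then for $t\neq0$ you get nonzero entries at $(r,i)$ and $(r,i+1)$. Clearing $(r,i+1)$ using column $i$ lands in $Ba_R$. This is exactly how $\s$ reappears inside $BsB\s$ in the example above.
\item If $s(R)$ has rooks at $(r_1,i)$ and $(r_2,i+1)$ with $r_2<r_1$, one row operation (a multiple of row $r_1$ added to row $r_2$) returns to $a_{s(R)}$.
\item If instead $r_2>r_1$, add $-t^{-1}$ times row $r_2$ to row $r_1$, then clear $(r_2,i+1)$ using column $i$. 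This yields a torus rescaling of $a_R$.
\end{itemize}
These computations, which are the paper's case analysis, close the argument. Combine them with your correct observation that $Ba_R$ and $Bsa_R=Ba_{s(R)}$ both lie in $P_s\s$.
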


    \begin{proof} 

        Let us assume that $s=s^r_i$, let $a_{R}$ be the matrix that has ones in the places of rooks and zeros elsewhere, this matrix represents the orbit corresponding to $R$. Let $\tilde{s}\in GL_n$ be the permutation matrix corresponding to $s_i$.

        Any matrix in the orbit corresponding to $R$ is of the form $B_na_{R}B_n$, we wish to find all the $B$ orbits of matrices in $B_na_{R}B_n \tilde{s}$. 

        Clearly it is enough to consider matrices in $a_{R}B_n\tilde{s}$.

        A matrix in $a_{R}B_n$ is a matrix whose all non-zero entries are to the right of a placed rook, and the entries of rooks are non-zero. Multiplying by $\tilde{s}$ switches the $i$ column with the $i+1$ column.

        We see that a matrix in $a_{R}B_n\tilde{s}$ has the following form:
        \begin{itemize}
            \item Entries are non-zero only to the right of rooks, with one possible exception of one entry to the left of a rook in column $i+1$.
            \item If there is a rook in column $i+1$, the corresponding entry at its position is zero.
            \item Entries at the positions of rooks of $s(R)$ are nonzero.
        \end{itemize}
        
        There are several possible cases:

        \begin{itemize}
            \item There are no rooks in columns $i,i+1$. 

            In this case any matrix in $a_{R}B_n\tilde{s}$ is clearly in $a_{R}B_n=a_{s(R)}B_n$

            \item There is a single rook in one of the columns $i,i+1$.

            If the rook is in column $i+1$, then we just have a matrix in $a_{s(R)}B_n$

            Assume that the rook is in column $i$ and row $r$. If the $r,i$ entry is zero then the matrix is in $a_{s(R)}B_n$, if this entry is nonzero, then the matrix is in $a_{s(R)}B_n$.

            \item There are two rooks in columns $i,i+1$.

            Assume the row number of the rook in column $i+1$ is larger than the row number of the rook in column $i$. In this case, after one row operation we can put our matrix in $a_{s(R)}B_n$.

            Assume the row number of the rook in column $i+1$ is smaller than the row number of the rook in column $i$. If the entry in the position of the rook in column $i$ is non-zero, after one row operation we can put the matrix into $a_{R}B$. Otherwise, the matrix is in $a_{s(R)}B$.
            
        \end{itemize}

        In all cases we showed that we can get a matrix in the orbit of $a_{s(R)}$, and we cannot get a matrix that is not in one of the orbits of $a_{R},a_{s(R)}$.

        The case where $s$ is in the copy of $GL_{n}$ acting from the left is the same.
        
    \end{proof}

    \begin{cor}\label{cor: action agrees with rooks 1}
        Let $s=s^l_i$ be a simple reflection of $G$, let $R$ be a rook arrangement and let $\s$ be the corresponding $B$ orbit on $X$. The type of $(s,\s)$ is $U$ if at least one of the $i,i+1$ rows of $R$ is nonempty. Otherwise, the type is $G$. The same holds if we replace $s^l_i$ by $s^r_i$ and rows by columns.

        Moreover, the action of $W$ on the Borel orbits agrees with its action on rook arrangements.
    \end{cor}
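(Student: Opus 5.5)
The corollary should follow from Proposition \ref{B orbit in P Hom()} together with the rank computation in Proposition \ref{rank compuation Hom()} and the list of types in Definition \ref{type_rank_1_spheric}. I treat $s=s^l_i$; the case $s^r_i$ is symmetric, with rows and columns exchanged.

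\textbf{Step 1.} By Proposition \ref{B orbit in P Hom()}, $P_s\s$ is the union of the orbits attached to $R$ and to $s(R)$. So $P_s\s$ contains at most two $B$ orbits. This immediately excludes types T1 and T2, which require three orbits.

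\textbf{Step 2: rows $i,i+1$ of $R$ both empty.} Then $s(R)=R$, so $P_s\s=\s$. This is type G.

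\textbf{Step 3: at least one of rows $i,i+1$ is nonempty.} Here $s(R)\neq R$: either exactly one of the two rows carries a rook, or both do and the rooks sit in different columns. So $P_s\s$ consists of exactly two distinct orbits, $\s$ and $\s_{s(R)}$. Types G, T1 and T2 are ruled out, leaving U or N.

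To exclude type N, I would show that Knop's reflection $s\s$ is $\s_{s(R)}\neq\s$. The first tool is the rank. Both arrangements have the same number of rooks, so by Proposition \ref{rank compuation Hom()} both orbits have the same rank. In Knop's classification, type N occurs when $s\s=\s$ and the other orbit $\s'$ in $P_s\s$ has rank differing from that of $\s$ by one. Equal ranks therefore exclude N, and the type is U.

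The main obstacle is that this uses a fact about ranks in type N that is not spelled out in the paper. Proposition \ref{associated graded} only states that simple reflections do not lower the rank. If the rank argument turns out to be insufficient, I would fall back on a dimension argument. In type N the orbits $\s$ and $\s'$ have different dimensions but the same Knop image, and the two orbits attached to $R$ and $s(R)$ differ by a row swap. The tangent-space and stabilizer computation from the proof of Proposition \ref{rank compuation Hom()} should then show that the torus stabilizers of $a_R$ and $a_{s(R)}$ are conjugate by the swap $\tilde s$. That matches type U, where $s$ exchanges the two orbits and their character lattices are $s$-conjugate, and not type N, where the lattices differ.

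\textbf{Step 4: the action.} In type U, Knop's action sends $\s$ to the other orbit of $P_s\s$, which is $\s_{s(R)}$. In type G it fixes $\s$, and there $s(R)=R$. Hence $s\cdot\s_R=\s_{s(R)}$ for every simple reflection. Since the simple reflections generate $W$ and both actions are $W$-actions, Knop's action agrees with the action on rook arrangements.
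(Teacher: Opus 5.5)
Your proof is correct and follows the same route as the paper. Proposition~\ref{B orbit in P Hom()} gives $P_s\s=\s$ (type G) when both rows are empty, and otherwise gives exactly the two orbits attached to $R\neq s(R)$; the equal ranks from Proposition~\ref{rank compuation Hom()} then exclude type N, and the statement about the $W$-action follows because Knop's action is determined by the types. The paper also uses, without comment, the standard fact from Knop that the two orbits in a type N situation have different ranks, so your fallback argument is not needed.
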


    \begin{proof}
        If both rows are empty, then by Proposition \ref{B orbit in P Hom()} $P_s\s$ is a single $B$ orbit, so the type is $G$. Otherwise, by Proposition \ref{B orbit in P Hom()} $P_s\s$ contains two Borel orbits, so the type of $(s,\s)$ is either $U$ or $N$. By Proposition \ref{rank compuation Hom()} the ranks of both $B$ orbits in $P_s\s$ are the same. Therefore the type is $U$.

        Since the action of the Weyl group is defined in terms of the types of orbits, the second part follows.
    \end{proof}
        
    \end{subsubsection}

    \begin{subsubsection}{Orbits on $GL_n\times \C^n$}

    An element of $X^\vee=GL_n\times \C^n$ is a pair of a matrix $g'$ and a vector $v$. There are two $G$ orbits, determined by whither $v=0$. In this subsection we take $B\subset G$ to be the Borel of upper triangular matrices in the first coordinate and lower triangular matrices in the second coordinate.

    \begin{prop}\label{Borel orbits on X^}
        The $B$ orbits on $X^\vee$ correspond to rook arrangements on a board of size $n\times n$.
    \end{prop}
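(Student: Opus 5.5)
We will classify the $B$ orbits on $X^\vee=GL_n\times\C^n$ directly by normal forms, mirroring the argument for $Hom(\C^n,\C^n)$. Here $B=B_n\times B_n^-$ acts by $(b_1,b_2)(g',v)=(b_1g'b_2^t,\,b_2v)$. Since $b_2$ is lower triangular, $b_2^t$ is upper triangular, so on the first coordinate we have the $B_n\times B_n$ action $g'\mapsto b_1g'c$ with $c=b_2^t$ upper triangular. Its orbits on $GL_n$ are the Bruhat cells $B_nwB_n$, $w\in S_n$. The plan is therefore to fiber the classification over these cells: first move $g'$ to a permutation matrix $w$, and then classify the orbits of the stabilizer $\mathrm{Stab}_B(w)$ on the vector coordinate $\C^n$.

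For the first step, every orbit contains a point $(w,v)$ with $w$ a permutation matrix, and the stabilizer of $w$ is $\{(b_1,b_2): b_1 w b_2^t=w\}$, that is, $b_1=w(b_2^t)^{-1}w^{-1}$. So the stabilizer is identified with the subgroup of $B_n^-$ consisting of those $b_2$ for which $w(b_2^t)^{-1}w^{-1}$ is upper triangular. Equivalently, $b_2\in B_n^-\cap w^{-1}B_n^-w$ (up to inversion and transposition conventions, which we will fix carefully). This group contains the diagonal torus together with the root subgroups $U_{-\beta}$ for positive roots $\beta$ with $w(\beta)>0$ (again up to conventions). We must then classify the orbits of $L_w:=B_n^-\cap w^{-1}B_n^- w$ acting on $\C^n$ by $v\mapsto b_2v$.

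For the second step, the torus lets us scale every nonzero coordinate of $v$ to $1$, and each root subgroup in $L_w$ lets us add a multiple of coordinate $j$ to coordinate $i$ for the allowed pairs $(i,j)$ (with $i>j$ and the pair not inverted by $w$). Using these moves we reduce $v$ to a $0/1$ vector supported on a set $S\subseteq\{1,\dots,n\}$ that contains no pair $j<i$ allowed by $w$. So $S$ is a set of indices that is "decreasing" relative to $w$, and the orbits over the cell of $w$ are indexed by such sets $S$. We then exhibit a bijection between the pairs $(w,S)$ and the rook arrangements on an $n\times n$ board. The natural candidate is to take the permutation matrix of $w$ and delete the rooks in the columns (or rows) indexed by $S$, or more precisely to use a decomposition of $w$ as a partial permutation together with the data of $S$.

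\textbf{Main obstacle.} The hardest part is getting this bijection exactly right: proving that the reduced vectors give pairwise distinct orbits, and that the admissible sets $S$ are in bijection with what is needed. As a sanity check we will count. The number of rook arrangements is $\sum_k \binom{n}{k}^2 k!$, which satisfies $R_n=2nR_{n-1}-(n-1)^2R_{n-2}$. On the other side we will try to show that the number of admissible $S$ for a given $w$ equals a quantity summing to the same total; for instance, if the admissible $S$ are exactly the subsets on which $w$ restricts to a decreasing chain in a suitable sense, then both sides admit the same recursion obtained by removing index $n$. For distinctness of orbits we will use the invariants $\mathrm{rank}$ of the projections of $v$ onto the coordinate flags twisted by $w$, analogous to the rank conditions that characterize rook orbits. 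If a direct bijection proves messy, we fall back on matching the counts together with a completeness argument: we already know every orbit meets some normal form, so distinct invariants plus equal cardinality finish the proof.
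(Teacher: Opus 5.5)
Your argument is correct, but it fibers in the opposite order from the paper.

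\textbf{The paper's route.} The paper fibers over the vector coordinate. Under $B_n^-$, $\C^n$ has only the orbits $\{0\}$ and $B_n^-e_i$. So $v=0$ gives the Bruhat cells, i.e.\ the full arrangements. For $v=e_i$ the paper must classify orbits of $B_n\times\mathrm{Stab}(e_i)$ on $GL_n$, that is, orbits of the non-Borel group $B_i$ on the flag variety. It does this by an inductive reduction on the last row, and lands on the arrangements whose first empty column is $i$.

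\textbf{Your route.} You fiber over the Bruhat cells of $g'$ instead. This pushes all of the non-Borel behaviour into the linear action of $L_w=B_n^-\cap w^{-1}B_n^-w$ on $\C^n$, where it is easy to handle.

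\textbf{The steps you hedged on all go through.} Write $j\prec i$ for $j<i$ and $w(j)<w(i)$; this relation is transitive.
\begin{itemize}
\item \emph{Distinctness.} The set of $\prec$-minimal elements of $\mathrm{supp}(v)$ is preserved by the torus and by each elementary move. A move using $v_j\neq 0$ changes only the coordinate $i\succ j$, and every $k\succeq i$ is already dominated by $j$, which stays in the support. Hence the normal forms $(w,\sum_{i\in S}e_i)$ with $w|_S$ decreasing lie in pairwise distinct orbits. This is the concrete form of your ``twisted flag rank'' invariants.
\item \emph{The bijection.} Deleting the rooks of $w$ in the columns of $S$ is a bijection onto rook arrangements. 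Given a set of empty rows and an equally large set of empty columns, there is a unique order-reversing matching between them, which recovers $w$ and $S$.
\end{itemize}
So no count-matching fallback is needed. The identity $\sum_j\binom{n}{j}\frac{n!}{j!}=\sum_k\binom{n}{k}^2k!$ holds anyway, since a fixed $j$-subset is decreasing for exactly $n!/j!$ permutations.

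\textbf{What each approach buys.} Yours gives a shorter, fully checkable classification with explicit representatives. The paper's gives representatives $(a_R,e_i)$ organized around the vector coordinate, which it reuses in the rank and $P_s$-orbit computations that follow. The same data are visible in your normal form: $S=\emptyset$ iff $v=0$, and $\min S$ is the first empty column, i.e.\ the index of the $B_n^-$-orbit of $v$.
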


    \begin{proof}
        
    The Borel orbits in the $G$ orbit with $v=0$ correspond to full rook arrangements. The rest correspond to non-full rook arrangements.

    There are $n+1$ Borel orbits on $\C^n$, represented by the zero vector and by the coordinate vectors $e_i$ which have zero everywhere but at the $i$ index, and there the entry is 1. 
    
    For $1\leq i\leq n$, let $B^t_i$ be the stabilizer of $e_i$ inside the subgroup of lower diagonal matrices in $GL_n$. The group $B^t_i$ is the subgroup of lower triangular matrices whose elements have $1$ in the $(i,i)$ entry and the rest of the $i$ column is zero. Denote by $B_{n+1}$ be the group of upper triangular matrices in $GL_n$.

    The transpose of $B_i^t$ which we denote by $B_i$ is the subgroup of upper triangular matrices whose elements have $1$ in the $(i,i)$ entry and the rest of the $i$ row is zero. 

    To describe the $B$ orbits on $X$, it is enough to describe the $B_i$ orbits on the flag variety of $GL_n$.

    To finish the proof of Proposition \ref{Borel orbits on X^} we show the following.

    \end{proof}

    \begin{prop}\label{B_i orbits}
        Let $1\leq i\leq n$, there is a bijection between the $B_i$ orbits on the flag variety of $GL_n$ and rook arrangements whose first empty column is column number $i$.
    \end{prop}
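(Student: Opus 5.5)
The plan is to turn the statement into a normal form problem for matrices under row and column operations, in the same spirit as the Borel orbits on $Hom(\C^n,\C^n)$. Orbits of $B_i$ on the flag variety $B_{n+1}\backslash GL_n$ are the same as double cosets $B_{n+1}\backslash GL_n/B_i$. So I will classify the orbits of $B_{n+1}\times B_i$ acting on $g'\in GL_n$ by $g'\mapsto b\,g'\,u^{-1}$, and attach to each orbit a rook arrangement whose first empty column is $i$.

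\textbf{Allowed operations.} I first record what the action does to $g'$. Left multiplication by $B_{n+1}$ scales rows and adds multiples of a lower row to a higher row. Right multiplication by $u\in B_i$ has the following effect.
\begin{itemize}
\item Every column $j\neq i$ may be scaled.
\item Any earlier column may be added to a later column $j\neq i$, except that column $i$ may never be used as a source.
\item Column $i$ can neither be scaled nor used as a source. It can, however, receive multiples of the earlier columns $1,\dots,i-1$.
\end{itemize}
In short, column $i$ behaves like a fixed vector $v$ that can only be modified by row operations and by the span of the earlier columns.

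\textbf{Constructing the normal form.} I will sweep the columns from left to right, keeping track of pivots.
\begin{enumerate}
\item For columns $1,\dots,i-1$, run the usual Bruhat-type reduction: find the lowest nonzero entry, scale it to $1$, and clear the rest of its row using column operations and the rest of its column using row operations. Because $g'$ is invertible, each of these columns receives a pivot, so they become rooks in $i-1$ distinct rows.
\item Column $i$ is then reduced modulo the span of the first $i-1$ pivot columns and modulo row operations. Its residual data is the single row index $r$ of its lowest surviving nonzero entry, and this entry cannot be normalised to $1$ only up to the torus on the left. The left torus can absorb the scalar, so in fact only $r$ survives.
\item Continue with columns $i+1,\dots,n$. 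Each gets a pivot in a remaining row, but column $i$ may be added into them. Adding column $i$ lets a later column kill its entry in row $r$ and then pivot elsewhere. Consequently the later columns only determine a partial rook arrangement on the $n-i+1$ rows not used by columns $1,\dots,i-1$: one of those rows is always "absorbed" by $v$, while the others carry rooks or not.
\end{enumerate}
The resulting picture is the arrangement with rooks in columns $1,\dots,i-1$, column $i$ empty, and some partial arrangement in columns $i+1,\dots,n$. Which rows remain free, and whether the row $r$ of $v$ is recorded, are encoded by which of the remaining rows are left empty. I expect this to give a map from double cosets to rook arrangements with first empty column $i$.

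\textbf{Well-definedness and bijectivity.} To show that distinct normal forms give distinct orbits, I will use invariants rather than the algorithm. These are the ranks of the lower-left submatrices formed by the last $a$ rows and the first $b$ columns, which are invariant under the $B_{n+1}$ row operations and the column operations of $B_i$ that add only earlier columns. In addition, for column $i$ I will use the ranks of the same submatrices with column $i$ deleted, together with the ranks with column $i$ included. Since column $i$ only receives earlier columns and is never scaled or used, both families are invariant. From these ranks one should be able to read off the rook positions.

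For surjectivity, every arrangement with first empty column $i$ must be realised. I will do this by writing down an explicit invertible matrix: place the permutation entries for the rooks, and fill the empty columns $\geq i$ with unit vectors in the rook-free rows, with column $i$ using the appropriate such row.

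\textbf{Main obstacle.} The hard step is the precise bookkeeping of column $i$. Because it cannot be scaled but can absorb other columns, and because later columns can absorb it, it is not obvious that the normal form is unique. Pinning down exactly which rank invariants separate orbits is where I expect the difficulty.

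As a safeguard I will check the count. The arrangements with first empty column $i$ number
$$\frac{n!}{(n-i+1)!}\cdot \#\{\text{partial rook arrangements on an } (n-i+1)\times(n-i) \text{ board}\}.$$
I will compare this with the number of $B_i$ orbits computed fibrewise. Since $B_{n+1}=B_i\cdot R_i$, where $R_i$ is the row-$i$ subgroup isomorphic to $\mathbb{G}_m\ltimes\mathbb{G}_a^{n-i}$, each Bruhat cell $B_{n+1}\backslash B_{n+1}wB_{n+1}$ splits into $B_i$-orbits according to the orbits of $R_i$ on it. Matching the two counts will confirm that the invariants are complete.
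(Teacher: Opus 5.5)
There is a genuine gap, and it is exactly at the bookkeeping of column $i$ that you flag. Your list of allowed moves is right: $u_{ik}=0$ for $k>i$, so column $i$ is never a source. But Step 3 then lets a later column ``kill its entry in row $r$'' by adding column $i$, which is precisely the forbidden move. Without it, once column $i$ has been normalised to $e_r$, row $r$ also cannot be used to clear the entries of later columns above row $r$. Doing so would spoil column $i$, and column $i$ can only be repaired in the pivot rows of columns $1,\dots,i-1$. So a later column whose lowest nonzero entry lies in row $r$ does not reduce to a unit vector. It normalises to a two-term column like $e_r+e_\rho$, and then itself plays the role of the special column.

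In particular, the orbits do not have monomial representatives. Already for $n=2$, $i=1$ one has $B_1=\{\operatorname{diag}(1,t)\}$. The orbits are determined by whether the last row $(x,y)$ has $y=0$, $x=0$, or $xy\neq 0$, and the orbit of $\begin{pmatrix}0&1\\1&1\end{pmatrix}$ contains no permutation matrix. For the same reason your surjectivity construction (rooks plus unit vectors in the empty columns) only ever produces permutation matrices. For $n=2$, $i=1$ there are three arrangements but only two permutation matrices, so that construction cannot be surjective.

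Beyond this, none of your three strands is actually completed:
\begin{itemize}
\item The map from normal forms to arrangements is never defined (``I expect this to give a map'').
\item Your two families of ranks are indeed invariant, but you give no argument that they separate orbits, nor a rule for reading the arrangement off them.
\item The count is left undone. Also, the $B_i$-orbits in a Bruhat cell correspond to orbits of $B_{n+1}\cap w^{-1}B_{n+1}w$ on $B_i\backslash B_{n+1}\cong e_i^\vee B_{n+1}$, not to $R_i$-orbits.
\end{itemize}
A completed count would by itself give a bijection of finite sets, but that count is the whole content of the statement.

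The missing idea is this chain phenomenon. The paper handles it by reducing from the bottom row rather than from the left column. Let $g_{n,a}$ be the first nonzero entry of the last row.
\begin{itemize}
\item If $a\neq i$, it becomes a rook, and one passes to size $n-1$ with special column $i$ or $i-1$.
\item If $a=i$, the last row is normalised to $e_i^\vee+e_b^\vee$, with $b>i$ the next nonzero position. The subgroup preserving this row reduces the problem to size $n-1$ with special column $b-1$, recorded as ``the next empty column after $i$ is $b$''.
\end{itemize}
This is what produces the non-monomial representatives in the paper's Remark, and a correct version of your Step 3 would have to reproduce it.
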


    \begin{proof}
        Let $g\in GL_n$, we think about it as a full rank $n\times n$ matrix, we act on the left by $B_{n+1}$ and from the right by $B_i$. 

        Consider the first nonzero element in the last row of $g$, let it be $g_{n,a}\neq 0$. Using the left action we can make sure that $g_{b,a}=0$ for all $1\leq b<n$. 

        There are two cases:

        \begin{enumerate}
            \item If $a\neq i$ we can use the right action of $B_i$ to make sure that also $g_{n,j}=0$ for all $j\neq a$. This way we can reduce the problem from $n,i$ to either $n-1,i$ or $n-1,i-1$ based on whether $a>i$ or $a<i$. In the rook arrangements it means that there is a rook at place $(n,a)$. We continue by induction.

            \item If $a=i$, let $b>i$ be the smallest such that $g_{n,b}\neq 0$. We can use the right action by $B_i$ to make sure that $g_{n,j}=0$ for all $j\notin \{i,b\}$. We reduce the problem to the same problem but for $n-1,b-1$. This is done by requiring that the action from the right preserves the last row whose entries are zero everywhere but at places $i,b$ and these entries are ones. In the process of this reduction, we erase column number $i$ which has zeros everywhere but at the last row, we also erase the last row. In the rook arrangements it means that the next empty column after $i$ is $b$.
        \end{enumerate}

        These cases cover all possible cases of rook arrangements where the first empty column is column number $i$. By induction we are done.

    \end{proof}
    \begin{Remark}\label{remark: representativie GL_n times V}
        From the proof of Proposition \ref{Borel orbits on X^} it follows that any $B$ orbit can be represented by a matrix whose entries are only $0$ and $1$, and a coordinate vector.

        Given a rook arrangement we construct a representative as follows. The special vector $v$ is the coordinate vector whose number is the number of the first empty column. If there are no empty columns then $v=0$.

        The matrix has entry $1$ at the position of each rook. A column or a row that contains a rook, contains only one non-zero entry.

        To finish the description of the representative, it is enough to define the representative of the empty rook arrangement. The representative of a general rook arrangement at rows and columns that do not contain any rooks agree with the representative of the empty rook arrangement of the appropriate size.
        
        The representative of the empty rook arrangement is given by the matrix:

        $$\begin{pmatrix}
            0& 0 & 0 & \cdots & 0 & 1 \\
            0 & 0 & 0 & \cdots & 1 & 1 \\
            \vdots & \vdots & \vdots & \ddots  & \vdots & \vdots\\
            0 & 1 & 1 & \cdots & 0 & 0 \\
            1 & 1 & 0 & \cdots & 0 & 0
        \end{pmatrix}$$

        For a rook arrangement $R$ we denote the corresponding matrix by $a_{R}$.
    \end{Remark}

    Using the described representatives of the orbits it is easy to deduce the following.

    \begin{prop}\label{rank computaion GL times V}
        The rank of a $B$ orbit on $X^\vee$ corresponding to a rook arrangement $R$ with $m$ rooks is $2n-m$.
    \end{prop}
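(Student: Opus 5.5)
We have to show that the rank of the $B$ orbit attached to a rook arrangement $R$ with $m$ rooks is $2n-m$. We follow the method of Proposition \ref{rank compuation Hom()}. For a homogeneous $B$-space $B\cdot x$, the rank equals the dimension of the lattice of characters of $B$ that are trivial on $\mathrm{Stab}_B(x)$. Since the stabilizer is solvable, this is the rank of the character lattice of its image in $T$. Characters of $B$ factor through $T$, and unipotent elements of the stabilizer contribute nothing. So the rank is $\dim T-\dim(\text{torus part of } \mathrm{Stab}_B(x))$, where $\dim T=2n$. The task is therefore to show that the stabilizer's projection to $T$ has dimension $m$, for the representative $(a_R,v)$ of Remark \ref{remark: representativie GL_n times V}.

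\textbf{Step 1: the torus part of the stabilizer.} Take $(t^1,t^2)\in T$, diagonal in both factors. This pair fixes $(a_R,v)$ if and only if $t^1_{i,i}t^2_{j,j}=1$ for every nonzero entry $(i,j)$ of $a_R$, and $t^2_{k,k}=1$ where $v=e_k$. We will not compute only $\mathrm{Stab}_T$. Instead we argue via a Levi/torus decomposition: the stabilizer in $B$ is an algebraic group whose maximal torus is conjugate into $T$, and the relevant rank is $\dim$ of that torus. Because the representative consists of $0$'s and $1$'s in a "monomial-like" configuration, we expect a maximal torus of the stabilizer to lie in $T$ itself. This is why the paper's proofs use diagonal tori.

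\textbf{Step 2: counting equations.} Each rook $(i,j)$ gives one equation linking $t^1_{i}$ and $t^2_{j}$, and these involve disjoint rows and columns. Together they cut out $m$ independent conditions, so $m$ free parameters remain among the rook rows and columns.

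The rows and columns without rooks carry the representative of the empty arrangement of size $n-m$, namely the anti-diagonal-plus-subdiagonal matrix, together with the vector $e_k$ (with $k$ the first empty column when $m<n$). We show these force all the remaining torus coordinates. Number the empty block's rows $r$ and columns $c$. The nonzero entries $(r,n-m+1-r)$ and $(r,n-m+2-r)$ give $t^2_c$ equal for adjacent columns, and each $t^1_r$ is determined by the $t^2$'s. The condition from $v$ sets one $t^2$ equal to $1$. So the empty block contributes no free parameters.

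Hence $\dim\mathrm{Stab}_T=m$, and the rank is $2n-m$. When $m=n$ the vector is $v=0$, which imposes no condition, and the count gives $n=2n-m$, consistent with the formula.

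\textbf{Main obstacle.} Step 1 is the delicate part. We must verify that the torus part of the full $B$-stabilizer has the same dimension as $\mathrm{Stab}_T$. A priori the stabilizer could contain a torus not conjugate into $T$ within the stabilizer, and then the image in $T$ would be larger. To handle this, we first try to exhibit $\mathrm{Stab}_B(x)=\mathrm{Stab}_T(x)\ltimes U_x$ with $U_x$ unipotent. This uses that the representative is fixed by the $T$-part while the equations defining the stabilizer are weight-homogeneous. If that fails, the fallback is a dimension count: the Borel orbit dimension is known from the inductive construction in Proposition \ref{B_i orbits}, and we combine it with Knop's relation between rank and orbit dimension to cross-check the value $2n-m$.
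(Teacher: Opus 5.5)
Your Steps 1--2 are the paper's proof. The paper computes $\operatorname{codim}_T\mathrm{Stab}_T(a_R)=2n-m-1$ and $\operatorname{codim}_T\mathrm{Stab}_T(a_R,v_R)=2n-m$, and your count gives the same numbers. The $m$ rook equations are independent. The empty block forces all its $t^2_c$ to be equal with $t^1_r=(t^2_c)^{-1}$, so $a_R$ alone has a one-dimensional extra stabilizer, and $t^2_k=1$ kills it. The paper then identifies the rank with this codimension, justified only by ``any character on $B$ is determined by its values on $T$''. You are right that this gives only one inequality. A character trivial on $\mathrm{Stab}_B(x)$ is trivial on $\mathrm{Stab}_T(x)$, so the computation proves $\operatorname{rank}\le 2n-m$. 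Equality needs the image of $\mathrm{Stab}_B(x)$ in $B/U\cong T$ to have dimension exactly $m$. So the step you flag is a genuine gap, and it is shared with the paper.

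Neither of your proposed repairs closes it as stated.

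\emph{Weight-homogeneity.} Write $b=tu\in\mathrm{Stab}_B(x)$ and compare $T$-weight components of $ux=t^{-1}x$. This forces $\lambda(t)=1$ only for weights $\lambda$ that are \emph{minimal} in the support of $x$. Here the support is far from an antichain. The entries $(r,n-m+1-r)$ and $(r,n-m+2-r)$ in one row of the empty block are comparable, and so are two rooks one of which lies strictly north-east of the other. So this argument does not yield $\mathrm{Stab}_B(x)=\mathrm{Stab}_T(x)\ltimes U_x$.

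\emph{The fallback.} It has nothing to stand on: Proposition \ref{B_i orbits} computes no orbit dimensions, and there is no general formula for the rank of a $B$-orbit in terms of its dimension.

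A clean way to finish is as follows. Since $U$ is normal in $B$, every $U$-orbit in $Bx$ has dimension $\dim U-\dim(U\cap B_x)$. Hence $\operatorname{rank}(Bx)=\dim Bx-\dim Ux$, and passing to tangent spaces,
\[
\operatorname{rank}(Bx)=\dim(\mathfrak{t}\cdot x)-\dim(\mathfrak{t}\cdot x\cap\mathfrak{u}\cdot x)=2n-\dim\mathrm{Stab}_T(x)-\dim(\mathfrak{t}\cdot x\cap\mathfrak{u}\cdot x).
\]
What remains is the linear-algebra check $\mathfrak{t}\cdot x\cap\mathfrak{u}\cdot x=0$ for $x=(a_R,v_R)$. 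Here $\mathfrak{t}\cdot x$ is supported on the nonzero entries of $a_R$ and on $v_R$. Also $\mathfrak{u}\cdot x=\{(N_1a_R+a_RN_2^{t},\,N_2v_R)\}$ with $N_1$ strictly upper and $N_2$ strictly lower triangular. For instance, for a permutation matrix $w$ (the full case) this reduces to the fact that $N_1+wN_2^{t}w^{-1}$ has zero diagonal. For the empty block, one reads off the vanishing from the entries outside the support of $a_R$, as one checks directly for small sizes.
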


    \begin{proof}
        For a full rook arrangement, using the same argument as in the proof of Proposition \ref{rank compuation Hom()} we get that the rank of the corresponding $B$ orbit is $n$.

        Assume that the rook arrangement $R$ is not full. Let $a_R,v_R$ be the representative described in Remark \ref{remark: representativie GL_n times V}. The vector $v_R$ is some coordinate vector $e_i$.

        The rank of the Borel orbit of $(a_R,v_R)$ is equal to the dimension of the lattice of characters on $B$ that vanish on the stabilizer of $a_R,v_R$. Let $T\subset B$ be the maximal torus of diagonal matrices in both coordinates. Any character on $B$ is determined by its values on $T$. 

         The codimension of the stabilizer of $a_R$ inside $T$ is $2n-m-1$. The codimension of the stabilizer of both $a_R$ and $v_R$ inside $T$ is $2n-m$.
        
    \end{proof}
    
    \begin{prop}\label{Borel orbits in Ps orbit}
        Let $s=s^l_i$ be a simple reflection of $G$, let $R$ be a rook arrangement and let $\s$ be the corresponding $B$ orbit on $X$. Let $P_s$ be the parabolic corresponding to $s$. The rook arrangements corresponding to $B$ orbits in $P_s\s$ are determined as follows:

        \begin{enumerate}
            \item If the $i$ and $i+1$ rows of $R$ are both nonempty then the answer is $R,s(R)$.
            \item If one of these rows is empty and the other one is not, we look at the column of the unique rook on these rows. Denote this column by $j$, count the number of empty columns before $j$, and the number of empty rows after $i+1$. If these numbers differ then the answer is again $R,s(R)$. If the numbers are equal then the answer is $R,s(R)$ and the arrangement obtained by removing the rook in column $j$.

            \item If both the $i$ and $i+1$ rows are empty the answer is $R$ and the two arrangements obtained from $R$ by adding a single rook, either in row $i$ or in row $i+1$, the column of the rook is $j$ such that the number of empty rows after $i$ is equal to the number of empty columns before $j$.  
        \end{enumerate}

    The same is true for $s=s^r_i$ if we replace the roles of rows and columns. 
    \end{prop}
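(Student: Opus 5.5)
We compute $P_s\s$ directly from the representatives of Remark \ref{remark: representativie GL_n times V}, in the same way as in the proof of Proposition \ref{B orbit in P Hom()}. Since $P_s=B\cup B\tilde{s}B$, where $\tilde{s}$ is the permutation matrix of $s_i$ placed in the first copy of $GL_n$, we have $P_s\s=\s\cup B\tilde{s}B\cdot(a_R,v_R)$. So it suffices to determine which $B$ orbits meet the set $\tilde{s}\,U_{\alpha}\cdot(a_R,v_R)$, where $U_\alpha\cong\C$ is the root subgroup for $\alpha_i$. The vector component $v_R$ is unaffected by the first copy of $GL_n$. Hence $P_s\s$ lies in the same $G$ orbit, and the first empty column stays fixed unless the reduction creates or destroys a rook. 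Concretely, we take the one-parameter family $\tilde{s}u_{\alpha}(c)a_R$, $c\in\C$. Then we run the reduction algorithm from the proof of Proposition \ref{B_i orbits}, with $B_{n+1}$ acting on the left and $B_j$ on the right, to find the rook arrangement attached to each member of the family. The member with $c=0$ gives $s(R)$. A generic $c$ gives the open orbit of $P_s\s$. Special values of $c$ give the remaining orbits.

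The cases split by the rows $i,i+1$ of $R$. In case 1 both rows contain a rook, so rows $i,i+1$ of $a_R$ each have a single nonzero entry, lying in rook columns. The calculation is then identical to the $Hom(\C^n,\C^n)$ computation: a row operation between rows $i$ and $i+1$ either moves us back into the orbit of $R$ or into that of $s(R)$. The empty-rook block of the representative is untouched, so only $R,s(R)$ occur. In case 3 both rows are empty, so rows $i,i+1$ lie in the empty-rook block, which after removing rook rows and columns is the antidiagonal-staircase matrix of Remark \ref{remark: representativie GL_n times V}. We reduce to the empty arrangement of the appropriate size $n'$, and then to the rank-one computation of the $B\times B^-$ orbits on $GL_{n'}\times\C^{n'}$ under a single simple reflection. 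Swapping the two adjacent staircase rows and adding a multiple of one to the other creates a single new rook, in row $i$ or row $i+1$. The column $j$ of that rook is forced by the staircase shape: it is the column whose antidiagonal position matches, which is exactly the condition that the number of empty rows after $i$ equals the number of empty columns before $j$.

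Case 2 has one rook, at $(r,j)$ with $r\in\{i,i+1\}$, together with one empty row in the staircase block. The row operation now mixes a rook row with a staircase row. Generically the staircase entries can be cleared using the right action of $B_{j'}$, and we obtain only $R$ and $s(R)$. The rook can instead be absorbed into the staircase, giving the arrangement with the rook removed, exactly when the staircase row adjacent to row $r$ has its nonzero entries in column $j$. This gives the counting condition.

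I expect the main obstacle to be the bookkeeping in cases 2 and 3, namely tracking positions inside the staircase. The plan is to verify the criterion by induction on $n$, peeling off the last row as in Proposition \ref{B_i orbits}. As a consistency check, we compare with the ranks from Proposition \ref{rank computaion GL times V}. In case 1, where $R$ and $s(R)$ have equal rank, the type is $U$. In cases 2 and 3 a third orbit appears with rank differing by one, which forces type $T$. This matches the count of three orbits, and the multiplicity-one claims then follow from Definition \ref{type_rank_1_spheric}. The case $s=s^r_i$ is handled by the same argument with rows and columns exchanged; the only change is that the relevant $\tilde{s}$ also acts on $v$, which shifts the first empty column in the expected way.
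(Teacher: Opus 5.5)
Your case 1 is correct: the row and column operations only involve the two rook columns, which the stabilizer of $v_R$ allows, and they leave the staircase untouched. Your case 3 is the paper's reduction (delete occupied rows and columns, compute for $n=2$). One claim in the setup is wrong: ``$c=0$ gives $s(R)$'' fails when $s(R)=R$. For $n=2$, $i=1$, $R=\emptyset$, one gets $\tilde s a_R=\begin{pmatrix}1&1\\0&1\end{pmatrix}$, which lies in the orbit of $\{(2,2)\}$.

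\textbf{The gap is case 2.} Nothing is computed there, and your criterion can never hold: the staircase row next to row $r$ is supported on empty columns, while $j$ is a rook column. More seriously, no correct computation can ``give the counting condition'' as written, because it is off by one. $P_{s^l_i}$ acts on the vector only through the lower triangular Borel of the second factor. So the first empty column is constant on $P_{s^l_i}$-orbits, contrary to your ``unless the reduction creates or destroys a rook''. Two examples with $n=2$, $i=1$ show the stated condition fails in both directions:
\begin{itemize}
\item For $R=\{(1,1)\}$ both counts are $0$, so the condition predicts $\emptyset$ in the orbit. But deleting the rook moves the first empty column from $2$ to $1$, so $\emptyset$ cannot be there.
\item For $R=\{(1,2)\}$ the counts are $1$ and $0$, so the condition predicts no third orbit. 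Yet $R$ lies in the $P_{s^l_1}$-orbit of the orbit of $\emptyset$, which is $\{(g',v)\mid v_1\neq 0\}$ by the paper's own $n=2$ computation.
\end{itemize}
The true condition is that the number of empty columns before $j$ equals the number of empty rows after $i+1$, plus one.

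\textbf{The missing idea} is the structural argument the paper uses to avoid computing case 2 at all:
\begin{itemize}
\item If $s(R)\neq R$, then $\tilde s a_R=a_{s(R)}$, so $s(R)\in P_s\mathcal{O}$.
\item A $P_s$-orbit contains at most three $B$-orbits. Hence its arrangements not fixed by $s$ form at most one pair $\{R,s(R)\}$.
\item So every $P_s$-orbit with three $B$-orbits contains an arrangement with rows $i,i+1$ empty, and only case 3 needs computing.
\item Case 1 follows because case 3 never produces two rooks in rows $i,i+1$.
\item A case-2 arrangement has a third orbit exactly when case 3, applied to the arrangement with its rook deleted, produces it. Translating the case-3 count gives the corrected condition above.
\end{itemize}
Your one-line treatment of $s^r_i$ also needs an actual computation, since there the vector $v$ moves and the first empty column can change. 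The paper does $s^r_1$ for $n=2$ explicitly.
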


    \begin{proof}

    First, consider the case $s(R)\neq R$, this is the same as having a rook in one of the rows that $s$ switches. In this case, it is easy to see that the action of $s$ on $a_{R}$ gives $a_{s(R)}$, implying that $P_{s}\s$ contains the orbit corresponding to $s(R)$.

    As $P_{s}\s$ contains at most $3$ $B$ orbits, it follows that if $R,R'$ are rook arrangements corresponding to orbits in $P_{s}\s$ such that $s(R)\neq R$ and $s(R')\neq R'$ then $s(R)=R'$. Thus, if $P_s\s$ contains three Borel orbitss then one of them corresponds to a rook arrangement $R$ such that $s(R)=R$. Therefore we can assume that $s(R)=R$.

    It is easy to see that since $s$ changes  either two empty rows or two empty columns, we can erase all non-empty rows and columns, and prove the result for the empty rook arrangement.

    The calculation is easily seen to be reducible to the case of $n=2$. We present the calculation for $n=2$.

        Let $a_R=\begin{pmatrix}
        0 & 1\\
        1& 1
    \end{pmatrix}$ and $v_R=\begin{pmatrix}
        1 \\0
    \end{pmatrix}$.

    For the rest of the proof we change our notation and let $B\subset GL_2$ be the group of upper triangular matrices.

    \begin{itemize}
        \item Assume that $s=s_1^l$.  In this case $P_s=GL_2\times B^t$. The action from the left by $GL_2$ does not change the special vector $v_R$ and it acts transitively on $GL_2$. Thus, we obtain all rook arrangements whose first empty column is the first column.

        \item Assume that $s=s_1^r$. In this case $P_s=B\times GL_2$. The stabilizer of $v_R$ in $P_s$ is contained in  $B\times B$. The $B\times B$ orbit of $a_R$ is the union of two $B\times B_1^t$ orbits in $GL_2$. This gives two rook arrangements, the empty one and the one with a single rook at the $(2,2)$ position. The Borel orbit corresponding to the rook arrangement with a single rook at the second row and the first column is also in $P_s\s$.
    \end{itemize}

    \end{proof}

    \begin{cor}\label{cor: action agrees with rooks 2}
        Let $s=s^l_i$ be a simple reflection of $G$, let $R$ be a rook arrangement and let $\s$ be the corresponding $B$ orbit on $X^\vee$. The type of $\s,s$ is $U$ if both of the $i,i+1$ rows of $R$ are nonempty. If both are empty, the type is $T$; otherwise, the type is determined by the same condition as in the second case of Proposition \ref{Borel orbits in Ps orbit}. The same holds if we replace $s^l_i$ by $s^r_i$ and rows by columns.

        Moreover, the action of $W$ on the Borel orbits agrees with its action on rook arrangements.
    \end{cor}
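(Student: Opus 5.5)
The plan is to read off the type of $(s,\s)$ from Proposition \ref{Borel orbits in Ps orbit}, using the number of $B$ orbits in $P_s\s$ together with the rank formula of Proposition \ref{rank computaion GL times V}, and then check that Knop's action on orbits matches permutation of rook arrangements. By symmetry I treat only $s=s^l_i$; the case $s^r_i$ is identical with rows and columns exchanged.

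\textbf{Case 1: rows $i$ and $i+1$ both nonempty.} By Proposition \ref{Borel orbits in Ps orbit}, $P_s\s$ consists of the two orbits for $R$ and $s(R)$. Here $s(R)\neq R$, and both arrangements have the same number of rooks $m$. By Proposition \ref{rank computaion GL times V}, they have the same rank $2n-m$. Two orbits of equal rank rule out type $N$, since there $s\s=\s$. So the type is $U$, and Knop's $s\s$ is the other orbit, namely $s(R)$.

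\textbf{Case 3: rows $i$ and $i+1$ both empty.} Now $P_s\s$ has three orbits: $R$ and two arrangements $R_1,R_2$ with one extra rook, in row $i$ and row $i+1$ respectively, in the same column $j$. These satisfy $s(R_1)=R_2$, and their rank is one less than that of $R$. Recall that in type T the two orbits exchanged by $s$ lie in the closure of the third, or vice versa. Since $R$ is fixed by $s$ and has a different rank, $\s$ must be the distinguished orbit, so the type is $T2$. For $R_1$ the type is $T1$ with $sR_1=R_2$. In every case Knop's action sends $R$ to $s(R)=R$ and swaps $R_1,R_2$, which agrees with the rook action.

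\textbf{Case 2: exactly one of the two rows nonempty.} If the two counts in Proposition \ref{Borel orbits in Ps orbit} differ, we get two orbits $R,s(R)$ of equal rank, hence type $U$ as in Case 1. If the counts are equal, we get three orbits $R$, $s(R)$ and $R'$, where $R'$ is $R$ with its rook removed. Then $R'$ has both rows empty, which is Case 3 applied to $R'$, and $\{R,s(R)\}=\{R'_1,R'_2\}$. So $\s$ has type $T1$ and $s\s=s(R)$. In particular, "the same condition as in the second case" decides between $U$ and $T$.

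The main obstacle is pinning down closure relations, i.e.\ distinguishing $T1$ from $T2$ and $U1$ from $U2$, which ranks alone do not fully determine. However, the second assertion (agreement of the actions) only needs the multiset of orbits in $P_s\s$ and which orbit is $s$-fixed. That information is already supplied by Proposition \ref{Borel orbits in Ps orbit} together with the rank and dimension counts. In Knop's rank-one analysis, the orbit of smallest rank in a type T triple is the open one.

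Finally, Knop's $W$-action is generated by these simple reflections, and so is the action on rook arrangements. Since the two agree on each generator, they agree on all of $W$.
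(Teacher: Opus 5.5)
Your argument is essentially the paper's. You count the $B$ orbits in $P_s\s$ using Proposition~\ref{Borel orbits in Ps orbit}, and you use Proposition~\ref{rank computaion GL times V} to see that when $P_s\s$ has two orbits they have equal rank, so the type is $U$. Three orbits force type $T$, and the action is then read off from the types. You are in fact more explicit than the paper about which orbit of a type $T$ triple is $s$-fixed.

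Some of your justifications are wrong, though none breaks the argument.

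\textbf{The closing remark is backwards.} You say the orbit of smallest rank in a type $T$ triple is the open one. In fact the open, $s$-fixed orbit has rank one \emph{larger} than the two exchanged orbits, which share the smaller rank; think of $PGL_2/T$. Your own Case 3 uses the correct version: $R$ has rank one more than $R_1,R_2$, and you rightly make it the $T2$ orbit. As literally stated, the remark contradicts Case 3, and it would not single out any orbit.

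\textbf{There is no ``vice versa'' in type $T$.} In both $T1$ and $T2$, the exchanged pair lies in the closure of the third orbit.

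\textbf{Case 3 should rest on the ranks.} The $s$-fixed orbit is identified because $R$ is the only orbit whose rank differs from the other two. Using the fact that the rook action fixes $R$ would be circular, since that agreement is what you are proving.

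\textbf{The reason for excluding type $N$ in Case 1 is different.} Two equal-rank orbits rule out $N$ because in type $N$ the closed orbit has rank one less than the open one. Appealing to $s\s=\s$ presupposes the Knop action you are trying to identify.
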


    \begin{proof}
        If both rows are empty, then by Proposition \ref{Borel orbits in Ps orbit} $P_s\s$ contains three $B$ orbits, so the type is $T$. In the other case where $P_s\s$ contains three $B$ orbits, the type is also $T$.

        If both rows are nonempty, then by Proposition \ref{Borel orbits in Ps orbit} $P_s\s$ contains two Borel orbits corresponding to $R,s(R)$ for some rook arrangement $R$. By Proposition \ref{rank computaion GL times V} these Borel orbits have the same rank and so the type is $U$. In the remaining case the type is also $U$ for the same reason.

        Since the action of the Weyl group is defined in terms of the types of orbits, the second part follows.
    \end{proof}

    \end{subsubsection}

    Now we are ready to prove Conjecture \ref{FGT conjecture} for this case.

      \begin{theorem}\label{first example}
        Conjecture \ref{FGT conjecture} holds for the relative Langlands dual pair $T^*(Hom(\C^n,\C^n))$ and $T^*(GL_n\times \C^n)$.
    \end{theorem}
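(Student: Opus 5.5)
By Theorem \ref{main}, $H^{BM}_{top}(\Lambda)\cong V_{comb}(X)$ for $X=Hom(\C^n,\C^n)$ and $H^{BM}_{top}(\Lambda^\vee)\cong V_{comb}(X^\vee)$ for $X^\vee=GL_n\times\C^n$. So it suffices to show
\[
V_{comb}(X^\vee)\cong sgn\otimes V_{comb}(X)
\]
as $\C[S_n\times S_n]$-modules. Both sides are representations of a finite group, so I would compare characters. Part (1) of Conjecture \ref{FGT conjecture} is then immediate, since both sides have a basis indexed by rook arrangements on the $n\times n$ board.

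The plan is to use the rank filtration of Proposition \ref{associated graded}. Since representations of $W$ are semisimple, each $V_{comb}$ is isomorphic to its associated graded $\Gamma(V_{comb})$. In $\Gamma$, each $w$ sends $[\s]$ to $\pm[w\s]$, and by Corollaries \ref{cor: action agrees with rooks 1} and \ref{cor: action agrees with rooks 2}, $w\s$ is just $w(R)$, the permutation action on rooks.

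By Proposition \ref{rank compuation Hom()} and Proposition \ref{rank computaion GL times V}, the graded pieces are indexed by the number $m$ of rooks on both sides (ranks $m$ and $2n-m$ respectively). So I would compare, for each $m$, the signed permutation modules on arrangements with $m$ rooks. Each such module is induced from the stabilizer of a standard arrangement with rooks at $(1,1),\dots,(m,m)$. This stabilizer is $\Delta S_m\times S_{n-m}\times S_{n-m}$, and it acts on the chosen basis vector by some sign character $\epsilon_X$, resp. $\epsilon_{X^\vee}$. The claim then reduces to the identity
\[
\epsilon_{X^\vee}=sgn|_{Stab}\cdot\epsilon_X .
\]
I expect $\epsilon_X$ to be trivial and $\epsilon_{X^\vee}$ to be $sgn$ on the $S_{n-m}\times S_{n-m}$ factor and trivial on $\Delta S_m$. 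This matches $sgn\cdot\epsilon_X$, because $sgn$ of a diagonal element $(\sigma,\sigma)$ is $sgn(\sigma)^2=1$.

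To determine the signs, I would read them off from the types of simple reflections in the definition of $V_{comb}$. On graded pieces, a type U reflection sends $[\s]\mapsto[s\s]$. A reflection fixing $\s$ acts by $+1$ in types G and N1 (there $(s+1)[\s]=2[\s]$ modulo higher rank) and by $-1$ in types T2 and N2.

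For $X$, Corollary \ref{cor: action agrees with rooks 1} shows that only types G and U occur. Hence all signs are $+1$, and the graded piece is the honest permutation module.

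For $X^\vee$, a reflection swapping two rows (or two columns) that both contain rooks is of type U, so it contributes no sign; this gives triviality on $\Delta S_m$. A reflection swapping two empty rows is of type T by Corollary \ref{cor: action agrees with rooks 2}, and the orbit is fixed by it. I need to check that the orbit is the closed one, i.e. of type T2, which gives sign $-1$. The plan is to check this via ranks: the two other orbits in $P_s\s$ have one more rook, hence rank one lower by Proposition \ref{rank computaion GL times V}. In a T configuration the pair $\{\s',s\s'\}$ consists of lower-rank orbits lying in the closure of the fixed one, which forces type T2.

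The main obstacle is this sign bookkeeping in the case where the standard representative is not fixed. The stabilizer $S_{n-m}\times S_{n-m}$ is generated by adjacent transpositions only after conjugating empty rows and columns to be adjacent. The conjugating path passes through type U reflections, which carry sign $+1$ but might interact with rows mixing empty and occupied positions (the second case of Proposition \ref{Borel orbits in Ps orbit}). I would handle this by choosing the standard arrangement so that the empty rows and columns are the last $n-m$, so that the stabilizer is generated directly by simple reflections fixing the arrangement. Then the sign character is determined on generators, and well-definedness follows from the fact that $V_{comb}$ is a genuine representation.
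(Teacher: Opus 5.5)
Your argument is correct, and its skeleton matches the paper's. Like the paper, you reduce to $V_{comb}$ via Theorem \ref{main} and use the rank filtration of Proposition \ref{associated graded} to treat one $W$-orbit of rook arrangements at a time. You also note, as the paper does, that $X$ yields an honest permutation module, and that on the $X^\vee$ side the only signs come from type T2 reflections fixing an arrangement.

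The difference is in how you verify that these signs are exactly $sgn$ on the stabilizer. The paper takes an arbitrary $R$ and an arbitrary word $w=s_1\cdots s_l$ in $\mathrm{Stab}(R)$, and shows that the number of non-fixing steps is even. It does this with a chessboard parity for swaps of an empty with an occupied row or column, and with the sign of the induced permutation on occupied rows and columns for swaps of two occupied ones. This implicitly uses that every non-fixing step acts by $+1$ on the graded piece, including the mixed empty/occupied ones, which may be of type U or T1.

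You instead use that the scalar by which $\mathrm{Stab}(R_0)$ acts on $[R_0]$ in $\Gamma(V_{comb}(X^\vee))$ is automatically a character. You evaluate it on the generators $s^l_i,s^r_i$ ($i>m$) and $s^l_is^r_i$ ($i<m$) of $\Delta S_m\times S_{n-m}\times S_{n-m}$. This only meets the two clean cases: both lines empty (type T2, sign $-1$) and both occupied (type U, sign $+1$). It never touches the mixed case of Proposition \ref{Borel orbits in Ps orbit}. Your closing identity $\mathrm{Ind}_S^W(sgn|_S\cdot\epsilon_X)\cong sgn\otimes\mathrm{Ind}_S^W\epsilon_X$ also makes explicit what the paper leaves implicit.

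The paper's route, in exchange, is a word-by-word invariant that needs no choice of representative or generating set. With your choice of $R_0$, the ``main obstacle'' you flag does not actually arise.

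One wording slip: a type T orbit fixed by $s$ is the \emph{open} orbit of $P_s\s$, not the closed one. In Definition \ref{type_rank_1_spheric}, T2 means $\s'\cup s\s'\subset\overline{\s}$. Your next sentence states this correctly, and it also follows at once because a type T1 orbit always has $s\s\neq\s$.
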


    \begin{proof}
    By Proposition \ref{associated graded} and Corollaries \ref{cor: action agrees with rooks 1} and \ref{cor: action agrees with rooks 2} it is enough to compare each $W$ orbit of rook arrangements separately.

    In the case of $X=Hom(\C^n,\C^n)$ the representation of $W$ obtained by acting on Borel orbits corresponding to rook arrangements with $k$ rooks is isomorphic to the representation obtained by acting on the rook arrangements.
    
    In the case of $X^\vee=GL_n\times \C^n$ there are signs coming from the orbits of type $T$. We claim that the representation we get is precisely tensoring the representation obtained from acting rook arrangements by the sign character. To prove this, it is enough to take a single rook arrangement $R$ and to show that if $w\in W$ stabilizes it, then for any presentation of $w=s_1\cdot ...\cdot s_l$ as a product of simple reflections, the number of indices $j$ such that $s_j\cdot s_{j+1}\cdot ...\cdot s_l (R)= s_{j+1}\cdot ...\cdot s_l (R)$ is equal to $l$ modulo $2$.  This means that the number of indices $j$ such that $s_j\cdot s_{j+1}\cdot ...\cdot s_l (R)\neq s_{j+1}\cdot ...\cdot s_l (R)$ is even. 
    
    The number of simple reflection switching an empty row with a nonempty one is easily seen to be even by a chessboard coloring argument. 
    
    We claim that the number of simple reflection switching either two nonempty rows or two nonempty columns is also even. Consider the nonempty rows $i_1<i_2<...<i_k$ and the nonempty columns $j_1<j_2<...<j_k$. Define a permutation of $\{1,...,k\}$ by sending $1\leq t\leq k$ to $1\leq s\leq k$ if there is a rook at the $(i_s,j_t)$ position. 
    The simple reflection switching either two nonempty rows or two nonempty columns are the only ones changing the sign of this permutation.
    \end{proof}

\end{subsection}

\begin{subsection}{The case of $T^*(GL_{2n+1}/GL_n\times GL_{n+1})$ and $T^*(GL_{2n+1}/Sp_{2n})$}

Let $n\geq 1$ and let $G=GL_{2n+1}$. Let $X=GL_{2n+1}/GL_n\times GL_{n+1}$ and let $X^\vee=GL_{2n+1}/Sp_{2n}$ with the usual actions.

    By \cite{benzvi2024relativelanglandsduality} $T^*X$ and $T^*X^\vee$ are relative Langlands dual of each other.

Let $B$ be the Borel of upper triangular matrices in $G$. It is well known that the $B$ orbits on $X$ are parametrized by clans. 

\begin{defn}
    Let $m,k$ be two integers, a $(m,k)$ clan is an isomorphism class of sequences of length $m+k$ whose elements are either natural numbers or one of the symbols $\{+,-\}$. Each number appears either twice or not at all, and the difference between the number of $+$ symbols and the number of $-$ symbols is $m-k$. Two such sequences are isomorphic if there is an automorphism of $\N$ (as a set) sending one sequence to the other.

    In this work we only consider $(n+1,n)$ clans. Meaning sequences of length $2n+1$ with one more $+$ than $-$. We use clans to mean $(n+1,n)$ clans.

    For a Clan $C$ and $1\leq i\leq 2n+1$ we denote by $C_i$ the corresponding element. It is either a number or a $\{+,-\}$ sign.
\end{defn}

\begin{exmp}
    There are 6 clans for $n=1$ and they  are given by $$++-,+-+,-++,11+,1+1,+11$$
\end{exmp}

The Weyl group of $G$ is $S_{2n+1}$ and it acts on clans by permuting the sequence.

Let $s_1,...,s_{2n}\in W$ be the simple reflection given by switching the $i$ and $i+1$ elements.

\begin{subsubsection}{Orbits on $GL_{2n+1}/(GL_n\times GL_{n+1})$}

Let $X=GL_{2n+1}/(GL_n\times GL_{n+1})$. All the results we need in this case can be found in the literature, specifically in Section 4 of \cite{wyser2012glpxglqorbitclosures}. Therefore, we only state the needed results and we do not prove them. Note that the terminology of \cite{wyser2012glpxglqorbitclosures} differs from our own. Instead of using the notions of types $U,T,N,G$ the notions of "complex" and "non-compact imaginary" are used.

\begin{prop}
    There is a bijection between $B$ orbits on $X$ and clans.
\end{prop}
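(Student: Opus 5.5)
The plan is to identify $B\backslash X$ with $H$-orbits on the flag variety $\B$ of $GL_{2n+1}$, where $H=GL_n\times GL_{n+1}$. Since $B\backslash G/H\cong H\backslash G/B$, this reduces the statement to the classical parametrization of $GL_p\times GL_q$ orbits on flags of $\C^{p+q}$ by $(p,q)$ clans, here with $p=n+1$ and $q=n$. I would write $\C^{2n+1}=E_+\oplus E_-$ with $\dim E_+=n+1$ and $\dim E_-=n$, so that $H=GL(E_+)\times GL(E_-)$.

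To a full flag $0=F_0\subset F_1\subset\cdots\subset F_{2n+1}$ we attach the numerical invariants
\[
\dim(F_i\cap E_+),\qquad \dim(F_i\cap E_-),\qquad \dim\bigl(\pi_+(F_i)\bigr),
\]
where $\pi_+$ is the projection onto $E_+$ along $E_-$. These are constant on $H$-orbits. Following Matsuki--Oshima and Yamamoto, the clan is read off from the increments as $i$ goes from $i-1$ to $i$:
\begin{itemize}
\item if $\dim(F_i\cap E_+)$ jumps, we write $+$ at position $i$;
\item if $\dim(F_i\cap E_-)$ jumps, we write $-$;
\item otherwise the new vector is "mixed". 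The position $i$ is then paired with the later position $j$ at which the $E_+$- and $E_-$-components of that vector both become visible, and we put the same natural number at $i$ and at $j$.
\end{itemize}
The sign condition, that there is one more $+$ than $-$, follows by counting dimensions: each matched pair uses up one dimension of $E_+$ and one of $E_-$.

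For surjectivity, I would write down for each clan an explicit flag adapted to a basis $e_1,\dots,e_{n+1}$ of $E_+$ and $f_1,\dots,f_n$ of $E_-$. Each $+$ contributes a fresh $e_a$, each $-$ contributes a fresh $f_b$, and a pair $(i,j)$ with $i<j$ contributes $e_a+f_b$ at step $i$ and $e_a$ at step $j$. One then checks that this flag realizes the given clan.

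The hard part is injectivity: any two flags with the same clan must lie in the same $H$-orbit. I would argue by induction on $n$ (more generally on $p+q$), mirroring the row-reduction arguments of Proposition~\ref{B_i orbits}. Look at $F_1$. If it lies in $E_+$ or in $E_-$, use $H$ to move it to $\C e_1$ or $\C f_1$, and pass to the quotient flag in $\C^{2n+1}/F_1$ with the smaller group $GL(E_+/F_1)\times GL(E_-)$ (or the analogous group). If $F_1$ is mixed, move it to $\C(e_1+f_1)$. Its stabilizer contains the diagonal $GL_1$ together with the parabolic pieces, and one shows that the residual flag data in $\C^{2n+1}/F_1$ is classified by a $(p-1,q-1)$ clan together with the distinguished later position $j$ where $e_1$ enters. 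Controlling this stabilizer is where care is needed.

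Since the paper explicitly relies on Section~4 of \cite{wyser2012glpxglqorbitclosures} for this case, I would ultimately cite that result (due to Matsuki--Oshima and Yamamoto) for the bijection. I would only verify that the translation $B\backslash X\leftrightarrow H\backslash\B$ preserves the indexing conventions, and that the $W=S_{2n+1}$ action and the types match the "complex" and "non-compact imaginary" terminology used there.
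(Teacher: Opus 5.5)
Your proposal ends up where the paper does. The paper gives no argument for this proposition; it simply cites Section 4 of \cite{wyser2012glpxglqorbitclosures}, i.e.\ the Matsuki--Oshima/Yamamoto classification of $GL_{n+1}\times GL_n$-orbits on the flag variety. So passing to $H\backslash G/B$ and citing that result is a complete proof at the level of the paper.

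Do not offer your self-contained sketch as a substitute, though, because as written it would fail.

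\textbf{The listed invariants do not separate orbits.} Since $\ker\pi_+=E_-$, you have $\dim\pi_+(F_i)=i-\dim(F_i\cap E_-)$, so your third invariant is redundant. The numbers $\dim(F_i\cap E_\pm)$ alone cannot detect how the natural numbers are matched. For example, the $(3,2)$-clans $1212{+}$ and $1221{+}$ give identical values for every $i$, yet they are different orbits.

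\textbf{The reading rule is wrong at second occurrences.} At the second occurrence $j$ of a number, both $\dim(F_j\cap E_+)$ and $\dim(F_j\cap E_-)$ jump. The correct rule is:
\begin{itemize}
\item only the $E_+$-intersection jumps: write $+$;
\item only the $E_-$-intersection jumps: write $-$;
\item neither jumps: first occurrence of a number;
\item both jump: second occurrence of a number.
\end{itemize}

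\textbf{The matching needs a different invariant.} Your phrase about the components of ``that vector'' becoming visible is not well defined, since the new vector is only determined modulo $F_{i-1}$. The invariant that actually records the matching in Yamamoto's theorem is $\dim(\pi_+(F_i)+F_j)$ for $i<j$. It equals $j$ plus the number of pairs $s\le i<j<t$. For instance, $\dim(\pi_+(F_1)+F_3)$ is $3$ for $1212{+}$ and $4$ for $1221{+}$.

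\textbf{The mixed case of your induction is not carried out.} This is the real content of the injectivity argument, and you flag it without doing it. After normalizing $F_1=\C(e_1+f_1)$, the images of $E_+$ and $E_-$ in $\C^{2n+1}/F_1$ meet in the line spanned by the image of $e_1$. So the residual problem is not directly a $GL_{n}\times GL_{n-1}$ problem. You must record the step at which this line enters the flag. You must then use the unipotent part of the stabilizer to straighten the flag below that step before reducing to $E_+/\C e_1\oplus E_-/\C f_1$.
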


\begin{prop}\label{B orbits in Ps for GL/GL}
    The bijection between clans and $B$ orbits can be chosen such that the following holds.
    
    Let $s=s_i$ be a simple reflection and let $P_s$ be the parabolic corresponding to $s$. Let $\s$ be a $B$ orbit on $X$ and let $C$ be the clan corresponding to $\s$. The clans corresponding to the $B$ orbits in $P_s\s$ are determined as follows:
    \begin{enumerate}
        \item If one of the $i$ or $i+1$ elements of $C$ is a number and not both of them are equal, the answer is $C$ and $sC$. In this case $s,\s$ has type $U$.
        \item If $sC=C$ and the elements in positions $i,i+1$ of $C$ are either both $+$ or both $-$, then the answer is $C$. In this case $s,\s$ has type $G$.
        \item If $C_i$ and $C_{i+1}$ are $+$ and $-$ in some order, the answer is $C,sC$ and the clan obtained from $C$ by replacing $C_i$ and $C_{i+1}$ by some number which does not appear in $C$. In this case $s,\s$ has type $T$.
        \item If $sC=C$ and the elements in positions $i,i+1$ of $C$ are the same number, the answer is $C$ and both clans obtained from $C$ by replacing the elements in positions $i,i+1$ by $+-$ and $-+$. In this case $s,\s$ has type $T$.
    \end{enumerate}

    Moreover, the action of $W$ on the $B$ orbits agrees with its action on the clans.
\end{prop}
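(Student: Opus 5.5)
The plan is to realize the $B$ orbits on $X$ as $K=GL_n\times GL_{n+1}$ orbits on the flag variety of $GL_{2n+1}$, that is, on full flags $0=V_0\subset V_1\subset\cdots\subset V_{2n+1}=\C^{2n+1}$. Here $\C^{2n+1}=E_+\oplus E_-$ with $\dim E_+=n+1$ and $\dim E_-=n$, and $K$ is the stabilizer of this decomposition; the identification is $B\backslash G/K\cong K\backslash G/B$.

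\textbf{Step 1: the bijection with clans.} To a flag we attach a clan via the Matsuki--Oshima / Yamamoto invariants. These are the dimensions
\[
\dim(V_i\cap E_+),\qquad \dim(V_i\cap E_-),\qquad \dim(\pi_+(V_i)),
\]
where $\pi_+$ is the projection to $E_+$ along $E_-$. At step $i$, the symbol $C_i$ is $+$ (resp. $-$) if $\dim(V_i\cap E_\pm)$ jumps, and is a number if neither jumps. Two equal numbers are placed at positions $a<b$ when the new $\pi_+$-direction appearing at $a$ is matched, at $b$, by a vector making $V_b\cap E_+$ and $V_b\cap E_-$ both grow. A standard representative for the clan $C$ is built as follows:
\begin{itemize}
\item for $C_i=\pm$, take a basis vector of $E_\pm$;
\item for a pair $C_a=C_b$, take $e^+_k+e^-_k$ at position $a$ and $e^+_k-e^-_k$ at position $b$.
\end{itemize}
The count of signs is forced by $\dim E_+-\dim E_-=1$. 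Showing that these invariants separate orbits, and that every flag can be put into standard form, is an induction on $2n+1$ by row reduction inside $K$, as in the rook-arrangement proofs above. Since the paper cites Section 4 of \cite{wyser2012glpxglqorbitclosures}, I would quote that source for this step rather than redo it.

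\textbf{Step 2: $P_s\s$ for $s=s_i$.} The group $P_s$ moves only $V_i$ inside the pencil of lines in $V_{i+1}/V_{i-1}$, so it suffices to compute the $K$-orbits of the flags obtained by varying $V_i$ in this $\p^1$, starting from the standard representative. This is a rank-one computation inside the two-dimensional space spanned by the basis vectors at positions $i$ and $i+1$. The four cases of the statement correspond to the possible pairs $(C_i,C_{i+1})$.

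\begin{enumerate}
\item \textbf{$(\pm,\pm)$ with equal signs.} The plane lies in one $E_\pm$, so the stabilizer acts transitively on the $\p^1$. This gives type $G$.
\item \textbf{$(+,-)$ or $(-,+)$.} The plane is $\langle e^+,e^-\rangle$, and the stabilizer acts on the $\p^1$ through a torus. The orbits are:
\begin{itemize}
\item two fixed points, giving the clans $C$ and $sC$;
\item an open orbit whose invariants give a new matched pair at $i,i+1$.
\end{itemize}
This is type $T$.
\item \textbf{$C_i=C_{i+1}$ a number.} The plane is $\langle e^+_k+e^-_k,\ e^+_k-e^-_k\rangle=\langle e^+_k,e^-_k\rangle$, so this flag is the generic point of the previous picture. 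The closed orbits $e^\pm_k$ produce $+-$ and $-+$ at positions $i,i+1$.
\item \textbf{At least one of $C_i,C_{i+1}$ is a number, not a matched pair.} The swap $s$ acts on the representatives, giving the two orbits $C$ and $sC$. In Knop's terms this is $U$ rather than $N$, since $sC\neq C$.
\end{enumerate}

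Case (4) is exactly where type $N$ would otherwise occur, so to exclude it I would compare $\dim$ or rank as in Proposition \ref{rank computaion GL times V}.

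\textbf{Step 3: compatibility with Knop's action.} In types $U$ and $T$, Knop's $s\s$ is the partner orbit of the same rank. For case (3), the element $s$ maps $+-$ to $-+$, which matches $sC$; in case (4) it is literally $sC$. For types $G$ and $N$ the action fixes $\s$, which agrees with $sC=C$ whenever $C_i=C_{i+1}$. Since $W$ is generated by the simple reflections, the action on orbits agrees with the permutation action on clans.

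The main obstacle is Step 1: making the clan invariant well defined, in particular the matching of equal numbers, and proving completeness of the normal form. I would handle it by citing Yamamoto and Wyser and only verifying that their labelling is compatible with the local $\p^1$ computation in Step 2.
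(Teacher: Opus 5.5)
The paper gives no argument for this proposition: it quotes Section 4 of \cite{wyser2012glpxglqorbitclosures}. Your route is a reasonable and more self-contained alternative. You cite only the clan parametrization, then analyse the stabilizer in $K$ of the partial flag $(V_j)_{j\neq i}$ acting on $\mathbb{P}(V_{i+1}/V_{i-1})$. Your $(\pm,\pm)$, $(+,-)$ and matched-pair cases are correct. In the $(+,-)$ case, the lines $(V_{i+1}\cap E_\pm)/(V_{i-1}\cap E_\pm)$ are stable under the stabilizer, which is why its image is exactly a torus.

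\textbf{Gap: your case 4 (the paper's item 1).} You show that $[v_i]$ and $[v_{i+1}]$ lie in the orbits $C$ and $sC$, but not that the fiber contains no third orbit.
\begin{itemize}
\item Neither ``since $sC\neq C$'' nor a rank or dimension comparison can exclude a third orbit. In type $T1$ the orbits $\mathcal{O}$ and $s\mathcal{O}$ also have equal rank and dimension.
\item Take the torus in $K$ that scales each sign vector by its own scalar and each pair $e^+_k,e^-_k$ by a common scalar. It preserves the standard flag. On $\mathbb{P}(V_{i+1}/V_{i-1})$ its fixed points are exactly $[v_i]$ and $[v_{i+1}]$. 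Combined with $sC\neq C$, this forces the image of the stabilizer in $PGL_2$ to be either this torus $T_0$ or one of the two Borel subgroups containing it. That excludes types $G$ and $N$, so $N$ is not the alternative you need to rule out. It does not exclude $T$.
\item What is missing is a unipotent element of the stabilizer. Example: let $C_i$ be the first occurrence of a number with partner at $a>i+1$, so $v_i=e^+_k+e^-_k$, and let $C_{i+1}=+$ with $v_{i+1}=e^+_\beta$. The map $e^+_k\mapsto e^+_k+te^+_\beta$, identity on the other basis vectors, lies in $K$. It preserves every $V_j$ with $j\neq i$; for $V_a$ this uses $e^+_\beta\in V_a$. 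It moves $[v_i]$ over $\mathbb{P}^1\setminus\{[v_{i+1}]\}$, so there are exactly two orbits.
\item Which of $[v_i],[v_{i+1}]$ is the fixed point depends on where the partners sit. For two numbers whose partners both come after $i+1$, it depends on which partner comes first. So this is not a computation in the plane of positions $i,i+1$ alone.
\item Alternatively, use Yamamoto's theorem to check that a generic line $V_{i-1}+\langle v_i+cv_{i+1}\rangle$ has the invariants of $C$ or of $sC$.
\end{itemize}

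\textbf{Smaller points.}
\begin{itemize}
\item In Step 1, both $\dim(V_i\cap E_+)$ and $\dim(V_i\cap E_-)$ jump at the second occurrence of a number. So ``number iff neither jumps'' only describes first occurrences.
\item In Step 3, the matched-pair orbit is the open orbit of type $T2$, which Knop's $s$ fixes. List it with type $G$ rather than with $N$; type $N$ never occurs here.
\end{itemize}
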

    
\end{subsubsection}

\begin{subsubsection}{Orbits on $GL_{2n+1}/Sp_{2n}$}

Let $X^\vee=GL_{2n+1}/Sp_{2n}$. Let $B\subset GL_{2n+1}$ be the group of upper triangular matrices.

\begin{prop}\label{B orbits on GL/Sp}
    There is a bijection between $B$ orbits on $X^\vee$ and clans.
\end{prop}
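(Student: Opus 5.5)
We must show that $B$-orbits on $X^\vee=GL_{2n+1}/Sp_{2n}$ are in bijection with $(n+1,n)$ clans. The first step is to translate the problem. $B$-orbits on $G/Sp_{2n}$ correspond to $Sp_{2n}$-orbits on the flag variety of $GL_{2n+1}$. A more convenient model for $X^\vee$ is the variety of pairs $(\omega,\ell)$, where $\omega$ is a skew form of rank $2n$ on $\C^{2n+1}$ and $\ell$ is a chosen line; this model then gets the upper triangular $B$ acting. Concretely, $GL_{2n+1}/Sp_{2n}$ fibres over $GL_{2n+1}/(Sp_{2n}\ltimes \C^{2n})\cdot GL_1$-type data. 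The cleaner route is to use the chain $Sp_{2n}\subset Sp_{2n}\times GL_1\subset P$, where $P$ is the stabilizer of the decomposition $\C^{2n+1}=\C^{2n}\oplus \C$. I will instead follow the inductive row/column reduction already used in Proposition \ref{B_i orbits}. Namely, I represent an orbit by a flag $F_1\subset\cdots\subset F_{2n+1}$ together with the form $\omega$ (kernel a line $K$) and a vector $v\notin$ the span complementary to $K$, modulo $Sp_{2n}$. Put differently, I work with $B$ acting on pairs (a nondegenerate-on-a-hyperplane skew form, plus normalizing data).

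Second, I define the clan attached to a flag. For each $i$, I compare the restriction $\omega|_{F_i}$ and whether $K\subset F_i$. A position $i$ gets a number paired with $j>i$ if $F_i$ pairs nondegenerately with $F_j$ for the first time at step $j$; here I use the standard rank sequence $\operatorname{rank}(\omega|_{F_i})$ and the ranks of $\omega$ restricted to $F_i\times F_j$. Positions not paired get a sign. The sign is $+$ or $-$ according to whether the extra data (the vector $v$, equivalently the $GL_1$ factor distinguishing $X^\vee$ from the $\omega$-only space) has already appeared in $F_i$. Since the rank of $\omega$ is $2n$, there are exactly $n$ pairs minus the number of sign pairs. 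The single kernel direction together with the $v$ direction must then produce exactly one more $+$ than $-$, and this counting fixes the $(n+1,n)$ condition.

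Third, I prove well-definedness and bijectivity by induction on $n$, as in Proposition \ref{B_i orbits}. Look at the last row of a representative matrix and normalize it using the left $B$-action and right $Sp_{2n}$-action. Either the last vector is $\omega$-paired with an earlier one, which gives a number pair; we then delete both and reduce to $GL_{2n-1}/Sp_{2n-2}$. Or it lies in the kernel/special direction, which gives a sign; we then reduce to a smaller symmetric-type quotient of the same shape. Injectivity follows because the rank invariants $\dim(F_i\cap F_j^{\perp_\omega})$ and $\dim(F_i\cap K)$-type quantities are $B$-invariant and recover the clan. Surjectivity follows by writing explicit $0/1$ representatives for each clan, analogous to Remark \ref{remark: representativie GL_n times V}.

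The main obstacle is the bookkeeping of the sign symbols. I need to identify the right second invariant beyond $\omega$ that separates $+$ from $-$, because $GL_{2n+1}/Sp_{2n}$ differs from the space of rank-$2n$ forms by a $\C^\times$-torsor. I also need to check that the resulting count of $B$-orbits matches the number of $(n+1,n)$ clans. A useful sanity check is $n=1$, where $GL_3/Sp_2$ should have exactly $6$ orbits.
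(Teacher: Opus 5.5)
Your proposal has a genuine gap. It never commits to a correct model of $X^\vee$, and the invariants you build cannot see the datum that separates orbits.

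The model you name is wrong. Pairs $(\omega,\ell)$, with $\omega$ a rank-$2n$ skew form on $\C^{2n+1}$ and $\ell$ a line, form a space of dimension $n(2n+1)+2n$. But $\dim X^\vee=(2n+1)^2-n(2n+1)=2n^2+3n+1$. Nor is $X^\vee$ a $\C^\times$-torsor over the rank-$2n$ forms: the fibre $\mathrm{Stab}(\omega)/Sp_{2n}$ has dimension $2n+1$. It records a vector spanning $K=\ker\omega$ \emph{and} a hyperplane $W$ complementary to $K$. A correct model is the paper's space of triples $(v,W,q)$, or equivalently pairs $(\omega,\phi)$ with $\phi\in(\C^{2n+1})^*$ and $\phi|_K\neq 0$.

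The hyperplane $W$ is essential. Take $n=1$ and compare $(e_3,\langle e_1,e_2\rangle,q)$ with $q(e_1,e_2)=1$ against $(e_3,\ker(e_1^\vee+e_3^\vee),q')$ with $q'(e_1-e_3,e_2)=1$. Both induce the same $\omega=e_1^\vee\wedge e_2^\vee$ with the same kernel vector $e_3$. So every invariant of your type, $\dim(F_a\cap F_b^{\perp_\omega})$ or $\dim(F_a\cap K)$, agrees on them. Yet they lie in different $B$-orbits (clans $11+$ and $+-+$), separated by the least $k$ with $F_k\not\subset W$. Your injectivity step therefore fails as stated.

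Your sign rule cannot work either. Deciding $+$ versus $-$ at position $a$ by whether a fixed datum already lies in $F_a$ is monotone in $a$. It can only produce sign patterns $-\cdots-+\cdots+$ (or the reverse), never $+-+$, which is one of the six clans you need for $n=1$. The $(n+1,n)$ count is also asserted rather than derived.

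The missing idea, which is exactly the ``main obstacle'' you flag, is the paper's two-stage reduction.
\begin{enumerate}
\item Classify the $B$-orbits on $\{(v,\phi):\phi(v)\neq0\}$. They are represented by $(e_i,e_i^\vee+e_j^\vee)$ with $j\le i$, and these fix the positions $j$ and $i$ of the leftmost and rightmost $+$.
\item Classify the orbits of the residual group $B_i^j\subset GL_{2n}$ on Gram matrices in $GL_{2n}/Sp_{2n}$. This group is upper triangular with a $1$ at $(j,j)$ and zeros at $(j,k)$ for $j<k<i$, so it is not a Borel.
\end{enumerate}
In the induction of the second stage, a minimal pivot avoiding index $j$ yields a number pair. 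A pivot involving $j$ instead yields a $-$ together with a new $+$, and the recursion continues with the leftmost-$+$ marker moved to that new position. This moving marker is what creates the alternating $\pm$ chains. Your step ``reduce to a smaller quotient of the same shape'' skips precisely this case, since the shape (the pair $(j,i)$) changes at each such step.
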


\begin{proof}
Consider the action of $G$ on the space of triples $(v,W,q)$ for $0\neq v\in\C^{2n+1}$ a vector, $W\subset \C^{2n+1}$ a $2n$ dimensional subspace that does not contain $v$ and $q$ a symplectic form on $W$. It is easy to see that $G$ acts transitively on this space and there is a point whose stabilizer is $Sp_{2n}$.  Thus, we need to compute the $B$ orbits on this space. 

Choosing a $2n$ dimensional vector space that does not contain $v$ is the same as choosing a functional that does not vanish on $v$. There are $2n+1$ Borel orbits on non-zero functionals. They are represented by the dual basis of the coordinate vectors $e_i$, denoted by $e_i^\vee$. Consider the space $N=\{(v,\phi)\in \C^{2n+1}\times (\C^{2n+1})^*|\phi(v)\neq 0\}$, the $B$ orbits on $N$ are represented by $\{(e_i,e_i^\vee+e_j^\vee)|1\leq j\leq i\leq 2n+1\}$. 

Let $\Tilde{B}_i^j$ be the stabilizer of the pair $e_i,e_i^\vee+e_j^\vee$ for $j\leq i$.

Let $W$ be the kernel of $e_i^\vee+e_j^\vee$.

We choose a basis of $W$ given by $e_1,\ldots,e_{j-1},e_{j}-e_{i},e_{j+1},\ldots,e_{i-1},e_{i+1},\ldots,e_n$.

This choice defines an embedding of $\Tilde{B}_i^j$ into $GL_{2n}$. 

The image of $\Tilde{B}_i^j$ in $GL_{2n}$ is a group denoted by $B_i^j$. If $i=j$, then $B^i_j$ is the subgroup of upper triangular matrices. Otherwise, $B^i_j$ is the subgroup of upper triangular matrices that have 1 at the $(j,j)$ entry, and the entries $(j,k)$ are zero for $k$ between $j+1$ and $i-1$ (if $j=i-1$ there is no extra condition). 

To illustrate, a matrix in $B_i^j$ is of the form:

$$\begin{pmatrix}
    * & \cdots & \cdots &\cdots & \cdots& \cdots& * &  \cdots & * \\
    \vdots & \ddots & & \cdots & \cdots&\cdots & \vdots & \cdots & * \\
        0 & \cdots  &  *& \cdots& \cdots &\cdots &   * &  \cdots  & * \\

    0 & \cdots & 0 & 1&0 & \cdots & 0 & \cdots & * \\

    0 & \cdots & \cdots & 0&* & \cdots & * & \cdots & * \\

    \vdots & & & &  & \ddots & \vdots &  &\vdots \\
    0 & \cdots & \cdots& \cdots & \cdots & 0 & * & \cdots & * \\
    \vdots & & & && & & \ddots & \vdots \\
    0 & \cdots & \cdots& \cdots & \cdots & \cdots & \cdots & 0 & *
\end{pmatrix}
$$

The proposition follows from the next one.

\end{proof}

\begin{prop}
    Let $j\leq i$, the number of $B^i_j$ orbits on $GL_{2n}/Sp_{2n}$ is equal to the number of clans such that the leftmost $+$ is in position $j$ and the rightmost $+$ is in position $i$.
\end{prop}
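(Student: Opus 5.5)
The plan is to count $B^i_j$ orbits on $GL_{2n}/Sp_{2n}$ by identifying them with $B^i_j$ orbits on the space of symplectic forms on $\C^{2n}$ (with the basis of $W$ fixed above), and then to run an induction in the spirit of Proposition \ref{B_i orbits}. For $i=j$ the group $B^i_i$ is the full upper triangular Borel of $GL_{2n}$. In that case the Borel orbits on symplectic forms are classically indexed by fixed-point-free involutions of $\{1,\dots,2n\}$: bring the Gram matrix into monomial form by triangular row and column operations. Such involutions correspond bijectively to clans of length $2n+1$ with a single $+$, placed at position $i$, and the $2n$ remaining positions paired by equal numbers. (The pairing in the clan sits on the $2n+1$ positions with position $i$ deleted.) So the base case follows from this classical normal form.

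For $j<i$ I will run a reduction by row/column elimination exactly as in the proof of Proposition \ref{B_i orbits}. Take the antisymmetric Gram matrix $q$ and look at the last basis vector, i.e.\ the last row. Let $a$ be the first index with $q_{a,2n}\neq 0$. If $a\neq j$, the extra freedom in $B^i_j$ lets us clear the rest of row and column $2n$ and row and column $a$, except the pair $(a,2n)$. This produces a pairing $\{a,2n\}$ in the clan, and we reduce to a symplectic form on $2n-2$ coordinates with group of the same shape $B^{i'}_{j'}$. The new indices are shifted according to whether $a$ lies before $j$, between $j$ and $i$, or after $i$. The crucial case is $a=j$. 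The $(j,j)$ entry is frozen to $1$ and the $(j,k)$ entries are zero for $j<k<i$, so one cannot scale or clear freely along the $j$ direction. Instead, take the next index $b>j$ with $q_{b,2n}\neq0$ modulo the allowed operations. This should be the analogue of the second case in Proposition \ref{B_i orbits}: coordinate $j$ becomes a $+$ or $-$ sign rather than a pair, and the roles of the leftmost and rightmost $+$ are updated.

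Concretely, I will aim to prove a recursion. Let $c(n,j,i)$ be the number of $B^i_j$ orbits and $d(n,j,i)$ the number of clans with leftmost $+$ at $j$ and rightmost $+$ at $i$. I will decompose the clans according to the entry in the last position $2n+1$ (after the deleted position $i$ is accounted for), which is either a number paired with some earlier position, or a sign $-$ or $+$. This decomposition should match the case split on $a$ in the elimination. Having both sides satisfy the same recursion and the same base cases then gives the equality by induction on $n$.

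The main obstacle is the case $a=j$ and, more generally, checking that the normal forms obtained are pairwise non-conjugate. The frozen entry and the zero block in $B^i_j$ make the stabilizer computation delicate. This requires showing that a $-$ sign genuinely appears between $j$ and $i$, so that the sign-count condition ($+$'s minus $-$'s equals $1$) is respected. To handle this I will first work out $n=1$ and $n=2$ by hand, where $B^i_j$ acts on the few symplectic forms on $\C^2$ and $\C^4$. From those cases I will extract the invariants separating orbits: the ranks of $q$ restricted to the flags $\langle e_1,\dots,e_k\rangle$ and to their intersections with the line spanned by the distinguished direction. I will then use these rank invariants in general to prove injectivity, while the elimination proves surjectivity.
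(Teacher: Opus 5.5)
You have a genuine gap. Your reduction step for $j<i$ fails as stated, because the pivot you choose is not an invariant of the action.

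In your model (forms on $W$ in the fixed basis, acted on by $M\mapsto g^{-T}Mg^{-1}$, which has the same orbits as $M\mapsto g^{T}Mg$), an element of $B^i_j$ changes a basis $v_1,\dots,v_{2n}$ with Gram matrix $M$ by $v_m\mapsto c_mv_m+\sum_{k<m}g_{km}v_k$, with $c_j=1$ and $g_{jm}=0$ for $j<m<i$. Under this congruence action both rows and columns absorb \emph{earlier} ones. So $v_{2n}$ is the most flexible vector (it absorbs everything), and $v_1$ is the rigid one (it is only rescaled).

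Proposition~\ref{B_i orbits} is different. There the left factor $B_{n+1}$ adds \emph{later} rows to earlier ones, so the last row is only rescaled and its first nonzero entry is an invariant. This is why the analogy does not transfer.

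Here is a concrete failure. Take $n=2$, $j=3$, $i=4$, so $B^i_j$ is the upper triangular group with $g_{33}=1$. Let $q(v_1,v_2)=q(v_2,v_4)=q(v_3,v_4)=1$ and all other $q(v_k,v_l)$ with $k<l$ be zero; the Pfaffian is $1$. Your rule gives $a=2\neq j$ and claims that rows and columns $2$ and $4$ can be cleared except for the $(2,4)$ entry. But $v_1\mapsto c_1v_1$ and $v_2\mapsto c_2v_2+g_{12}v_1$, so $q(v_1,v_2)$ is only multiplied by $c_1c_2\neq0$ and can never be cleared. In fact the flag invariants put this form in the Borel orbit of the involution $(12)(34)$, while your normal form would lie in the orbit of $(13)(24)$. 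So the surjectivity half of your induction, and with it the recursion on the last clan position, rests on a wrong normal form. In the inverse model $M\mapsto bMb^{T}$ (coming from $gSp_{2n}\mapsto gJg^{T}$ with $J$ the standard symplectic matrix) the last vector is rigid. There, however, the invariant pivot is the \emph{last} nonzero entry of the last column, and the constraints coming from row $j$ would have to be redone.

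Even after fixing the pivot, the step that actually produces clans is missing. The signs $+,-$ and the condition on the leftmost and rightmost $+$ arise only when the pivot involves the special index $j$. For that case you only say it ``should be the analogue'' of the second case of Proposition~\ref{B_i orbits}. You also defer the separation of normal forms to rank invariants not yet identified.

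The paper pivots at the rigid end instead. It takes a nonzero pair $(k,l)$ such that no other nonzero pair $(k',l')$ has $k'\le k$, $l'\le l$. For such a pair the clearing substitution really lies in $B^i_j$ whenever $k,l\neq j$, and it records a pair of equal numbers in the clan. When the pair involves $j$, the paper follows the chain $v_j\to v_l\to v_t$, with $t\le i-1$ the next index such that $q(v_t,v_l)\neq0$. It records a $-$ at $l$ (shifted past $i$ if necessary) and a $+$ at $t$. It then recurses in dimension $2n-2$ with $j$ replaced by $t$ and $i$ by $i$ or $i-1$. This is what builds the alternating $+,-$ chain of Remark~\ref{rem:sp_represntatives}. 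Without an argument of this kind for the $j$-case, your proposal does not establish the count.
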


\begin{proof}
    Let $q$ be the symplectic form fixed by $Sp_{2n}$. Any element of $GL_{2n}$ is the Gram matrix of a basis $v_1,...,v_{2n}$. The Gram matrix is $M_{x,y}=q(v_x,v_y)$. The group $B^i_j$ acts on the basis $v_1,...,v_{2n}$.

    We prove the result by induction. Let $k,l$ be such that $q(v_k,v_l)\neq 0$ and such that there is no different pair $k'\leq k$ and $l'\leq l$ with $q(v_{k'},v_{l'})\neq 0$. We try to make $v_k,v_l$ perpendicular to all other vectors. For $m\neq k,l$ consider
    $v_m'=v_m-v_k\frac{q(v_l,v_m)}{q(v_m,v_l)}+v_l\frac{q(v_k,v_m)}{q(v_l,v_k)}$, and $v_k'=v_k,v_l'=v_l$. 
    
    If $k,l\neq j$ the transformation $v_1,...,v_{2n}\rightarrow v'_1,...,v'_{2n}$ is in $B^i_j$ so we can make the $v_k,v_l$ perpendicular to all other vectors. In the clan it means that the same number appears in positions $k$ and $l$. We continue by induction with the same problem for $2n-2$.

    We are left with cases of $k=j$ or $l=j$. Without loss of generality assume $k=j$. In the clan we put a $-$ at the $l$ place if $l<i$ and in the $l+1$ place if $l\geq i$. 

    We can make sure that $v_k$ is perpendicular to all vectors except $v_l$ and that $v_l$ is perpendicular to all vectors $v_m$ with $m<j$ or $m>i-1$. Let $j\leq t\leq i-1$ be minimal such that $q(v_t,v_l)\neq 0$. We can make sure that all other vectors except $v_t,v_k$ are perpendicular to $v_l$. In the clan we put a plus at the $t$ place. By considering the space of all vectors perpendicular to $v_l,v_k$ we reduce to the same problem for $2n-2$, $j$ is replaced by $t$ and $i$ is replaced either by $i-1$ or by $i$ depending on whether $l<i$ or $l\geq i$. 

    By induction we are done.
    
\end{proof}

 \begin{Remark}\label{rem:sp_represntatives}
        It will be useful to have a set of representatives for the Borel orbits. This can be achieved following the proof of Proposition \ref{B orbits on GL/Sp}.
        
        Any $B$ orbit can be represented by a triple $(v,W,q)$ where $v$ is of the form $e_i$ and $W$ is the kernel of the functional $e_i^\vee+ e_{j}^{\vee}$ for $j\leq i$. The symplectic form $q$ in the basis $e_1,\ldots,e_{j-1},e_{j}-e_{i},e_{j+1},\ldots,e_{i-1},e_{i+1},\ldots,e_n$ is given by an anti-symmetric matrix, it can be chosen to have only ones and zeros above the diagonal, such that every row and column has at most two entries which are not zero.

        Given a clan $C$ we construct a representative as follows. Let $A_{1},...,A_{t+1}$ be the positions of the $+$ signs in $C$, and let $B_{1},...,B_{t}$ be the positions of the  $-$ sign in $C$. Let $j=A_{0}$ be the position of the leftmost $+$, and $i=A_{t+1}$ be the position of the rightmost $+$. Let $r:\{1,...,2n+1\}\rightarrow \{1,...,2n\}$ be the function \[
r(a)=
\begin{cases}
a & \text{if } a< i\\
a-1 & \text{if } a\geq i
\end{cases}
\]

        We define the representative to be $(e_{i},ker(e_i^\vee+ e_{j}^{\vee}),q)$, the symplectic form $q$ is written in the basis $v_{1},\ldots,v_{2n}=e_1,\ldots,e_{j-1},e_{j}-e_{i},e_{j+1},\ldots,e_{i-1},e_{i+1},\ldots,e_n$ as follows. 
        \begin{itemize}
            \item For $1\leq a<b\leq 2n$, such that $C_a,C_b$ are numbers. If $C_b\neq C_a$ then $q(v_{r(a)},v_{r(b)})=0$, otherwise $q(v_{r(a)},v_{r(b)})=1$.

            \item For $1\leq a,b\leq t$ we have $q(v_{r(A_{a})},v_{r(A_{b})})=q(v_{r(B_{a})},v_{r(B_{b})})=0$.

            \item For $1\leq a,b\leq t$, $q(v_{r(A_{a})},v_{r(B_{b})})=0$ unless $a-b\in\{0,1\}$. For $a-b\in\{0,1\}$ we have $q(v_{r(A_{a})},v_{r(B_{b})})=1$ if $A_{a}<B_{b}$ and $q(v_{r(A_{a})},v_{r(B_{b})})=-1$ otherwise. 
        \end{itemize}

        To reformulate, the numbers are divided into pairs of two-dimensional symplectic spaces, while the $+,-$ signs correspond to a form where each $+$ or $-$ is orthogonal to everything except the two 'adjacent' places of the opposite sign. Notice that the rightmost $+$ isn't used, and does not correspond to a vector.
        
        For example, for the clan $C=--+++$ the resulting matrix is:

        $$\begin{pmatrix}
            0& 0 & 1 &  1 \\
            0 & 0 & 0  & 1 \\
            -1 & 0 &  0 & 0 \\
            -1 & -1 & 0 & 0
        \end{pmatrix}$$

        For a clan $C$ we denote this matrix by $A_{C}$.
    \end{Remark}

Using the given representatives of the orbits the following can be deduced.

\begin{prop}\label{rank GL/Sp}
    The rank of a $B$ orbit on $X^\vee$ corresponding to a clan $C$ with $m$ $+$ signs is $n+m$.
\end{prop}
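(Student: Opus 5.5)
The plan is to compute the rank directly from the representatives of Remark \ref{rem:sp_represntatives}, in the same way as in Propositions \ref{rank compuation Hom()} and \ref{rank computaion GL times V}. Two standard facts do the work. First, for a $B$ orbit $B\cdot x$ the lattice $\chi(B\cdot x)$ consists of the characters of $B$ that are trivial on $\mathrm{Stab}_B(x)$. Second, characters of $B$ factor through $T$, so the rank equals $\operatorname{rank}(T)$ minus the dimension of the image of $\mathrm{Stab}_B(x)$ in $B/U\cong T$. When the representative is chosen so that a large part of its stabilizer lies in $T$, this image is just $\mathrm{Stab}_T(x)$.

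So for a clan $C$ with $m$ plus signs and $m-1$ minus signs, I take the triple $x_C=(e_i,\ker(e_i^\vee+e_j^\vee),A_C)$. Here $j$ and $i$ are the positions of the leftmost and rightmost $+$.

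The first step is to compute $\mathrm{Stab}_T(x_C)$. Write $t=\mathrm{diag}(t_1,\dots,t_{2n+1})$.
\begin{enumerate}
\item Fixing $e_i$ forces $t_i=1$.
\item Preserving the functional $e_i^\vee+e_j^\vee$ up to the scaling that fixes $e_i$ forces $t_j=t_i=1$.
\item Preserving $q$ forces $t_at_b=1$ for each pair $(a,b)$ with $A_C$ nonzero in entry $(r(a),r(b))$. On $e_j-e_i$ the torus acts by the scalar $t_j=t_i$, so that vector behaves like an honest weight vector.
\end{enumerate}

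Each pair of equal numbers in $C$ imposes one relation $t_at_b=1$ and leaves a $1$-dimensional torus free. The $+$ and $-$ positions form a chain $A_1<B_1\sim A_2\sim B_2\sim\cdots$ through the "adjacent opposite sign" rule, and together with $t_j=t_i=1$ this chain pins down every $t$ at a sign position: going along the chain gives $t_{A}=t_{B}^{-1}=\dots=1$.

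With $p$ equal to the number of number-pairs, we have $2p+2m-1=2n+1$, so $p=n+1-m$. Hence $\dim\mathrm{Stab}_T(x_C)=p=n+1-m$, and the image of the full stabilizer in $T$ has dimension at most this.

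The second step is to show that the projection of $\mathrm{Stab}_B(x_C)$ to $T$ equals $\mathrm{Stab}_T(x_C)$, i.e. it contains no larger torus. The rank would then be $(2n+1)-(n+1-m)=n+m$, as claimed.

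This is the main obstacle, because the representative is not $T$-stable in an obvious unipotent-free way. I would argue that any $b=tu\in\mathrm{Stab}_B(x_C)$ has diagonal part satisfying the same relations. Preserving $e_i$ and the functional only involves the diagonal entries at $i$ and $j$. For $q$, compare the "leading" (lowest-index, first nonzero) entries of the Gram matrix: $u$ is unipotent upper triangular, so the leading entry in each row of $A_C$ gets multiplied only by the diagonal part. This is the same argument used to prove Proposition \ref{B orbits on GL/Sp} by induction. So the pivot entries of $A_C$, which are exactly the entries used above, give the relations on $t$.

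As a sanity check, $m=n+1$ gives rank $2n+1$, which is the open orbit of a rank-$(2n+1)$ space $GL_{2n+1}/Sp_{2n}$. The case $n=1$ can also be checked against the six clans.
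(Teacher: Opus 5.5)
Your first step is exactly the paper's proof. The paper takes the same representative $(e_i,\ker(e_i^\vee+e_j^\vee),A_C)$ and computes the codimension of its stabilizer in $T$, separately for $i=j$ and $i\neq j$. Your count $\dim \mathrm{Stab}_T(x_C)=p=n+1-m$ is correct. The paper's intermediate claim that the codimension inside $T_{i,j}$ is $n+m-1$ should read $n+m-2$, since $\dim T_{i,j}=2n-1$.

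The paper stops there and silently identifies the rank with $\operatorname{codim}_T\mathrm{Stab}_T(x_C)$. You are right that this needs justification, but your inequality is backwards. The image of $\mathrm{Stab}_B(x_C)$ in $T$ \emph{contains} $\mathrm{Stab}_T(x_C)$, so its dimension is at least $p$, not ``at most this''. Step one therefore gives only $\operatorname{rank}\le n+m$, and the reverse inequality is the whole content of your second step.

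That second step does not work as you argue it. For upper triangular $\beta$ one has $(\beta^{T}A\beta)_{xy}=\sum_{a\le x,\,b\le y}\beta_{ax}\beta_{by}A_{ab}$. So the leading entry of row $x$ sees only diagonal entries when no earlier row has a nonzero entry in columns $\le y$; otherwise off-diagonal cross terms enter.

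Take $C=--+++$ (so $n=2$, $j=3$, $i=5$). The paper displays $A_C$, whose nonzero entries above the diagonal are $(1,3),(1,4),(2,4)$.
\begin{itemize}
\item The torus relations $t_1t_3=t_1t_4=t_2t_4=1$ use all three entries, but only $(1,3)$ (and its transpose) is a pivot. So ``the pivot entries are exactly the entries used above'' is false.
\item Cross terms really do appear: $(\beta^TA_C\beta)_{24}=\beta_{12}\beta_{34}+\beta_{12}\beta_{44}+\beta_{22}\beta_{44}$.
\end{itemize}

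Worse, no argument that uses only ``$u$ unipotent upper triangular and $\beta_{jj}=1$'' can succeed. Let $E_{34}$ be the elementary matrix and set $\beta=\mathrm{diag}(1,c,1,c^{-1})+(1-c^{-1})E_{34}$. This $\beta$ is upper triangular with $\beta_{33}=1$, and one checks $\beta^TA_C\beta=A_C$ for every $c\neq 0$. Yet the only diagonal matrix with $t_3=1$ preserving $A_C$ is the identity.

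What excludes this $\beta$ is that it does not lie in $B^j_i$. Upper triangularity on all of $\C^{2n+1}$, together with fixing $e_i$ and the functional $e_i^\vee+e_j^\vee$, forces $b_{jk}=0$ for $j<k<i$; here that says $\beta_{34}=0$. Your remark that preserving $e_i$ and the functional ``only involves the diagonal entries at $i$ and $j$'' overlooks exactly these constraints. With $\beta_{34}=0$ one gets in turn:
\begin{itemize}
\item $\beta_{11}=1$ from the entry $(1,3)$;
\item $\beta_{44}=1$ from $(1,4)$;
\item $\beta_{12}=0$ from $q(\beta v_2,\beta v_3)=0$;
\item $\beta_{22}=1$ from $(2,4)$.
\end{itemize}

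So a correct second step has to be a genuine induction along the $\pm$ chain, in the spirit of the inductive classification of the orbits. It must use these extra vanishing conditions to kill the cross terms before reading off the diagonal relations. As written, your proposal, like the paper's proof, establishes only $\operatorname{rank}\le n+m$.
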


\begin{proof}
         Let $(e_i,Ker(e^\vee_i+e^\vee_j),q)$ be the representative of the orbit corresponding to $C$ described in Remark \ref{rem:sp_represntatives}. 

        The rank of the Borel orbit of $(e_i,Ker(e^\vee_i+e^\vee_j),q)$ is equal to the dimension of the lattice of characters on $B$ that vanish on the stabilizer of $(e_i,Ker(e^\vee_i+e^\vee_j),q)$. Let $T\subset B$ be the maximal torus of diagonal matrices. Any character on $B$ is determined by its values on $T$. 

         Assume that $i=j$. Let $T_i\subset T$ be the stabilizer of $e_i$. It acts on $q$ in the same way as the torus of diagonal matrices in $GL_{2n}$ acts on the $2n\times 2n$ antisymmetric matrix $A_C$. Therefore the codimension of the stabilizer of $(e_i,Ker(e^\vee_i),q)$ in $T$ is $n+1$. 
         
         We are left with the case $i\neq j$. Let $T_{i,j}$ be the stabilizer of $e_i,e_j$ inside $T$. It acts on $q$ in the same way that the torus of $2n\times 2n$ diagonal matrices with $1$ at the $(j,j)$ entry acts on $A_C$. It is easy to see that the codimension of the stabilizer of $A_C$ inside $T_{i,j}$ is $n+m-1$. Thus, the codimension of the stabilizer of the representative inside $T$ is $n+m$.
\end{proof}

\begin{prop}\label{B orbits in Ps for GL/Sp}
    The bijection between clans and $B$ orbits constructed in the proof of Proposition \ref{B orbits on GL/Sp} satisfies the following.
    
    Let $s=s_i$ be a simple reflection and let $P_s$ be the parabolic corresponding to $s$. Let $\s$ be a $B$ orbit on $X$ and let $C$ be the clan corresponding to $\s$. The clans corresponding to the $B$ orbits in $P_s\s$ are determined as follows:
    \begin{enumerate}
        \item If both of the $C_i$ and $C_{i+1}$ are numbers and $C_i\neq C_{i+1}$, the answer is $C$ and $sC$. In this case $s,\s$ has type $U$.
        \item If $sC=C$ and $C_i=C_{i+1}$ is a number, the answer is $C$. In this case $s,\s$ has type $G$.
        \item Assume $sC=C$ and $C_i=C_{i+1}=+$. We use the notions of Remark \ref{rem:sp_represntatives}. We have $i=A_{k},i+1=A_{k+1}$ for some $k$. The answer is $C$ together with both clans obtained from $C$ by replacing the $-$ at position $B_{k}$ and one of the pluses at positions $i,i+1$ by a number which does not appear in $C$. In the case where $C_i=C_{i+1}=-$ we get a similar answer with the roles of $+$,$-$ reversed, one of the $-$ signs and the $+$ at position $A_{k+1}$ is replaced by a number.
        
        In this case $s,\s$ has type $T$.

        \item If one of the elements in positions $i,i+1$ is a number and the other one is in $\{+,-\}$ the answer is $C,sC$ unless $C$ can be obtained from another clan by the procedure described in the previous item. In that case there is an additional $B$ orbit in $P_s\s$. In the first case the type of $s,\s$ is $U$ and in the second case it is $T$.
        
        \item If $C_i$ and $C_{i+1}$ are $+$ and $-$ in some order, the answer is $C,sC$. In this case $s,\s$ has type $U$.
        
    \end{enumerate}

    Moreover, the action of $W$ on the $B$ orbits agrees with its action on the clans.
\end{prop}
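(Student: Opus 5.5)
We work throughout with the model $X^\vee\cong\{(v,W,q)\}$ and the representatives $(e_i,\ker(e_i^\vee+e_j^\vee),q)$ of Remark \ref{rem:sp_represntatives}, with $q$ given by the matrix $A_C$.

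\textbf{Reduction to a rank-one computation.} For $s=s_i$, the set $P_s\s$ is obtained by acting on the representative of $C$ by the $SL_2$ (or $GL_2$) in coordinates $i,i+1$, i.e.\ by the Levi of $P_s$. This group only moves the data in positions $i,i+1$: the vectors $e_i,e_{i+1}$, the corresponding coordinates of the functional, and the two basis vectors $v_{r(i)},v_{r(i+1)}$ of $W$ (when $i$ or $i+1$ equals the special indices $i_0,j_0$, the vector $e_{i_0}$ and the basis vector $e_{j_0}-e_{i_0}$). So in each case we apply the explicit element $\tilde s$ together with a one-parameter unipotent $u_\lambda=\exp(\lambda E_{i,i+1})$ to the representative. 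We then run the normal-form algorithm from the proof of Proposition \ref{B orbits on GL/Sp} on the result to read off its clan. Whenever both $C_i,C_{i+1}$ are non-special, the computation is literally a $2\times 2$ block computation on $A_C$, like the one in Proposition \ref{Borel orbits in Ps orbit}.

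\textbf{Case analysis.} We go through the clan cases in order.
\begin{enumerate}
\item If $C_i\neq C_{i+1}$ are numbers, then $v_{r(i)},v_{r(i+1)}$ are paired with distinct other vectors. Here $u_\lambda$ stays inside the $B$ orbit, and $\tilde s$ just swaps the pairings, giving $sC$.
\item If $C_i=C_{i+1}$ is a number, the plane spanned by $v_{r(i)},v_{r(i+1)}$ is a hyperbolic plane orthogonal to everything else. The Levi acts on it through $GL_2$, which preserves it up to scalar, so $P_s\s=\s$ and the type is $G$.
\item In the $\pm$ cases we use the chain description of the $+/-$ vectors, where each is orthogonal to everything except its two neighbours of opposite sign. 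For $++$ (or $--$) at $i,i+1$, the element $u_\lambda$ with $\lambda\neq 0$ makes one of the two $+$ vectors pair with the $-$ at $B_k$ in a decoupled way. Renormalizing via the inductive algorithm, this merges that pair into a new number. The two choices of which $+$ gets merged come from $\lambda$ generic versus the $\tilde s$-translate, and they give the two extra clans.
\item If $C_i,C_{i+1}$ are a number and a sign, the case is the reverse of the previous one. The additional orbit appears exactly when the unipotent can split the numbered pair back into a $\pm$ pair adjacent in the chain, which is precisely the stated condition.
\item If $C_i,C_{i+1}$ are $+$ and $-$ in some order, these are neighbours in the chain, and one checks that only $C$ and $sC$ occur.
\end{enumerate}
The special positions, the leftmost and rightmost $+$, have to be handled separately. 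For these we track the orbit of $(v,\phi)$ on $N$ (whose $B$ orbits are listed in the proof of Proposition \ref{B orbits on GL/Sp}) together with the change of $(i_0,j_0)$.

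\textbf{Types and Weyl action.} Once $P_s\s$ is known, the type follows from the number of orbits together with the ranks from Proposition \ref{rank GL/Sp}.
\begin{itemize}
\item Two orbits with equal rank ($sC$ has the same number of $+$ signs as $C$) give type $U$, which rules out type $N$.
\item Three orbits give type $T$.
\item One orbit gives type $G$.
\end{itemize}
Knop's action is determined by the types and by which orbit is $s\s$. In each case the orbit we labelled $sC$ is the one of equal rank, or in the $T$ cases the appropriate partner, so the action on clans is the permutation action.

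\textbf{Main obstacle.} The delicate step is the third and fourth items when $i$ or $i+1$ is one of the special positions $j_0,i_0$. There the basis of $W$ involves $e_{j_0}-e_{i_0}$, the map $r$ shifts, and the Levi action changes the representative in $N$, so the bookkeeping of which $-$ (namely $B_k$) gets merged must be checked carefully. I would handle this first by reducing to small $n$ ($n=1,2$), where $P_s\s$ can be verified against the six clans for $n=1$, and then argue that the general case localizes to the portion of the chain between $A_k$ and $A_{k+1}$.
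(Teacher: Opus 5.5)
\textbf{There is a genuine gap: your proposal only ever exhibits orbits and never bounds them.} You assert, without argument, that nothing else occurs in items 1 and 5 and in the first half of item 4 (``one checks''), and the ``exactly when'' of item 4 (``which is precisely the stated condition''). In item 1 you look at $u_\lambda x$ and $\tilde s x$ separately. But $P_s x=Bx\cup\bigcup_\lambda B\tilde s u_\lambda x$, so a third orbit could only come from $\tilde s u_\lambda x$ with $\lambda\neq 0$, and you never analyse that.

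The missing idea is the paper's reduction to $s$-fixed clans:
\begin{enumerate}
\item The permutation matrix of $s$ sends the representative of $C$ to that of $sC$. So the set of clans occurring in $P_s\s$ is $s$-stable, and $C$, $sC$ have the same rank by Proposition \ref{rank GL/Sp}.
\item $P_s\s$ has at most three $B$-orbits. If it has three, the involution $s$ fixes one of them, a clan $D$ with $D_i=D_{i+1}$.
\item A repeated number gives a single orbit, so $D$ has $++$ or $--$ at positions $i,i+1$.
\end{enumerate}
Hence only item 3 needs a computation, and there it suffices to exhibit one merged orbit: its $s$-translate is the second, and three is the maximum. Items 1, 5 and the dichotomy in item 4 then follow formally. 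A non-fixed $C$ can have a third orbit only if it is a merged clan of some $s$-fixed $D$, and merged clans have a number next to a sign at positions $i,i+1$.

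\textbf{Second, the one concrete mechanism you give, in item 3, would fail.} $u_\lambda=\exp(\lambda E_{i,i+1})$ is upper triangular, hence in $B$, so by itself it never leaves $\s$. The correct family is $\tilde s u_\lambda x$. Even with that family, a generic $\lambda$ cannot produce a merged clan. $C$ with $C_i=C_{i+1}\in\{+,-\}$ has rank $n+m$, while the merged clans have rank $n+m-1$. So $C$ is the open $B$-orbit of $P_s\s$ (type $T2$), and the merged orbits lie in its boundary. They are reached only at isolated parameter values; generic $\lambda$ returns to $C$.

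The paper uses exactly such a special element. For $--$ at $l=B_k$, $l+1=B_{k+1}$, it replaces $v_{r(l)}$ by $v_{r(l)}-v_{r(l)+1}$, the choice that makes it orthogonal to $v_{r(A_{k+1})}$, and then corrects $v_{r(A_{k+2})}$ inside $B$.

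For the rightmost pair of $+$'s, $\tilde s$ itself moves $v=e_i$ to $e_{i-1}$ and already lands in a merged orbit. However, normalizing the form there requires adding $\pm v_{r(A_{t-1})}\pm\cdots$, running over all earlier $+$-vectors. So your plan to check $n=1,2$ and then ``localize to the chain between $A_k$ and $A_{k+1}$'' does not match what happens in that case and would need a real argument.
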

\begin{proof}
        First, using the representatives constructed in Remark \ref{rem:sp_represntatives} we see that the $B$ orbit corresponding to $sC$ is always in the $P_s\s$. Let $s=s_l$, the operation that switches $e_l,e_{l+1}$ and keeps the other basis elements in place is in $P_s$. If $sC\neq C$, it is easy to see that this operation sends the representative of $C$ to the representative of $sC$.

    Each $P_s$ orbit contains at most three Borel orbits and by Proposition \ref{rank GL/Sp} the rank of the orbits corresponding to $C,sC$ are the same. Therefore, like in the proof of Proposition \ref{Borel orbits in Ps orbit} it is enough to prove the result under the assumption $sC=C$.

    The case where $s$ switches two copies of the same number is easy, it is the same as what happens in the well studied case of $GL_{2n}/Sp_{2n}$.

    The interesting case is where $s$ switches two adjacent identical signs. 
    
    We begin with the case where $s$ switches two adjacent $-$.

    Let $j\leq i$ be the indices of the leftmost $+$ sign and rightmost $+$ sign respectively. We know that $i\neq j$ as there are at least two $-$ signs in $C$ and thus at least three $+$ signs. Let $s=s_l$ the reflection that switches the $l$ and $l+1$ positions. Since $s$ switches two $-$ signs, we get $l\neq i,j$ and $l+1\neq i,j$. Let $B^{j}_{i}$ be the subgroup of $GL_{2n}$ defined in the proof of Proposition \ref{B orbits on GL/Sp}. It is the subgroup of upper triangular matrices that have 1 at the $(j,j)$ place and the entries $(j,k)$ are zero for $k$ between $j+1$ and $i-1$ (if $j=i-1$ there is no extra condition). 
    
    Recall the function $r$ from Remark \ref{rem:sp_represntatives}, $r:\{1,...,2n+1\}\rightarrow \{1,...,2n\}$ \[
r(a)=
\begin{cases}
a & \text{if } a< i\\
a-1 & \text{if } a\geq i
\end{cases}
\]

    Let $P^{j}_{i}$ be the group generated by $B^{j}_{i}$ and the $s_{r(l)}$ permutation matrix, it consists of matrices of the same form as $B_{j}^{i}$ but allowing also a non-zero entry at the $(r(l)+1,r(l))$-coordinate. We wish to calculate the $B_{i}^{j}$ orbits in $P^{j}_{i}A_{C}$. In fact, it is enough to show that a representative of the clan obtained by the process in the statement of the proposition is in $P^{j}_{i}A_{C}$, since we know there are at most three $B_{i}^{j}$ orbits in $P^{j}_{i}A_{C}$.

    Consider $A_{C}$ as the Gram matrix of a basis $v_{1},...,v_{2n}$. We use the notions of Remark $\ref{rem:sp_represntatives}$. We know that for some $k$, $l=B_{k},l+1=B_{k+1}$ and that $v_{r(A_{k+1})}$ is orthogonal to all but $v_{r(l)},v_{r(l+1)}$.

    Acting by $P^{j}_{i}$ we may replace $v_{r(l)}$ by $v_{r(l)}-v_{r(l)+1}$. After this operation $v_{r(l)}$ and  $v_{r(A_{k+1})}$ become orthogonal. Acting by $B^{j}_{i}$ we can replaces $v_{r(A_{k+2})}$ by a linear combination of $v_{r(A_{k+2})},v_{r(A_{k+1})}$ that is orthogonal to $v_{r(l)+1}$ (if $A_{k+2}$ isn't defined this step is not needed). After these operation $v_{r(l)+1}$ and $v_{r(A_{k+1})}$ are orthogonal to all other vectors. Up to multiplication by a scalar, the Gram matrix of this new basis is exactly $A_{D}$, for the clan $D$ obtained from $C$ by replacing the $B_{k+1},A_{k+1}$ elements with a new number. 

    In the case where $s$ switch two $+$ signs, unless this is the rightmost pair, the argument remains the same.

    We are left with the case where $s$ switches the two rightmost $+$ signs. i.e. $l=i-1$. We again use the notions of Remark \ref{rem:sp_represntatives}. Acting by the permutation matrix of $s=s_{i-1}$, we get the element $(e_{i-1},ker(e_{i-1}^{\vee}+e_{j}^{\vee}),q)$, where $q$ is the form obtained from $A_{C}$ and the linear isomorphism between $ker(e_{i-1}^{\vee}+e_{j}^{\vee})$ and $ker(e_{i}^{\vee}+e_{j}^{\vee})$ induced by $s$.

    Working in the basis $v_{1},\ldots ,v_{2n} = e_{1},e_{2}, \ldots ,e_{j-1},e_{i-1}-e_{j},e_{j+1},\ldots ,e_{i-2},e_{i}, \ldots e_{2n+1}$, we can see that $q$ is almost of the desired form.

    Replacing $v_{i}$ by $v_{i}\pm v_{r(A_{t-1})}\pm v_{r(A_{t-2})} \pm \cdots \pm v_{r(A_{0})} $ for appropriately chosen signs, we can make sure that $v_{i}$ is orthogonal to all vectors except $v_{r(B_{t})}$. Up to multiplication by a scalar this operation gives the representative for the clan $D$, obtained by changing the rightmost $+$ and rightmost $-$ into a new number. 

Again, the action of $W$ on the Borel orbits is determined by the types, so the fact that it agrees with the action on clans follows
immediately.
\end{proof}

\end{subsubsection}
    
Now we are ready to prove Conjecture \ref{FGT conjecture} for this case.

      \begin{theorem}
        Conjecture \ref{FGT conjecture} holds for the relative Langlands dual pair $T^*(GL_{2n+1}/GL_n\times GL_{n+1})$ and $T^*(GL_{2n+1}/Sp_{2n})$.
    \end{theorem}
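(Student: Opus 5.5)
By Theorem \ref{main}, it is enough to compare $V_{comb}(X)$ and $V_{comb}(X^\vee)\otimes sgn$. Both are built on the same index set, namely clans, by Proposition \ref{B orbits in Ps for GL/GL} and Proposition \ref{B orbits in Ps for GL/Sp}. In both cases Knop's action of $W=S_{2n+1}$ agrees with the permutation action on clans. The plan mirrors the proof of Theorem \ref{first example}.

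First, I apply Proposition \ref{associated graded}. This replaces each representation by its associated graded for the rank filtration, which is harmless because representations of $W$ are semisimple. In the associated graded, $w[\s]=\pm[w\s]$. So each $W$-orbit of clans contributes a monomial representation. This representation is induced from a character of the stabilizer $\mathrm{Stab}_W(C)$ of a chosen clan $C$, and the character is determined by the signs.

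Next, I read off the signs from the definition of $V_{comb}$, which gives $s[\s]=(\alpha_{s,\s}-1)[\s]+\dots$. On the associated graded, a simple reflection acts on $[\s]$ with sign $+1$ in types $G,U,N1$ and $-1$ in types $T2,N2$. Here the T1 term $[\s']$ lies in higher rank and so dies, and only the $s\s$ term survives. A T1 orbit maps to $+[s\s]$ while a T2 orbit maps to $-[\s]$, since in type T we have $s\s=\s$ only in the T2 situation. Concretely, with $s=s_i$:
\begin{itemize}
\item On $X$, by Proposition \ref{B orbits in Ps for GL/GL}, $s$ fixes $[C]$ with sign $+$ when $C_i=C_{i+1}\in\{+,-\}$ (type $G$). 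It fixes $[C]$ with sign $-$ when $C_i=C_{i+1}$ is a number (type T2, the case with the smaller rank and the closed orbit).
\item On $X^\vee$, by Proposition \ref{B orbits in Ps for GL/Sp}, the signs are reversed. The sign is $+$ for equal numbers (type $G$) and $-$ for equal signs (type T2).
\end{itemize}
I will check these type assignments using the rank formulas, Proposition \ref{rank GL/Sp} and its analogue for $X$, which give rank $n+m$ versus a complementary count.

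The key point is that the two characters on $\mathrm{Stab}_W(C)$ differ exactly by $sgn$. Write $w=s_1\cdots s_l$ with $w\in\mathrm{Stab}_W(C)$. The product of the signs is $(-1)^{\#\{\text{steps fixing an equal-sign pair}\}}$ on $X$ and $(-1)^{\#\{\text{steps fixing an equal-number pair}\}}$ on $X^\vee$. Their ratio is therefore $(-1)^{\#\{\text{fixing steps}\}}$. So I must show that the number of non-fixing steps is even, i.e. that $\#\{\text{fixing steps}\}\equiv l \pmod 2$.

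Each non-fixing step either swaps two distinct numbers, swaps a number with a sign, or swaps $+$ with $-$. I will track a permutation-type invariant, in the spirit of the chessboard argument used for rook arrangements. Swaps of a number with a sign change the relative order of the positions of numbers against the positions of signs. Swaps of $+$ with $-$, or of two different numbers, change the induced permutation on the $\pm$ slots and on the numbered slots respectively. Returning to $C$ forces each of these parities to be trivial. I need to be careful here: a swap of two distinct numbers permutes the pairing, so the right invariant is the sign of the permutation of positions, restricted to the relevant class. I will take the sign of the total permutation $w$ modulo the stabilizer parity and compare the two counts.

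This combinatorial parity bookkeeping is where I expect the main difficulty. I would first try the cleanest formulation: since $w\in S_{2n+1}$ fixes $C$, compute $sgn(w)$ directly from $C$'s structure, then check that $\#$fixing steps has parity $sgn(w)$ by showing each non-fixing step changes a suitable sign-of-arrangement invariant of the intermediate clan, which returns to its start.
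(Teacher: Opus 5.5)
\textbf{Verdict: same route as the paper, but with a genuine gap at the decisive parity step.}

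\textbf{The reduction.} Your reduction is the paper's. Theorem \ref{main} and Proposition \ref{associated graded} reduce the claim, one $W$-orbit of clans at a time, to comparing two characters of $\mathrm{Stab}_W(C)$. Steps with $s_iD\neq D$ contribute $+1$ on both sides. At a step fixing $D$, one side has type $G$ and the other type $T2$ (equal signs versus equal numbers). So everything comes down to this: for any word $s_1\cdots s_l$ with $wC=C$, the number of non-fixing steps is even. One minor slip: the $T2$ orbit is the dense orbit of $P_s\s$ and has the larger rank, not the closed one. This is harmless, since the sign $-1$ comes from $\alpha_{s,\s}=0$.

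\textbf{The gap.} You never prove this parity statement, and that is where the content of the theorem lies once the reduction is made. For each kind of non-fixing step you need a statistic that depends only on the intermediate clan and changes parity exactly at steps of that kind. Your first two hints can be made precise, and they are the paper's invariants:
\begin{itemize}
\item the parity of the sum of the positions carrying a $+$ or $-$ flips exactly at number/sign swaps;
\item the parity of the number of pairs $a<b$ with $C_a=-$, $C_b=+$ flips exactly at swaps exchanging an adjacent $+$ and $-$.
\end{itemize}
For swaps of two distinct numbers, however, your candidate fails. The numbered slots carry a perfect matching defined only up to relabeling, not a permutation. Also, ``the sign of the permutation of positions'' depends on the word rather than on the intermediate clan. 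Tracked along the word, it also flips at fixing steps that exchange the two copies of a number, so it does not isolate the swaps you need to count.

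\textbf{The missing idea.} Count \emph{crossing} pairs of the matching, i.e.\ pairs of numbers $a\neq b$ whose occurrences interleave as $a\,b\,a\,b$.
\begin{itemize}
\item Swapping adjacent distinct numbers $a,b$ does not change the relation of either arc to any third arc.
\item The same swap turns a disjoint pair $\{a,b\}$ into a crossing one, or a crossing pair into a nested one (or conversely). So the crossing count changes by exactly $\pm1$.
\item Number/sign swaps, $\pm$ swaps and all fixing steps leave the crossing count unchanged.
\end{itemize}
Hence the number of swaps of distinct numbers is even. Together with the two other parities, the number of non-fixing steps is even, which finishes the proof.

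The paper phrases this step through nested pairs. The nesting count does not change under, e.g., $1\,1\,2\,2\mapsto 1\,2\,1\,2$, so it is the crossing count that actually works.
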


    \begin{proof}
    By Propositions \ref{associated graded}, \ref{B orbits in Ps for GL/GL} and \ref{B orbits in Ps for GL/Sp} it is enough to compare each $W$ orbit of clans separately.

    Notice that if a simple reflection $s$ fixes a clan $C$ then the corresponding type is always either $G$ or $T2$, the type of $C,s$ is $G$ for $X$ if and only if the type of $C,s$ is $T$ for $X^\vee$.

    Thus, like in the proof of Theorem \ref{first example} it is enough to show that if $w\in W$ fixes a clan $C$ and $w=s_1\cdot ... \cdot s_l$ is a presentation of $w$ as a product of simple reflections. Then the number of indices $j$ such that $$s_j\cdot s_{j+1}\cdot ...\cdot s_l C\neq s_{j+1}\cdot ...\cdot s_l C$$ is even.

    By considering the parity of situations where we have two numbers $a,b$ where the first appearance of $a$ in $C$ is before the first appearance of $b$ but the second appearance of $a$ is after the second appearance of $b$, we see that the number of simple reflections switching two different numbers is even. 
    
    By considering the parity of the sum of all positions where either a $+$ or a $-$ appears we see that the number of simple reflections switching a number and a $\{+,-\}$ sign is even. 

    It remains to show that the number of simple reflections switching $+,-$ to $-,+$ is even.  Consider the parity of the number of pairs of indices $1\leq i<j\leq 2n+1$ such that $C_{j}=+$ and $C_{i}=-$. The simple reflections switching $+,-$ to $-,+$ are the only ones changing this parity.
        
\end{proof}

\end{subsection}
    
\newpage

\appendix

\section{Intersections in Borel Moore Homology}\label{A1}

In this Appendix we mention several properties of Borel Moore homology that we use throughout the paper. For more details see Sections 2.6 and 2.7 of \cite{Chriss1997RepresentationTA} and Section 19 of \cite{FultonIntersectionTheory}.

\begin{defn}
    Let $X$ be a complex algebraic variety of dimension $n$ over $\R$. Let $X^{sm}$ be the smooth points of $X$. We define a fundamental class of $X$ in $H_{top}^{BM}(X)=H_{n}^{BM}(X)$ to be a class which restricts to the fundamental class of $X^{sm}$ in $H_{top}^{BM}(X^{sm})$. 
\end{defn}

\begin{prop}
    A fundamental class exists and it is unique.
\end{prop}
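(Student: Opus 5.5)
We need to prove that a fundamental class of $X$ in $H^{BM}_{n}(X)$ exists and is unique, where $n$ is the real dimension of $X$. The plan is to reduce everything to the smooth locus and to the singular locus, which has smaller dimension. Let $Z=X\setminus X^{sm}$. Then $Z$ is a closed subvariety of complex dimension strictly less than $n/2$, so its real dimension is at most $n-2$. We use the long exact sequence of Borel--Moore homology for the closed embedding $Z\hookrightarrow X$ with open complement $X^{sm}$:
\[
H^{BM}_{n}(Z)\to H^{BM}_{n}(X)\to H^{BM}_{n}(X^{sm})\to H^{BM}_{n-1}(Z).
\]
The restriction map here is the one appearing in the definition of a fundamental class.

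The key input is the vanishing $H^{BM}_k(Z)=0$ for $k>\dim_\R Z$. We justify it by stratifying $Z$ into smooth locally closed pieces, for instance via a Whitney stratification or by iterating the singular locus. We then induct on dimension using the same long exact sequence. For a smooth piece $S$ of real dimension $d$, Poincaré duality gives $H^{BM}_k(S)\cong H^{d-k}(S)$, which vanishes for $k>d$. Since $\dim_\R Z\le n-2$, both end terms vanish: $H^{BM}_n(Z)=0$ and $H^{BM}_{n-1}(Z)=0$.

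It follows that the restriction $H^{BM}_n(X)\to H^{BM}_n(X^{sm})$ is an isomorphism. On the smooth, possibly disconnected, oriented $n$-manifold $X^{sm}$, Poincaré duality identifies $H^{BM}_n(X^{sm})\cong H^0(X^{sm})$. Under this identification the fundamental class of $X^{sm}$ corresponds to the constant function $1$; it is defined using the canonical complex orientation. Its unique preimage under the isomorphism is the fundamental class of $X$, which gives existence and uniqueness simultaneously.

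The only step with real content is the dimension vanishing for the singular variety $Z$. We expect this to be routine given the stratification argument, or it can be cited from Chriss--Ginzburg, Section 2.6. As a byproduct, if $X$ has irreducible components $X_1,\dots,X_r$ of dimension $n$, the argument shows that the classes $[X_i]$ form a basis of $H^{BM}_{top}(X)$. This is because $X^{sm}$ minus the pairwise intersections has the connected pieces $X_i^{sm}$, and removing the intersections again costs only real codimension at least $2$.
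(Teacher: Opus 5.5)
Your proposal is correct and follows the same route as the paper's sketch. Since the singular locus $Z=X\setminus X^{sm}$ has real codimension at least $2$, the long exact sequence of Borel--Moore homology makes the restriction $H^{BM}_n(X)\to H^{BM}_n(X^{sm})$ an isomorphism, so the fundamental class of $X^{sm}$ has a unique preimage. Your stratification and induction argument for the vanishing $H^{BM}_k(Z)=0$ when $k>\dim_\R Z$ supplies a step the paper leaves implicit; you also correctly read the paper's ``codimension of $X^{sm}$'' as the codimension of its complement.
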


\begin{proof}
    We give the sketch of the proof, for more details see 2.6.12 in \cite{Chriss1997RepresentationTA}. It is enough to show that $H_{top}^{BM}(X)\cong H_{top}^{BM}(X^{sm})$. This follows from the fact that the codimension of $X^{sm}$ is at least 2 (over $\R$) and the long exact sequence of Borel Moore homology.
\end{proof}

The fundamental class of $X$ is denoted by $[X]$.

\begin{prop}\label{basis of top BM}(Proposition 2.6.14 in \cite{Chriss1997RepresentationTA})
    Let $X$ be a complex algebraic variety, the fundamental classes of the irreducible components of $X$ of maximal dimension form a basis for $H_{top}^{BM}(X)$.
\end{prop}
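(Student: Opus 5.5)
The plan is to reduce to the irreducible case and then glue. First, treat $X$ irreducible of complex dimension $n$, so real dimension $2n$. I would show that $H^{BM}_{2n}(X)$ is one-dimensional, spanned by $[X]$.

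Let $Z = X \setminus X^{sm}$. This is a closed subvariety of complex dimension at most $n-1$, so its real dimension is at most $2n-2$. The long exact sequence of Borel--Moore homology for the closed--open pair $(Z, X^{sm})$ gives
\[
H^{BM}_{2n}(Z)\to H^{BM}_{2n}(X)\to H^{BM}_{2n}(X^{sm})\to H^{BM}_{2n-1}(Z).
\]
Since $H^{BM}_k(Z)=0$ for $k>2\dim_{\R}$-bounds, that is for $k > 2n-2$, both outer terms vanish. Hence $H^{BM}_{2n}(X)\cong H^{BM}_{2n}(X^{sm})$. Now $X^{sm}$ is a connected oriented $2n$-manifold, because $X$ is irreducible and the smooth locus of an irreducible variety is connected. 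Poincaré duality then gives $H^{BM}_{2n}(X^{sm})\cong H^0(X^{sm})\cong\C$, generated by the fundamental class. This yields the statement in the irreducible case.

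For general $X$, let $X_1,\dots,X_r$ be the irreducible components of maximal dimension $n$, and let $Y$ be the union of all components of smaller dimension together with all pairwise intersections $X_a\cap X_b$. Then $Y$ is closed of complex dimension at most $n-1$. Its complement $U=X\setminus Y$ is the disjoint union of the open sets $U_a = X_a\setminus Y$. The same long exact sequence argument, with vanishing of $H^{BM}_{2n}(Y)$ and $H^{BM}_{2n-1}(Y)$, gives
\[
H^{BM}_{2n}(X)\cong H^{BM}_{2n}(U)=\bigoplus_a H^{BM}_{2n}(U_a).
\]
Applying the same restriction isomorphism to each $X_a$ with closed subset $X_a\cap Y$ gives $H^{BM}_{2n}(U_a)\cong H^{BM}_{2n}(X_a)=\C[X_a]$.

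It remains to check that the pushforward of $[X_a]$ along $X_a\hookrightarrow X$ restricts to the generator of the $a$-th summand and to zero on the others. This holds because $[X_a]$ restricted to $U_b$ is supported on $X_a\cap U_b$, which is empty for $b\neq a$. So the classes $[X_a]$ form a basis.

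The main obstacle is the dimension vanishing $H^{BM}_k(Z)=0$ for $k>2\dim_{\C}Z$, together with connectedness of the smooth locus of an irreducible variety. For the vanishing I would induct on dimension by stratifying $Z$ into smooth locally closed pieces and using the long exact sequence repeatedly; on each smooth piece, Poincaré duality identifies $H^{BM}_k$ with $H^{2d-k}$, which is zero for $k>2d$. Connectedness of the smooth locus I would quote as standard (for instance from the irreducibility of the normalization) rather than reprove.
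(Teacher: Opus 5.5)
Your proof is correct and follows essentially the argument the paper relies on: the paper gives no proof beyond citing Chriss--Ginzburg 2.6.14, and that proof, like the paper's sketch for the existence of fundamental classes, removes a closed subset of real codimension at least two, applies the long exact sequence of Borel--Moore homology with the vanishing $H^{BM}_k(Z)=0$ for $k>2\dim_{\mathbb{C}} Z$, and identifies what remains with a disjoint union of connected oriented manifolds. Your two-step organization (irreducible case first, then excising the lower-dimensional components and pairwise intersections) and your check that $[X_a]$ restricts only to the $a$-th summand are both sound.
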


\begin{defn}\label{def:intersection}
    Let $M$ be a smooth complex algebraic manifold of real dimension $m$ and let $X_1,X_2\subset M$ be two closed algebraic varieties such that there are open subsets $U_1,U_2\subset M$ with $X_1,X_2$ being homotopy retracts of $U_1,U_2$ respectively. We define $$\cap:H^{BM}_{i}(X_1)\times H^{BM}_{j}(X_2)\rightarrow H^{BM}_{i+j-m}(X_1\cap X_2)$$ using the cup product in relative cohomology $$\cup:H^{n-i}(M,M\backslash X_1)\times H^{n-j}(M,M\backslash X_2)\rightarrow H^{2m-i-j}(M,M\backslash X_1\cup X_2)$$ and Poincare duality.
\end{defn}

An important result about the intersection is the projection formula. For its proof see \cite{FultonIntersectionTheory}.

\begin{prop}(Projection formula)
    Let $f:X\rightarrow Y$ be proper. Let $x\in H_i^{BM}(X), y\in H_i^{BM}(X)$ then we have $$f_*(x\cap f^*y)=f_*x\cap y$$
\end{prop}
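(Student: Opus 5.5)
The statement is the projection formula $f_*(x\cap f^*y)=f_*x\cap y$ for $f:X\to Y$ proper. The degree bookkeeping in the statement needs one correction: $x$ should lie in $H^{BM}_i(X)$ and $y$ in $H^{BM}_j(Y)$. I will derive the formula from the cohomological definition of $\cap$ in Definition \ref{def:intersection}. The idea is to reduce both sides to statements about cup products in relative cohomology of ambient manifolds, and then use naturality of cup products under pullback together with the usual projection formula for pushforward in ordinary (co)homology.

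\textbf{Setup.} Embed everything in smooth ambient spaces, as in \cite{Chriss1997RepresentationTA}.
\begin{itemize}
\item Choose a smooth $N$ containing $Y$ as a closed subvariety. Factor $f$ as $X\hookrightarrow M\times N\to N$, where $M$ is a smooth variety containing $X$ and the first map is the graph embedding.
\item Properness of $f$ lets us take $M$ compact, or at least arrange that the projection is proper on the relevant supports, so that $f_*$ in Borel--Moore homology is defined.
\item Interpret $f^*y$ as the refined pullback. Concretely, $f^*y=[X]\cap_{M\times N}\mathrm{pr}_N^*y$ restricted appropriately, with the intersection taken inside $M\times N$.
\end{itemize}

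\textbf{Step 1: reduce to a projection and a closed embedding.} The formula is compatible with composition, so it suffices to treat two cases.
\begin{itemize}
\item \emph{Closed embedding.} Here $f_*$ is induced by the inclusion of supports, and the formula follows from naturality of the relative cup product in Definition \ref{def:intersection} with respect to enlarging the support from $X_1$ to a larger closed set.
\item \emph{Smooth proper projection $p:M\times N\to N$.} Translate Borel--Moore classes via Poincaré duality into relative cohomology classes $\xi\in H^*(M\times N, M\times N\setminus X)$ and $\eta\in H^*(N,N\setminus Y)$. Then $p^*y$ corresponds to $p^*\eta$, and $p_*$ corresponds to integration along the compact fibre $M$. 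The identity becomes $p_!(\xi\cup p^*\eta)=p_!(\xi)\cup\eta$, which is the standard projection formula for fibre integration (Gysin map) in relative cohomology.
\end{itemize}

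\textbf{Step 2: supports and degrees.} Check that supports match. On the left, $x\cap f^*y$ is supported on $X\cap f^{-1}(Y')$, where $Y'$ is the support of $y$, and $f$ maps this into $f(X)\cap Y'$, which is the support of the right side. Both sides land in $H^{BM}_{i+j-\dim_\R N}$. Naturality of Poincaré duality with respect to open restriction ensures the homotopy-retract neighbourhoods $U_1,U_2$ can be chosen compatibly under $f$.

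\textbf{Main obstacle.} The hardest step is the fibre-integration projection formula in \emph{relative} cohomology with supports, together with sign conventions. I would first try to deduce it from the absolute version via the long exact sequence of pairs, using excision to pass to tubular neighbourhoods. Alternatively, I would follow \cite{FultonIntersectionTheory}, Proposition 19.1.2, which proves the analogous statement for refined Gysin maps and the cycle map to Borel--Moore homology; compatibility of that cycle map then transfers the result. Since the paper only uses the formula for algebraic classes in top degree, the Fulton route suffices. I would cite it, noting that all classes involved are cycle classes.
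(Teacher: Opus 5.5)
You are right that the degrees in the statement are garbled: $y$ should lie in $H^{BM}_j(Y)$. Your eventual fallback is Fulton's Chow-level projection formula, transported to Borel--Moore homology by the cycle map, which is enough for the fundamental-class computations the paper actually performs. That fallback coincides with the paper, whose entire proof is the reference to \cite{FultonIntersectionTheory}.

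The self-contained outline you put in front of it does not hold up as written. Because $\cap$ in Definition \ref{def:intersection} only exists inside a fixed smooth ambient, the formula must be read for a proper map $f:M\to N$ of smooth varieties. Here $x$ is supported on a closed $X\subset M$, $y$ on a closed $Y\subset N$, and $f^*y\in H^{BM}(f^{-1}Y)$ is obtained by pulling $\eta=\mathrm{PD}_N(y)\in H^{*}(N,N\setminus Y)$ back to $H^{*}(M,M\setminus f^{-1}Y)$. Your definition $f^*y=[X]\cap_{M\times N}\mathrm{pr}_N^*y$, for a possibly singular $X$, has already intersected with $[X]$. Intersecting once more with $x$, in whatever ambient you form $x\cap f^*y$, therefore counts the codimension of $X$ twice. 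The left-hand side lands in degree $i+j-\dim_{\R}N-\mathrm{codim}_{\R}X$ rather than $i+j-\dim_{\R}N$.

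Your closed-embedding step also misidentifies the mechanism. For a closed submanifold $M\subset N$, pushforward is indeed the identity on $H^{BM}(X)$, but the ambient changes. The class $x\in H^{BM}_i(X)$ is dual to a class in $H^{\dim_{\R}M-i}(M,M\setminus X)$ when you intersect in $M$, and to one in $H^{\dim_{\R}N-i}(N,N\setminus X)$ when you intersect in $N$. These are matched by the Thom isomorphism of the normal bundle (tubular neighbourhood plus excision), not by enlarging supports. The formula in this case is precisely the compatibility of that Thom isomorphism with cup products and with the restriction $H^*(N,N\setminus Y)\to H^*(M,M\setminus(M\cap Y))$. That is the actual content of the step, and you leave it out.

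A shorter argument avoids the factorization altogether. In a smooth ambient, intersecting with a class is the same as capping with its Poincar\'e dual. So the two sides are $f_*(f^*\eta\frown x)$ and $\eta\frown f_*x$, with the signs from graded commutativity agreeing because $\dim_{\R}M-\dim_{\R}N$ is even. These are equal by naturality of the cap product under proper pushforward.
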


\begin{defn}
    Let $M,X_1,X_2$ be like in Definition \ref{def:intersection}. Let $Z$ be an irreducible component of $X_1\cap X_2$. Assume that $X_1,X_2$ are equidimensional. We say that $X_1,X_2$ intersect properly at $Z$  if $dim(Z)=dim(X_1)+dim(X_2)-dim(M)$. 
\end{defn}

\begin{theorem}\label{positive coef}
    Let $M,X_1,X_2$ be like in Definition \ref{def:intersection}. Assume that $X_1,X_2$ are equidimensional. Write $[X_1]\cap [X_2]\in H^{BM}_{top}(X_1\cap X_2)$ as a linear combination of the fundamental classes of all the top dimensional irreducible components of $X_1\cap X_2$. 
    
    \begin{enumerate}
        \item All the coefficients of the fundamental classes where $X_1,X_2$ intersect properly are positive. 
    \item  Let $Z$ be a top dimensional irreducible component of $X_1\cap X_2$. Assume that we have $\mathring{X}_1,\mathring{X}_2$ smooth and open in $X_1,X_2$ respectively, and assume that they intersect transversely at an irreducible component $\mathring{Z}\subset \mathring{X}_1\cap \mathring{X}_2$. If $Z\subset \overline{\mathring{Z}}$ then the coefficient of $[Z]$ in $[X_1]\cap [X_2]$ is 1.
    
    \end{enumerate}

\end{theorem}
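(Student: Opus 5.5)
The statement to prove is Theorem \ref{positive coef}: (1) coefficients at proper components of $[X_1]\cap[X_2]$ are positive; (2) the coefficient is $1$ at a component that is the closure of a transversal component. The plan is to reduce both parts to a local computation at a generic point of the component $Z$. The intersection product of Definition \ref{def:intersection} is defined through relative cohomology classes supported on $X_1$ and $X_2$. So its restriction to any open $U\subset M$ is the intersection product of $X_1\cap U$ and $X_2\cap U$ inside $U$. The coefficient of $[Z]$ can therefore be read off after restricting to any open $U$ meeting $Z$ but avoiding all other top dimensional components of $X_1\cap X_2$. This uses Proposition \ref{basis of top BM} together with the restriction isomorphism $H^{BM}_{top}(X_1\cap X_2)\to H^{BM}_{top}(U\cap X_1\cap X_2)$ on the span of classes of components meeting $U$.

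For part (2), I will choose $U$ so small that $U\cap X_i=U\cap\mathring{X}_i$ are smooth and meet transversally along $U\cap\mathring{Z}$, which is smooth. This is possible because $Z\subset\overline{\mathring{Z}}$, so $Z$ and $\overline{\mathring{Z}}$ agree as top dimensional components. In this situation the Thom classes of the normal bundles of $\mathring{X}_1$ and $\mathring{X}_2$ multiply to the Thom class of the normal bundle of $\mathring{Z}$, since transversality gives $N_{\mathring{Z}}=N_{\mathring{X}_1}|_{\mathring{Z}}\oplus N_{\mathring{X}_2}|_{\mathring{Z}}$. Under Poincaré duality this product corresponds to the fundamental class $[\mathring{Z}]$, so the coefficient is $1$. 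This is the standard computation in \cite{Chriss1997RepresentationTA}, Section 2.6.

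For part (1), I will use the comparison with Fulton's refined intersection product, via \cite{FultonIntersectionTheory}, Section 19 (the cycle map commutes with refined Gysin maps). First write $[X_1]\cap[X_2]$ as the pullback of $[X_1\times X_2]$ along the diagonal $\Delta:M\to M\times M$, which is a regular embedding since $M$ is smooth. By Fulton's Proposition 7.1, the coefficient at a proper component $Z$ equals the intersection multiplicity $i(Z;X_1\cdot X_2)$. This multiplicity is a positive integer, bounded below by $1$ and equal to the length of the local ring when the relevant rings are Cohen--Macaulay.

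The main obstacle is justifying that the topological product of Definition \ref{def:intersection} agrees with Fulton's algebraic refined product, so that positivity transfers. I would handle this first by citing Fulton, Chapter 19, where compatibility of the cycle class map with Gysin pullbacks for regular embeddings is proven. If needed, I would deform to the normal cone: the specialization map is compatible with Borel--Moore homology, and the coefficient becomes the multiplicity of the corresponding component of the normal cone $C_{Z}(X_1\times X_2)$ intersected with the zero section. That multiplicity is a geometric multiplicity, and hence positive.
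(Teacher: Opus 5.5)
Your proposal is correct and takes essentially the paper's route. The paper's proof is only a citation: Fulton's Proposition 8.2 (proper intersection multiplicities are positive, and equal $1$ under generic transversality), plus a reference identifying the topological product of Definition \ref{def:intersection} with the algebraic refined product. You spell out that same reduction, using the cycle-map compatibility of \cite{FultonIntersectionTheory}, Chapter 19, and a local Thom-class computation.
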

\begin{proof}
    See Appendix A1 of \cite{MR2179652}, as well as Proposition 8.2 of \cite{FultonIntersectionTheory}.
\end{proof}

Next we define the convolution construction.

\begin{defn}
    Let $M_1,M_2,M_3$ be three smooth complex algebraic manifolds and let $X_1\subset M_1\times M_2$ and $X_2\subset M_2\times M_3$ be two closed algebraic varieties. Let $X_1\circ X_2$ be the set theoretic composition $X_1\circ X_2=\{(m_1,m_3)\in M_1\times M_3|\exists m_2\in M_2 (m_1,m_2)\in X_1, (m_2,m_3)\in X_2\}$. Let $p_{i,j}:M_1\times M_2\times M_3\rightarrow M_i\times M_j$ be the projection, assume that $p_{1,3}$ is proper. Let $d$ be the real dimension of $M_2$.

    We define $$*:H_i^{BM}(X_1)\times H_j^{BM}(X_2)\rightarrow H_{i+j-d}^{BM}(X_1\circ X_2)$$ by $c_1*c_2:=p_{1,3*}(p^*_{1,2}c_1\cap p^*_{2,3}c_2)$. The intersection is carried out inside the smooth variety $M_1\times M_2\times M_3$.
\end{defn}

We give an equivalent way to define the convolution. Denote $Y=\{(m_1,m_2,m_3)\in M_1\times M_2\times M_3|(m_1,m_2)\in X_1,(m_2,m_3)\in X_2\}$. Let $p'_{1,3}:Y\rightarrow X_1 \circ X_2$, $p'_{1,2}:Y\rightarrow X_1$ and $p'_{2,3}:Y\rightarrow X_2$ be the natural projections. For $(c_1,c_2)\in H_{i}^{BM}(X_1)\times H_{j}^{BM}(X_2)$ we can compute $p'_{1,3*}(p_{1,2}'^{*}c_1\cap p'^*_{2,3}c_2)\in H_{i+j-d}^{BM}(X_1)$. Here the intersection $p_{1,2}^*c_1\cap p^*_{2,3}c_2$ is computed in $H^{BM}_\bullet(Y)$. Notice that $Y$ may be non smooth. It is easy to see using the projection formula that the following holds.

\begin{prop}\label{alternative convolution}
    In the above setup we have $p'_{1,3*}(p_{1,2}'^*c_1\cap p'^*_{2,3}c_2)=c_1*c_2$.
\end{prop}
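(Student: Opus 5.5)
The statement is Proposition \ref{alternative convolution}: the convolution $c_1*c_2$, defined by intersecting inside the smooth ambient space $M_1\times M_2\times M_3$, agrees with the version computed on $Y$. The idea is to factor everything through $Y$ and use the projection formula together with the fact that the refined intersection only depends on supports. Observe that $Y=p_{1,2}^{-1}(X_1)\cap p_{2,3}^{-1}(X_2)$ inside $M:=M_1\times M_2\times M_3$. Moreover $p_{1,2}^*c_1$ is a Borel--Moore class supported on $p_{1,2}^{-1}(X_1)=X_1\times M_3$, and $p_{2,3}^*c_2$ is supported on $M_1\times X_2$. By Definition \ref{def:intersection}, their intersection $p^*_{1,2}c_1\cap p^*_{2,3}c_2$ therefore already lands in $H^{BM}_{\bullet}(Y)$. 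So the first step is to make precise that the pullbacks $p'^*_{1,2}c_1$ and $p'^*_{2,3}c_2$ in the alternative formula are defined as exactly these restricted-support classes, so that the intersection computed "in $H^{BM}_\bullet(Y)$" is the refined intersection with respect to the ambient smooth $M$.

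Second, I would check compatibility of pushforward. The map $p_{1,3}\colon M\to M_1\times M_3$ is proper, and its restriction to $Y$ is $p'_{1,3}\colon Y\to X_1\circ X_2$. Its image is by definition contained in $X_1\circ X_2$, which is closed since $p_{1,3}$ is proper. Pushforward in Borel--Moore homology is functorial for closed embeddings. Hence $p_{1,3*}$ applied to a class supported on $Y$ equals $i_*p'_{1,3*}$ of that class, where $i\colon X_1\circ X_2\hookrightarrow M_1\times M_3$ is the inclusion. Since $c_1*c_2$ is regarded as a class in $H^{BM}(X_1\circ X_2)$, the two formulas agree.

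The main subtlety is the first step: showing that pulling back $c_1$ along the smooth projection $p_{1,2}$ and then viewing it with support in $X_1\times M_3$ commutes with the relative cup-product construction. Concretely, one must check that the Poincaré-dual relative class in $H^\bullet(M,M\setminus (X_1\times M_3))$ is the pullback of the one in $H^\bullet(M_1\times M_2,(M_1\times M_2)\setminus X_1)$. I would try to argue this by naturality of Thom/Poincaré duality under the submersion $p_{1,2}$, which follows from the local product structure $M=(M_1\times M_2)\times M_3$ and the Künneth isomorphism. This is where the projection formula, as invoked in the text, enters: it is the ingredient that identifies $p'_{1,3*}$ of the refined product with $p_{1,3*}$ of the ambient product.
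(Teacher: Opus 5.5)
Your argument is correct under the reading you adopt, but it follows a different route from the paper. The paper's one-line proof appeals to the projection formula; your proof never actually uses it.

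Once you declare that $p'^*_{1,2}c_1$, $p'^*_{2,3}c_2$ and their product ``in $H^{BM}_\bullet(Y)$'' mean $p_{1,2}^*c_1$, $p_{2,3}^*c_2$ and their refined intersection in $M=M_1\times M_2\times M_3$, both sides are $p_{1,3*}$ of literally the same class. That product lands in $H^{BM}_\bullet(Y)$ because $Y=(X_1\times M_3)\cap(M_1\times X_2)$. All that then remains is $p_{1,3}|_Y=p'_{1,3}$ together with functoriality of proper pushforward, exactly as in your second paragraph.

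Two things follow from this. First, the ``main subtlety'' you single out (Poincar\'e duality versus pullback along the submersion $p_{1,2}$) is not needed, since both sides involve the same class $p_{1,2}^*c_1$. Second, your closing sentence credits the projection formula with an identification that is purely definitional.

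Your reading buys a precise, well-posed statement, which is worth having since the paper's wording is vague ($Y$ need not be smooth). The cost is that it turns the proposition into a tautology.

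The projection-formula route buys something different: it lets you trade an intersection carried out on $Y$ for one in some other smooth space $N$ containing $Y$. For a closed embedding $\iota\colon Y\hookrightarrow N$, a class $\alpha$ on $N$ and a class $\beta$ on $Y$, one has $\iota_*(\iota^*\alpha\cap\beta)=\alpha\cap\iota_*\beta$. This is what the paper actually extracts later, in the proof of Proposition~\ref{compact description of convolution}, where the intersection is moved into $\Tilde{\mathfrak{g}}\times\mathcal{B}$ via the embedding of $Y$. Your version on its own does not supply that change-of-ambient-space step.
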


We are interested in the convolution construction, when, for $i=\dim(X_1)$ and $j=\dim(X_2)$, we have $i+j-d=dim(X_1\circ X_2)$. In this case we write 
$$*:H_{top}^{BM}(X_1)\times H_{top}^{BM}(X_2)\rightarrow H_{top}^{BM}(X_1\circ X_2)$$.

\begin{Remark}\label{remark: intersection then convolution}
    By definition the convolution construction is computed in two steps. First we compute the intersection $p^*_{1,2}c_1\cap p^*_{2,3}c_2$ in $M_1\times M_2\times M_3$ and then we push to $X_1\circ X_2$. If $dim(Y)=i+j-d$ we can write $p^*_{1,2}c_1\cap p^*_{2,3}c_2$ as a linear combination of the fundamental classes of the top-dimensional irreducible components of $Y$. We can compute the convolution construction by computing the coefficients of the fundamental classes of the irreducible components of $Y$ in $p^*_{1,2}c_1\cap p^*_{2,3}c_2$ and the pushforward of these fundamental classes to $X_1\circ X_2$. 

    We can write the convolution as a linear combination of fundamental classes of irreducible components of $X_1\circ X_2$. A necessary condition for a fundamental class of an irreducible component of $X_1\circ X_2$ to appear in the convolution with a nonzero coefficient is that the generic fiber of $Y$ above this irreducible component is nonzero.
\end{Remark}

Let us specialize the convolution construction to one of our cases of interest. Let $\mathfrak{g}$ be a Lie algebra of a reductive group and let $\B$ be the flag variety of $\mathfrak{g}$. Let $\phi:\Tilde{\mathfrak{g}}\rightarrow \mathfrak{g}$ be its Grothendieck alteration, it is a proper surjective map of smooth varieties. Recall that $\Tilde{\mathfrak{g}}=\{(g,\mathfrak{b})|\mathfrak{b}\in\B,g\in \mathfrak{b}\subset \mathfrak{g}\}$. Let $\mathfrak{h}\subset \mathfrak{g}$ be a Lie algebra of a spherical subgroup. We take $X_1=\mathfrak{h}\times_{\mathfrak{g}}\Tilde{\mathfrak{g}}$ and $X_2=\Tilde{\mathfrak{g}}\times_{\mathfrak{g}}\Tilde{\mathfrak{g}}$, we have $X_1\circ X_2=X_1$. Both $X_1$ and $X_2$ are equidimensional of dimensions $dim(\mathfrak{h})$ and $dim(\mathfrak{g})$ respectively. We get a convolution construction $:H_{i}^{BM}(X_1)\times H_{j}^{BM}(X_2)\rightarrow H_{i+j-dim(\mathfrak{g})}^{BM}(X_1)$.

We give one more description of the convolution in this setting.  Consider $\Tilde{\mathfrak{g}}\times \B$ and let $p_1$ be the projection onto the first coordinate. Let $p_2:\Tilde{\mathfrak{g}}\times \B\rightarrow \mathfrak{g}\times \B$ be the map that is the identity on the second coordinate and $\phi$ on the first coordinate. Starting with $(c_1,c_2)\in H_{i}^{BM}(X_1)\times H_{j}^{BM}(X_2)$ we can compute $p_{2*}(p_1^*c_1\cap c_2)\in H_{i+j-dim(\mathfrak{g})}^{BM}(X_1)$. Here we consider $X_1\subset \Tilde{\mathfrak{g}}$ and $X_2\subset \Tilde{\mathfrak{g}}\times \B$.

\begin{prop}\label{compact description of convolution}
    In the above setup we have $p_{2*}(p_1^*c_1\cap c_2)=c_1*c_2$.
\end{prop}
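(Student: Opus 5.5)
The plan is to reduce the identity $p_{2*}(p_1^*c_1\cap c_2)=c_1*c_2$ to the projection formula and to base change for Borel--Moore pullbacks along smooth maps. Here $M_1=\mathfrak{h}$ (or rather the ambient point), $M_2=\Tilde{\mathfrak{g}}$, $M_3=\Tilde{\mathfrak{g}}$, and the convolution is $c_1*c_2=\pi_{1*}(\pi_0^*c_2\cap\pi_2^*c_1)$ computed on $\mathfrak{h}\times_{\mathfrak{g}}\Tilde{\mathfrak{g}}\times_{\mathfrak{g}}\Tilde{\mathfrak{g}}$. Equivalently, by Proposition \ref{alternative convolution}, it is computed on the space $Y=\{(h,B_1,B_2)\mid (h,B_1)\in X_1,\ (h,B_1,B_2)\in X_2\}$.

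The key observation is that $Y$ is naturally a closed subvariety of $\Tilde{\mathfrak{g}}\times\B$. Indeed, a point of $X_2=St'$ is a triple $(x,B_1,B_2)$ with $x\in Lie(B_1)\cap Lie(B_2)$, so $St'$ embeds into $\Tilde{\mathfrak{g}}\times\B$ via $(x,B_1,B_2)\mapsto((x,B_1),B_2)$. Under this embedding $Y=p_1^{-1}(X_1)\cap St'$.

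\textbf{Step 1: realize $X_1$ as a subvariety and compare pullbacks.} Since $\mathfrak{h}\times_{\mathfrak{g}}\Tilde{\mathfrak{g}}\subset\Tilde{\mathfrak{g}}$ is closed, $p_1^*c_1$ is a class on $p_1^{-1}(X_1)=X_1\times\B$. Here $p_1$ is the smooth projection $\Tilde{\mathfrak{g}}\times\B\to\Tilde{\mathfrak{g}}$, and pullback along it is simply the cross product with $[\B]$.

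\textbf{Step 2: identify the intersection.} Form $p_1^*c_1\cap c_2$ inside the smooth ambient space $\Tilde{\mathfrak{g}}\times\B$. We then compare it with $\pi_2^*c_1\cap\pi_0^*c_2$ computed in $\mathfrak{h}\times\Tilde{\mathfrak{g}}\times\Tilde{\mathfrak{g}}$ (or in $M_1\times M_2\times M_3$ from the definition of convolution). To do this, embed $\Tilde{\mathfrak{g}}\times\B$ into $\Tilde{\mathfrak{g}}\times\Tilde{\mathfrak{g}}$ as the closed subvariety $\{((x,B_1),(y,B_2))\mid x=y\}$. This is the preimage of the diagonal of $\mathfrak{g}$ under the smooth map $\Tilde{\mathfrak{g}}\times\Tilde{\mathfrak{g}}\to\mathfrak{g}\times\mathfrak{g}$.

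The refined intersection with the diagonal (a Gysin restriction, as in Section 2.7 of \cite{Chriss1997RepresentationTA}) is compatible with $\cap$. Applying the standard associativity and compatibility of intersection products with restriction to a smooth subvariety, both computations produce the same class supported on $Y$. This is the step I expect to be the main obstacle: one must check that the degree shifts match, so that $i+j-\dim(\mathfrak{g})$ is the correct index on both sides, and that no excess normal bundle appears. I would first try to avoid this issue entirely. The plan is to show that $St'\subset\Tilde{\mathfrak{g}}\times\Tilde{\mathfrak{g}}$ already lies inside the subvariety $\{x=y\}$, and that the intersection with the pulled-back class from $\Tilde{\mathfrak{g}}\times\Tilde{\mathfrak{g}}$ is the same as the intersection computed inside $\{x=y\}$. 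This follows from the projection formula applied to the closed embedding $\iota:\Tilde{\mathfrak{g}}\times\B\hookrightarrow\Tilde{\mathfrak{g}}\times\Tilde{\mathfrak{g}}$, since $\iota_*(a\cap\iota^*b)=\iota_*a\cap b$.

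\textbf{Step 3: push forward.} Finally, the map $p_2$ restricted to $Y$ sends $(h,B_1,B_2)\mapsto(h,B_2)$, landing in $X_1$. This coincides with $p'_{1,3}$ of Proposition \ref{alternative convolution}. Both pushforwards are proper maps, so the two resulting classes agree. Together with Step 2, this gives $p_{2*}(p_1^*c_1\cap c_2)=c_1*c_2$.
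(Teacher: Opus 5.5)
Your key observation, that $Y$ sits inside $\Tilde{\mathfrak{g}}\times\B$ as a closed subvariety via $(x,B_1,B_2)\mapsto((x,B_1),B_2)$, is correct and is also the heart of the paper's argument. However, Step 2 is the only place where the two intersection products are actually compared, and it rests on a false geometric claim. There is no closed embedding $\iota:\Tilde{\mathfrak{g}}\times\B\hookrightarrow\Tilde{\mathfrak{g}}\times\Tilde{\mathfrak{g}}$ with image $\{((x,B_1),(y,B_2))\mid x=y\}$. For a point $((x,B_1),B_2)$ of $\Tilde{\mathfrak{g}}\times\B$ one generally has $x\notin Lie(B_2)$, so $(x,B_2)\notin\Tilde{\mathfrak{g}}$. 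The locus $\{x=y\}$ is $St'$ itself, which is singular and has dimension $\dim G$ rather than $\dim G+\dim\B$. Also, $\Tilde{\mathfrak{g}}\times\Tilde{\mathfrak{g}}\to\mathfrak{g}\times\mathfrak{g}$ is not smooth: $\Tilde{\mathfrak{g}}\to\mathfrak{g}$ is generically finite but has fiber $\B$ over $0$. So the projection formula $\iota_*(a\cap\iota^*b)=\iota_*a\cap b$ is applied to a map that does not exist, and ``$St'$ lies inside $\{x=y\}$'' is a tautology. The appeal to ``standard associativity and compatibility'' does not explain why an intersection in the $2\dim G$-dimensional manifold $\Tilde{\mathfrak{g}}\times\Tilde{\mathfrak{g}}$ agrees with one in the $(\dim G+\dim\B)$-dimensional manifold $\Tilde{\mathfrak{g}}\times\B$. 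That comparison is the entire content of the proposition, so the proof has a gap exactly there.

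The map relating the two ambient spaces goes the other way. Let $f:\Tilde{\mathfrak{g}}\times\Tilde{\mathfrak{g}}\to\Tilde{\mathfrak{g}}\times\B$ forget $y$. It is a vector bundle with fiber $Lie(B_2)$, hence smooth of relative dimension $\dim\mathfrak{b}=\dim(\Tilde{\mathfrak{g}}\times\Tilde{\mathfrak{g}})-\dim(\Tilde{\mathfrak{g}}\times\B)$, and its restriction to $St'$ is exactly your closed embedding. Take $M_1$ to be a point and let $q_1,q_2$ be the two projections of $\Tilde{\mathfrak{g}}\times\Tilde{\mathfrak{g}}$. Then $q_1=p_1\circ f$ everywhere and $q_2=p_2\circ f$ on $St'$. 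Hence $c_1*c_2=q_{2*}(f^*p_1^*c_1\cap c_2)=p_{2*}f_*(f^*p_1^*c_1\cap c_2)=p_{2*}(p_1^*c_1\cap c_2)$ by the projection formula for $f$, which is legitimate because $f$ is proper on the support of $c_2$. The paper takes a slightly different route. It first reduces to $\mathfrak{h}=\mathfrak{g}$ using the closed embedding $\mathfrak{h}\to\mathfrak{g}$, so that $p_{2,3}:Y\to St'$ is an isomorphism. It then applies the projection formula to the closed embedding $i:Y\hookrightarrow\Tilde{\mathfrak{g}}\times\B$, using $p_{1,2}=p_1\circ i$. Either way, your proposal is missing a genuine morphism to which the projection formula can be applied.
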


\begin{proof}

    Using the closed embedding $\mathfrak{h}\rightarrow\mathfrak{g}$, the projection formula and proper base change we may assume that $\mathfrak{h}=\mathfrak{g}$.

    Let $Y=\mathfrak{g}\times_{\mathfrak{g}}\Tilde{\mathfrak{g}}\times_{\mathfrak{g}}\Tilde{\mathfrak{g}}$ and let $p_{1,2},p_{1,3}:Y\rightarrow X_1$ and $p_{2,3}:Y\rightarrow X_1$ be the natural projections. Notice that $p_{2,3}$ is an isomorphism.

    By Proposition \ref{alternative convolution} we have $p_{1,3*}(p_{1,2}^*c_1\cap p^*_{2,3}c_2)=c_1*c_2$. The intersection is computed inside $Y$. The intersection $p_1^*c_1\cap c_2$ is computed inside $\Tilde{\mathfrak{g}}\times \B$. Let $i:Y\rightarrow \Tilde{\mathfrak{g}}\times \B$ be the closed embedding given by $i(h,\mathfrak{b}_1,\mathfrak{b}_2)=((h,\mathfrak{b}_1),\mathfrak{b}_2)$. 

    Notice that $p_{1,2}=p_1\circ i$ and so by the projection formula $i_*(p_{1,2}^*c_1\cap p^*_{2,3}c_2)=i_*(i^*p_{1}^*c_1\cap p^*_{2,3}c_2)=p^*_1c_1\cap i_*p_{2,3}^*c_2=p^*_1c_1\cap c_2$. The result follows.
\end{proof}

\bibliographystyle{alphaurl}
\bibliography{mybib}

\end{document}